\DeclareSymbolFont{AMSb}{U}{msb}{m}{n}
\providecommand{\mr}[1]{\href{http://www.ams.org/mathscinet-getitem?mr=#1}{MR~#1}}
\providecommand{\zbl}[1]{\href{https://zbmath.org/?q=an:#1}{Zbl~#1}}
\newcommand{\RR}{\mathbb{R}}
\newcommand{\NN}{\mathbb{N}}
\newcommand{\BB}{\mathbb{B}}
\newcommand{\II}{\mathbb{I}}
\newcommand{\EE}{\mathcal{E}}
\newcommand{\DD}{\mathcal{D}}
\newcommand{\HH}{\mathcal{H}}
\newcommand{\dd}{\mathrm{d}}
\newcommand{\ee}{\mathrm{e}}
\newcommand{\normpi}{[\![\pi]\!]}
\DeclareMathOperator{\supp}{Spt}
\DeclareMathOperator{\id}{Id}
\DeclareMathOperator{\cexp}{c-exp}
\DeclareMathOperator{\csexp}{c^{\ast}-exp}
\DeclareMathOperator{\cbexp}{\overline{c}-exp}
\DeclareMathOperator{\ctexp}{\widetilde{c}-exp}
\DeclareMathOperator*{\esssup}{esssup}
\newcommand{\crosssymbol}{\mbox{\hskip.3em$|\kern-.ex| \kern-2.6ex=\joinrel=$}}
\DeclareMathOperator{\mathcross}{\crosssymbol}
\newcommand{\cross}[1]{\mathcross_{#1}}
\theoremstyle{plain}
\newtheorem{theorem}{Theorem}[section]
\newtheorem{proposition}[theorem]{Proposition}
\newtheorem{corollary}[theorem]{Corollary}
\newtheorem{lemma}[theorem]{Lemma}
\newtheorem*{theorem*}{Theorem}
\theoremstyle{definition}
\newtheorem{definition}[theorem]{Definition}
\newtheorem{remark}[theorem]{Remark}
\numberwithin{equation}{section}
\begin{document}
\title[Variational approach to regularity of optimal transport maps]{Variational approach to regularity of optimal transport maps: general cost functions}
\author[F.~Otto]{Felix Otto}
\address[F.~Otto]{Max-Planck-Institut für Mathematik in den Naturwissenschaften, Inselstraße 22, 04103 Leipzig, Germany}
\email{Felix.Otto@mis.mpg.de}

\author[M.~Prod'homme]{Maxime Prod'homme}
\address[M.~Prod'homme]{Institut de Mathématiques de Toulouse, Université Paul Sabatier, 118 route de Narbonne, F--31062 Toulouse Cedex 9, France}
\email{maxime.prodhomme@math.univ-toulouse.fr}

\author[T.~Ried]{Tobias Ried}
\address[T.~Ried]{Max-Planck-Institut für Mathematik in den Naturwissenschaften, Inselstraße 22, 04103 Leipzig, Germany}
\email{Tobias.Ried@mis.mpg.de}
\subjclass[2020]{49Q22, 35B65; 53C21}
\keywords{Optimal transportation, $\epsilon$-regularity, partial regularity, general cost functions, almost-minimality}
\thanks{\textcopyright {2021} by the authors. Faithful reproduction of this article, in its entirety, by any means is permitted for non-commercial purposes.}
\date{\today}
\begin{abstract}
    We extend the variational approach to regularity for optimal transport maps initiated by Goldman and the first author to the case of general cost functions. Our main result is an $\epsilon$--regularity result for optimal transport maps between Hölder continuous densities slightly more quantitative than the result by De Philippis--Figalli. One of the new contributions is the use of almost-minimality: if the cost is quantitatively close to the Euclidean cost function, a minimizer for the optimal transport problem with general cost is an almost-minimizer for the one with quadratic cost. This further highlights the connection between our variational approach and De Giorgi's strategy for $\epsilon$--regularity of minimal surfaces. 
\end{abstract}
\maketitle
\setcounter{tocdepth}{1}
\tableofcontents
\setcounter{tocdepth}{2}
\section{Introduction}\label{sec:introduction}

In this paper, we give an entirely variational proof of the $\epsilon$-regularity result for optimal transportation with a general cost function $c$ and H\"older continuous densities, as established by {De Philippis} and {Figalli} \cite{DF14}.
This provides a keystone to the line of research started in \cite{GO17} and continued in \cite{GHO19}: In \cite{GO17}, the variational approach was introduced and $\epsilon$-regularity was established in case of a Euclidean cost function $c(x,y) = \frac{1}{2} |x-y|^2$, see \cite[Theorem 1.2]{GO17}.
In \cite{GHO19}, among other things, the argument was extended to rougher densities, which required a substitute for {McCann}'s displacement convexity; this generalization is crucial here.

\medskip
One motivation for considering this more general setting is the study of optimal transportation on Riemannian manifolds with cost function given by $\frac{1}{2}d^2(x,y)$, where $d$ is the Riemannian distance. In this context, an $\epsilon$--regularity result is of particular interest because, compared to the Euclidean setting, even though $c$ is a compact perturbation of the Euclidean case, there are other mechanisms creating singularities like curvature. 
Indeed, under suitable convexity conditions on the support of the target density, the  so-called {Ma--Trudinger--Wang} (MTW) condition on the cost function $c$, a strong structural assumption, is needed to obtain global smoothness of the optimal transport map, see \cite{MTW05} and \cite{Loe09}. 
Since in the most interesting case of cost $\frac{1}{2}d^2(x,y)$, the MTW condition is quite restrictive\footnote{It is violated whenever a Riemannian sectional curvature is negative at any point of the manifold, see \cite{Loe09}.}, and does not have a simple interpretation in terms of geometric properties of the manifold\footnote{See \cite{KM10} for the concept of cross-curvature and its relation to the MTW condition.}, it is highly desirable to have a regularity theory without further conditions on the cost function $c$ (and on the geometry of the support of the densities).

\medskip
The outer loop of our argument is similar to that of \cite{DF14}: 
a Campanato iteration on dyadically shrinking balls that relies on a one-step improvement lemma, which in turn relies on the closeness of the solution to that of a simpler problem with a high-level interior regularity theory. The main differences are: 
\begin{enumerate}[label=$\circ$]
	\item In \cite{DF14}, the simpler problem is the Monge--Ampère equation with constant right-hand-side and Dirichlet boundary data coming from the convex potential; for us, the simpler problem is the Poisson equation with Neumann boundary data coming from the flux in the Eulerian formulation of optimal transportation \cite{BenamouBrenier}.
	\item In \cite{DF14}, the comparison relies on the maximum principle; in our case, it relies on the fact that the density/flux pair in the Eulerian formulation is a minimizer\footnote{In fact, an \emph{almost minimizer}.} given its own boundary conditions. 
	\item In \cite{DF14}, the interior regularity theory appeals to the $\epsilon$-regularity theory for the Monge--Ampère equation \cite{FK10}, which itself relies on {Caffarelli}'s work \cite{Caffarelli92}; in our case, it is just inner regularity of harmonic functions.
\end{enumerate}

\medskip
Loosely speaking, the Campanato iteration in \cite{DF14} relies on freezing the coefficients, whereas here, it relies on linearizing the problem (next to freezing the coefficients).
In the language of nonlinear elasticity, we tackle the geometric nonlinearity (which corresponds to the nonlinearity inherent to optimal transport) alongside the material nonlinearity (which corresponds to the cost function $c$).
As a consequence of this, we achieve $\mathcal{C}^{2,\alpha}$-regularity in a single Campanato iteration, whereas \cite{DF14} proceeds in three rounds of iterations, namely first $\mathcal{C}^{1,1-}$, then $\mathcal{C}^{1,1}$, and finally $\mathcal{C}^{2,\alpha}$.
Another consequence of this approach via linearization is that we instantly arrive at an estimate that has the same homogeneities as for a linear equation (meaning that the H\"older semi-norm of the second derivatives is estimated by the H\"older semi-norm of the densities and not a nonlinear function thereof).
Likewise, we obtain the natural dependence on the H\"older semi-norm of the mixed derivative of the cost function\footnote{The regularity of the cost function enters our result in a more nonlinear way, too: Its qualitative regularity on the $\mathcal{C}^2$-level determines the energy and length scales below which the linearization regime kicks in.}. When it comes to this dependence on the cost function $c$, we observe a similar phenomenon as for boundary regularity: regarding how the regularity of the data and the regularity of the solution are related, optimal transportation seems better behaved than its linearization, as we shall explain now\footnote{We refer to \cite{MiuraOtto} for a discussion of this phenomenon in the study of boundary regularity.}:
Assuming unit densities for the sake of the discussion, the Euler--Lagrange equation can be expressed on the level of the optimal transport map $T$ as the fully nonlinear (and $x$-dependent) elliptic system given by $\det\nabla T=1$ and $\mathrm{curl}_x(\nabla_x c(x,T(x)))=0$. Since the latter can be re-phrased by imposing that the matrix $\nabla_{xy}c(x,T(x))\nabla T(x)$ is symmetric, the H\"older norm of $\nabla T$ lives indeed on the same footing as the H\"older norm of the mixed derivative $\nabla_{xy}c$.
The linearization around $T(x)=x$ on the other hand is given by the elliptic system $\mathrm{div}\,\delta T=0$ and $\mathrm{curl}_x(\nabla_x c(x,x)+\nabla_{xy}c(x,x)\delta T(x))=0$, which has divergence-form character\footnote{In 2-d this can be seen by re-expressing the second equation in terms of the stream function of $\delta T$.}.
Here H\"older control of $\nabla_{xy}c(x,x)$ matches with H\"older control of only $\delta T$, and not its gradient.

\medskip
Our approach is analogous to {De Giorgi}'s strategy for the
$\epsilon$-regularity of minimal surfaces, foremost in the sense that it proceeds via harmonic approximation. 
In fact, our strategy is surprisingly similar to {Schoen} \& {Simon}'s variant \cite{SchoenSimon} to that regularity theory: 

\begin{enumerate}[label=$\circ$]
	\item Both approaches rely on the fact that the configuration is minimal given its own boundary conditions (Dirichlet boundary conditions there \cite[(43)]{SchoenSimon}, flux boundary conditions here \cite[(3.22)]{GO17}), see \cite[p.428]{SchoenSimon} and \cite[Proof of Proposition 3.3, Step 4]{GO17}; the Euler--Lagrange equation does not play a role in either approach.
	\item Both approaches have to cope with a mismatch in description between the given configuration and the harmonic construction (non-graph vs.~graph there, time-dependent flux vs.~time-independent flux here) which leads to an error term that luckily is superlinear in the energy, see \cite[(38)]{SchoenSimon} and \cite[Proof of Proposition 3.3, Step 4]{GO17}. On a very high-level description, this super-linearity may be considered as coming from the next term in a Taylor expansion of the nonlinearity, which in the right topology can be seen via lower-dimensional isoperimetric principles, see \cite[Lemma 3]{SchoenSimon} and \cite[Lemma 2.3]{GO17}. (However, there is no direct analogue of the Lipschitz approximation here.)
	\item Both approaches have to establish an approximate orthogonality that allows to relate the distance between the minimal configuration and the construction in an energy norm by the energy gap, see \cite[p.426ff.]{SchoenSimon} and \cite[Proof of Proposition 3.3, Step 3]{GO17} in the simple setting or rather \cite[Lemma 1.8]{GHO19} in our setting; it thus ultimately relies on some (strict) convexity, see \cite[(4)]{SchoenSimon}.
	\item In order to establish this approximate orthogonality, both approaches have to smooth out the boundary data (there by simple convolution, here in addition by a nonlinear approximation), see \cite[(34),(40),(52)]{SchoenSimon} and \cite[Proposition 3.6,(3.46),(3.47)]{GHO19}.
	\item In view of this, both approaches have to choose a good radius for the cylinder (in the Eulerian space-time here) on which the construction is carried out, see \cite[p.424]{SchoenSimon} and \cite[Section 3.1.3]{GHO19}.
\end{enumerate}

\medskip
The advantages of a variational approach become particularly apparent in this paper, when we pass from a Euclidean cost function to a more general one:
We may appeal to the concept of \emph{almost minimizers}, which is well-established for minimal surfaces.\footnote{See for instance \cite[Definition III.1]{Alm76}, \cite[Definition 1]{Bom82}. Almost minimizers are often also called $\omega$-minimizers, see e.g. \cite[Section 7.7]{Giu03}.} In our case, this simple concept means that, on a given scale, we interpret the minimizer (always with respect to its own boundary conditions) of the problem with $c$ as an approximate minimizer of the Euclidean problem. This allows us to directly appeal to the Euclidean harmonic approximation \cite[Theorem 1.5]{GHO19}. 
Incidentally, while dealing with H\"older continuous densities like in \cite{GO17} and not general measures as in \cite{GHO19}, we could not appeal to the simpler \cite[Proposition 3.3]{GO17}, since this one relies on the Euler--Lagrange equation in form of {McCann}'s displacement convexity.

\medskip
There are essentially two new challenges we face when passing from a Euclidean to a general cost function (next to the geometric ingredients also present in \cite{DF14}):
\begin{enumerate}[label=$\circ$]
	\item Starting point for the variational approach is always an $L^\infty/L^2$-bound on the displacement, which does rely on the Euler--Lagrange equation in the weak form of monotonicity of the support of the optimal coupling, see \cite[Lemma 3.1]{GO17}, \cite[Lemma 2.9]{GHO19}, and \cite[Proposition 2.2]{MiuraOtto}.
	In this paper, we establish the analogue of \cite[Lemma 3.1]{GO17} based on the $c$-monotonicity, see Proposition \ref{prop:Linfty-main}. Loosely speaking, this relies on  a qualitative argument down to some ($c$-dependent) scale $R_0$, followed by an argument that constitutes a perturbation of the one in \cite[Lemma 3.1]{GO17} for the scales $\le R_0$.
	\item We now address a somewhat hidden, but quite important additional difficulty that has to be overcome when passing from a Euclidean to a general cost functional\footnote{In fact, we missed this point in the first posted version.}:
	While the Kantorowich formulation of optimal transportation has the merit of being convex, it is so in a very degenerate way; 
	the Benamou--Brenier formulation on the contrary uncovers an almost strict\footnote{But not uniformly strict.} convexity. The variational approach to regularity capitalizes on this strict convexity. 
	However, this Eulerian reformulation seems naturally available only in the Riemannian case, and its strict convexity seems apparent only in the Euclidean setting. This is one of the reasons to appeal to the concept of almost minimizer, since it allows us to pass from a general cost function to the Euclidean one. 
	However, for configurations that are not exact minimizers of the Euclidean cost functional, the Lagrangian cost ${\int|y-x|^2\,\dd\pi}$ and the cost $\int\frac{1}{\rho}|j|^2$ of their Eulerian description \eqref{eq:density-flux-associated} are in general different:
	While the Eulerian cost is always dominated by the Lagrangian one, this is typically strict\footnote{\label{foot:Lagrangian-dominates-Eulerian}Here is an easy example: Let $\pi$ be generated by the map $T$ that maps $[-1,0]$ onto $[0,1]$ and $[0,1]$ onto $[-1,0]$, both by shift by one, to the right and to the left, respectively. The Lagrangian cost is obviously given by $2$. For the Eulerian cost we note that the Eulerian velocity $v$ vanishes on the diamond $|z|<\min\{t,1-t\}$, where the density assumes the value $2$. Due to this ``destructive interference'', the Eulerian cost is reduced by $1$.}.
	Hence the prior work \cite{GO17,GHO19,MiuraOtto} on the variational approach used the Euler--Lagrange equation in a somewhat hidden way, namely in terms of the incidence of Eulerian and Lagrangian cost. Luckily, the discrepancy of both functionals can be controlled for \emph{almost minimizers}, see Lemma \ref{prop:quasi-minimality-eulerian-main}.
\end{enumerate}

\subsection{Main results}
Let $X,Y \subset \RR^d$ be compact. We assume that the cost function $c: X\times Y \to \RR$ satisfies:
\begin{enumerate}[label=\textbf{(C\arabic*)}]
	\item\label{item:cost-cont} $c \in \mathcal{C}^{2}(X\times Y)$.
	\item\label{item:cost-inj-y} For any $x\in X$, the map $Y\ni y \mapsto -\nabla_x c(x,y) \in \RR^d$ is one-to-one.
	\item\label{item:cost-inj-x} For any $y\in Y$, the map $X\ni x \mapsto -\nabla_y c(x,y) \in \RR^d$ is one-to-one.
    \item\label{item:cost-non-deg} $\det \nabla_{xy}c(x,y)\neq 0$ for all $(x,y)\in X\times Y$.
\end{enumerate}
Let $\rho_0,\rho_1:\RR^d \to \RR$ be two probability densities, with $\supp\rho_0 \subseteq X$ and $\supp\rho_1 \subseteq Y$. It is well-known that under (an even milder regularity assumption than) condition \ref{item:cost-cont}, the optimal transportation problem
\begin{align}\label{eq:OT-c}
	\inf_{\pi\in\Pi(\rho_0,\rho_1)} \int_{\RR^d\times \RR^d} c(x,y)\,\dd\pi,
\end{align}
where the infimum is taken over all couplings $\pi$ between the measures $\rho_0\,\dd x$ and $\rho_1 \,\dd y$, admits a solution $\pi$, which we call a $c$-optimal coupling.

For $R>0$ we define the set 
\begin{align}\label{eq:cross-def}
   \cross{R} := (B_R \times \RR^d) \cup (\RR^d \times B_R),
\end{align}
which is quite natural in the context of optimal transportation, because it allows for a symmetric treatment of the transport problem: it is suitable to describe all the mass that gets transported out of $B_R$, and all the mass that is transported into $B_R$.
For $\alpha\in(0,1)$ we write 
\begin{align}\label{eq:c-hoelder-def}
	\left[ \nabla_{xy}c \right]_{\alpha, R} 
	:= \sup_{(x,y) \neq (x',y') \in \,\cross{R}} \frac{|\nabla_{xy}c(x,y) - \nabla_{xy}c(x',y')|}{|(x,y)-(x',y')|^{\alpha}}
\end{align}
for the $\mathcal{C}^{0,\alpha}$-semi-norm of the mixed derivative $\nabla_{xy}c$ of the cost function in the cross $\cross{R}$, and denote by 
\begin{align*}
	[\rho]_{\alpha,R} := \sup_{x\neq x'\in B_R} \frac{|\rho(x) - \rho(x')|}{|x-x'|^{\alpha}}
\end{align*}
the $\mathcal{C}^{0,\alpha}$-semi-norm of $\rho$ in $B_R$.

Fixing $\rho_0(0) =\rho_1(0) = 1$, we think of the densities as non-dimensional objects. This means that $\pi(\cross{R})=\int_{B_R}(\rho_0 + \rho_1)-\pi(B_R\times B_R)$ has units of $(\text{length})^d$, so that the \emph{Euclidean transport energy} $\int_{\cross{R}} \frac{1}{2}|x-y|^2\,\mathrm{d}\pi$ has dimensionality $(\text{length})^{d+2}$, and explains the normalization  by $R^{-(d+2)}$ in assumption \eqref{eq:smallness-main-theorem}  and in the definition \eqref{eq:def-energy} of $\EE_R$ below, making it a non-dimensional quantity\footnote{Notice that we are using a different convention here than in \cite{GHO19}, since it is more natural to work with the non-dimensional energy in our context.}. Similarly, the normalization $\nabla_{xy}c(0,0) = -\II$ makes the second derivatives of the cost function non-dimensional.

The main result of this paper is the following $\epsilon$-regularity result:
\begin{theorem}\label{thm:main}
Assume that \ref{item:cost-cont}--\ref{item:cost-non-deg} hold and that $\rho_0(0) = \rho_1(0) =1$, as well as $\nabla_{xy}c(0,0)= -\II$. Assume further that $0$ is in the interior of $\, X \times Y$.

Let $\pi$ be a $c$-optimal coupling from $\rho_0$ to $\rho_1$. 
There exists $R_0= R_0(c)>0$ such that for all $R\leq R_0$ with\footnote{An assumption of the form $f\ll1$ means that there exists $\epsilon>0$, typically only depending on the dimension $d$ and Hölder exponents, such that if $f\leq\epsilon$, then the conclusion holds. We write $\ll_{\Lambda}$ to indicate that $\epsilon$ also depends on the parameter $\Lambda$.
The symbols $\sim$, $\gtrsim$ and $\lesssim$ indicate estimates that hold up to a global constant $C$, which typically only depends on the dimension $d$ and Hölder exponents. For instance, $f\lesssim g$ means that there exists such a constant with $f\leq Cg$. $f\sim g$ means that $f\lesssim g$ and $g\lesssim f$.} 
    \begin{align}\label{eq:smallness-main-theorem}
        \frac{1}{R^{d+2}} \int_{B_{4R}\times \RR^d} |x-y|^2\,\dd \pi 
        + R^{2\alpha}\left([\rho_0]_{\alpha, 4R}^2 + [\rho_1]_{\alpha, 4R}^2 + \left[\nabla_{xy}c\right]_{\alpha, 4R}^2\right) \ll_c 1,
    \end{align}
    there exists a function $T\in \mathcal{C}^{1,\alpha}(B_R)$ such that $(B_R \times \RR^d) \cap \supp \pi  \subseteq \mathrm{graph}\,T$, 
    and the estimate 
    \begin{align}\label{eq:holder-gradient-main}
        [\nabla T]_{\alpha, R}^2 \lesssim \frac{1}{R^{d+2+2\alpha}}\int_{B_{4R}\times \RR^d} |x-y|^2\,\dd \pi + [\rho_0]_{\alpha, 4R}^2 + [\rho_1]_{\alpha, 4R}^2 + \left[\nabla_{xy}c\right]_{\alpha, 4R}^2
    \end{align}
    holds.
\end{theorem}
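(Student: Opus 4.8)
The plan is to run a Campanato iteration on the dyadically shrinking balls $B_{\theta^k R}$ for a fixed $\theta\in(0,1)$, whose engine is a one-step improvement lemma obtained by harmonic approximation. Throughout, write $\EE_R$ for the non-dimensional Euclidean transport energy \eqref{eq:def-energy} and $D_R^2:=[\rho_0]_{\alpha,R}^2+[\rho_1]_{\alpha,R}^2+[\nabla_{xy}c]_{\alpha,R}^2$ for the data term, so that hypothesis \eqref{eq:smallness-main-theorem} is of the form $\EE_{4R}+R^{2\alpha}D_{4R}^2\ll_c 1$ and conclusion \eqref{eq:holder-gradient-main} of the form $[\nabla T]_{\alpha,R}^2\lesssim R^{-2\alpha}\EE_{4R}+D_{4R}^2$. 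The first step is to localize: I would invoke Proposition~\ref{prop:Linfty-main} to upgrade the $L^2$-smallness of the displacement to an $L^\infty$-bound, so that on the portion of $\supp\pi$ lying over a slightly smaller ball the displacement $|y-x|$ is pointwise controlled by $R$ times a positive power of $\EE_R+R^{2\alpha}D_R^2$. This confines the coupling to a thin neighbourhood of the diagonal; it is what makes both the reduction to the Euclidean problem and the eventual graph property possible.

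The second step is to exploit almost-minimality. Since $\nabla_{xy}c(0,0)=-\II$ and $c\in\mathcal{C}^2$, modulo terms depending only on $x$ or only on $y$ — which do not affect the transport problem \eqref{eq:OT-c} — the cost $c(x,y)$ agrees with $-x\cdot y$, hence with $\tfrac12|x-y|^2$, up to an error whose size on $\cross{R}$ is controlled by the $\mathcal{C}^2$-modulus of $\nabla_{xy}c$ there (and, for its $\mathcal{C}^{0,\alpha}$-part, by $R^\alpha[\nabla_{xy}c]_{\alpha,R}$). Therefore, comparing $\pi$ restricted to a ball against competitors with the same boundary fluxes, the $c$-optimal coupling is an \emph{almost minimizer} of the \emph{Euclidean} transport problem, with almost-minimality parameter small once $R\le R_0(c)$. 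This is the mechanism that lets us import the Euclidean theory wholesale.

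The third, and main, step is the one-step improvement. I would combine: (i) the almost-minimality from the second step; (ii) Lemma~\ref{prop:quasi-minimality-eulerian-main}, which for almost minimizers controls the discrepancy between the Lagrangian cost $\int|y-x|^2\,\dd\pi$ and the Eulerian cost $\int\tfrac1\rho|j|^2$ of the associated density/flux pair \eqref{eq:density-flux-associated} — this is precisely what replaces the use of McCann's displacement convexity (that is, the Euler--Lagrange equation) in \cite{GO17,GHO19}; and (iii) the Euclidean harmonic approximation \cite[Theorem~1.5]{GHO19}. Together these produce, at the smaller scale $\theta R$ and after a cost-adapted change of variables — re-centering through the $c$-exponential map so that the renormalized cost again satisfies $\nabla_{xy}c=-\II$ at the new base point, which is the device handling the geometric nonlinearity alongside the freezing of the material coefficients — an energy decay of the form
\begin{align*}
\EE'_{\theta R}\;\lesssim\;\theta^{2}\,\EE_R + C(\theta)\,R^{2\alpha}D_R^2 + (\text{cost-dependent }\mathcal{C}^2\text{-error}),
\end{align*}
where $\EE'$ denotes the energy in the new coordinates. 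Choosing $\theta$ small fixes a contraction factor better than $\theta^{2\alpha}$.

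Finally, I would iterate the third step on $B_{\theta^k R}$, checking that the smallness hypothesis is reproduced at every scale: the data term scales sub-critically, $(\theta R)^{2\alpha}D_{\theta R}^2\le\theta^{2\alpha}R^{2\alpha}D_R^2$, and the $\mathcal{C}^2$-error of the cost is absorbed for $R\le R_0(c)$, so the iteration never stops. This yields the geometric decay $\EE'_\rho\lesssim(\rho/R)^{2\alpha}\bigl(\EE_{4R}+R^{2\alpha}D_{4R}^2\bigr)$ together with summability of the increments of the accumulated cost-exponential changes of variables; the limiting change of variables encodes $\nabla T(0)$, its H\"older modulus gives \eqref{eq:holder-gradient-main} via the Campanato characterization of $\mathcal{C}^{1,\alpha}$, and the localization of the first step together with $c$-cyclical monotonicity upgrades ``$\supp\pi$ concentrates near a $\mathcal{C}^{1,\alpha}$ graph at every scale'' to $(B_R\times\RR^d)\cap\supp\pi\subseteq\mathrm{graph}\,T$. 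I expect the main obstacle to be the third step: choosing the cost-adapted change of variables at each scale and bookkeeping the error terms — above all the Lagrangian/Eulerian discrepancy — so that the one-step estimate closes with the claimed linear homogeneities (the H\"older norm of $\nabla T$ bounded by the H\"older norms of the data, not a nonlinear function thereof). Verifying the $L^\infty$-bound for a general cost (the qualitative-to-perturbative argument behind Proposition~\ref{prop:Linfty-main}) and that almost-minimality survives renormalization are the other delicate points, but the genuinely new difficulty, flagged in the introduction, is the degeneracy of convexity away from the Euclidean cost, which is exactly what Lemma~\ref{prop:quasi-minimality-eulerian-main} is designed to handle.
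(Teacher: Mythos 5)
Your proposal follows the same route as the paper: qualitative-then-quantitative $L^\infty$ bound (Proposition~\ref{prop:Linfty-main} plus Lemma~\ref{lem:displacement-qualitative}), almost-minimality of the $c$-optimal coupling for the Euclidean cost, the Lagrangian/Eulerian transfer of Lemma~\ref{prop:quasi-minimality-eulerian-main}, harmonic approximation, a one-step improvement with a cost-adapted affine change of variables, and a Campanato iteration; all the key lemmas are correctly identified and correctly assigned their roles. One genuine omission: you silently read the hypothesis \eqref{eq:smallness-main-theorem} as smallness of the symmetric cross energy $\EE_{4R}$, whereas the theorem only assumes smallness of the \emph{one-sided} energy $R^{-(d+2)}\int_{B_{4R}\times\RR^d}|x-y|^2\,\dd\pi$. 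All of the machinery you invoke (the quantitative $L^\infty$ bound, the harmonic approximation, the one-step improvement) requires control of the energy on the full cross $\cross{R}$, so a separate argument is needed to show that little mass arrives in $B_{2R}$ from outside $B_{4R}$; the paper does this in \ref{step:one-sided-to-two-sided} by combining a one-sided version of the $L^\infty$ bound with a $c$-monotonicity argument along the segment $[x,y]$ to exclude couples in $B_{4R}^c\times B_{2R}$. Two smaller points of bookkeeping you gloss over: (i) the displayed one-step decay with prefactor $\theta^2$ is not literally achievable — the $\tau\EE$ and $M/\tau$ error terms from the harmonic approximation degrade it to $\theta^{2\beta}$ for any $\beta<1$, which is still enough since one only needs $\beta>\alpha$, as you note; and (ii) the sub-critical scaling $(\theta R)^{2\alpha}D_{\theta R}^2\le\theta^{2\alpha}R^{2\alpha}D_R^2$ concerns the original data, whereas in the iteration the densities and cost are transformed at each step, so their H\"older seminorms pick up factors $1+C(\EE^{1/2}+\HH^{1/2})$ whose infinite product must be shown to converge (the paper's \eqref{eq:rhok}--\eqref{eq:H-estimate}). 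Finally, the graph property in the paper comes from the Campanato decay via disintegration of $\pi$ (each conditional measure collapses to a Dirac), not from $c$-cyclical monotonicity as you suggest, though your route is plausible.
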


We stress that the implicit constant in \eqref{eq:holder-gradient-main} is independent of the cost $c$.
The scale $R_0$ below which our $\epsilon$-regularity result holds has to be such that $B_{2R_0} \subseteq X \cap Y$ and such that the qualitative $L^{\infty}/L^2$ bound (Lemma \ref{lem:displacement-qualitative}) holds.
We note that the dependence of $R_0$ on $c$ and the implicit dependence on $c$ in the smallness assumption \eqref{eq:smallness-main-theorem} are only through the qualitative information \ref{item:cost-cont}--\ref{item:cost-non-deg}, see Remark \ref{rem:qualitative-quantitative} and Lemma \ref{lem:displacement-qualitative} for details. Note also that, without appealing to the well-known result that the solution of \eqref{eq:OT-c} is a deterministic coupling $\pi=(\id \times T)_{\#} \rho_0$, this structural property of the optimal coupling is an outcome of our iteration.

\begin{remark}\label{rem:T-inverse}
Under the same assumptions as in Theorem \ref{thm:main}, in particular only asking for the one-sided energy $\frac{1}{R^{d+2}} \int_{B_{4R}\times \RR^d} |x-y|^2\,\dd \pi$ to be small in \eqref{eq:smallness-main-theorem}, we can also prove the existence of a function $T^* \in \mathcal{C}^{1,\alpha}(B_R)$ such that $(\RR^d\times B_R) \cap \supp\pi \subseteq \{(T^*(y),y) \, : \, y \in B_R\}$, with the same estimate on the semi-norm of $\nabla T^*$. This follows from the symmetric nature of the assumptions \ref{item:cost-cont}--\ref{item:cost-non-deg}, of the normalization conditions on the densities and the cost, and of the smallness assumption \eqref{eq:smallness-main-theorem}. We refer the reader to \ref{step:one-sided-to-two-sided} of the proof of Theorem \ref{thm:main} to see how \eqref{eq:smallness-main-theorem} entails smallness of a symmetric version of the Euclidean transport energy, as defined in \eqref{eq:def-energy}, at a smaller scale. 
\end{remark}

\medskip
As in \cite{DF14}, Theorem \ref{thm:main} leads to a partial regularity result for a $c$-optimal transport map $T$, that is, a map such that the $c$-optimal coupling between $\rho_0$ and $\rho_1$ is of the form 
\begin{align*}
    \pi_T := (\id \times T)_{\#} \rho_0.
\end{align*} The existence of such a map is a classical result in optimal transportation under assumptions \ref{item:cost-cont}--\ref{item:cost-inj-y} on the cost, as well as its particular structure, namely the fact that it derives from a potential.
More precisely, there exists a $c$-convex function
	$u: X \to \RR$ such that 
\begin{align*}
	T(x) = T_u(x) := \cexp_x(\nabla u(x)),
\end{align*}
where the \emph{$c$-exponential map} is well-defined in view of \ref{item:cost-cont} and \ref{item:cost-inj-y} via
\begin{align}\label{eq:c-exp}
	\cexp_x(p) = y \quad \Leftrightarrow \quad p = -\nabla_x c(x,y) \quad \text{for any } x\in X, y\in Y, p\in\RR^d.
\end{align}
Recall that a function $u: X \to \RR$ is $c$-convex if there exists a function $\lambda: Y\to \RR\cup\{-\infty\}$ such that \[u(x) = \sup_{y\in Y}\left( \lambda(y) - c(x,y)\right).\] 
Note that by assumption \ref{item:cost-cont} and the boundedness of $Y$, the function $u$ is semi-convex, i.e., there exists a constant $C$ such that $u+ C |x|^2$ is convex. Hence, by {Alexandrov}'s Theorem (see, for instance, \cite[Theorem 6.9]{EG15}, or \cite[Theorem 14.25]{Vil09}), $u$ is twice differentiable at a.e. $x\in X$. For more details on $c$-convexity and its connection to optimal transport and Monge-Ampère equations we refer to \cite[Chapter 5]{Vil09} and \cite[Section 5.3]{Fig17}.

Before stating the partial regularity result, let us mention that our $L^2$-based assumption on the smallness of the Euclidean energy of the forward transport is not more restrictive than the $L^{\infty}$-based assumption of closeness of the Kantorovich potential $u$ to $\frac{1}{2}|\cdot|^2$ in \cite[Theorems 4.3 \& 5.3]{DF14}. However, the assumption on $u$ is not invariant under transformations of $c$ and $u$ that preserve optimality, whereas the optimal transport map $T_u$, and hence our assumption on the energy $R^{-d-2} \int_{B_{4R}} |x-T_u(x)|^2\,\rho_0(x)\,\dd x$, are unaffected. For that reason we additionally have to fix $\nabla_{xx}c(0,0)=0$, and $\nabla_x c(0,0) = 0$ in the following corollary\footnote{Assuming that $T_u(0) = 0$, the assumption $\nabla_x c(0,0) = 0$ fixes $\nabla u(0) = -\nabla_{x}c(0, T_u(0)) = 0$.}, and ask for $[\nabla_{xx}c]_{\alpha, 4R}$ to be small. Hence, in this result we think of the cost $c$ as being close to $-x \cdot y$, which is not necessarily the case in Theorem \ref{thm:main}. 
\begin{corollary}\label{cor:DF}
    Assume that \ref{item:cost-cont}--\ref{item:cost-non-deg} hold and that $\rho_0(0) = \rho_1(0) =1$, as well as $\nabla_{xy}c(0,0)= -\II$, $\nabla_{xx}c(0,0)=0$, and $\nabla_x c(0,0) = 0$. Assume further that $0$ is in the interior of $\, X \times Y$.

Let $T_u$ be the $c$-optimal transport map from $\rho_0$ to $\rho_1$. 
There exists $R_0= R_0(c)>0$ such that for all $R\leq R_0$ with 
    \begin{align}\label{eq:smallness-main-corollary}
        \frac{1}{R^{2}} \left\|u-\tfrac{1}{2}|\cdot|^2 \right\|_{\mathcal{C}^0(B_{8R})}
        + R^{\alpha}\left([\rho_0]_{\alpha, 4R} + [\rho_1]_{\alpha, 4R} + \left[\nabla_{xy}c\right]_{\alpha, 4R} + \left[\nabla_{xx}c\right]_{\alpha, 4R}\right) \ll_c 1,
    \end{align}
    $T_u\in \mathcal{C}^{1,\alpha}(B_R)$ with
    \begin{align}\label{eq:holder-gradient-main-corollary}
        [\nabla T_u]_{\alpha, R} \lesssim\frac{1}{R^{2+\alpha}} \left\|u-\tfrac{1}{2}|\cdot|^2 \right\|_{\mathcal{C}^0(B_{8R})} + [\rho_0]_{\alpha, 4R} + [\rho_1]_{\alpha, 4R} + \left[\nabla_{xy}c\right]_{\alpha, 4R} + \left[\nabla_{xy}c\right]_{\alpha, 4R}.
    \end{align}
\end{corollary}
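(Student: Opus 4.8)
The plan is to deduce Corollary~\ref{cor:DF} directly from Theorem~\ref{thm:main} by converting its $L^2$‑based hypothesis \eqref{eq:smallness-main-theorem} and conclusion \eqref{eq:holder-gradient-main} into the $\mathcal{C}^0$‑based ones, using that the $c$‑optimal coupling is $\pi=\pi_{T_u}=(\id\times T_u)_{\#}\rho_0$ with $T_u(x)=\cexp_x(\nabla u(x))$. Write $w:=u-\tfrac12|\cdot|^2$. Since $\int_{B_{4R}\times\RR^d}|x-y|^2\,\dd\pi=\int_{B_{4R}}|x-T_u(x)|^2\,\rho_0\,\dd x$ and $\|\rho_0\|_{L^\infty(B_{4R})}\lesssim1$ (from $\rho_0(0)=1$ and $R^\alpha[\rho_0]_{\alpha,4R}\lesssim1$), the whole corollary reduces to the pointwise estimate, valid for a.e.\ $x\in B_{4R}$,
\[
|x-T_u(x)|\;\lesssim\;\frac1R\,\|w\|_{\mathcal{C}^0(B_{8R})}+R^{1+\alpha}\bigl([\nabla_{xx}c]_{\alpha,4R}+[\nabla_{xy}c]_{\alpha,4R}\bigr).
\]
Indeed, squaring, integrating against $\rho_0\lesssim1$ over $B_{4R}$ and dividing by $R^{d+2}$ bounds $\frac{1}{R^{d+2}}\int_{B_{4R}\times\RR^d}|x-y|^2\,\dd\pi$ by $\frac{1}{R^4}\|w\|_{\mathcal{C}^0(B_{8R})}^2+R^{2\alpha}\bigl([\nabla_{xx}c]_{\alpha,4R}^2+[\nabla_{xy}c]_{\alpha,4R}^2\bigr)$; hence every term on the left of \eqref{eq:smallness-main-theorem} is controlled by the square of the left‑hand side of \eqref{eq:smallness-main-corollary}, so \eqref{eq:smallness-main-corollary}$\,\ll_c1$ implies \eqref{eq:smallness-main-theorem}$\,\ll_c1$. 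Theorem~\ref{thm:main} then gives $T\in\mathcal{C}^{1,\alpha}(B_R)$ with $(B_R\times\RR^d)\cap\supp\pi\subseteq\mathrm{graph}\,T$; since $\rho_0\sim1>0$ on $B_R$, for a.e.\ $x\in B_R$ the point $(x,T_u(x))$ lies in $\supp\pi$, hence in $\mathrm{graph}\,T$, so $T_u=T$ a.e.\ on $B_R$ and $T_u$ inherits the $\mathcal{C}^{1,\alpha}(B_R)$ representative $T$. Finally, feeding the same energy bound into \eqref{eq:holder-gradient-main} (using $\frac{1}{R^{d+2+2\alpha}}\int|x-y|^2\,\dd\pi=R^{-2\alpha}\cdot\frac{1}{R^{d+2}}\int|x-y|^2\,\dd\pi$) and taking square roots yields \eqref{eq:holder-gradient-main-corollary} — whose last term should read $[\nabla_{xx}c]_{\alpha,4R}$ — and it is here that the \emph{linear} dependence of the displayed estimate on $\|w\|_{\mathcal{C}^0}$ is needed, so that one obtains the factor $R^{-(2+\alpha)}$ and not $R^{-(1+\alpha)}$.

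To prove the displayed pointwise estimate I would split $x-T_u(x)=(x-\nabla u(x))+(\nabla u(x)-T_u(x))$. For the second term, $\nabla u(x)=-\nabla_x c(x,T_u(x))$ by the definition of $\cexp$, so setting $F(x,y):=-\nabla_x c(x,y)-y$ we have $\nabla u(x)-T_u(x)=F(x,T_u(x))$ with $F(0,0)=0$ and $\nabla F(0,0)=\bigl(-\nabla_{xx}c(0,0),\,-\nabla_{xy}c(0,0)-\II\bigr)=0$ thanks to the normalizations $\nabla_xc(0,0)=0$, $\nabla_{xx}c(0,0)=0$, $\nabla_{xy}c(0,0)=-\II$; a first‑order Taylor expansion of $F$ with $\alpha$‑Hölder remainder gives $|F(x,y)|\lesssim([\nabla_{xx}c]_{\alpha,4R}+[\nabla_{xy}c]_{\alpha,4R})|(x,y)|^{1+\alpha}$, and since $x\in B_{4R}$ forces $(x,T_u(x))\in\cross{4R}$ while the qualitative $L^\infty/L^2$ bound (Lemma~\ref{lem:displacement-qualitative}, built into the choice of $R_0$) forces $|(x,T_u(x))|\lesssim R$, this term is $\lesssim([\nabla_{xx}c]_{\alpha,4R}+[\nabla_{xy}c]_{\alpha,4R})R^{1+\alpha}$. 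For the first term, the point is that $u$ is semi‑convex with a \emph{quantitatively small} constant: because $u=\sup_y(\lambda(y)-c(\cdot,y))$, $\nabla_{xx}c(0,0)=0$, and the same qualitative bound confines the transport from a neighborhood of the origin to a neighborhood of the origin, $u+C_0|\cdot|^2$ is convex there with $C_0\lesssim R^\alpha[\nabla_{xx}c]_{\alpha,4R}$ (modulo fixing the precise radii). The elementary gradient estimate for convex functions — for convex $v$, $|\nabla v(x)|\lesssim r^{-1}\,\mathrm{osc}_{B_{2r}}v$ on $B_r$ — applied to $u+C_0|\cdot|^2$, together with $\nabla(u-\tfrac12|\cdot|^2)=\nabla u-\id$, then yields $|x-\nabla u(x)|\lesssim R^{-1}\|w\|_{\mathcal{C}^0(B_{8R})}+C_0R$ on $B_{4R}$, which is the desired first term.

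The main obstacle is exactly this first‑term estimate. A crude modulus‑of‑continuity bound only gives $|x-\nabla u(x)|\lesssim\|w\|_{\mathcal{C}^0(B_{8R})}^{1/2}$, hence $R^{-(1+\alpha)}\|w\|_{\mathcal{C}^0(B_{8R})}^{1/2}$ in \eqref{eq:holder-gradient-main-corollary}, which under the smallness hypothesis is weaker than the claimed $R^{-(2+\alpha)}\|w\|_{\mathcal{C}^0(B_{8R})}$; one genuinely needs the linear‑in‑$\|w\|_{\mathcal{C}^0}$ bound, which forces one to go through semi‑convexity rather than mere continuity, and moreover to verify that the relevant semi‑convexity constant is of size $R^\alpha[\nabla_{xx}c]_{\alpha,4R}$ and not of size $\|\nabla_{xx}c\|_{\mathcal{C}^0(X\times Y)}$ — this is where the qualitative localization of the transport below the cost‑dependent scale $R_0$ enters, and is also why $R_0$ and the smallness threshold depend on $c$ only through the qualitative information \ref{item:cost-cont}--\ref{item:cost-non-deg}. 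The rest — bookkeeping of the radii $R$, $4R$, $8R$ (and slightly enlarged neighborhoods on which the convexity estimate and the Hölder semi‑norms must be read) and the passage to the continuous representative of $T_u$ — is routine.
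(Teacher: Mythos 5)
Your route coincides with the paper's: both reduce the corollary to the pointwise bound $\|x-T_u\|_{L^\infty(B_{4R})}\lesssim R^{-1}\|u-\tfrac12|\cdot|^2\|_{\mathcal{C}^0(B_{8R})}+R^{1+\alpha}([\nabla_{xx}c]_{\alpha,4R}+[\nabla_{xy}c]_{\alpha,4R})$, prove it by splitting $x-T_u=(x-\nabla u)+(\nabla u-T_u)$, control the second difference exactly as you do (Taylor expansion of $-\nabla_xc(x,y)-y$ at the origin plus the qualitative localization $|T_u(x)|\le\Lambda_0R$ from Lemma \ref{lem:qualitative-2}), and then feed the resulting energy smallness into Theorem \ref{thm:main}. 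Your identification of $T_u$ with the map $T$ produced by the theorem, and your remark that the last term of \eqref{eq:holder-gradient-main-corollary} should read $[\nabla_{xx}c]_{\alpha,4R}$, are both correct.

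The gap sits exactly where you placed the ``main obstacle'', and your proposed repair does not close it. Making the semi-convexity constant of $u$ small --- $u+\tfrac{C_0}{2}|\cdot|^2$ convex with $C_0\lesssim R^{\alpha}[\nabla_{xx}c]_{\alpha}$ --- does not make the semi-convexity constant of $w:=u-\tfrac12|\cdot|^2$ small: the convex function is $w+\tfrac{1+C_0}{2}|\cdot|^2$, so $w$ is only $(1+C_0)$-semi-convex, the unit coming from the subtracted quadratic and being unremovable. Concretely, the convex gradient estimate applied to $v:=u+\tfrac{C_0}{2}|\cdot|^2$ controls $|\nabla v|$ on $B_{4R}$ by $R^{-1}\mathrm{osc}_{B_{8R}}v$, and $\mathrm{osc}_{B_{8R}}v\gtrsim R^2$ because of the quadratic part of $v$; the output is therefore $|x-\nabla u|\lesssim R^{-1}\|w\|_{\mathcal{C}^0}+R$, with an additive $O(R)$ rather than the $O(C_0R)$ you claim. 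Equivalently, the one-sided inequality $\nabla w(y)\cdot(x-y)\le 2\|w\|_{\mathcal{C}^0(B_{8R})}+\tfrac{1+C_0}{2}|x-y|^2$, optimized over $|x-y|\le 4R$, yields only $\min\{\|w\|_{\mathcal{C}^0}^{1/2},\,R^{-1}\|w\|_{\mathcal{C}^0}+R\}$, and under \eqref{eq:smallness-main-corollary}, where $\|w\|_{\mathcal{C}^0(B_{8R})}\ll R^2$, both options are strictly weaker than $R^{-1}\|w\|_{\mathcal{C}^0}$. What survives is the qualitative part: since $\|w\|_{\mathcal{C}^0}^{1/2}\ll R$, the smallness hypothesis of Theorem \ref{thm:main} is verified and $T_u\in\mathcal{C}^{1,\alpha}(B_R)$ follows, but the quantitative estimate comes out with $R^{-(1+\alpha)}\|w\|_{\mathcal{C}^0}^{1/2}$ in place of $R^{-(2+\alpha)}\|w\|_{\mathcal{C}^0}$ --- exactly the deficiency you anticipated. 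In fairness, the paper's own proof is no more conclusive at this point: it asserts the linear bound \eqref{eq:cor-df-Linfty-bound} by citing Lemma \ref{lem:semi-convex}, but that lemma, in the regime $\|w\|_{\mathcal{C}^0(B_{8R})}\ll R^2$ forced by the hypothesis, likewise delivers only the square-root bound. A genuine proof of the linear dependence would require two-sided (semi-concave as well as semi-convex) control of $u-\tfrac12|\cdot|^2$, which $c$-convexity alone does not provide.
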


The partial regularity statement is then as follows:

\begin{corollary}\label{cor:partial}
Let $\rho_0,\rho_1:\RR^d \to \RR$ be two probability densities with the properties that $X= \supp\rho_0$ and $Y= \supp\rho_1$ are bounded with\footnote{$|\cdot|$ denotes the Lebesgue measure on $\RR^d$.} $|\partial X| =|\partial Y|= 0$, $\rho_0$, $\rho_1$ are positive on their supports, and $\rho_0\in \mathcal{C}^{0,\alpha}(X)$, $\rho_1\in \mathcal{C}^{0,\alpha}(Y)$.
Assume that $c \in \mathcal{C}^{2, \alpha}(X \times Y)$ and that \ref{item:cost-inj-y}--\ref{item:cost-non-deg} hold.
	Then there exist open sets $X' \subseteq X$ and $Y' \subseteq Y$ with $|X \setminus X'| = |Y \setminus Y'| = 0$ such that the $c$-optimal transport map $T$ between $\rho_0$ and $\rho_1$ is a $\mathcal{C}^{1,\alpha}$- diffeomorphism between $X'$ and $Y'$.
\end{corollary}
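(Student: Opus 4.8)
The plan is to deduce Corollary \ref{cor:partial} from Theorem \ref{thm:main} (or equivalently from Corollary \ref{cor:DF}) by showing that the quantitative smallness hypothesis \eqref{eq:smallness-main-theorem} is satisfied at almost every point of $X$, after an affine change of variables that normalizes the cost and densities at that point. First I would recall the classical existence and structure theory for $c$-optimal maps under \ref{item:cost-cont}--\ref{item:cost-inj-y}: the $c$-optimal coupling is of the form $\pi = (\id\times T_u)_\#\rho_0$ for a $c$-convex potential $u$, and by Alexandrov's theorem $u$ is twice differentiable at a.e.\ $x_0\in X$. I would then fix such a point $x_0$ that is moreover a Lebesgue point of $\rho_0$ and a point of density one of $X$ (and such that $y_0:=T_u(x_0)$ is analogously good for $\rho_1$ and $Y$, which holds for a.e.\ $x_0$ since $T_u$ pushes $\rho_0\,\dd x$ to $\rho_1\,\dd y$ and is, at points of twice-differentiability, a local quasi-isometry because $\det\nabla_{xy}c\neq 0$).

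Next I would perform the normalization. Replacing $c(x,y)$ by $\tilde c(x,y):= a\,c(x_0+bx, y_0+b'y) + (\text{affine terms in }x\text{ and }y)$ with suitable constants and a linear change of the $y$-variable, one arranges $\tilde\rho_0(0)=\tilde\rho_1(0)=1$ and $\nabla_{xy}\tilde c(0,0)=-\II$, which are exactly the normalizations required in Theorem \ref{thm:main}; such transformations preserve $c$-optimality and the graph structure of the coupling, and turn the Hölder semi-norms of $\rho_0,\rho_1,\nabla_{xy}c$ into those of the normalized objects up to fixed factors. The crucial point is that, by Alexandrov differentiability of $u$ at $x_0$, the optimal map satisfies $T_u(x) = T_u(x_0) + \nabla T_u(x_0)(x-x_0) + o(|x-x_0|)$; after the linear normalization one may further assume $\nabla T_u(0)=\II$ (the second-order expansion of $u$ together with $\det\nabla_{xy}c(0,0)\neq0$ forces $\nabla T_u(x_0)$ to be of the right form, namely $\nabla T_u(x_0)=-(\nabla_{xy}c)^{-1}(\nabla_{xx}c+\nabla^2 u)$, and we absorb it into the linear change of variables). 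Hence $\frac{1}{R^{d+2}}\int_{B_{4R}(0)\times\RR^d}|x-y|^2\,\dd\tilde\pi = \frac{1}{R^{d+2}}\int_{B_{4R}(0)}|x-T_u(x)|^2\,\tilde\rho_0\,\dd x = o(R^2)\to 0$ as $R\to0$, while $R^{2\alpha}$ times the Hölder semi-norms also tends to $0$. So for $R$ small enough (depending on $x_0$) the smallness assumption \eqref{eq:smallness-main-theorem} holds, and Theorem \ref{thm:main} yields a neighborhood of $x_0$ on which $(\,\cdot\times\RR^d)\cap\supp\pi$ is the graph of a $\mathcal{C}^{1,\alpha}$ map, i.e.\ $T=T_u$ is $\mathcal{C}^{1,\alpha}$ near $x_0$.

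From this I would build the sets $X'$ and $Y'$: let $X'$ be the (open) set of points near which $T_u$ is a $\mathcal{C}^{1,\alpha}$ map, which by the above contains a.e.\ point of $X$, hence $|X\setminus X'|=0$. Applying the symmetric statement of Remark \ref{rem:T-inverse} (using the one-sided energy and the symmetry of \ref{item:cost-inj-y}--\ref{item:cost-non-deg}) simultaneously gives that the inverse map $T^*$ is $\mathcal{C}^{1,\alpha}$ near $y_0=T_u(x_0)$ for a.e.\ $x_0$; since $T_u$ is a bijection between full-measure subsets of $X$ and $Y$ (as a $c$-optimal map between positive densities on domains with negligible boundary), the image $Y'$ of $X'$ under $T_u$ is open, has full measure in $Y$, and on $X'$ the map $T_u$ has invertible differential (because $\det\nabla_{xy}c\neq0$ and the Monge--Ampère relation $\det\nabla T_u = \rho_0/(\rho_1\circ T_u)>0$ forbid degeneracy), so $T_u:X'\to Y'$ is a $\mathcal{C}^{1,\alpha}$-diffeomorphism.

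The main obstacle, and the step deserving genuine care rather than a one-line dismissal, is the normalization/change-of-variables bookkeeping combined with controlling the ``second factor'' in \eqref{eq:smallness-main-theorem}: one must verify that after the affine rescaling centered at an Alexandrov point the normalized cost still satisfies \ref{item:cost-cont}--\ref{item:cost-non-deg} (this is automatic) \emph{and} that the quantity $o(R^2)$ coming from twice-differentiability genuinely controls $\frac{1}{R^{d+2}}\int_{B_{4R}}|x-T_u(x)|^2\rho_0$; here one uses that $\rho_0$ is bounded near $x_0$ and the first-order Taylor expansion of $T_u$ at the Alexandrov point, being careful that the $o(\cdot)$ is only pointwise and must be integrated — this is standard but is the place where the measure-theoretic choice of a good point $x_0$ (Lebesgue point of $\rho_0$, point of approximate differentiability of $T_u$, density point of $X$) is actually used. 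A secondary technical point is ensuring that $y_0=T_u(x_0)$ lies in the interior of $Y$ for a.e.\ $x_0$ and that $x_0$ is interior to $X$, which follows from $|\partial X|=|\partial Y|=0$ together with $T_u$ preserving negligible sets.
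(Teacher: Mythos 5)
Your proposal is correct and follows essentially the same route as the paper: reduce to Theorem \ref{thm:main} (plus Remark \ref{rem:T-inverse} for the inverse) at a.e.\ point via Alexandrov twice-differentiability of the $c$-convex potential, an affine normalization absorbing $\nabla^2 u(x_0)$ and $\nabla_{xy}c(x_0,y_0)$ so that the rescaled map is close to the identity in the $L^2$ sense, and then define $X'$, $Y'$ from the sets of good points where the a.e.\ inverse relation holds. The only differences from the paper's proof are bookkeeping choices (the paper fixes $X'=X_1\cap T^{-1}(Y_1)$ up front and verifies openness afterwards, and makes explicit the positive-definiteness of $A=\nabla^2\overline{u}(0)$ needed to take the square root in the change of variables), none of which alter the argument.
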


As recently pointed out in \cite{Gol20} for the quadratic case, the variational approach is flexible enough to also obtain $\epsilon$--regularity for optimal transport maps between merely continuous densities. The modifications presented in \cite{Gol20} can be combined with our results to prove an $\epsilon$--regularity result for the class of general cost functions considered above. This will be the context of a separate note \cite{PR20}.

\medskip
Finally, Theorem \ref{thm:main} can also be applied to optimal transportation on a Riemannian manifold $\mathcal{M}$ with cost given by the square of the Riemannian distance function $d$: if $\rho_0, \rho_1\in \mathcal{C}^{0,\alpha}(\mathcal{M})$ are two probability densities, locally bounded away from zero and infinity on $\mathcal{M}$, then the optimal transport map $T: \mathcal{M} \to \mathcal{M}$ sending $\rho_0$ to $\rho_1$ for the cost $c = \frac{d^2}{2}$ is a $\mathcal{C}^{1,\alpha}$-diffeomorphism outside two closed sets $\Sigma_X, \Sigma_Y \subset\mathcal{M}$ of measure zero. See \cite[Theorem 1.4]{DF14} for details.

\subsection{Strategy of the proofs}
In this section we sketch the proof of the $\epsilon$-regularity Theorem \ref{thm:main}. As in \cite{GO17,GHO19} one of the key steps is a harmonic approximation result, which can be obtained by an explicit construction and (approximate) orthogonality on an \emph{Eulerian} level. 

\subsubsection{$L^{\infty}$ bound on the displacement}
A crucial ingredient to the variational approach is a local $L^{\infty}/L^2$-estimate
on the level of the displacement. More precisely, given a scale $R$,
it gives a pointwise estimate on the non-dimensionalized displacement $\frac{y-x}{R}$
in terms of the (non-dimensionalized) Euclidean transport energy 
\begin{align}\label{eq:def-energy}
	\EE_R(\pi) := \frac{1}{R^{d+2}} \int_{\cross{R}} \frac{1}{2}|x-y|^2\,\mathrm{d}\pi,
\end{align}
which amounts to a
squared $L^2$-average of the displacement.
While this looks like an inner regularity estimate in the spirit
of the main result, Theorem \ref{thm:main}, it is not.
In fact, it is rather an interpolation estimate with the $c$-monotonicity of $\supp\pi$
providing an invisible second control next to the energy.
This becomes most apparent in the simple context of \cite[Lemma 3.1]{GO17} where
monotonicity morally amounts to a (one-sided) 
$L^{\infty}$-control of the gradient of the displacement.
The interpolation character of the estimate still shines through in the fractional exponent 
$\frac{2}{d+2}\in(0,1)$ on the $L^2$-norm. 

Following \cite{GHO19}, we here allow for general measures $\mu$ and $\nu$; the natural
local control of these data on the energy scale is given by
\begin{align}\label{eq:D-def}
	\DD_R(\mu,\nu) &:= \frac{1}{R^{d+2}} W^2_{B_{R}}(\mu, \kappa_{\mu}) + \frac{(\kappa_{\mu}-1)^2}{\kappa_{\mu}} +  \frac{1}{R^{d+2}} W^2_{B_{R}}(\nu, \kappa_{\nu}) + \frac{(\kappa_{\nu}-1)^2}{\kappa_{\nu}} ,
\end{align}
which measures locally at scale $R>0$ the distance from given measures $\mu$ and $\nu$ to the Lebesgue measure, where 
\begin{align}\label{eq:kappa-def}
    \kappa_{\mu} = \frac{\mu(B_R)}{|B_R|} \quad \text{and} \quad W_{B_R}^2(\mu,\kappa_\mu) = W^2(\mu\lfloor_{B_R},\kappa_{\mu}\,\dd x \lfloor_{B_R})
\end{align}
is the quadratic Wasserstein distance between $\mu\lfloor_{B_R}$ and $\kappa_{\mu}\,\dd x \lfloor_{B_R}$.\footnote{We use the convention that $W^2(\mu, \nu) = \inf_{\pi\in\Pi(\mu,\nu)} \int\frac{1}{2}|x-y|^2\,\dd\pi$ in this paper.} Notice that if $\mu = \rho_0\,\dd x$ and $\nu = \rho_1\,\dd y$ with Hölder continuous probability densities such that $\frac{1}{2} \leq \rho_j \leq 2$ on $B_R$, $j=0, 1$, then\footnote{Whenever there is no room for confusion, we will drop the dependence of $\DD$ on the measures $\mu$ and $\nu$.}
\begin{align}\label{eq:D-bound-rho}
	\DD_R \lesssim R^{2\alpha} \left([\rho_0]_{\alpha,R}^2 + [\rho_1]_{\alpha,R}^2\right),
\end{align}
see Lemma \ref{lem:D-Holder_rho} in the appendix.

The new aspect compared to \cite[Lemma 2.9]{GHO19} is the general cost function $c$.
Not surprisingly, it turns out that the result still holds provided $c$ is close to Euclidean and that the closeness is measured in the non-dimensional $\mathcal{C}^2$-norm.
We stress the fact that this closeness is not required on
the entire ``cross'' $\cross{5R}$, cf.~\eqref{eq:cross-def}, 
but only to the ``finite cross'' 
\begin{align}\label{eq:def-finite-cross}
    \BB_{5R, \Lambda R} := \left(B_{5R} \times B_{\Lambda R}\right) \cup \left( B_{\Lambda R} \times B_{5R}\right).
\end{align}
This is crucial, since only this smallness is guaranteed by the finiteness of the $\mathcal{C}^{2,\alpha}$-norm, cf.~\eqref{eq:c-mismatch} below. This sharpening is a consequence of the qualitative hypotheses \ref{item:cost-cont}--\ref{item:cost-non-deg}.

\begin{proposition}\label{prop:Linfty-main}
	Assume that the cost function $c$ satisfies \ref{item:cost-cont}--\ref{item:cost-non-deg}, and 
	let $\pi\in\Pi(\mu, \nu)$ be a coupling with $c$-monotone support. 
	
	For all $\Lambda < \infty$ and for all $R>0$ such that 
	\begin{align}
	    \cross{5R} \cap \supp \pi &\subseteq \BB_{5R, \Lambda R}\,\label{eq:Linfty-inclusion-assumption-main}
	\end{align} and for which
	\begin{align}
	    \EE_{6R} + \DD_{6R} &\ll 1, \label{eq:Linfty-smallness-assumption-main}\\
	    \text{and} \quad \|\nabla_{xy}c + \II \,\|_{\mathcal{C}^0(\BB_{5R, \Lambda R})} &\ll 1,\label{eq:Linfty-closeness-assumption-main} 
	\end{align}
	we have that 
	\begin{align}\label{eq:Linfty-main}
	    (x,y)\in \cross{4R} \cap \supp\pi \quad \Rightarrow \quad |x-y| \lesssim R \left( \EE_{6R} + \DD_{6R} \right)^{\frac{1}{d+2}}.
	\end{align}
\end{proposition}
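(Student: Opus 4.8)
The plan is to establish the analogue, for general $c$, of the Euclidean $L^\infty/L^2$-bound \cite[Lemma 3.1]{GO17} (see also \cite[Lemma 2.9]{GHO19}), by treating $c$ as a perturbation of $\tfrac12|x-y|^2$; the whole argument then reduces to one elementary geometric estimate, run at a single carefully chosen scale, which simultaneously yields the quantitative bound and excludes the regime of large displacement. \textbf{Step 1 (perturbed monotonicity).} I would first use the $c$-monotonicity of $\supp\pi$, only through its two-point form $c(x_0,y_0)+c(x_1,y_1)\le c(x_0,y_1)+c(x_1,y_0)$, together with the mixed-second-difference identity
\[
c(x_0,y_1)-c(x_0,y_0)-c(x_1,y_1)+c(x_1,y_0)=\int_0^1\!\!\int_0^1 (x_0-x_1)\cdot\big(\nabla_{xy}c(\xi_t,\eta_s)\,(y_1-y_0)\big)\,\dd s\,\dd t,
\]
with $\xi_t=x_1+t(x_0-x_1)$ and $\eta_s=y_0+s(y_1-y_0)$. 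Writing $\nabla_{xy}c=-\II+(\nabla_{xy}c+\II)$ turns $c$-monotonicity into the \emph{approximate} monotonicity
\[
(x_1-x_0)\cdot(y_1-y_0)\ \ge\ -\,\delta\,|x_1-x_0|\,|y_1-y_0|,\qquad \delta:=\|\nabla_{xy}c+\II\|_{\mathcal{C}^0(\BB_{5R,\Lambda R})}\ll 1,
\]
valid for all pairs $(x_0,y_0),(x_1,y_1)\in\supp\pi$ whose connecting segments $[x_0,x_1]$ and $[y_0,y_1]$ remain inside $B_{5R}$ resp.\ $B_{\Lambda R}$ — the only configuration I will need, and one guaranteed by the inclusion assumption \eqref{eq:Linfty-inclusion-assumption-main} together with \eqref{eq:Linfty-closeness-assumption-main}. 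This is the sole place the cost enters.

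\textbf{Step 2 (the geometric obstruction).} Fix $(x_0,y_0)\in\cross{4R}\cap\supp\pi$; by the symmetry of all the hypotheses under exchanging the two factors I may assume $x_0\in B_{4R}$, and I may assume $\ell:=|x_0-y_0|>0$. With $e:=(y_0-x_0)/\ell$ I set the crucial scale $\tau:=\min\{\ell/4,\,R/2\}$ and the ball $G:=B_{\tau/12}(x_0+\tau e)$, noting $G\subseteq B_{5R}$, so that by \eqref{eq:Linfty-inclusion-assumption-main} any $(x_1,y_1)\in\supp\pi$ with $x_1\in G$ has $y_1\in B_{\Lambda R}$ and Step 1 applies to the pair $(x_0,y_0),(x_1,y_1)$. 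Writing $x_1-x_0=\tau e+w$ with $|w|\le\tau/12$ and $y_1-y_0=(y_1-x_1)+w+(\tau-\ell)e$, the hypothesis $|x_1-y_1|<\tau/8$ would make $(x_1-x_0)\cdot(y_1-y_0)$ dominated by its term $\tau(\tau-\ell)$: since $\tau\le\ell/4$ gives $\ell-\tau\ge 3\tau$, every remaining term is absorbed and one gets $(x_1-x_0)\cdot(y_1-y_0)\le-\tfrac56\,\tau(\ell-\tau)<0$, contradicting the approximate monotonicity of Step 1 for $\delta$ small enough. Hence every $(x_1,y_1)\in\supp\pi$ with $x_1\in G$ satisfies $|x_1-y_1|\ge\tau/8$.

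\textbf{Step 3 (energy lower bound and conclusion).} Since $G\times\RR^d\subseteq\cross{6R}$, Step 2 gives $\mu(G)\,(\tau/8)^2\le\int_{\cross{6R}}|x-y|^2\,\dd\pi\lesssim R^{d+2}\,\EE_{6R}$. On the other hand, the smallness of $\DD_{6R}$ — through its control of $W_{B_{6R}}(\mu,\kappa_\mu)$ and of $\kappa_\mu-1$ — yields the standard ball-mass bound $\mu(G)\gtrsim|G|\sim\tau^d$ whenever $\tau\gtrsim R\,\DD_{6R}^{1/(d+2)}$; if instead $\tau\lesssim R\,\DD_{6R}^{1/(d+2)}$, the asserted estimate already holds. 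Combining the two displays in the former case gives $\tau^{d+2}\lesssim R^{d+2}\,\EE_{6R}$, so in all cases $\tau\lesssim R\,(\EE_{6R}+\DD_{6R})^{1/(d+2)}$. Finally, since $\EE_{6R}+\DD_{6R}\ll1$ the right-hand side is below $R/2$, forcing $\tau=\ell/4$, and therefore $|x_0-y_0|=\ell\lesssim R\,(\EE_{6R}+\DD_{6R})^{1/(d+2)}$, as claimed; the case $y_0\in B_{4R}$ follows by the same argument after exchanging the factors.

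\textbf{Main obstacle.} The computation in Step 2 is purely elementary and the ball-mass bound in Step 3 is the standard way $\DD$ is exploited; the genuine care lies in Step 1, namely in keeping all comparison points $(\xi_t,\eta_s)$ inside the finite cross $\BB_{5R,\Lambda R}$, the only region where $\nabla_{xy}c$ is known to be close to $-\II$ — this is exactly what dictates the hierarchy of radii $4R<5R<6R$ in the statement and the role of the inclusion assumption \eqref{eq:Linfty-inclusion-assumption-main}. The secondary point is the bookkeeping of the scale: the choice $\tau=\min\{\ell/4,R/2\}$ is what lets one treat the regime $\ell\gtrsim R$ (ruled out a posteriori by smallness) and the regime $\ell\ll R$ (the genuine interpolation estimate) in a single stroke, so that the $c$-monotonicity plays the role of an invisible second control alongside the energy.
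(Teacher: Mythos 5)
Your proof is correct. Step 1 is identical to the paper's use of $c$-monotonicity: the mixed-second-difference identity plus $\|\nabla_{xy}c+\II\|_{\mathcal{C}^0(\BB_{5R,\Lambda R})}\ll 1$ yields exactly the approximate monotonicity \eqref{eq:Linfty-step1-cmon-2}, and you correctly identify that the inclusion \eqref{eq:Linfty-inclusion-assumption-main} is what keeps the interpolation points $(\xi_t,\eta_s)$ inside the finite cross. Where you diverge is in how the quantitative bound is extracted. The paper rearranges the monotonicity inequality into $(x-y)\cdot(x-x')\le 3|x-x'|^2+|x'-y'|^2+\delta|x-x'|\,|x-y|$ and integrates it against a normalized test measure $\eta(x')\,\pi(\dd x'\dd y')$ supported near $x-re$ for an \emph{arbitrary} direction $e$; the arbitrariness of $e$ recovers $|x-y|$ on the left, the data terms are controlled via \eqref{eq:GHO19-cutoff-bound}, and choosing $r$ a large multiple of $(\EE_{6R}+\DD_{6R})^{1/(d+2)}$ absorbs the remainders. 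You instead run a contradiction: placing a ball $G$ of radius $\sim\tau$ at $x_0+\tau e$ with $e$ the \emph{specific} displacement direction, you show every trajectory starting in $G$ must have length $\gtrsim\tau$, then play the resulting lower bound $\mu(G)\,\tau^2\lesssim R^{d+2}\EE_{6R}$ against the mass lower bound $\mu(G)\gtrsim\tau^d$ (which uses \eqref{eq:GHO19-cutoff-bound} in the same way the paper does, and requires the dichotomy $\tau\gtrsim R\,\DD_{6R}^{1/(d+2)}$ or not). Both routes are perturbations of the Euclidean $L^\infty/L^2$ argument and consume the same inputs; the paper's averaged version produces the estimate directly and in a form that makes the one-sided variant (Remark \ref{rem:one-sided-energy}) transparent, while yours is more elementary and makes the geometric mechanism — a long trajectory forbids short trajectories just downstream of its starting point — completely explicit. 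Your choice $\tau=\min\{\ell/4,R/2\}$ and the final appeal to \eqref{eq:Linfty-smallness-assumption-main} to rule out $\tau=R/2$ close the argument correctly, and the smallness required of $\delta$ in your Step 2 is a universal constant, consistent with \eqref{eq:Linfty-closeness-assumption-main}.
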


\begin{remark}\label{rem:one-sided-energy}
    A close look at the proof of Proposition \ref{prop:Linfty-main} actually tells us that if $\EE_{6R}$ is replaced in \eqref{eq:Linfty-smallness-assumption-main} by the one-sided energy, that is, if we assume
    \begin{align*}
        \frac{1}{R^{d+2}} \int_{B_{6R} \times \RR^d} |y-x|^2 \, \dd \pi \ll 1,
    \end{align*}
    and if \eqref{eq:Linfty-inclusion-assumption-main} is replaced by the one-sided inclusion
    \begin{align*}
        (B_{5R} \times \RR^d) \cap \supp \pi \subseteq \BB_{5R, \Lambda R}, 
    \end{align*} then we still get a one-sided $L^{\infty}$ bound in the form of 
    \begin{align*}
        (x,y) \in (B_{4R} \times \RR^d) \cap \supp \pi \quad \Rightarrow \quad |x-y| \lesssim R\left(\frac{1}{R^{d+2}} \int_{B_{6R} \times \RR^d} |y-x|^2 \, \dd \pi + \DD_{6R} \right)^{\frac{1}{d+2}}.
    \end{align*}
    This observation will be useful in the proof of Theorem \ref{thm:main} to relate the one-sided energy in \eqref{eq:smallness-main-theorem} to the full energy in Proposition \ref{prop:one-step-main}. 
\end{remark}

Note that due to assumption \eqref{eq:Linfty-inclusion-assumption-main} Proposition \ref{prop:Linfty-main} might appear rather useless: indeed, one basically has to assume a (qualitative) $L^{\infty}$ bound in the sense that there is a constant $\Lambda<\infty$ such that if $x\in B_{5R}$, then $y\in B_{\Lambda R}$, in order to obtain the $L^{\infty}$ bound \eqref{eq:Linfty-main}. 
However, as we show in Lemma \ref{lem:displacement-qualitative}, due to the global assumptions \ref{item:cost-cont}--\ref{item:cost-non-deg} alone, there exists a scale $R_0>0$ and a constant $\Lambda_0<\infty$ such that \eqref{eq:Linfty-inclusion-assumption-main} holds. 
Moreover, in the Campanato iteration used to prove Theorem \ref{thm:main}, which is based on suitable affine changes of coordinates, the qualitative $L^{\infty}$ bound \eqref{eq:Linfty-inclusion-assumption-main} is reproduced in each step of the iteration (with a constant $\Lambda$ that after the first step can be fixed throughout the iteration, e.g. $\Lambda = 27$ works).

\begin{remark}\label{rem:qualitative-quantitative}
There is an apparent mismatch with respect to the domains involved in the closeness assumptions on $c$ in Theorem \ref{thm:main} and Proposition \ref{prop:Linfty-main}: we assume\footnote{The assumption of Theorem \ref{thm:main} is scaled to $6R$ here to match the scale on which smallness of $\EE$ and $\DD$ is assumed in both statements.} $R^{2\alpha}[\nabla_{xy}c]_{\alpha, 6R}^2 \ll 1$ in Theorem \ref{thm:main} and $\|\nabla_{xy}c+\II \|_{\mathcal{C}^0(\BB_{5R,\Lambda R})} \ll 1$ in Proposition \ref{prop:Linfty-main}. 
We are able to relate the two assumptions due to the qualitative $L^{\infty}$ bound \eqref{eq:Linfty-inclusion-assumption-main}:
If $\nabla_{xy}c(0,0)=-\II$, we have for all $\Lambda<\infty$, using the inclusion $\BB_{5R, \Lambda R} \subseteq \cross{6R}$,
	\begin{align}\label{eq:c-mismatch}
		\|\nabla_{xy}c + \II \|_{\mathcal{C}^0(\BB_{5R, \Lambda R})}
		&= \sup_{(x,y) \in \BB_{5R, \Lambda R}} |\nabla_{xy} c(x,y) - \nabla_{xy} c(0,0)|
		\lesssim_{\Lambda}   R^{\alpha} [\nabla_{xy}c]_{\alpha,6R}.
	\end{align}
	Thus, if $R^{2\alpha} [\nabla_{xy}c]_{\alpha,6R}^2$ is chosen small enough, then the assumption \eqref{eq:Linfty-closeness-assumption-main} in Proposition \ref{prop:Linfty-main} is fulfilled.
\end{remark}

\subsubsection{Almost-minimality with respect to Euclidean cost}
One of the main new contributions of this work is showing that the concept of almost-minimality, which is well-established in the theory of minimal surfaces, can lead to important insights also in optimal transportation. The key observation is that if $c$ is quantitatively close to Euclidean cost, then a minimizer of \eqref{eq:OT-c} is almost-minimizing for the quadratic cost. 

One difficulty in applying the concept of almost-minimality is that we are dealing with local quantities, for which local minimality (being minimizing with respect to its own boundary condition) would be the right framework to adopt.

\begin{lemma}\label{lem:optimal-restriction}
	Let $\pi \in \Pi(\mu,\nu)$ be a $c$-optimal coupling between the measures $\mu$ and $\nu$. Then for any Borel set $\Omega \subseteq \RR^d \times \RR^d$ the coupling $\pi_{\Omega}:= \pi \lfloor_{\Omega}$ is $c$-optimal given its own marginals, i.e.\ $c$-optimal between the measures $\mu_{\Omega}$ and $\nu_{\Omega}$ defined via
	\begin{align}\label{eq:mu-nu-Omega}
		\mu_{\Omega}(A) = \pi( (A\times \RR^d)\cap\Omega),
		\quad 
		\nu_{\Omega}(A) = \pi( (\RR^d\times A)\cap\Omega),
	\end{align}
	for any Borel measurable $A\subseteq \RR^d$.
\end{lemma}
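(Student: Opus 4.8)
The plan is to argue by contradiction, exploiting the glueing property of optimal couplings. Suppose $\pi_\Omega = \pi\lfloor_\Omega$ were not $c$-optimal between its own marginals $\mu_\Omega$ and $\nu_\Omega$. Then there would exist a competitor $\tilde\pi \in \Pi(\mu_\Omega, \nu_\Omega)$ with
\[
  \int c\,\dd\tilde\pi < \int c\,\dd\pi_\Omega.
\]
The idea is to build from $\tilde\pi$ a global competitor to $\pi$ that does strictly better, contradicting $c$-optimality of $\pi$. The natural candidate is $\pi' := \tilde\pi + \pi\lfloor_{\Omega^c}$, i.e.\ we leave the part of the transport plan outside $\Omega$ untouched and replace the part inside $\Omega$ by the cheaper $\tilde\pi$. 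Since $\tilde\pi$ and $\pi_\Omega$ have the same marginals $\mu_\Omega, \nu_\Omega$ by construction, and $\pi = \pi_\Omega + \pi\lfloor_{\Omega^c}$, the plan $\pi'$ has the same marginals as $\pi$, namely $\mu$ and $\nu$; hence $\pi' \in \Pi(\mu,\nu)$. On the other hand,
\[
  \int c\,\dd\pi' = \int c\,\dd\tilde\pi + \int c\,\dd(\pi\lfloor_{\Omega^c})
  < \int c\,\dd\pi_\Omega + \int c\,\dd(\pi\lfloor_{\Omega^c}) = \int c\,\dd\pi,
\]
contradicting the $c$-optimality of $\pi$. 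Therefore $\pi_\Omega$ is $c$-optimal between $\mu_\Omega$ and $\nu_\Omega$.

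The main point requiring care — and the only genuine obstacle — is integrability/finiteness, so that the strict inequality above is not of the indeterminate form $\infty - \infty$ or $-\infty + \infty$. Since $c \in \mathcal{C}^2(X\times Y)$ with $X,Y$ compact, $c$ is bounded on $X\times Y \supseteq \supp\pi$, so $\int |c|\,\dd\pi < \infty$ and likewise $\int|c|\,\dd\pi_\Omega$, $\int |c|\,\dd(\pi\lfloor_{\Omega^c})$ are all finite; moreover $\tilde\pi$ is supported in $\overline{\supp\mu_\Omega}\times\overline{\supp\nu_\Omega} \subseteq X\times Y$ (being a coupling of measures supported there), so $\int|c|\,\dd\tilde\pi<\infty$ as well. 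Thus every integral appearing is a finite real number and the additivity $\int c\,\dd\pi = \int c\,\dd\pi_\Omega + \int c\,\dd(\pi\lfloor_{\Omega^c})$ together with the strict inequality is legitimate. One should also note that $\tilde\pi$ is guaranteed to exist — $\mu_\Omega$ and $\nu_\Omega$ have equal total mass $\pi(\Omega)$, so $\Pi(\mu_\Omega,\nu_\Omega)\neq\emptyset$ (after the trivial rescaling to probability measures, or directly since a product-type coupling exists) — and that the case $\pi(\Omega)=0$ is vacuous.

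If one prefers to avoid any boundedness hypothesis on $c$, the same argument goes through verbatim under the standing assumption that $\int c\,\dd\pi < \infty$ (which holds here since $\pi$ is $c$-optimal and hence, a fortiori, has finite $c$-cost), provided one additionally knows $c$ is, say, bounded below on $\supp\mu\times\supp\nu$ so that all the partial integrals are well-defined in $(-\infty,+\infty]$; this is automatic in our setting from \ref{item:cost-cont} and compactness. In either formulation, the proof is short and the only thing to watch is that we never subtract two infinite quantities.
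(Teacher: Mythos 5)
Your argument is correct and is essentially the paper's own proof: both construct the global competitor $\pi\lfloor_{\Omega^c}+\widetilde{\pi}$ and invoke $c$-optimality of $\pi$ together with additivity of the cost, the only (immaterial) difference being that you phrase it as a contradiction while the paper argues directly for every competitor $\widetilde{\pi}$. Your extra care about finiteness of the integrals is sound and harmless, though in this setting it follows immediately from the boundedness of $c$ on the compact set $X\times Y$.
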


This lemma allows us to restrict any $c$-optimal coupling $\pi$ to a ``good'' set, where particle trajectories are well-behaved in the sense that they satisfy an $L^{\infty}$ bound. In particular, we have the following corollary:

\begin{corollary}\label{cor:restriction-properties}
	Let $\pi \in \Pi(\mu,\nu)$ be a $c$-optimal coupling between the measures $\mu$ and $\nu$ with the property that there exists $M\leq 1$ such that for all $(x,y) \in \cross{R} \cap \supp\pi$
	\begin{align*}
		|x-y|\leq M R.
	\end{align*}
	Then the coupling $\pi_R := \pi\lfloor_{\cross{R}}$ is $c$-optimal between the measures $\mu_R$ and $\nu_R$ as defined in \eqref{eq:mu-nu-Omega} and we have that $\supp\mu_R, \supp\nu_R \subseteq B_{2R}$ (in particular $\supp\pi_R \subseteq B_{2R} \times B_{2R}$), $\mu_R = \mu$ and $\nu_R = \nu$ on $B_R$, and $\mu_R\leq\mu$, $\nu_R\leq \nu$.
\end{corollary}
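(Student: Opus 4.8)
The plan is to obtain the $c$-optimality of $\pi_R$ as a direct instance of Lemma~\ref{lem:optimal-restriction} with $\Omega=\cross R$, and then to read off the remaining assertions from elementary bookkeeping about the restricted measure $\pi\lfloor_{\cross R}$, invoking the $L^\infty$ hypothesis only for the support bound.

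First I observe that, comparing \eqref{eq:mu-nu-Omega} (taken with $\Omega=\cross R$) with the definition of the marginals of $\pi_R=\pi\lfloor_{\cross R}$, the measures $\mu_R$ and $\nu_R$ are precisely the first and second marginals of $\pi_R$; hence Lemma~\ref{lem:optimal-restriction} applies verbatim and yields that $\pi_R$ is $c$-optimal between $\mu_R$ and $\nu_R$. The domination $\mu_R\le\mu$ and $\nu_R\le\nu$ is immediate from the inclusion $(A\times\RR^d)\cap\cross R\subseteq A\times\RR^d$ and its mirror image. Moreover, for $A\subseteq B_R$ one has $A\times\RR^d\subseteq B_R\times\RR^d\subseteq\cross R$, so that $(A\times\RR^d)\cap\cross R=A\times\RR^d$ and therefore $\mu_R(A)=\mu(A)$; symmetrically $\nu_R=\nu$ on $B_R$.

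It remains to locate the supports. The measure $\pi_R$ is concentrated on $\cross R\cap\supp\pi$, which has full $\pi_R$-measure since $\pi_R$ is carried by $\cross R$ by construction and by $\supp\pi$ because $\pi_R\le\pi$. Now let $(x,y)\in\cross R\cap\supp\pi$; by definition of $\cross R$ we may assume $x\in B_R$ (the case $y\in B_R$ is symmetric), and then $|x-y|\le MR$ with $M\le 1$ gives $|y|\le|x|+|x-y|<R+MR\le 2R$. Thus $\cross R\cap\supp\pi\subseteq B_{2R}\times B_{2R}$; passing to closures, $\supp\pi_R\subseteq\overline{\cross R\cap\supp\pi}\subseteq B_{2R}\times B_{2R}$, and projecting onto the two coordinate factors gives $\supp\mu_R,\supp\nu_R\subseteq B_{2R}$.

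I do not anticipate any real obstacle — the statement essentially repackages Lemma~\ref{lem:optimal-restriction}. The only step that calls for care is the last one, where one works with the \emph{closed} support of $\pi_R$ rather than merely a set of full measure and then transfers this to the marginals through the (continuous) coordinate projections; here it is the strict triangle-inequality estimate $|y|<(1+M)R\le 2R$ (together with $M<1$, which holds in all applications of the corollary) that delivers the open-ball conclusion.
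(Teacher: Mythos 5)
Your proposal is correct and follows essentially the same route as the paper: $c$-optimality via Lemma~\ref{lem:optimal-restriction} with $\Omega=\cross{R}$, the identities $\mu_R\le\mu$ and $\mu_R=\mu$ on $B_R$ from the set-theoretic form of \eqref{eq:mu-nu-Omega}, and the support bound from the $L^\infty$ hypothesis combined with the triangle inequality (the paper phrases this measure-theoretically, showing $\mu_R(A)=\pi(A\times B_R)=0$ for $A\subseteq B_{2R}^c$, while you argue pointwise on $\cross{R}\cap\supp\pi$ and pass to closures, but the content is identical). The caveat you flag about open versus closed balls after taking closures is present to the same degree in the paper's own argument and is immaterial here.
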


One of the main observations now is that $c$-optimal couplings of the type considered in Corollary~\ref{cor:restriction-properties} are almost-minimizers of the Euclidean transport cost. The following assumptions \eqref{eq:supp-mu-nu-main} and \eqref{eq:mu-nu-close-main} should be read as properties satisfied by the marginal measures $\mu$ and $\nu$ of the restriction of a $c$-optimal coupling to a finite cross on which the $L^{\infty}$ bound \eqref{eq:Linfty-main} holds. 
Moreover, one of the marginals should be close to the Lebesgue measure in the sense that $\mu(B_R)\lesssim R^d$.

\begin{proposition}\label{prop:quasi-minimality-lagrangian-main}
Let $\mu$ and $\nu$ be two measures such that 
\begin{align}
	&\supp \mu \subseteq B_R, \, \supp \nu \subseteq B_R, \label{eq:supp-mu-nu-main} \\
	&\mu(B_R) \leq 2 |B_R|, 
	\label{eq:mu-nu-close-main}
\end{align}
for some $R>0$.
Let $\pi \in \Pi(\mu, \nu)$ be a $c$-optimal coupling between the measures $\mu$ and $\nu$. 
Then $\pi$ is almost-minimizing for the Euclidean cost, in the sense that for any $\widetilde{\pi}\in \Pi(\mu, \nu)$ we have that 
\begin{align}\label{eq:quasimin-main}
    \int \frac{1}{2} |x-y|^2\,\dd \pi 
    \leq \int \frac{1}{2} |x-y|^2\,\dd \widetilde{\pi} + R^{d+2} \Delta_{R},
\end{align}
where 
\begin{align} \label{eq:def-Delta-R-main}
	\Delta_R &:=  C \|\nabla_{xy} c + \II\|_{{C}^{0}(B_R\times B_R)} \, \EE_R(\pi)^{\frac{1}{2}}
\end{align}
for some constant $C$ depending only on $d$.
\end{proposition}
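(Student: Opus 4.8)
The plan is to compare the $c$-optimal coupling $\pi$ against an arbitrary competitor $\widetilde\pi \in \Pi(\mu,\nu)$ by exploiting $c$-optimality of $\pi$ and then Taylor-expanding the cost $c$ around the Euclidean (more precisely, the bilinear) model. First I would use the normalizations in force: since $\nabla_{xy}c(0,0)=-\II$, the natural "Euclidean" comparison cost is the bilinear form $b(x,y) := -x\cdot y$, whose associated transport cost differs from $\frac12|x-y|^2$ only by the marginal-dependent term $\frac12|x|^2+\frac12|y|^2$, which is identical for $\pi$ and $\widetilde\pi$ (they share marginals). Hence minimizing $\int\frac12|x-y|^2\,\dd\pi$ over $\Pi(\mu,\nu)$ is the same as minimizing $\int b\,\dd\pi$, and the almost-minimality inequality \eqref{eq:quasimin-main} is equivalent to the corresponding inequality with $\frac12|x-y|^2$ replaced by $b$.

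Next I would write, for the error function $e(x,y) := c(x,y) - c(0,\cdot) - c(\cdot,0) + c(0,0) - b(x,y)$ capturing the deviation of $c$ from bilinear (the purely $x$-dependent and purely $y$-dependent pieces of $c$ drop out of the transport functional since they integrate against the fixed marginals $\mu,\nu$), the chain
\[
\int b\,\dd\pi = \int c\,\dd\pi - (\text{marginal terms}) - \int e\,\dd\pi \le \int c\,\dd\widetilde\pi - (\text{marginal terms}) - \int e\,\dd\pi = \int b\,\dd\widetilde\pi + \int e\,\dd\widetilde\pi - \int e\,\dd\pi,
\]
where the inequality is $c$-optimality of $\pi$. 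So it remains to bound $\bigl|\int e\,\dd\widetilde\pi - \int e\,\dd\pi\bigr|$ by $R^{d+2}\Delta_R$. The key point is that $\nabla_{xy}e = \nabla_{xy}c + \II$, so on $B_R\times B_R$ one has the pointwise bound $|\nabla_{xy}e| \le \|\nabla_{xy}c+\II\|_{C^0(B_R\times B_R)} =: \eta$. Since $e$ has vanishing $x$-gradient at $y=0$ and vanishing $y$-gradient at $x=0$ up to the marginal pieces (which I've already discarded), integrating $\nabla_{xy}e$ twice gives $|e(x,y)| \lesssim \eta\,|x|\,|y| \le \eta R\,|y|$ on $\supp\mu\times\supp\nu \subseteq B_R\times B_R$ — but this crude bound only yields $\eta R^{d+2}$, not the $\eta\,\EE_R(\pi)^{1/2}$ we want. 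The sharpening comes from keeping the displacement: writing $e(x,y) = \int_0^1\int_0^1 (\nabla_{xy}e)(sx,ty)\,\dd s\,\dd t \cdot (x\otimes y)$ is not quite the right split; instead I would expand $e$ around the diagonal to extract a factor $|x-y|$. Concretely, because $\nabla_{xy}e$ controls the mixed second derivative, $e(x,y)-e(x,x)-e(y,y)+e(\tfrac{x+y}2,\tfrac{x+y}2)$-type differences are $O(\eta|x-y|^2)$, while the diagonal values $e(x,x)$ are again (up to already-discarded single-variable pieces) controlled — this is precisely the mechanism by which one trades one power of $R$ for one power of the displacement, giving $|\int e\,\dd\pi| \lesssim \eta \int |x-y|\, \dd\pi + (\text{marginal}) \lesssim \eta\, (\mu(B_R))^{1/2}\bigl(\int|x-y|^2\dd\pi\bigr)^{1/2}$ by Cauchy–Schwarz, and then $\mu(B_R)\le 2|B_R|\sim R^d$ from \eqref{eq:mu-nu-close-main} converts this into $\eta\, R^{d/2}\cdot R^{(d+2)/2}\EE_R(\pi)^{1/2} = R^{d+1}\eta\,\EE_R(\pi)^{1/2}$...

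I expect the \emph{main obstacle} to be exactly this bookkeeping: getting the homogeneity in \eqref{eq:def-Delta-R-main} right, i.e.\ seeing that $\int e\,\dd\widetilde\pi - \int e\,\dd\pi$ scales like $R^{d+2}\eta\,\EE_R(\pi)^{1/2}$ and \emph{not} like $R^{d+2}\eta$. This requires carefully tracking which pieces of $e$ pair against the fixed marginals (and hence cancel between $\pi$ and $\widetilde\pi$) versus which genuinely see the coupling, and realizing that after removing all single-variable contributions the remaining integrand is pointwise $\lesssim \eta\,|x-y|\cdot\mathrm{diam}(\supp) \lesssim \eta R|x-y|$, so that only the \emph{forward} coupling $\pi$ (whose displacement is $L^2$-small by hypothesis via $\EE_R$) needs to be estimated with Cauchy–Schwarz while for $\widetilde\pi$ one must instead symmetrize or note that the relevant difference $\int e\,\dd\pi - \int e\,\dd\widetilde\pi$ has a $\pi$-term one can bound as above and a $\widetilde\pi$-term that, after subtracting the common bilinear/marginal parts, is controlled by optimality itself — one should not need an a priori $L^2$ bound on $\widetilde\pi$. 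Plausibly the cleanest route is: bound $\int e\,\dd\pi$ by $\eta R \int|x-y|\,\dd\pi$ directly, and for the $\widetilde\pi$-term use that $\int c\,\dd\pi \le \int c\,\dd\widetilde\pi$ already, so only $\int e\,\dd\pi$ (one-sided, on the good coupling) enters the final estimate — this gives the asymmetric-looking $\Delta_R$ involving $\EE_R(\pi)$ and explains why the constant $C$ depends only on $d$. The remaining details (the two-fold integration of $\nabla_{xy}e$, Cauchy–Schwarz, and inserting $\mu(B_R)\lesssim R^d$) are routine.
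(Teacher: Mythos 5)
Your overall architecture matches the paper's: reduce to the bilinear cost $-x\cdot y$ using that the pure-$x$ and pure-$y$ parts of $\tfrac12|x-y|^2$ and of $c$ integrate identically against the shared marginals, invoke $c$-optimality, and bound the remaining error $\zeta(x,y)=-(\widetilde c(x,y)+x\cdot y)$ (your $-e$) by writing $\zeta(x,y)-\zeta(x,x)$ as a double integral of $\nabla_{xy}\zeta=-(\nabla_{xy}c+\II)$, which gives the pointwise bound $\lesssim \|\nabla_{xy}c+\II\|_{C^0(B_R\times B_R)}\,|x|\,|y-x|\le \eta R|y-x|$; Cauchy--Schwarz together with $\mu(B_R)\lesssim R^d$ then produces the $R^{d+2}\eta\,\EE_R(\pi)^{1/2}$ homogeneity. (Your intermediate musings about four-point diagonal differences being $O(\eta|x-y|^2)$ are a detour — the mechanism is one factor of displacement and one factor of $|x|\le R$, not two factors of displacement — but you land on the correct pointwise bound $\eta R|x-y|$.)

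The genuine gap is your treatment of the competitor term. After using $c$-optimality you are left with \emph{both} $\int e\,\dd\widetilde\pi$ and $\int e\,\dd\pi$, and your proposed escape — that ``only $\int e\,\dd\pi$ enters'' because $\int c\,\dd\pi\le\int c\,\dd\widetilde\pi$ was already used — does not work: that inequality is precisely what produced the two error terms, and it cannot be reused to absorb one of them. The term $\int e\,\dd\widetilde\pi$ must be estimated, and the Cauchy--Schwarz step requires $\int|x-y|^2\,\dd\widetilde\pi$, which is not a priori controlled. The missing (and simple) ingredient is the reduction the paper makes at the outset: one may assume without loss of generality that $\int|x-y|^2\,\dd(\pi-\widetilde\pi)\ge 0$, since otherwise \eqref{eq:quasimin-main} holds trivially; under this assumption $\EE_R(\widetilde\pi)\le\EE_R(\pi)$, so the same Cauchy--Schwarz bound applies to $\int|x-y|\,\dd(\pi+\widetilde\pi)$ using the total mass bound $(\pi+\widetilde\pi)(\RR^d\times\RR^d)=2\mu(B_R)\lesssim R^d$ from \eqref{eq:mu-nu-close-main}. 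With that one observation inserted, your argument closes.
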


The above statement is most naturally formulated in terms of couplings, that is, in the Kantorovich framework. However, the way (almost-)minimality enters in the proof of the harmonic approximation result (see Theorem \ref{thm:harmonic} below), it is needed in the Eulerian picture, where the construction of a competitor is done. 

\subsubsection{The Eulerian side of optimal transportation}
Given a coupling $\pi\in\Pi(\mu, \nu)$ between measures $\mu$ and $\nu$, we can define its Eulerian description, i.e.\ the density-flux pair $(\rho_t,j_t)$ associated to the coupling $\pi$ by 
\begin{align}\label{eq:density-flux-associated}
	\int \zeta \,\mathrm{d}\rho_t := \int \zeta((1-t)x + ty) \,\mathrm{d}\pi, \quad 
	\int \xi \cdot\mathrm{d}j_t := \int \xi((1-t)x + ty)\cdot(y-x) \,\mathrm{d}\pi 
\end{align}
for $t\in [0,1]$ and for all test functions $\zeta \in \mathcal{C}_c^{0}(\RR^d \times [0,1])$ and fields $\xi \in \mathcal{C}_c^{0}(\RR^d \times [0,1])^d$. It is easy to check that $(\rho_t,j_t)$ is a distributional solution of the continuity equation 
\begin{align}\label{eq:CE-intro}
	\partial_t \rho_t + \nabla\cdot j_t = 0, \qquad \rho_0 = \mu,\quad \rho_1 = \nu,
\end{align}
that is, for any $\zeta \in \mathcal{C}^1_c(\RR^d \times [0,1])$ there holds
\begin{align}\label{eq:CE-weak-intro}
	\int_0^1 \left(\int \partial_t \zeta \,\mathrm{d}\rho_t + \nabla\zeta \cdotp \mathrm{d}j_t \right)\,\mathrm{d}t = \int \zeta_1 \,\mathrm{d}\nu - \int \zeta_0\,\mathrm{d}\mu.
\end{align}
For brevity, we will often write $(\rho,j) := (\rho_t \,\mathrm{d}t, j_t \,\mathrm{d}t)$. Being divergence-free in $(t,x)$, the density-flux pair $(\rho, j)$ admits internal (and external) traces on $\partial(B_R \times(0,1))$ for any $R>0$, see \cite{CF03} for details, i.e., there exists a measure $f_R$ on $\partial B_R \times (0,1)$ such that 
\begin{align}\label{eq:CE-R}
	\int_{B_R\times[0,1]} \left( \partial_t \zeta \,\mathrm{d}\rho + \nabla\zeta \cdot \mathrm{d}j \right) = \int_{B_R} \zeta_1\,\mathrm{d}\nu - \int_{B_R} \zeta_0\,\mathrm{d}\mu + \int_{\partial B_R\times[0,1]} \zeta \,\mathrm{d}f_R,
\end{align}
We also introduce the time-averaged measure $\overline{f}_R$ on $\partial B_R$ defined via
\begin{align}\label{eq:fbar}
	\int_{\partial B_R} \zeta \,\dd \overline{f}_R 
	:= \int_{\partial B_R\times[0,1]} \zeta \,\dd f_R.
\end{align}
Similarly, defining the measure $\overline{j} := \int_0^1\,\dd j(\cdot, t)$, it is easy to see that 
\begin{align*}
	\nabla \cdot \overline{j} = \mu - \nu 
\end{align*} 
and that therefore $\overline{j}$ admits internal and external traces, which agree for all $R>0$ with $|\overline{j}|(\partial B_R) = \mu(\partial B_R) = \nu(\partial B_R) = 0$, and the internal trace agrees with $\overline{f}_R$.

Note that we have the duality \cite[Proposition 5.18]{San15} 
\begin{align}\label{eq:BB-duality}
	\frac{1}{2}\int \frac{1}{\rho} |j|^2 = \sup_{\xi\in \mathcal{C}^{0}_{c}(\RR^d \times [0,1])^d} \int \xi\cdot\dd j - \frac{|\xi|^2}{2}\,\dd \rho,
\end{align}
which immediately implies the subadditivity of $(\rho,j) \mapsto \int \frac{1}{\rho} |j|^2$. 
A localized version of \eqref{eq:BB-duality}, in the form of  
\begin{align}\label{eq:BB-duality-local}
	\frac{1}{2}\int_{B\times[0,1]} \frac{1}{\rho} |j|^2 = \sup_{\xi\in \mathcal{C}^{0}_{c}(B \times [0,1])^d} \int \xi\cdot\dd j - \frac{|\xi|^2}{2}\,\dd \rho,
\end{align}
also holds for any open set $B\subseteq \RR^d$.
From the inequality $\xi \cdot (y-x) -\frac{1}{2} |\xi|^2 \leq \frac{1}{2} |x-y|^2$, which is true for any $\xi, x, y \in \RR^d$, the duality formula \eqref{eq:BB-duality} immediately implies that the Eulerian cost of the density-flux pair $(\rho, j)$ corresponding to a coupling $\pi$ via \eqref{eq:density-flux-associated} is always dominated by the Lagrangian cost of $\pi$, i.e.\
\begin{align}\label{eq:Lagrangian-dominates-Eulerian}
	\frac{1}{2}\int \frac{1}{\rho} |j|^2 
	\leq \frac{1}{2} \int |x-y|^2 \,\dd \pi.
\end{align}
We stress that this inequality is in general strict, see the example in Footnote \ref{foot:Lagrangian-dominates-Eulerian}. 

Contrary to the case of quadratic cost $c(x,y) = \frac{1}{2}|x-y|^2$, or, equivalently, $\widetilde{c}(x,y) = -x \cdot y$, given an optimal coupling $\pi$ for the cost $c$, the density-flux pair $(\rho,j)$ associated to $\pi$ in the sense of \eqref{eq:density-flux-associated} is not optimal for the {Benamou--Brenier} formulation \cite{BenamouBrenier} of optimal transportation, i.e., 
\begin{align}\label{eq:BB}
	W^2(\mu, \nu) = \inf\left\{ \frac{1}{2} \int \frac{1}{\rho} |j|^2: \partial_t \rho + \nabla\cdot j = 0, \rho_0 = \mu, \rho_1 = \nu\right\},
\end{align}
where the continuity equation and boundary conditions are understood in the weak sense \eqref{eq:CE-weak-intro}, see \cite[Chapter 8]{Vil03} for details.

\begin{remark}\label{rem:BB}
	For minimizers $\pi$ of the Euclidean transport cost, i.e.\ $\pi\in\mathrm{argmin}\, W^2(\mu,\nu)$, we have \emph{equality} in \eqref{eq:Lagrangian-dominates-Eulerian},
\begin{align}\label{eq:Lagrangian-equal-Eulerian}
	\frac{1}{2}\int \frac{1}{\rho} |j|^2 
	= \frac{1}{2}  \int |x-y|^2 \,\dd \pi.
\end{align}
Indeed, if $(\rho,j)$ is the Eulerian description of $\pi$, then by \eqref{eq:BB}
\begin{align*}
    \frac{1}{2} \int |x-y|^2 \,\dd \pi = W^2(\mu, \nu) \leq \frac{1}{2} \int\frac{1}{\rho}|j|^2,
\end{align*}
which together with \eqref{eq:Lagrangian-dominates-Eulerian} implies \eqref{eq:Lagrangian-equal-Eulerian}.
\end{remark}

As another consequence, while displacement convexity guarantees in the Euclidean case that the Eulerian density $\rho \leq 1$ (up to a small error), c.f. \cite[Lemma 4.2]{GO17}, in our case $\rho$ is in general merely a measure. This complication is already present in \cite{GHO19} and led to important new insights in dealing with marginals that are not absolutely continuous with respect to Lebesgue measure in the Euclidean case, upon which we are also building in this work.

The Eulerian version of the almost-minimality Proposition~\ref{prop:quasi-minimality-lagrangian-main} can then be obtained via the following lemma:

\begin{lemma}\label{prop:quasi-minimality-eulerian-main}
Let $\pi \in \Pi(\mu, \nu)$ be a coupling between the measures $\mu$ and $\nu$ with the property that there exists a constant $\Delta <\infty$ such that 
\begin{align}\label{eq:lagrangian-almost-min-assumption}
	 \int \frac{1}{2} |x-y|^2\,\dd \pi 
    \leq \int \frac{1}{2} |x-y|^2\,\dd \widetilde{\pi} + \Delta
\end{align}
for any $\widetilde{\pi}\in\Pi(\mu, \nu)$,
and let $(\rho,j)$ be its Eulerian description defined in \eqref{eq:density-flux-associated}. 
Then 
\begin{align*}
	 \int \frac{1}{2} |x-y|^2\,\dd \pi \leq \frac{1}{2} \int \frac{1}{\rho} |j|^2 + \Delta, 
\end{align*}
and
\begin{align*}
	\frac{1}{2} \int \frac{1}{\rho} |j|^2 \leq \frac{1}{2} \int \frac{1}{\widetilde{\rho}} |\widetilde{j}|^2 + \Delta
\end{align*}
for any pair of measures $(\widetilde\rho,\widetilde j)$ satisfying
	\begin{align}\label{eq:CE-tilde-main}
	    \int \left( \partial_t \zeta \,\mathrm{d}\widetilde{\rho} + \nabla\zeta \cdot \mathrm{d}\widetilde{j} \, \right) = \int \zeta_1\,\mathrm{d}\nu - \int \zeta_0\,\mathrm{d}\mu
	\end{align}
for all $\zeta\in \mathcal{C}^\infty_c(\mathbb{R}^d\times[0,1])$.
\end{lemma}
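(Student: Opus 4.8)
The plan is to combine the general inequality \eqref{eq:Lagrangian-dominates-Eulerian} (Eulerian cost $\le$ Lagrangian cost), which holds for the Eulerian description of \emph{any} coupling, with the Lagrangian almost-minimality hypothesis \eqref{eq:lagrangian-almost-min-assumption}. First I would prove the first displayed inequality. Given any competitor $(\widetilde\rho,\widetilde j)$ solving the continuity equation \eqref{eq:CE-tilde-main} with the same marginals, I want to produce a coupling $\widetilde\pi\in\Pi(\mu,\nu)$ whose Lagrangian cost is controlled by the Eulerian cost $\tfrac12\int\frac1{\widetilde\rho}|\widetilde j|^2$. The natural choice is to take $\widetilde\pi$ to be a \emph{minimizer} of the Euclidean transport problem between $\mu$ and $\nu$, i.e.\ $\widetilde\pi\in\mathrm{argmin}\,W^2(\mu,\nu)$; then by the Benamou--Brenier formula \eqref{eq:BB}, $\tfrac12\int|x-y|^2\,\dd\widetilde\pi = W^2(\mu,\nu) \le \tfrac12\int\frac1{\widetilde\rho}|\widetilde j|^2$ for \emph{every} admissible $(\widetilde\rho,\widetilde j)$. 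Feeding this $\widetilde\pi$ into the almost-minimality hypothesis \eqref{eq:lagrangian-almost-min-assumption} gives
\begin{align*}
    \int\tfrac12|x-y|^2\,\dd\pi \le \int\tfrac12|x-y|^2\,\dd\widetilde\pi + \Delta = W^2(\mu,\nu) + \Delta \le \tfrac12\int\frac1{\widetilde\rho}|\widetilde j|^2 + \Delta.
\end{align*}
Specializing $(\widetilde\rho,\widetilde j)=(\rho,j)$, the Eulerian description of $\pi$ itself (which is admissible in \eqref{eq:CE-tilde-main} by the computation leading to \eqref{eq:CE-weak-intro}), yields exactly the first claimed inequality $\int\tfrac12|x-y|^2\,\dd\pi \le \tfrac12\int\frac1\rho|j|^2 + \Delta$.

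For the second inequality I would chain together what has just been shown with \eqref{eq:Lagrangian-dominates-Eulerian} applied to $\pi$: for any admissible $(\widetilde\rho,\widetilde j)$,
\begin{align*}
    \tfrac12\int\frac1\rho|j|^2 \le \int\tfrac12|x-y|^2\,\dd\pi \le \tfrac12\int\frac1{\widetilde\rho}|\widetilde j|^2 + \Delta,
\end{align*}
where the first step is \eqref{eq:Lagrangian-dominates-Eulerian} for $\pi$ and the second is the estimate derived in the previous paragraph. This is precisely the asserted minimality-up-to-$\Delta$ of the Eulerian cost of $(\rho,j)$ among all solutions of the continuity equation with the given endpoints.

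The one genuine technical point to be careful about — rather than a deep obstacle — is the existence and admissibility issues: one must check that a Euclidean-optimal $\widetilde\pi\in\Pi(\mu,\nu)$ exists (standard, since $\mu,\nu$ are finite measures with finite second moments here, the cross being bounded in the intended application) and that the Benamou--Brenier identity \eqref{eq:BB} applies to this pair of measures, including the weak formulation \eqref{eq:CE-weak-intro}/\eqref{eq:CE-tilde-main} of the continuity equation with the boundary conditions $\rho_0=\mu$, $\rho_1=\nu$. One should also note that the competitor class in \eqref{eq:CE-tilde-main} is tested only against $\zeta\in\mathcal{C}_c^\infty(\RR^d\times[0,1])$, matching the class in which the infimum in \eqref{eq:BB} is taken, so no mismatch arises there; and if $\tfrac12\int\frac1{\widetilde\rho}|\widetilde j|^2=+\infty$ the second inequality is trivial, so we may assume this quantity is finite. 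Beyond these bookkeeping checks, the argument is a short three-line chaining of \eqref{eq:Lagrangian-dominates-Eulerian}, \eqref{eq:BB}, and \eqref{eq:lagrangian-almost-min-assumption}.
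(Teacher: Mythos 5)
Your proof is correct and follows essentially the same route as the paper: both arguments insert the Euclidean-optimal coupling $\widetilde\pi\in\mathrm{argmin}\,W^2(\mu,\nu)$ into the almost-minimality hypothesis, use the Benamou--Brenier formula \eqref{eq:BB} (in the paper phrased via Remark \ref{rem:BB} and the Eulerian description of the minimizer) to bound its cost by $\tfrac12\int\frac{1}{\widetilde\rho}|\widetilde j|^2$ for any admissible competitor, and then invoke \eqref{eq:Lagrangian-dominates-Eulerian} for $\pi$ itself. The bookkeeping points you flag (existence of the minimizer, admissibility of $(\rho,j)$ in \eqref{eq:CE-tilde-main}) are handled the same way in the paper.
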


\subsubsection{The harmonic approximation result}
The main ingredient in the proof of Theorem \ref{thm:main} is the harmonic approximation result, which states that if a coupling between two measures supported on a ball (say of radius $7R$ for some $R>0$) satisfies the  $L^{\infty}$ bound Proposition \ref{prop:Linfty-main} globally on its support and is almost-minimizing with respect to the Euclidean cost, then the displacement $y-x$ is quantitatively close to a harmonic gradient field $\nabla\Phi$ in $\cross{R}$. This is actually a combination of a harmonic approximation result in the Eulerian picture (Theorem~\ref{thm:harmonic}) and Lemma~\ref{lem:harmonic-lagrange}, which allows us to transfer the Eulerian information back to the Lagrangian framework.

\begin{theorem}[Harmonic approximation]\label{thm:harmonic}
Let $R>0$ and $\mu,\nu$ be two measures with the property that 
\begin{align}\label{eq:harmonic-assumption-mu-nu}
	\supp\mu \subseteq B_{7R}, \quad \supp\nu \subseteq B_{7R}, \quad \text{and}\quad \mu(B_{7R})\leq 2|B_{7R}|.
\end{align}
Let further $\pi\in\Pi(\mu,\nu)$ be a coupling between the measures $\mu$ and $\nu$, such that: 
\begin{enumerate}
	\item $\pi$ satisfies a global $L^{\infty}$-bound, that is, there exists a constant $M\leq 1$ such that
		\begin{align}\label{eq:Linfty-global}
			|x-y| \leq M R
			\quad \text{for any } (x,y) \in \supp\pi.
		\end{align}	
	\item If $(\rho,j)$ is the Eulerian description of $\pi$ as defined in \eqref{eq:density-flux-associated}, then there exists a constant $\Delta_R<\infty$ such that
	\begin{align}\label{eq:almostminimal-harmonic}
		\int \frac{1}{\rho} |j|^2 \leq \int \frac{1}{\widetilde{\rho}} |\widetilde{j}|^2 + R^{d+2} \Delta_R 
	\end{align}
	for any Eulerian competitor, i.e.\ any pair of measures $(\widetilde\rho,\widetilde j)$ satisfying 
	\begin{equation*}
	    \int \partial_t \zeta \,\dd \widetilde{\rho} + \nabla\zeta\cdot \dd \widetilde{j} = \int_{B_R} \zeta_1 \,\dd\nu - \int_{B_R} \zeta_0 \,\dd \mu + \int_{\partial B_R \times [0,1]} \zeta\,\dd f_R.
	\end{equation*}
\end{enumerate}
Then for every $0<\tau \ll 1$, there exist $\epsilon_{\tau}>0$ and $C,C_{\tau}<\infty$ such that, provided\footnote{Note that by assumption \eqref{eq:harmonic-assumption-mu-nu} the coupling $\pi$ is supported on $B_{7R}\times B_{7R}$, so that by means of the estimate $\EE_{6R}(\pi) \lesssim \EE_{7R}(\pi) = \int |x-y|^2\,\dd\pi$ the smallness assumption of the Euclidean energy of $\pi$ on scale $6R$ could be replaced by a smallness assumption on the global Euclidean energy of $\pi$. However, since $\DD$ behaves nicely under restriction only on average, the applicability of the harmonic approximation result derived in \cite{GHO19} becomes more apparent in the form of assumption \eqref{eq:harmonic-smallness}.}
	\begin{align}\label{eq:harmonic-smallness}
		\EE_{6R}(\pi) + \DD_{6R}(\mu, \nu) \leq \epsilon_{\tau},
	\end{align}
the following holds: 
	There exists a radius $R_* \in (3R, 4R)$ such that if $\Phi$ is the solution, unique up to an additive constant, of \footnote{We recall that since the boundary flux $\overline{f}_{R_*}$ (as defined in \eqref{eq:fbar}) is a measure, equation \eqref{eq:harmonic-approx} has to be understood in the distributional sense, and that $\kappa_{\mu} = \frac{\mu(B_{R_*})}{|B_{R_*}|}$ and $\kappa_{\nu} = \frac{\nu(B_{R_*})}{|B_{R_*}|}$.}
	\begin{align}\label{eq:harmonic-approx}
		\Delta \Phi = \kappa_{\mu} - \kappa_{\nu} \quad \text{in } B_{R_*} 
		\quad \text{and} \quad 
		\nu\cdot \nabla\Phi = \nu\cdot \overline{j} = \overline{f}_{R_*} \quad \text{on } \partial B_{R_*},
	\end{align}
	then\footnote{We refer to \eqref{eq:def-j-rho-phi} for how to  understand the left-hand side of \eqref{eq:harmonic-energy}.} 
	\begin{align}\label{eq:harmonic-energy}
		\frac{1}{R^{d+2}} \int_{B_{2R}\times[0,1]} \frac{1}{\rho} |j-\rho\nabla\Phi|^2 
		\leq \left(\tau + \frac{C M}{\tau}\right) \EE_{6R} + C_{\tau}\DD_{6R} + \Delta_R,
	\end{align}
	and 
	\begin{align}\label{eq:harmonic-estimate}
		\frac{1}{R^2} \sup_{B_{2R}} |\nabla\Phi|^2 + 
		 \sup_{B_{2R}} |\nabla^2\Phi|^2 +
		R^2 \sup_{B_{2R}} |\nabla^3\Phi|^2 
		\lesssim \EE_{6R} + \DD_{6R}.
	\end{align}
\end{theorem}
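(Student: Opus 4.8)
The strategy is to reduce this statement to the Euclidean harmonic approximation result of \cite{GHO19} (quoted in the introduction as \cite[Theorem 1.5]{GHO19}), using the almost-minimality hypothesis \eqref{eq:almostminimal-harmonic} as the precise substitute for the exact Eulerian minimality that was available in the quadratic case. First I would set up the Eulerian picture: given $\pi$, form the density-flux pair $(\rho,j)$ via \eqref{eq:density-flux-associated}, which solves the continuity equation \eqref{eq:CE-intro}, and record the internal trace $f_R$ on $\partial B_R\times(0,1)$ as in \eqref{eq:CE-R}. The $L^\infty$ bound \eqref{eq:Linfty-global} together with \eqref{eq:harmonic-assumption-mu-nu} confines everything to $B_{7R}\times B_{7R}$ and controls the support of $\rho_t$. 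The key point is that the construction in \cite{GHO19} — choice of the good radius $R_*\in(3R,4R)$, the nonlinear/convolution smoothing of the boundary flux, the explicit harmonic competitor built from $\Phi$ solving \eqref{eq:harmonic-approx}, and the approximate-orthogonality estimate relating the energy gap to $\int\frac{1}{\rho}|j-\rho\nabla\Phi|^2$ — goes through verbatim on the Eulerian level, \emph{provided} one only ever uses that $(\rho,j)$ is a minimizer of $\int\frac1\rho|\cdot|^2$ given its own flux boundary data. Here it is only an almost-minimizer, with defect $R^{d+2}\Delta_R$; so wherever \cite{GHO19} invokes $\int\frac1\rho|j|^2\le\int\frac1{\tilde\rho}|\tilde j|^2$ for the harmonic-type competitor $(\tilde\rho,\tilde j)$, I would instead insert \eqref{eq:almostminimal-harmonic} and carry the additive $\Delta_R$ through to the final estimate \eqref{eq:harmonic-energy}.

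Concretely the steps are: \textbf{(i)} pass to the Eulerian side and fix notation for traces $f_R$, $\overline f_R$, $\overline j$; \textbf{(ii)} invoke the good-radius selection of \cite[Section 3.1.3]{GHO19} (an averaging/pigeonhole argument over $R_*\in(3R,4R)$) to find $R_*$ on whose boundary sphere the flux $f_{R_*}$ and the time-averaged $\overline f_{R_*}$ are well-controlled and the boundary smoothing can be performed; \textbf{(iii)} define $\Phi$ as the unique-up-to-constant solution of the Neumann problem \eqref{eq:harmonic-approx} and derive the interior bounds \eqref{eq:harmonic-estimate} from standard interior estimates for harmonic functions (the right-hand side $\kappa_\mu-\kappa_\nu$ is a constant controlled by $\DD_{6R}$, and the Neumann data is controlled by $\EE_{6R}+\DD_{6R}$ via the trace bounds and the $L^\infty$ bound); \textbf{(iv)} build the Eulerian competitor $(\tilde\rho,\tilde j)$ that interpolates the given boundary flux on $\partial B_{R_*}\times(0,1)$ toward the time-independent harmonic flux $\rho\nabla\Phi$ (glueing $(\rho,j)$ outside $B_{R_*}$ with the harmonic construction inside, exactly as in \cite{GHO19}); \textbf{(v)} apply the almost-minimality \eqref{eq:almostminimal-harmonic} to this competitor and expand the square, using the approximate orthogonality $\int\frac1\rho j\cdot\nabla\Phi\approx\int\rho|\nabla\Phi|^2$ coming from the weak formulation of \eqref{eq:harmonic-approx} and \eqref{eq:CE-R}; the mismatch between the time-dependent $j$ and the time-independent $\rho\nabla\Phi$ contributes the superlinear term $\frac{CM}{\tau}\EE_{6R}$ (this is where the $L^\infty$ constant $M$ enters, via the lower-dimensional isoperimetric-type bound of \cite[Lemma 2.3]{GO17}/\cite[Lemma 1.8]{GHO19}), and the almost-minimality defect contributes the extra $+\Delta_R$; \textbf{(vi)} absorb the smoothing errors into $C_\tau\DD_{6R}$ and $\tau\EE_{6R}$, yielding \eqref{eq:harmonic-energy}.

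The main obstacle — and the genuinely new point compared to \cite{GHO19} — is step~(v): reconciling the fact that almost-minimality is naturally a statement about the \emph{Lagrangian} cost $\int|x-y|^2\,\dd\pi$ (Proposition~\ref{prop:quasi-minimality-lagrangian-main}) with the need for an \emph{Eulerian} minimality-type inequality in the construction, given that the Lagrangian and Eulerian costs of $\pi$ genuinely differ (Footnote~\ref{foot:Lagrangian-dominates-Eulerian}) when $\pi$ is not Euclidean-optimal. This is exactly what Lemma~\ref{prop:quasi-minimality-eulerian-main} is designed to bridge: from $\int\frac12|x-y|^2\,\dd\pi\le\int\frac12|x-y|^2\,\dd\tilde\pi+\Delta$ it extracts both $\int\frac12|x-y|^2\,\dd\pi\le\frac12\int\frac1\rho|j|^2+\Delta$ (so the Lagrangian cost, not just the smaller Eulerian cost, is almost-minimal against Eulerian competitors) and $\frac12\int\frac1\rho|j|^2\le\frac12\int\frac1{\tilde\rho}|\tilde j|^2+\Delta$. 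I would therefore first upgrade \eqref{eq:almostminimal-harmonic} through Lemma~\ref{prop:quasi-minimality-eulerian-main} (localized to $B_R$ with the flux boundary term, exactly as in the competitor class in hypothesis~(2)), and only then run the \cite{GHO19} machinery; the remaining work is bookkeeping of the additive constants and checking that the good-radius and smoothing arguments there never secretly used exact minimality. A secondary technical nuisance is that $\rho$ is only a measure (not $\le 1$), so all manipulations of $\int\frac1\rho|j|^2$ must be done via the duality \eqref{eq:BB-duality-local} and its subadditivity, as in \cite{GHO19}; the left-hand side of \eqref{eq:harmonic-energy} must be interpreted through \eqref{eq:def-j-rho-phi} accordingly.
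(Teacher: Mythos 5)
Your proposal follows essentially the same route as the paper: reduce to the Euclidean harmonic approximation machinery of \cite{GHO19} (good-radius selection, regularized Neumann problem for the boundary flux, approximate orthogonality, explicit Eulerian competitor), and replace the exact Eulerian minimality used there by the almost-minimality hypothesis \eqref{eq:almostminimal-harmonic} via Lemma~\ref{prop:quasi-minimality-eulerian-main} and its localized form, carrying the additive defect $\Delta_R$ into \eqref{eq:harmonic-energy}. The only cosmetic difference is bookkeeping: in the paper the $CM/\tau$ term arises in the competitor-construction step rather than in the orthogonality step, and one must also verify that $M$ only ever multiplies superlinear powers of $\EE_{6R}+\DD_{6R}$ in the \cite{GHO19} error terms so that $M\leq 1$ suffices — but these are exactly the checks you flag.
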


From the Eulerian version of the harmonic approximation Theorem \ref{thm:harmonic} we can also obtain a Lagrangian version via almost-minimality:
\begin{lemma}\label{lem:harmonic-lagrange}
	Let $R>0$ and let $\pi\in\Pi(\mu,\nu)$ be a coupling between the measures $\mu$ and $\nu$, such that 
\begin{enumerate}
	\item $\pi$ satisfies a global $L^{\infty}$-bound, that is, there exists a constant $M\leq 1$ such that
		\begin{align}\label{eq:Linfty-global-harmonic-lagrange}
			|x-y| \leq M R
			\quad \text{for any } (x,y) \in \supp\pi;
		\end{align}	
	\item if $(\rho,j)$ is the Eulerian description of $\pi$ as defined in \eqref{eq:density-flux-associated}, then there exists a constant $\Delta_R<\infty$ such that
	\begin{align}\label{eq:almost-min-harmonic-lagrange}
		\int |x-y|^2\,\dd\pi \leq \int \frac{1}{\rho} |j|^2 + R^{d+2} \Delta_R.
	\end{align}
\end{enumerate}
Then for any smooth function $\Phi$ there holds
	\begin{align}\label{eq:harmonic-lagrange-lemma}
	\frac{1}{R^{d+2}}\int_{\cross{R}} \int_0^1 |x-y + \nabla\Phi(tx + (1-t)y)|^2 \,\dd t\,\dd \pi 
	\leq \frac{1}{R^{d+2}}\int_{B_{2R} \times [0,1]} \frac{1}{\rho} |j-\rho\nabla\Phi|^2 +  \Delta_R.
\end{align}
\end{lemma}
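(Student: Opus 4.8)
The soul of this statement is a simple algebraic identity. If $(\rho,j)$ is the Eulerian description \eqref{eq:density-flux-associated} of \emph{any} coupling $\pi\in\Pi(\mu,\nu)$, then expanding the squares and using \eqref{eq:density-flux-associated} to recognize $\int_0^1\!\int(y-x)\cdot\nabla\Phi((1-t)x+ty)\,\dd\pi\,\dd t=\int\nabla\Phi\cdot\dd j$ and $\int_0^1\!\int|\nabla\Phi((1-t)x+ty)|^2\,\dd\pi\,\dd t=\int|\nabla\Phi|^2\,\dd\rho$, the terms linear and quadratic in $\nabla\Phi$ coincide on the two sides, so that
\[
  \int\!\int_0^1\!\big|x-y+\nabla\Phi((1-t)x+ty)\big|^2\,\dd t\,\dd\pi-\int\frac1\rho\big|j-\rho\nabla\Phi\big|^2
  =\int|x-y|^2\,\dd\pi-\int\frac1\rho|j|^2 .
\]
By the almost-minimality assumption \eqref{eq:almost-min-harmonic-lagrange} the right-hand side is $\le R^{d+2}\Delta_R$, which is already \eqref{eq:harmonic-lagrange-lemma} — except that both integrations run over all of space rather than over $\cross R$ and $B_{2R}$. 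The remaining, and only real, work is the localization.

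For the left-hand side it is free: the integrand is nonnegative, so restricting $\pi$ to $\cross R$ only decreases it, and (this is where the global $L^\infty$-bound \eqref{eq:Linfty-global-harmonic-lagrange} with $M\le1$ enters) whenever $(x,y)\in\cross R\cap\supp\pi$ the whole segment $[x,y]$ lies in $B_{2R}$, since $x\in B_R$ or $y\in B_R$ together with $|x-y|\le MR\le R$. Hence, after the substitution $t\mapsto 1-t$ matching the interpolation convention of \eqref{eq:density-flux-associated}, the left-hand side of \eqref{eq:harmonic-lagrange-lemma} is bounded by $R^{-d-2}\int_0^1\!\int_{\{(1-t)x+ty\in B_{2R}\}}|y-x-\nabla\Phi((1-t)x+ty)|^2\,\dd\pi\,\dd t$. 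To bound this by the right-hand side I would descend to the disintegration: for each $t$ write $\pi=\int\pi^{t,z}\,\dd\rho_t(z)$ along $(x,y)\mapsto(1-t)x+ty$ (which pushes $\pi$ to $\rho_t$), set $v_t(z):=\int(y-x)\,\dd\pi^{t,z}=\tfrac{\dd j_t}{\dd\rho_t}(z)$ (the Radon--Nikodym derivative exists because $|j_t|\le MR\,\rho_t$ by \eqref{eq:Linfty-global-harmonic-lagrange}), and use the bias--variance split $\int|y-x-\nabla\Phi(z)|^2\,\dd\pi^{t,z}=|v_t(z)-\nabla\Phi(z)|^2+\int|y-x-v_t(z)|^2\,\dd\pi^{t,z}$ (valid since $\nabla\Phi((1-t)x+ty)=\nabla\Phi(z)$ on $\supp\pi^{t,z}$). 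Integrating in $z$ over $B_{2R}$ and then in $t$: the first term reproduces exactly $R^{-d-2}\int_{B_{2R}\times[0,1]}\tfrac1\rho|j-\rho\nabla\Phi|^2$ (this is precisely how that quantity is defined, cf.\ \eqref{eq:def-j-rho-phi}), while the second term, after dropping the harmless restriction to $B_{2R}$, equals $\int|x-y|^2\,\dd\pi-\int\tfrac1\rho|j|^2$ by the orthogonality $\int(y-x)\cdot v_t\,\dd\pi^{t,z}=|v_t(z)|^2$ behind \eqref{eq:Lagrangian-dominates-Eulerian}, hence is $\le R^{d+2}\Delta_R$ by \eqref{eq:almost-min-harmonic-lagrange}. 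Dividing by $R^{d+2}$ yields \eqref{eq:harmonic-lagrange-lemma}.

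The one point that demands care is purely measure-theoretic: $\rho_t$ and $j_t$ are only measures, so the disintegration, the a.e.\ defined velocity $v_t$, and the very meaning of $\int\tfrac1\rho|j-\rho\nabla\Phi|^2$ have to be handled within the framework fixed around \eqref{eq:def-j-rho-phi}; alternatively the whole argument can be run through the localized Benamou--Brenier duality \eqref{eq:BB-duality-local}, representing $\int_{B_{2R}\times[0,1]}\tfrac1\rho|j-\rho\nabla\Phi|^2$ as a supremum over test fields $\xi$ and using $\xi\cdot w-\tfrac12|\xi|^2\le\tfrac12|w|^2$ pointwise with $w=y-x-\nabla\Phi$. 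No genuinely new difficulty arises, as this bookkeeping is already present in \cite{GHO19}.
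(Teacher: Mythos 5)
Your proof is correct and follows essentially the same route as the paper: localize the left-hand side using the $L^\infty$ bound so that all interpolated points lie in $B_{2R}$, expand the square and identify the cross and quadratic terms with $\int\nabla\Phi\cdot\dd j$ and $\int|\nabla\Phi|^2\,\dd\rho$ via \eqref{eq:density-flux-associated}, and absorb the mismatch on $B_{2R}^c$ using the localized inequality that the Eulerian cost is dominated by the Lagrangian one there, before invoking \eqref{eq:almost-min-harmonic-lagrange}. Your disintegration/bias--variance packaging of that last step is just the pointwise form of the paper's appeal to \eqref{eq:BB-duality-local} and \eqref{eq:def-j-rho-phi}, so the two arguments coincide in substance.
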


\subsubsection{One-step improvement and Campanato iteration}
With the harmonic approximation result at hand, we can derive a one-step improvement result, 
which roughly says that if the coupling $\pi$ is quantitatively close to $(\id\times\id)_{\#}\rho_0$ on some scale $R$, expressed in terms of the estimate
\begin{align*}
    \mathcal{E}_R(\pi) + R^{2\alpha}\left([\rho_0]_{\alpha, R}^2 + [\rho_1]_{\alpha, R}^2 + \left[\nabla_{xy}c\right]_{\alpha, R}^2\right) \ll 1,
\end{align*}
and the fact that the (qualitative) $L^{\infty}$ bound on the displacement \eqref{eq:Linfty-inclusion-assumption-main} holds, 
then on a smaller scale $\theta R$, after an affine change of coordinates, it is even closer to $(\id\times\id)_{\#}\rho_0$.
This is the basis of a Campanato iteration to obtain the existence of the optimal transport map $T$ and its $\mathcal{C}^{1,\alpha}$ regularity.

We start with the affine change of coordinates and its properties:
\begin{lemma}\label{lem:coordinate-change}
Let $\pi\in\Pi(\mu,\nu)$ be an optimal transport plan with respect to the cost function $c$ between the measures $\mu(\dd x) = \rho_0(x)\,\dd x$ and $\nu(\dd y) = \rho_1(y)\,\dd y$. 

Given a non-singular matrix $B \in\RR^{d\times d}$ and a vector $b\in\RR^d$, we perform the affine change of coordinates\footnote{We use the notation $B^{-*} = (B^*)^{-1}$, where $B^*$ is the transpose of $B$, and $\nabla^2 c(x, y) = \begin{pmatrix}
	 \nabla_{xx}c(x,y) & \nabla_{yx}c(x,y) \\ \nabla_{xy} c(x,y) & \nabla_{yy}c(x,y)
\end{pmatrix}$.}
\begin{align}\label{eq:Q-transformation}
	\begin{pmatrix}
		\widehat{x} \\
		\widehat{y}
	\end{pmatrix}
	= 
	\begin{pmatrix}
		Bx \\ 
		\gamma B^{-*} D^* (y-b)
	\end{pmatrix}
	=: Q(x,y),
\end{align}
where $D = -\nabla_{xy}c(0,b)$ and $\gamma = \left(\frac{\rho_1(b)}{\rho_0(0)} \frac{|\det B|^2}{|\det D|}\right)^{\frac{1}{d}}$.\footnote{Note that $D$ is non-singular by assumption \ref{item:cost-non-deg}.}
If we let
\begin{align*}
	\widehat{\rho}_0(\widehat{x}) &= \frac{\rho_0(x)}{\rho_0(0)}, \quad 
	\widehat{\rho}_1(\widehat{y}) = \frac{\rho_1(y)}{\rho_1(b)}, \quad  \widehat{c}(\widehat{x},\widehat{y}) = \gamma c(x,y),
\end{align*}
so that in particular $\widehat{\rho}_0(0) = \widehat{\rho}_1(0) = 1$ and 
\begin{equation}\label{eq:mixed-derivative}
    \nabla_{\widehat{x}\widehat{y}} \widehat{c}(\widehat{x},\widehat{y}) = BD^{-1}\nabla_{xy}c(x,y)B^{-1},
\end{equation}
from which it follows that $\nabla_{\widehat{x}\widehat{y}}\widehat{c}(0,0) = -\II$, then the coupling
\begin{align}\label{eq:pihat-def}
	\widehat{\pi} := \frac{|\det B|}{\rho_0(0)} Q_{\#}\pi
\end{align}
is an optimal coupling between the measures $\widehat{\mu}(\dd\widehat{x}) = \widehat{\rho}_0(\widehat{x})\,\dd\widehat{x}$ and $\widehat{\nu}(\dd\widehat{y}) = \widehat{\rho}_1(\widehat{y})\,\dd\widehat{y}$ with respect to the cost function $\widehat{c}$. 
\end{lemma}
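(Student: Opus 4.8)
The plan is to reduce the whole statement to the change-of-variables formula together with the observation that the map $Q$ of \eqref{eq:Q-transformation} has product form $Q(x,y)=(Q_1x,Q_2y)$, with $Q_1x=Bx$ and $Q_2y=\gamma B^{-*}D^*(y-b)$ two affine bijections of $\RR^d$; the two constants $\gamma$ and $|\det B|/\rho_0(0)$ in \eqref{eq:pihat-def} are exactly those forced by matching Jacobians, so nothing has to be guessed. First I would compute the marginals of $\widehat\pi$. Because $Q$ is a product map, $Q_{\#}\pi\in\Pi\bigl((Q_1)_{\#}\mu,(Q_2)_{\#}\nu\bigr)$, and $\widehat\pi$ differs from it only by the positive scalar $|\det B|/\rho_0(0)$. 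Pushing $\mu(\dd x)=\rho_0(x)\,\dd x$ forward under $\widehat x=Bx$ gives $(Q_1)_{\#}\mu=\rho_0(B^{-1}\widehat x)\,|\det B|^{-1}\,\dd\widehat x$, whence $\tfrac{|\det B|}{\rho_0(0)}(Q_1)_{\#}\mu=\widehat\rho_0(\widehat x)\,\dd\widehat x=\widehat\mu$. The same computation for $\nu(\dd y)=\rho_1(y)\,\dd y$ under $\widehat y=\gamma B^{-*}D^*(y-b)$ produces the Jacobian $\gamma^d|\det D|/|\det B|$, and demanding $\tfrac{|\det B|}{\rho_0(0)}(Q_2)_{\#}\nu=\widehat\rho_1(\widehat y)\,\dd\widehat y=\widehat\nu$ collapses precisely to $\gamma^d=\tfrac{\rho_1(b)}{\rho_0(0)}\tfrac{|\det B|^2}{|\det D|}$, i.e.\ to the definition of $\gamma$; in particular $\widehat\mu$ and $\widehat\nu$ share the total mass $|\det B|/\rho_0(0)$, so $\widehat\pi$ is indeed a coupling between them.

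Next I would verify the normalizations and the transformation law \eqref{eq:mixed-derivative}. Since $\widehat x=0\Leftrightarrow x=0$ and $\widehat y=0\Leftrightarrow y=b$, the identities $\widehat\rho_0(0)=\widehat\rho_1(0)=1$ are immediate. Writing $\widehat c(\widehat x,\widehat y)=\gamma\,c\bigl(B^{-1}\widehat x,\ b+\gamma^{-1}D^{-*}B^*\widehat y\bigr)$ and differentiating once in $\widehat x$ and once in $\widehat y$ --- the inner maps being affine, their Jacobians are the constant matrices $B^{-1}$ and $\gamma^{-1}D^{-*}B^*$ --- the prefactor $\gamma$ cancels against the $\gamma^{-1}$, and with the block convention for $\nabla^2c$ recalled in the footnote to \eqref{eq:Q-transformation} one gets exactly $\nabla_{\widehat x\widehat y}\widehat c=BD^{-1}\nabla_{xy}c\,B^{-1}$. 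Evaluating at $(\widehat x,\widehat y)=(0,0)$, that is $(x,y)=(0,b)$, and using $D=-\nabla_{xy}c(0,b)$ yields $\nabla_{\widehat x\widehat y}\widehat c(0,0)=BD^{-1}(-D)B^{-1}=-\II$.

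The genuinely structural point --- and the heart of the lemma --- is the invariance of $c$-optimality. Since $Q$ is a product bijection, $\widetilde\pi\mapsto Q_{\#}\widetilde\pi$ is a bijection of $\Pi(\mu,\nu)$ onto $\Pi\bigl((Q_1)_{\#}\mu,(Q_2)_{\#}\nu\bigr)$, and from $c(x,y)=\gamma^{-1}\widehat c(Q(x,y))$ one has $\int c\,\dd\widetilde\pi=\gamma^{-1}\int\widehat c\,\dd(Q_{\#}\widetilde\pi)$ for every $\widetilde\pi\in\Pi(\mu,\nu)$. Hence $Q_{\#}\pi$ minimizes $\widetilde\pi'\mapsto\int\widehat c\,\dd\widetilde\pi'$ over $\Pi\bigl((Q_1)_{\#}\mu,(Q_2)_{\#}\nu\bigr)$, i.e.\ it is $\widehat c$-optimal between those marginals; multiplying by the positive constant $|\det B|/\rho_0(0)$ rescales the marginals to $\widehat\mu,\widehat\nu$ (Step 1) and the objective functional by the same constant, leaving the argmin unchanged. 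Thus $\widehat\pi$ is $\widehat c$-optimal between $\widehat\mu$ and $\widehat\nu$. I expect the only real difficulty to be bookkeeping: keeping the non-commuting matrices $B,D$ and their inverses and transposes, together with the scalar $\gamma$, straight so that \eqref{eq:mixed-derivative} and the two marginal identities come out exactly as written; conceptually, everything rests on the invariance of the Kantorovich problem \eqref{eq:OT-c} under affine product changes of variables and positive rescaling.
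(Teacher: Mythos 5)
Your proposal is correct and follows essentially the same route as the paper's proof in Appendix~\ref{app:proofcoord}: the key point in both is that the cost functional transforms by a positive constant under $\widetilde\pi\mapsto Q_{\#}\widetilde\pi$, so optimality is preserved (the paper phrases this as a contradiction argument with a pulled-back competitor, you phrase it as a bijection of the admissible classes, which is the same idea). You additionally write out the marginal computation fixing $\gamma$, which the paper leaves as ``easy to check''; this is a welcome but not substantive addition.
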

In the change of variables we perform, the role of $D$ is to ensure that we get a normalized cost, i.e.\ $\nabla_{\widehat{x}\widehat{y}}\widehat{c}(0,0) = -\II$, while $\gamma$ and $\det B$ in \eqref{eq:pihat-def} are needed for $\widehat{\pi}$ to define a transportation plan between the new densities. 
We refer the reader to Appendix \ref{app:proofcoord} for a proof of this lemma.

\begin{proposition}\label{prop:one-step-main}
Assume that $\rho_0(0) = \rho_1(0) = 1$ and $\nabla_{xy}c(0,0) = -\II$, and let $\pi \in \Pi(\rho_0, \rho_1)$ be $c$-optimal. 

Then for all $\beta \in (0, 1)$, there exist $\theta \in (0,1)$ and $C_{\beta} < \infty$ such that for all $\Lambda<\infty$ and $R>0$ for which 
\begin{align}
	&\cross{8R}\cap \supp\pi \subseteq \BB_{8R,\Lambda R}, \label{eq:qualitative-Linfty-onestep} \\
    &\mathcal{E}_{9R}(\pi) + R^{2\alpha}\left([\rho_0]_{\alpha, 9R}^2 + [\rho_1]_{\alpha, 9R}^2 + \left[\nabla_{xy}c\right]_{\alpha, 9R}^2\right) \ll_{\Lambda} 1, \label{eq:smallness-onestep-main}
\end{align}  
there exist a symmetric matrix $B\in\RR^{d\times d}$ and a vector $b \in \RR^d$ with 
\begin{align}\label{eq:B-b-est-main}
    |B-\II|^2 + \frac{1}{R^2}|b|^2 \lesssim \mathcal{E}_{9R}(\pi) + R^{2\alpha}\left([\rho_0]_{\alpha, 9R}^2 + [\rho_1]_{\alpha, 9R}^2\right), 
\end{align}
such that, performing the change of variables in Lemma \ref{lem:coordinate-change}, the coupling $\widehat{\pi}$ is $\widehat{c}$-optimal between the measures with densities $\widehat{\rho_0}$ and $\widehat{\rho_1}$ and there holds 
\begin{align}\label{eq:one-step-conclusion-main}
   \mathcal{E}_{\theta R}(\widehat{\pi}) \leq \theta^{2\beta} \mathcal{E}_{9R}(\pi) + C_{\beta}R^{2\alpha}\left([\rho_0]_{\alpha, 9R}^2 + [\rho_1]_{\alpha, 9R}^2 + \left[\nabla_{xy}c\right]_{\alpha, 9R}^2\right).
\end{align}
Moreover, 
we have the inclusion 
\begin{align}\label{eq:qualitative-theta-inclusion-main}
    \cross{\frac{8}{9}\theta R} \cap \supp \widehat{\pi} \subseteq \BB_{\frac{8}{9}\theta R, 3\theta R}.
\end{align}
\end{proposition}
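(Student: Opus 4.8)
The plan is to carry out one Campanato step: (i) upgrade the qualitative $L^\infty$-assumption \eqref{eq:qualitative-Linfty-onestep} to a quantitative one; (ii) localize by restricting $\pi$ to a fixed cross; (iii) compare with a harmonic gradient field; (iv) transfer back to the Lagrangian picture using almost-minimality; (v) read off the affine change of variables from the second-order Taylor polynomial of the harmonic potential and absorb the error terms, choosing in this order $\theta$ (depending on $\beta$), $\tau$, and the smallness threshold in \eqref{eq:smallness-onestep-main}.

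\emph{Steps (i)--(iii).} I would first invoke Proposition \ref{prop:Linfty-main}, rescaled so that its assumptions follow from \eqref{eq:qualitative-Linfty-onestep} and \eqref{eq:smallness-onestep-main} (concretely at scale $\sim R$, with its ``$6R$''-ball inside $B_{9R}$ and its ``$5R$''-cross inside $\cross{8R}$): by \eqref{eq:D-bound-rho} one has $\DD\lesssim R^{2\alpha}([\rho_0]_{\alpha,9R}^2+[\rho_1]_{\alpha,9R}^2)$, and \eqref{eq:c-mismatch} turns $R^{2\alpha}[\nabla_{xy}c]_{\alpha,9R}^2\ll_\Lambda 1$ into the $\mathcal C^0$-closeness \eqref{eq:Linfty-closeness-assumption-main}, so one obtains $|x-y|\lesssim R(\mathcal E_{9R}+\DD)^{1/(d+2)}=:MR$ on $\cross{6R}\cap\supp\pi$ with $M\ll1$. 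Then I restrict: by Lemma \ref{lem:optimal-restriction} the coupling $\pi_{6R}:=\pi\lfloor_{\cross{6R}}$ is $c$-optimal between its own marginals $\mu_{6R},\nu_{6R}$; the $L^\infty$-bound just obtained forces $\supp\pi_{6R}\subseteq B_{7R}\times B_{7R}$ (sharper than the generic $B_{12R}$ of Corollary \ref{cor:restriction-properties}), while $\mu_{6R}=\mu$, $\nu_{6R}=\nu$ on $B_{6R}$ keeps $\DD_{6R}(\mu_{6R},\nu_{6R})=\DD_{6R}(\mu,\nu)$ controlled by the densities, and a short computation using that the displacement is tiny gives $\mu_{6R}(B_{7R})=\pi(\cross{6R})\le 2|B_{7R}|$. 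Combining Proposition \ref{prop:quasi-minimality-lagrangian-main} at scale $7R$ with Lemma \ref{prop:quasi-minimality-eulerian-main}, the Eulerian description $(\rho,j)$ of $\pi_{6R}$ is almost-minimizing in the sense required by Theorem \ref{thm:harmonic}, with $\Delta_R\lesssim R^{\alpha}[\nabla_{xy}c]_{\alpha,9R}\,\mathcal E_{9R}^{1/2}$ (using \eqref{eq:c-mismatch} again, and passing from the global-marginal competitor class of Lemma \ref{prop:quasi-minimality-eulerian-main} to the local-plus-boundary-flux class of Theorem \ref{thm:harmonic} by extending a competitor with the original flux outside $B_R$). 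Theorem \ref{thm:harmonic}, applied at scale $R$, then produces a radius $R_*\in(3R,4R)$ and a harmonic potential $\Phi$ on $B_{R_*}$ obeying \eqref{eq:harmonic-energy} and \eqref{eq:harmonic-estimate}.

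\emph{Steps (iv)--(v).} Transferring to the Lagrangian side via Lemma \ref{lem:harmonic-lagrange} and inserting \eqref{eq:harmonic-energy} gives $\frac{1}{R^{d+2}}\int_{\cross{R}}\int_0^1|x-y+\nabla\Phi(tx+(1-t)y)|^2\,\dd t\,\dd\pi\lesssim(\tau+\tfrac{CM}{\tau})\mathcal E_{6R}+C_\tau\DD_{6R}+\Delta_R$. I set $B:=(\II+\nabla^2\Phi(0))^{1/2}$, symmetric and, by \eqref{eq:harmonic-estimate}, with $|B-\II|^2\lesssim\mathcal E_{6R}+\DD_{6R}$, and $b:=\nabla\Phi(0)$, so $R^{-2}|b|^2\lesssim\mathcal E_{6R}+\DD_{6R}$; together with $\DD_{6R}\lesssim R^{2\alpha}([\rho_0]_{\alpha,9R}^2+[\rho_1]_{\alpha,9R}^2)$ and $\mathcal E_{6R}\lesssim\mathcal E_{9R}$ this is \eqref{eq:B-b-est-main}. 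Because $\nabla_{xy}c(0,0)=-\II$ and $\rho_0(0)=\rho_1(0)=1$, the matrix $D=-\nabla_{xy}c(0,b)$ and the factor $\gamma$ of Lemma \ref{lem:coordinate-change} equal $\II$ and $1$ up to $O(R^\alpha[\nabla_{xy}c]_{\alpha,9R})$, so that after the change of variables $Q$ one has $\widehat x-\widehat y=B^{-1}\big((x-y+\nabla\Phi(tx+(1-t)y))-r(x,y)\big)$ modulo these lower-order cost/density corrections, where $r$ collects the second-order Taylor remainder $\nabla\Phi(x)-\nabla\Phi(0)-\nabla^2\Phi(0)x$ (of size $\lesssim|x|^2\sup|\nabla^3\Phi|$) and the segment-versus-endpoint mismatch $\nabla\Phi(tx+(1-t)y)-\nabla\Phi(x)$ (of size $\lesssim MR\sup|\nabla^2\Phi|$). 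Feeding this into $\mathcal E_{\theta R}(\widehat\pi)=\frac{|\det B|}{(\theta R)^{d+2}}\int_{Q^{-1}(\cross{\theta R})}\tfrac{1}{2}|\widehat x-\widehat y|^2\,\dd\pi$, using the $L^\infty$-bound (so $|x|,|y|\lesssim\theta R$ on the relevant domain, which differs from $\cross{\theta R}$ only by a thin, negligible shell) and \eqref{eq:harmonic-estimate}, the harmonic-approximation defect contributes $\lesssim\theta^{-(d+2)}\big((\tau+\tfrac{CM}{\tau})\mathcal E_{6R}+C_\tau\DD_{6R}+\Delta_R\big)$, the Taylor terms contribute $\lesssim(\theta^2+(M/\theta)^2)(\mathcal E_{6R}+\DD_{6R})$, and the cost/density corrections a small multiple of $R^{2\alpha}[\nabla_{xy}c]_{\alpha,9R}^2$. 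Splitting $\Delta_R$ by Young as $\delta\mathcal E_{9R}+C_\delta R^{2\alpha}[\nabla_{xy}c]_{\alpha,9R}^2$ and choosing first $\theta$ small (so $C\theta^2\le\tfrac{1}{2}\theta^{2\beta}$), then $\tau,\delta$ small, then the smallness in \eqref{eq:smallness-onestep-main} small enough (depending on $\beta,\Lambda,\tau$) that $M$ and $\mathcal E_{6R}+\DD_{6R}$ are correspondingly small, one arrives at \eqref{eq:one-step-conclusion-main}; that $\widehat\pi$ is $\widehat c$-optimal is immediate from Lemma \ref{lem:coordinate-change}. Finally, \eqref{eq:qualitative-theta-inclusion-main} follows from $B,\gamma B^{-*}D^*\approx\II$, $|b|$ small, and \eqref{eq:qualitative-Linfty-onestep}: if $\widehat x\in B_{\frac{8}{9}\theta R}$ then $x\in B_{\theta R}$, hence $y\in B_{3\theta R}$ by the $L^\infty$-bound and $\widehat y\in B_{3\theta R}$ with room to spare, and symmetrically.

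I expect the main obstacle to be the bookkeeping in step (v): the change of variables $Q$ is governed by the \emph{symmetrized} second-order part of $\Phi$ and is entangled with the cost through $D=-\nabla_{xy}c(0,b)$, and one must check that each error term --- the harmonic-approximation defect $\Delta_R$, the cubic Taylor remainder, the segment-versus-endpoint mismatch, and the deviations of $D,\gamma$ from $\II,1$ --- carries the right power of $R$ and either gains a favourable power of $\theta$ or lands as $C_\beta R^{2\alpha}(\cdots)$, so as to be absorbable in the order $\beta\to\theta\to\tau\to$ smallness. A second, more structural, delicate point is the scale matching that keeps $\DD_{6R}$ of the restricted marginals equal to that of the original ones; this is exactly what dictates restricting to $\cross{6R}$ rather than to a smaller cross, and relies essentially on the $L^\infty$-bound to confine $\supp\pi_{6R}$ to $B_{7R}$.
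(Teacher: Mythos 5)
Your proposal is correct and follows essentially the same route as the paper's proof: restriction to $\cross{6R}$, the quantitative $L^{\infty}$ bound, Lagrangian--Eulerian almost-minimality feeding into the harmonic approximation, the affine map built from $\nabla\Phi(0)$ and $\nabla^2\Phi(0)$, the same error decomposition, and the same order of parameter choices ($\theta$, then $\tau$, then smallness). The only deviation is cosmetic: you take $B=(\II+\nabla^2\Phi(0))^{1/2}$ where the paper takes $B=\ee^{\nabla^2\Phi(0)/2}$; both satisfy $B^2=\II+\nabla^2\Phi(0)+O(|\nabla^2\Phi(0)|^2)$ and the resulting superlinear correction in $|1-\det B^2|$ is absorbable, so nothing changes.
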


Let us give a rough sketch of how the one-step improvement result can now be iterated: 
In a first step, the qualitative bound on the displacement is obtained from the global assumptions \ref{item:cost-cont}--\ref{item:cost-non-deg} on the cost function, see Lemma \ref{lem:displacement-qualitative}. This yields an initial scale $R_0>0$ below which the cost function is close enough to the Euclidean cost function for \eqref{eq:Linfty-inclusion-assumption-main} to hold. 
We may therefore apply Proposition \ref{prop:one-step-main}, so that after an affine change of coordinates the the energy inequality \eqref{eq:one-step-conclusion-main} holds, the transformed densities and cost function are again normalized at the origin, optimality is preserved, and the qualitative $L^{\infty}$ bound \eqref{eq:qualitative-theta-inclusion-main} holds for the new coupling. We can therefore apply the one-step improvement Proposition \ref{prop:one-step-main} again, going to smaller and smaller scales. 
Together with Campanato's characterization of Hölder regularity, this yields the claimed existence and $\mathcal{C}^{1,\alpha}$ regularity of $T$.

The details of the above parts of the proof of our main Theorem \ref{thm:main} are explained in the sections below, with a full proof of Theorem \ref{thm:main} in Section \ref{sec:proof-main}. The proof of Corollary \ref{cor:partial} is essentially a combination of the ideas in \cite{GO17} and \cite{DF14}, and is given for the convenience of the reader in Section \ref{sec:partial}.

We conclude the introduction with a comment on the extension of the results presented above to general almost-minimizers with respect to Euclidean cost in the following sense: 

\begin{definition}[Almost-minimality w.r.t.\ Euclidean cost (on all scales)]\label{def:almost-min-general}
A coupling $\pi\in\Pi(\mu,\nu)$ is almost-minimal with respect to Euclidean cost if there exists $R_0>0$ and $\Delta_{\cdot}: (0,R_0] \to [0,\infty)$ non-decreasing such that for all $r\leq R_0$ and $(x_0,y_0)$ in the interior of $X \times Y$ there holds 
\begin{align}\label{eq:almost-min-general}
    \int |y-x|^2\,\mathrm{d}\pi \leq \int |y-x|^2\,\mathrm{d}\widetilde{\pi} + r^{d+2}\Delta_r
\end{align}
for all $\widetilde{\pi} \in \Pi(\mu,\nu)$ such that $\supp(\pi-\widetilde{\pi}) \subseteq (B_r(x_0)\times\RR^d) \cup (\RR^d \times B_r(y_0))$.
\end{definition}

We will restrict our attention to almost-minimizers in the class of deterministic transport plans coming from a Monge map $T$, i.e.\ $\pi=\pi_T = (\id, T)_{\#} \mu$, and call a transport map almost-minimizing with respect to Euclidean cost if for all $r\leq R_0$ and $x_0 \in \mathrm{int}X$ there holds
\begin{align}\label{eq:almost-min-monge}
    \int |T(x) - x|^2 \,\mu(\mathrm{d}x) \leq \int |\widetilde{T}(x) - x|^2\,\mu(\mathrm{d}x) + r^{d+2} \Delta_r
\end{align}
for all $\widetilde{T}$ such that $\widetilde{T}_{\#}\mu=\nu$ and $\mathrm{graph} \widetilde{T} = \mathrm{graph} T$ outside $(B_r(x_0)\times \RR^d) \cup (\RR^d \times B_r(T(x_0)))$.

In this situation, we get the following generalization\footnote{A more quantitative version of this result is work in progress.} of Theorem~\ref{thm:main}, whose proof will be sketched in Section~\ref{sec:almost-min-general}.
\begin{theorem}\label{thm:almost-min}
    Assume that $\rho_0(0) = \rho_1(0) = 1$, and that $0$ is in the interior of $X\times Y$. 
    Let $T:X\to Y$ be an almost-minimizing transport map from $\mu$ to $\nu$ with rate function $\Delta_r = C r^{2\alpha}$ for some $C<\infty$. Assume further that $T$ is invertible. There exists $R_1>0$ such that for any $R\leq R_1$ with
    \begin{align*}
        \frac{1}{R^{d+2}} \left( \int_{B_{4R}} |T(x) - x|^2\,\mu(\mathrm{d}x) + \int_{B_{4R}} |T^{-1}(y) - y|^2\,\nu(\mathrm{d}y) \right) + R^{2\alpha} \left([\rho_0]_{\alpha, 4R}^2 + [\rho_1]_{\alpha,4R}^2 \right) \ll 1,
    \end{align*}
    there holds $T\in \mathcal{C}^{1,\alpha}(B_R)$.
\end{theorem}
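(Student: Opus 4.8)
The plan is to run the same Campanato iteration as in the proof of Theorem~\ref{thm:main}, but with the almost-minimality hypothesis \eqref{eq:almost-min-monge} playing the role that was previously supplied by Proposition~\ref{prop:quasi-minimality-lagrangian-main}. The point is that the entire machinery downstream of Proposition~\ref{prop:quasi-minimality-lagrangian-main} --- the Eulerian almost-minimality (Lemma~\ref{prop:quasi-minimality-eulerian-main}), the harmonic approximation (Theorem~\ref{thm:harmonic} together with Lemma~\ref{lem:harmonic-lagrange}), and the one-step improvement (Proposition~\ref{prop:one-step-main}) --- only ever uses that the coupling, restricted to a cross, is almost-minimizing for the Euclidean cost with a rate $\Delta_r\lesssim r^{2\alpha}$ (and the H\"older control of the densities through $\DD_r\lesssim r^{2\alpha}([\rho_0]_{\alpha,r}^2+[\rho_1]_{\alpha,r}^2)$). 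So the first step is to observe that Definition~\ref{def:almost-min-general}, specialized via \eqref{eq:almost-min-monge} to Monge maps and to the rate $\Delta_r=Cr^{2\alpha}$, directly gives the Lagrangian almost-minimality on every cross $\cross{r}(x_0)$ at every scale $r\le R_0$, with the quantitatively correct homogeneity; in particular there is no longer any need for the cost $c$, for $c$-monotonicity, or for the closeness assumption \eqref{eq:Linfty-closeness-assumption-main}.

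The second step is the $L^\infty/L^2$ bound on the displacement. Here the qualitative input \eqref{eq:Linfty-inclusion-assumption-main}, which in Theorem~\ref{thm:main} was extracted from \ref{item:cost-cont}--\ref{item:cost-non-deg} via Lemma~\ref{lem:displacement-qualitative}, must instead be obtained from the hypotheses of Theorem~\ref{thm:almost-min}: invertibility of $T$ together with $\supp\nu\subseteq Y$ bounded gives, for some $R_1>0$ and $\Lambda_0<\infty$ depending only on the sets $X,Y$ and on $T$, that $\cross{5R}\cap\supp\pi_T\subseteq\BB_{5R,\Lambda_0 R}$ for $R\le R_1$ --- this is where the two-sided energy smallness in the hypothesis is used, and it is also the point of assuming $T$ invertible (so that the roles of the two marginals are symmetric, cf.\ Remark~\ref{rem:T-inverse}). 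Once this qualitative inclusion is in force, the argument of Proposition~\ref{prop:Linfty-main} goes through \emph{without} the cost function: the $c$-monotonicity of $\supp\pi$ was used there only to produce a competitor and compare energies, and almost-minimality \eqref{eq:almost-min-monge} (used on the cross $\cross{r}$, invoking Lemma~\ref{lem:optimal-restriction}'s analogue --- restriction of an almost-minimizer to a cross is almost-minimizing given its own marginals, which follows from \eqref{eq:almost-min-monge} by localizing the perturbation) provides exactly the same one-sided comparison with an error term that is superlinear in the energy because $\Delta_r\lesssim r^{2\alpha}$. Thus one gets $|x-y|\lesssim R(\EE_{6R}+\DD_{6R}+R^{2\alpha}(\dots))^{1/(d+2)}$ on $\cross{4R}\cap\supp\pi_T$.

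The third step is to assemble the one-step improvement: restrict $\pi_T$ to the finite cross on which the $L^\infty$ bound holds, obtaining by the Monge-map analogue of Corollary~\ref{cor:restriction-properties} a coupling supported on $B_{2R}\times B_{2R}$ with marginals agreeing with $\mu,\nu$ on $B_R$ and dominated by them, which is almost-minimizing for the Euclidean cost with rate $\Delta_R\lesssim R^{2\alpha}$; feed this into Lemma~\ref{prop:quasi-minimality-eulerian-main} to get Eulerian almost-minimality, then into Theorem~\ref{thm:harmonic} and Lemma~\ref{lem:harmonic-lagrange} to get closeness of the displacement to a harmonic gradient field, then run the affine change of coordinates of Lemma~\ref{lem:coordinate-change} (with $c=\widetilde c(x,y)=-x\cdot y$, so $D=\II$ and the cost terms drop out) exactly as in Proposition~\ref{prop:one-step-main} to obtain $\EE_{\theta R}(\widehat\pi)\le\theta^{2\beta}\EE_{9R}(\pi)+C_\beta R^{2\alpha}([\rho_0]_{\alpha,9R}^2+[\rho_1]_{\alpha,9R}^2)$ together with the reproduced qualitative inclusion \eqref{eq:qualitative-theta-inclusion-main}. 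One must check that the change of coordinates respects almost-minimality --- it does, since an affine map with constant Jacobian conjugates the Euclidean cost into a quadratic cost plus a null-Lagrangian and rescales the rate function by a constant depending only on $|B-\II|$ (which is $\ll1$), hence $\widehat\pi$ is again Euclidean-almost-minimizing with a comparable rate on the rescaled scales. Iterating and invoking Campanato's characterization of $\mathcal{C}^{1,\alpha}$ (as in Section~\ref{sec:proof-main}) yields $T\in\mathcal{C}^{1,\alpha}(B_R)$.

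The main obstacle I expect is the \emph{stability of the almost-minimality property under the operations of the iteration}: restriction to a cross, passing from Monge maps to couplings and back, and the affine change of variables must each preserve \eqref{eq:almost-min-monge} (or its coupling version) with a rate that is still $\lesssim r^{2\alpha}$ and with constants that do not degenerate over the infinitely many steps of the Campanato iteration. Restriction is the delicate one: a competitor for the restricted coupling must be extended to a global competitor for $\pi_T$ without destroying the marginal constraints or enlarging the support of the perturbation beyond a cross --- this is precisely the content of (the Monge-map version of) Lemma~\ref{lem:optimal-restriction}, and one has to verify that \eqref{eq:almost-min-monge} is robust enough to allow gluing. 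A secondary obstacle is that $\DD$ only behaves well under restriction \emph{on average}, so the good radius $R_*\in(3R,4R)$ must be chosen as in Theorem~\ref{thm:harmonic}; since that choice is already internal to Theorem~\ref{thm:harmonic}, which we are invoking as a black box, this causes no new difficulty, but it does mean the smallness thresholds must be tracked carefully to keep them uniform along the iteration.
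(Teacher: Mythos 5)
There is a genuine gap at the heart of your second step. You claim that the $L^\infty/L^2$ bound of Proposition~\ref{prop:Linfty-main} ``goes through without the cost function'' because $c$-monotonicity was ``used there only to produce a competitor and compare energies.'' That is not what happens in that proof: $c$-monotonicity is a \emph{pointwise} inequality on $\supp\pi$, namely $c(x,y)+c(x',y')\le c(x,y')+c(x',y)$ for every pair of points in the support, and the proof integrates this pointwise inequality against $\eta(x')\,\pi(\dd x'\dd y')$ while keeping $(x,y)$ fixed. For an exact minimizer this pointwise property follows by swapping infinitesimal amounts of mass, but for an almost-minimizer the swap of mass $\sim\epsilon^d$ near two points at mutual distance $\sim R$ only yields a gain of order $\epsilon^d$ against an error $R^{d+2}\Delta_R$ that does \emph{not} vanish as $\epsilon\to0$; no pointwise monotonicity of the support survives, and hence no pointwise bound on the displacement. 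This is exactly the obstruction the paper flags at the opening of Section~\ref{sec:almost-min-general} (``our assumptions do not allow us to prove an $L^\infty$ bound on the displacement'') and again in the proof of Proposition~\ref{prop:Lp-bound-almost-min} (``we cannot really work with a pointwise argument but have to consider small sets $A$ of positive measure'').

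The missing idea is the paper's replacement for the $L^\infty$ bound: an interior $L^p$ bound on the displacement for every $p<\infty$ (Proposition~\ref{prop:Lp-bound-almost-min-main}, proved by swapping \emph{sets} of positive measure and averaging over directions), followed by a splitting of trajectories into a good set $\mathcal{G}$, on which an $L^\infty$ bound with threshold $MR$, $M=(\EE_{6R}+\DD_{6R})^{\alpha}$, holds by fiat, and a bad set $\mathcal{B}$ whose energy contribution is shown to be \emph{superlinearly} small in $\EE_{6R}$ by combining the measure bound $\mu(\mathcal{B})\lesssim R^d\,\EE_{6R}^{1-2\alpha}$ with H\"older's inequality and the $L^p$ bound for $p$ large. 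Only the restriction to $\mathcal{G}\times T(\mathcal{G})$ is fed into the harmonic approximation, and the bad trajectories must be estimated separately in the one-step improvement (estimate \eqref{eq:harmonic-all-trajectories}). From that point on, your outline (Eulerian almost-minimality, harmonic approximation, affine change of variables with $D=\II$, stability of the almost-minimality rate under $Q$) does match the paper's argument, and your concern about propagating the rate $\Delta_r$ through the iteration is addressed there exactly as you anticipate. A secondary weak point: your derivation of the qualitative inclusion $\cross{5R}\cap\supp\pi_T\subseteq\BB_{5R,\Lambda_0R}$ from invertibility of $T$ and boundedness of $Y$ alone gives only $|T(x)|\le\operatorname{diam}Y$, not $|T(x)|\lesssim R$ for small $R$; without continuity of $T$ (which is the conclusion, not a hypothesis) this inclusion is not available, which is another reason the $L^p$ route is needed.
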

\bigskip
\begin{center}
\textsc{Acknowledgements}
\end{center}
\medskip

We thank Jonas Hirsch for pointing out and explaining the reference \cite{SchoenSimon} and other works related to minimal surfaces.
We also thank Georgiana Chatzigeorgiou for carefully reading a previous version of this article and valuable comments, which led to the identification of a more serious error that has been fixed in the current version. We also thank the referee for very helpful comments, in particular for suggesting to include the more general statement on almost-minimizing transport maps.

MP thanks the Max Planck Institute for Mathematics in the Sciences and TR thanks the University of Toulouse for their kind hospitality. 
MP was partially supported by the Projects MESA (ANR-18-CE40-006) and EFI (ANR-17-CE40-0030) of the French National Research Agency (ANR).

\section{An \texorpdfstring{$L^{\infty}$}{L infinity} bound on the displacement}\label{sec:Linfty}
In this section we establish an $L^{\infty}$ bound on the displacement for transference plans $\pi\in\Pi(\mu, \nu)$ with $c$-monotone support, that is,
\begin{align}\label{eq:c-monotonicity}
	c(x, y) + c(x',y') \leq c(x, y') + c(x',y) \quad \text{for all} \quad (x,y), (x',y') \in \supp \pi,
\end{align}
provided that the transport cost is small, the marginals $\mu, \nu$ are close to the Lebesgue measure, and the cost function $c$ is close to the Euclidean cost function.  
In Lemma \ref{lem:displacement-qualitative} we use the $c$-monotonicity \eqref{eq:c-monotonicity}
combined with the qualitative hypotheses \ref{item:cost-cont}--\ref{item:cost-non-deg} in conjunction with compactness to obtain a more qualitative version of the $L^\infty/L^2$-bound, which just expresses finite expansion.
In Proposition \ref{prop:Linfty-main} this qualitative $L^{\infty}/L^2$ bound in form of \eqref{eq:Linfty-inclusion-assumption-main} is upgraded to the desired quantitative version under the scale-invariant smallness assumption \eqref{eq:Linfty-closeness-assumption-main}. 
The latter is a consequence of the quantitative smallness hypothesis $R^{2\alpha} [\nabla_{xy}c]_{\alpha,R}^2 \ll 1$, as we pointed out in Remark \ref{rem:qualitative-quantitative}. 
In both steps, we need to ensure that there are sufficiently many points in $\supp\pi$ close to the diagonal. This is formulated in Lemma \ref{lem:cone}, which does not rely on monotonicity.

\begin{proof}[Proof of Proposition \ref{prop:Linfty-main}]
    Let $\Lambda < \infty$ and  $R>0$ be such that \eqref{eq:Linfty-inclusion-assumption-main}, \eqref{eq:Linfty-smallness-assumption-main} and \eqref{eq:Linfty-closeness-assumption-main} hold. We only prove the bound \eqref{eq:Linfty-main} for a couple $(x,y) \in (B_{4R} \times \RR^d) \cap \supp\pi$, as the other case $(x,y) \in (\RR^d \times B_{4R}) \cap \supp\pi$ follows by symmetry. 
        
	\begin{enumerate}[label=\textsc{Step \arabic*},leftmargin=0pt,labelsep=*,itemindent=*]
		\item (Rescaling). Let $(\widetilde{x}, \widetilde{y}) = (S_R(x), S_R(y)) := (R^{-1}x, R^{-1}y)$ and set 
		\begin{align*}
		    \widetilde{\mu} &:= {S_R}_{\#} \mu, \quad \widetilde{\nu} := {S_R}_{\#} \nu, \quad \widetilde{\pi} := R^{-d} (S_R \times S_R)_{\#} \pi \quad \text{and} \quad \widetilde{c}(\widetilde{x}, \widetilde{y}) := R^{-2}c(R\widetilde{x}, R\widetilde{y}), 
		\end{align*}
		so that $\widetilde{c}$ still satisfies properties \ref{item:cost-cont}--\ref{item:cost-non-deg}, and we have  $\widetilde{\pi} \in \Pi(\widetilde{\mu}, \widetilde{\nu})$, and $\supp\widetilde{\pi}$ is $\widetilde{c}$-monotone. We also have 
		\begin{align*}
		    \EE_6(\widetilde{\pi}) = \EE_{6R}(\pi), \quad \DD_6(\widetilde{\mu}, \widetilde{\nu}) = \DD_{6R}(\mu, \nu) \quad \text{and} \quad \|\nabla_{\widetilde{x} \widetilde{y}}\widetilde{c} + \II\|_{\mathcal{C}^0(\BB_{5, \Lambda})} = \|\nabla_{xy}c+\II\|_{\mathcal{C}^0(\BB_{5R, \Lambda R})}. 
		\end{align*} This allows us to only consider the  case $R=1$ in the following. We will abbreviate 
		\begin{align*}
		    \EE := \EE_6 \quad \text{and} \quad \DD := \DD_6. 
		\end{align*}
		
		\medskip
		\item (Use of $c$-monotonicity of $\supp \pi$).
    		Let $(x,y) \in (B_{4} \times \RR^d) \cap \supp\pi$. We first show that for all $(x',y') \in (B_{5} \times \RR^d) \cap \supp\pi$ we have 
    		\begin{align}\label{eq:Linfty-step1}
				(x-y) \cdot (x-x') 
				&\leq 3 |x-x'|^2 + |x'-y'|^2 + \delta |x-x'| \, |x-y|,
			\end{align}
			where, recalling \eqref{eq:def-finite-cross},  
			\begin{align}\label{eq:delta-def}
				\delta:= \|\nabla_{xy}c + \II \|_{\mathcal{C}^0(\BB_{5, \Lambda})}.  
			\end{align}
			
			Indeed, setting $x_t := tx + (1-t)x'$ and $y_s := s y + (1-s) y'$ for $t,s \in [0,1]$, $c$-monotonicity \eqref{eq:c-monotonicity} of $\supp\pi$
		    implies that 
		    \begin{align}
				0 &\geq (c(x,y)-c(x',y))-(c(x,y')-c(x',y'))
				= \int_0^1 \int_0^1 (x-x') \cdot \nabla_{xy} c(x_t, y_s) \, (y-y') \,\dd s \dd t  \nonumber\\
		         &= - (x-x')\cdot (y-y') +  \int_0^1 \int_0^1 (x-x') \cdot \left(\nabla_{xy} c(x_t, y_s) + \II \right) \, (y-y') \,\dd s \dd t. \label{eq:Linfty-step1-cmon}
		    \end{align}
			By assumption \eqref{eq:Linfty-inclusion-assumption-main}, we have $(x,y)$, $(x',y') \in B_{5} \times B_{\Lambda}$ and thus $\{(x_t, y_t)\}_{t\in[0,1]} \subseteq B_{5} \times B_{\Lambda} \subset \BB_{5, \Lambda}$. Hence, we obtain from \eqref{eq:delta-def}  
			\begin{align*}
				\left| \int_0^1 \int_0^1 (x-x') \cdot \left(\nabla_{xy} c(x_t, y_s) + \II \right) \, (y-y') \,\dd s \dd t \right| \leq \delta |x-x'| \, |y-y'|,
			\end{align*}
			so that \eqref{eq:Linfty-step1-cmon} turns into
			\begin{align}\label{eq:Linfty-step1-cmon-2}
				0 \geq - (x-x') \cdot (y-y') - \delta |x-x'| \, |y-y'|.
			\end{align}
			Upon writing $y-y' = (y-x) + (x-x') + (x'-y')$, it follows that
			\begin{align*}
				0 &\geq - (x-x') \cdot (y-x) - |x-x'|^2 - (x-x') \cdot (x'-y') - \delta |x-x'| \, |y-x| \\
				&\qquad - \delta |x-x'|^2 -\delta |x-x'| \, |x'-y'|,
			\end{align*}
			from which we obtain the estimate  
			\begin{align*}
				(x-y) \cdot (x-x') 
				&\leq \delta |x-x'| \, |x-y| + (1+\delta) |x-x'|^2 + (1+\delta) |x-x'| \, |x'-y'| \\
				&\leq \delta |x-x'| \, |x-y| +  \frac{3}{2}(1+\delta) |x-x'|^2 + \frac{1}{2} (1+\delta) |x'-y'|^2.
			\end{align*}
			Note that $\delta \ll 1$ by assumption \eqref{eq:Linfty-closeness-assumption-main}, hence \eqref{eq:Linfty-step1} follows.
		\bigskip
		\item (Proof of estimate \eqref{eq:Linfty-main}). 
	Let $r \ll 1$ and $e\in S^{d-1}$ be arbitrary, to be fixed later. Let $\eta$ be supported in $B_{\frac{r}{2}}(x-re)$ and satisfy the bounds 
	\begin{align}\label{eq:Linfty-assumptions-eta}
		\sup |\eta| + r \sup |\nabla\eta| + r^2 \sup |\nabla^2 \eta| \lesssim 1.
	\end{align}
	We make the additional assumption that $\eta$ is normalized in such a way that
	\begin{align}\label{eq:normalisation-eta}
		\int (x-x') \eta(x') \,\dd x' = r^{d+1} e.
	\end{align}
	Note that since $x\in B_{4}$ and $r\ll 1$, we have 
	\begin{align*}
	    \supp \eta \subseteq B_{\frac{r}{2}}(x-re) \subset B_{5}. 
	\end{align*}

	Integrating inequality \eqref{eq:Linfty-step1} against the measure $\eta(x') \, \pi(\dd x' \dd y')$, 
	it follows that
	\begin{align} \label{eq:eta-chi}
	\begin{split}
		(x-y) \cdot \int (x-x') \eta(x')\,\pi(\dd x' \dd y') 
		&\leq 3 \int |x-x'|^2 \eta(x')\,\pi(\dd x' \dd y')\\
		&\quad + \int |x'-y'|^2 \eta(x')\,\pi(\dd x' \dd y')\\
		&\quad + \delta |x-y| \int |x-x'| \eta(x')\,\pi(\dd x' \dd y').
	\end{split}
	\end{align}
	Note that by \eqref{eq:normalisation-eta} the integral on the left-hand-side of inequality \eqref{eq:eta-chi} can be expressed as  
	\begin{align*}
		\int (x-x') \eta(x')\, \pi(\dd x' \dd y')
		&= \int (x-x') \eta(x') \mu(\dd x') \\
		&= \kappa_{\mu} r^{d+1} e 
		+ \int (x-x') \eta(x') \left( \mu(\dd x') - \kappa_{\mu}\,\dd x'\right). 
	\end{align*}
	To estimate the latter integral, we recall the following result from \cite[Lemma 2.8]{GHO19}: for any $\zeta \in \mathcal{C}^{\infty}(B_R)$, 
	\begin{align}\label{eq:GHO19-cutoff-bound}
		\left| \int_{B_{R}} \zeta \,(\dd\mu -\kappa_{\mu}\,\dd x) \right| 
		&\leq \left( \kappa_{\mu} \int_{B_{R}} |\nabla\zeta|^2\,\dd x \, W_{B_{R}}^2(\mu,\kappa_{\mu}) \right)^{\frac{1}{2}} + \frac{1}{2} \sup_{B_{R}} |\nabla^2 \zeta| \, W_{B_{R}}^2(\mu,\kappa_{\mu}).
	\end{align}
	By this estimate with $\zeta = (x-\cdot)\eta$ and using that $\kappa_{\mu}\sim 1$ by assumption  \eqref{eq:Linfty-smallness-assumption-main}, we obtain with \eqref{eq:Linfty-assumptions-eta} that
	\begin{align*}
		\left| \int (x-x') \eta(x') \left(\mu(\dd x') - \kappa_{\mu}\,\dd x'\right) \right|
		&\lesssim (r^d \DD)^{\frac{1}{2}} + \frac{1}{r} \DD 
		\lesssim \epsilon r^{d+1} + \frac{1}{\epsilon} \frac{1}{r}  \DD
	\end{align*}
	for some $0<\epsilon\ll 1$ to be fixed later. Hence,
	\begin{align}\label{eq:Linfty-step2-lhs}
	\begin{split}
		(x-y) &\cdot \int (x-x') \eta(x')\,\pi(\dd x' \dd y') \\
		&\geq \kappa_{\mu} r^{d+1} (x-y)\cdot e - C \left(\epsilon r^{d+1} + \frac{1}{\epsilon r} \DD\right)|x-y|.
	\end{split}
	\end{align}
	We now estimate each term on the right-hand-side of inequality \eqref{eq:eta-chi} separately: 
	\begin{enumerate}[label=\arabic*),leftmargin=0pt,labelsep=*,itemindent=*]
		\item For the first term we estimate 
		\begin{align*}
			&\int |x-x'|^2 \eta(x')\,\pi(\dd x' \dd y') 
			= \int |x-x'|^2 \eta(x') \mu(\dd x') \\
			&\quad \leq \kappa_{\mu} \int |x-x'|^2 \eta(x') \,\dd x'
			+ \left| \int |x-x'|^2 \eta(x') \left(\mu(\dd x') - \kappa_{\mu}\,\dd x'\right) \right|.
		\end{align*}
		Using again \eqref{eq:Linfty-assumptions-eta} and $\kappa_{\mu} \sim 1$ for the first term on the right-hand side, estimate \eqref{eq:GHO19-cutoff-bound} with $\zeta=|x-\cdot|^2 \eta$, and Young's inequality for the second term we obtain 
		\begin{align}\label{eq:Linfty-step2-rhs1}
			\int |x-x'|^2 \eta(x')\,\pi(\dd x' \dd y') 
			\lesssim r^{d+2} + \DD.
		\end{align}
			
		\item For the second term on the right-hand-side of \eqref{eq:eta-chi} we use that $\supp\eta\subseteq B_{5}$ and \eqref{eq:Linfty-assumptions-eta}, recalling also the definition \eqref{eq:def-energy} of $\EE$, to estimate 
		\begin{align}\label{eq:Linfty-step2-rhs2}
			\int |x'-y'|^2 \eta(x')\,\pi(\dd x' \dd y')
			\lesssim \int_{B_{5}\times \RR^d} |x'-y'|^2 \,\pi(\dd x' \dd y')
			\lesssim \EE.
		\end{align}	
		
		\item We may bound the integral in the third term on the right-hand-side of \eqref{eq:eta-chi} as for \eqref{eq:Linfty-step2-rhs1} by using \eqref{eq:Linfty-assumptions-eta}, $\kappa_{\mu} \sim 1$ and estimate \eqref{eq:GHO19-cutoff-bound} with\footnote{Notice that by the assumption that $\eta$ is supported on $B_{\frac{r}{2}}(x-re)$, the function $x' \mapsto |x-x'| \eta(x')$ has no singularity at $x'=x$.} $\zeta=|x-\cdot|\eta$ to get 
		\begin{align}\label{eq:Linfty-step2-rhs3}
			\int |x-x'| \eta(x')\,\pi(\dd x' \dd y')
			\lesssim r^{d+1} + \frac{1}{r} \DD.
		\end{align}
		\end{enumerate}

		Inserting the estimates \eqref{eq:Linfty-step2-lhs}, \eqref{eq:Linfty-step2-rhs1}, \eqref{eq:Linfty-step2-rhs2}, and \eqref{eq:Linfty-step2-rhs3} into inequality \eqref{eq:eta-chi} yields 
			\begin{align*}
				&\kappa_{\mu} r^{d+1} (x-y) \cdot e \lesssim \left(\epsilon r^{d+1} + \frac{1}{\epsilon r} \DD\right)|x-y| 
				+ r^{d+2} + \DD 
				+ \EE
				+ \delta \left(r^{d+1} + \frac{1}{r} \DD \right)|x-y|.
			\end{align*}
		Since $e$ is arbitrary and $\kappa_{\mu} \sim 1$, this turns into
			\begin{align}\label{eq:Linfty-step2-penultimate}
				|x-y| 
				&\lesssim (\epsilon + \delta) |x-y| 
				+ \left(\delta+\frac{1}{\epsilon}\right) \left(\frac{1}{r}\right)^{d+2}\DD \,|x-y| + r + \left(\frac{1}{r}\right)^{d+1} (\EE + \DD).
			\end{align}
		We first choose $\epsilon$ and the implicit constant in \eqref{eq:Linfty-closeness-assumption-main},
        which in view of \eqref{eq:delta-def} governs $\delta$, so small that we may absorb the first
        term on the right-hand-side into the left-hand-side. 
        We then choose $r$ to be a large multiple of 
        $(\EE+\DD)^\frac{1}{d+2}$, so that also the second right-hand-side
        term in \eqref{eq:Linfty-step2-penultimate} can be absorbed. This choice of $r$ is admissible in the sense
        of $r\ll 1$ provided the implicit constant in \eqref{eq:Linfty-smallness-assumption-main} is small enough.
        This yields \eqref{eq:Linfty-main}.
		\qedhere
	\end{enumerate}
\end{proof}

The next lemma shows that due to the global qualitative information on the cost function $c$, that is, \ref{item:cost-cont}--\ref{item:cost-non-deg}, there is a scale below which we can derive a qualitative bound on the displacement. It roughly says that there is a small enough scale after which the cost essentially behaves like Euclidean cost, with an error that is uniformly small due to compactness of the set $X\times Y$.

\begin{lemma}\label{lem:displacement-qualitative}
	Assume that the cost function $c$ satisfies  \ref{item:cost-cont}--\ref{item:cost-non-deg} and 
	let $\pi\in\Pi(\mu, \nu)$ be a coupling with $c$-monotone support. 
	
	There exist $\Lambda_0 <\infty$ and $R_0>0$ such that for all $R\leq R_0$ for which 
	\begin{align}\label{eq:displacement-qualitative-assumption}
		\EE_{6R} + \DD_{6R} \ll 1,
	\end{align}
	we have the inclusion 
	\begin{align*}
		\cross{5R} \cap \supp \pi \subseteq  \BB_{5R, \Lambda_0 R}.
	\end{align*}
\end{lemma}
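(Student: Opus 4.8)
The plan is to prove the lemma in two stages: a soft compactness argument showing that the displacement is \emph{uniformly small} at small scales, followed by a quantitative step --- essentially a re-run of the proof of Proposition~\ref{prop:Linfty-main} --- that restores the linear dependence on $R$. For the first stage I claim that for every $\eta>0$ there is $R_1(\eta)>0$ such that if $R\le R_1(\eta)$, $\pi$ has $c$-monotone support, and $\EE_{6R}+\DD_{6R}\ll 1$, then $|x-y|\le\eta$ for all $(x,y)\in\cross{6R}\cap\supp\pi$. Suppose this fails: there are $\eta_0>0$, $R_n\downarrow 0$, couplings $\pi_n\in\Pi(\mu_n,\nu_n)$ with $c$-monotone support and $\EE_{6R_n}+\DD_{6R_n}\ll 1$, and $(x_n,y_n)\in\cross{6R_n}\cap\supp\pi_n$ with $|x_n-y_n|\ge\eta_0$; by the symmetry of the statement under exchanging the two marginals (replacing $c$ by $\tilde c(y,x):=c(x,y)$, which again satisfies \ref{item:cost-cont}--\ref{item:cost-non-deg}) we may assume $x_n\in B_{6R_n}$, so $x_n\to 0$ and, along a subsequence, $y_n\to y_*\in Y$ with $|y_*|=\lim|y_n-x_n|\ge\eta_0$. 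By the cone Lemma~\ref{lem:cone} --- which quantifies, using the smallness of $\EE_{6R_n}$ and $\DD_{6R_n}$, that $\supp\pi_n$ carries near-diagonal points densely enough in $B_{6R_n}$ --- one finds, for every prescribed $e\in S^{d-1}$ and all large $n$, a comparison point $(x_n',y_n')\in\supp\pi_n$ with $x_n'$ within a small (threshold-controlled) fraction of $R_n$ of $x_n-\tfrac12 R_n e$ and $|x_n'-y_n'|\to 0$; in particular $x_n',y_n'\to 0$ and the unit vector $e_n:=(x_n-x_n')/|x_n-x_n'|$ converges, along a further subsequence, to some $\bar e$ within a small, threshold-controlled angle of $e$.

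Now rearrange the $c$-monotonicity inequality \eqref{eq:c-monotonicity} at the pair $(x_n,y_n),(x_n',y_n')$ into $h_n(x_n)\le h_n(x_n')$ with $h_n:=c(\,\cdot\,,y_n)-c(\,\cdot\,,y_n')$, and expand $h_n$ to first order around $x_n$ (using \ref{item:cost-cont} and the uniform bound on the second $x$-derivatives of $c$ near the origin): this gives $\big[\nabla_x c(x_n,y_n)-\nabla_x c(x_n,y_n')\big]\cdot(x_n-x_n')\le C\,\|\nabla_{xx}c\|_{\mathcal C^0(X\times Y)}\,|x_n-x_n'|^2$. Dividing by $|x_n-x_n'|\sim R_n$ and letting $n\to\infty$ yields $v\cdot\bar e\le 0$, where $v:=\nabla_x c(0,y_*)-\nabla_x c(0,0)$. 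If $v\ne 0$, choosing $e=v/|v|$ from the start makes $\bar e$ lie within a small angle of $v/|v|$, so that $v\cdot\bar e>0$ once the smallness threshold in \eqref{eq:displacement-qualitative-assumption} is taken small enough --- a contradiction. Hence $v=0$, i.e.\ $\nabla_x c(0,y_*)=\nabla_x c(0,0)$, which by the injectivity \ref{item:cost-inj-y} forces $y_*=0$, contradicting $|y_*|\ge\eta_0$. This proves the first-stage claim.

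For the second stage, write $A_0:=\nabla_{xy}c(0,0)$, invertible by \ref{item:cost-non-deg}, and fix $\eta>0$ so small (by \ref{item:cost-cont} and continuity) that $\|\nabla_{xy}c-A_0\|_{\mathcal C^0(B_{2\eta}\times B_{2\eta})}$ is below the dimensional threshold needed in the proof of Proposition~\ref{prop:Linfty-main}; set $R_0:=R_1(\eta)$, shrunk further so that $B_{6R_0}\subseteq X\cap Y$ and $6R_0\le\eta$. For $R\le R_0$ with $\EE_{6R}+\DD_{6R}\ll 1$, the first stage gives $\cross{6R}\cap\supp\pi\subseteq B_{2\eta}\times B_{2\eta}$, so the whole relevant configuration lies where $\nabla_{xy}c$ is as close to $A_0$ as we please. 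One may then run the argument of Proposition~\ref{prop:Linfty-main} verbatim, with $-\II$ replaced by $A_0$ throughout (so the final inversion costs a harmless $c$-dependent factor $\|A_0^{-1}\|$) and with the radii adjusted to conclude on $\cross{5R}$ from the data at scale $6R$; equivalently, after the affine normalization of Lemma~\ref{lem:coordinate-change} one is in the exact setting of Proposition~\ref{prop:Linfty-main}, with \eqref{eq:Linfty-inclusion-assumption-main} holding trivially (with $\Lambda=6+\eta/R<\infty$) and \eqref{eq:Linfty-closeness-assumption-main} guaranteed by the first stage. This yields $|x-y|\lesssim_c R(\EE_{6R}+\DD_{6R})^{\frac{1}{d+2}}\lesssim_c R$ on $\cross{5R}\cap\supp\pi$, hence the asserted inclusion $\cross{5R}\cap\supp\pi\subseteq\BB_{5R,\Lambda_0 R}$ with $\Lambda_0=\Lambda_0(c)<\infty$. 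The crux of the whole argument is the first stage, and within it the production of comparison points aligned with a \emph{prescribed} direction: the bad displacement is of order one while the working scale $R_n$ tends to $0$, so a naive blow-up would lose the bad point; keeping the original scale and dividing the $c$-monotonicity inequality by $|x_n-x_n'|\sim R_n$ before taking the limit is exactly what turns the problem into the injectivity statement \ref{item:cost-inj-y}, and controlling the direction $e_n$ up to a genuinely small angle (not just a fixed cone) is what pins down the smallness threshold in \eqref{eq:displacement-qualitative-assumption} --- the only place, together with Lemma~\ref{lem:cone}, where the hypotheses on $\EE$ and $\DD$ enter the first stage.
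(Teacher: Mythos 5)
Your proposal is correct in substance but follows a genuinely different route from the paper's. The paper argues in two direct steps: setting $\widetilde c(x,y):=c(x,y)-c(x,0)-c(0,y)+c(0,0)$, the $c$-monotonicity combined with Lemma~\ref{lem:cone}\ref{item:cone-3} yields the \emph{quantitative} bound $|\nabla_x\widetilde c(x,y)|\le\lambda R$ for $(x,y)\in(B_{5R}\times\RR^d)\cap\supp\pi$ --- linear in $R$ and, crucially, valid with no prior information on the location of $y$, since after Taylor expansion every term in the monotonicity inequality is $O(R)|x-x'|$; a uniform inverse-function argument for $y\mapsto-\nabla_x\widetilde c(x,y)$, using \ref{item:cost-inj-y}, \ref{item:cost-non-deg} and compactness of $X$, then converts this into $|y|\le\Lambda_0R$. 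You instead first extract only \emph{qualitative} smallness $|x-y|\le\eta$ by a compactness/contradiction argument --- whose limit identity $[\nabla_xc(0,y_*)-\nabla_xc(0,0)]\cdot\bar e\le0$ together with \ref{item:cost-inj-y} is a soft version of the paper's mechanism --- and must then re-run the quantitative machinery of Proposition~\ref{prop:Linfty-main} to recover the linear-in-$R$ scale. Both work; the paper's route is shorter, avoids generalizing Proposition~\ref{prop:Linfty-main} to costs with $\nabla_{xy}c(0,0)=A_0\neq-\II$, and makes your Stage~1 unnecessary.

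A few points need repair or qualification. (i) Lemma~\ref{lem:cone}\ref{item:cone-3} produces comparison points in a cone of \emph{fixed} half-angle $\frac{\pi}{4}$, not ``within a small, threshold-controlled angle of $e$''; your sign argument survives because $v\cdot\bar e\ge|v|/\sqrt2>0$ already for this fixed angle, but the claim as written is false and the threshold in \eqref{eq:displacement-qualitative-assumption} plays no role in controlling the angle. (ii) The direction $e$ depends on $v$ and hence on $y_*$, so it must be chosen only after passing to the subsequence along which $y_n\to y_*$; harmless, since the cone lemma is applied for each $n$ separately. (iii) Radius bookkeeping: the cone lemma is stated for $x\in B_{5R}$ with data at scale $6R$, while you invoke it at $x_n\in B_{6R_n}$; and Proposition~\ref{prop:Linfty-main} concludes on $\cross{4R}$ from the inclusion at $5R$, while you need a conclusion on $\cross{5R}$ from data at $6R$. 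Both are fixable by the $\epsilon$-adjustments you gesture at, but they do need to be carried out. (iv) Most substantively, your Stage~2 forces the absorption constants and the admissible choice of $r$ in the proof of Proposition~\ref{prop:Linfty-main} to depend on $|A_0^{-1}|$, so your argument delivers the lemma only with a $c$-dependent threshold $\EE_{6R}+\DD_{6R}\ll_c1$, whereas the statement (and the paper's proof, whose sole use of \eqref{eq:displacement-qualitative-assumption} is through the purely dimensional Lemma~\ref{lem:cone}) has a threshold depending only on $d$. This weaker version still suffices for the application in Theorem~\ref{thm:main}, but it is a genuine loss relative to the statement as given.
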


\begin{proof} We only prove the inclusion $(B_{5R}\times \RR^d) \cap \supp \pi \subseteq B_{5R}\times B_{\Lambda_0 R}$, the other inclusion $(\RR^d \times B_{5R}) \cap \supp \pi \subseteq B_{\Lambda_0 R} \times B_{5R}$ follows analogously since the assumptions are symmetric in $x$ and $y$.
\begin{enumerate}[label=\textsc{Step \arabic*},leftmargin=0pt,labelsep=*,itemindent=*]
	\item \label{step:grad-boundedness} (Use of $c$-monotonicity of $\supp\pi$).
	Let $R>0$ be such that \eqref{eq:displacement-qualitative-assumption} holds, in the sense that we may use Lemma \ref{lem:cone}\ref{item:cone-3}, and set 
	\begin{align}\label{eq:ctilde-def}
		\widetilde{c}(x,y) := c(x,y) - c(x,0) - c(0,y) + c(0,0). 
	\end{align}
	We claim that there exists a constant $\lambda<\infty$, depending only on $\|c\|_{\mathcal{C}^2(X\times Y)}$, such that 
	\begin{align*}
		\nabla_x \widetilde{c}(x,y) \in B_{\lambda R} \quad \text{for all} \quad (x,y) \in (B_{5R} \times \RR^d) \cap \supp \pi. 
	\end{align*}
	
	To show this, we use the $c$-monotonicity \eqref{eq:c-monotonicity} of $\supp \pi$.
	Notice that $c$-monotonicity of $\supp\pi$ implies its $\widetilde{c}$-monotonicity:  
	\begin{align}\label{eq:tilde-monotonicity}
		\widetilde{c}(x,y) - \widetilde{c}(x',y) \leq \widetilde{c}(x,y') - \widetilde{c}(x',y') 
		\quad \text{for all} \quad (x,y),(x',y') \in \supp \pi.
	\end{align}
	With $x_t := tx + (1-t)x'$ we can write
	\begin{align*}
		\widetilde{c}(x,y) - \widetilde{c}(x',y) 
		&= \int_0^1 \nabla_x \widetilde{c}(x_t, y)\,\dd t \cdot (x-x') \\
		&= \nabla_x \widetilde{c}(x, y) \cdot (x-x') + \int_0^1 (\nabla_x \widetilde{c}(x_t, y) - \nabla_x \widetilde{c}(x, y)) \,\dd t \cdot (x-x'),
	\end{align*}
	and, using that $\nabla_x\widetilde{c}(0,0) = 0$,
	\begin{align*}
		\widetilde{c}(x,y') - \widetilde{c}(x',y') 
		&= (\nabla_x \widetilde{c}(0, y') - \nabla_x \widetilde{c}(0,0)) \cdot (x-x') \\
		&\quad + \int_0^1 (\nabla_x \widetilde{c}(x_t, y') - \nabla_x \widetilde{c}(0, y')) \,\dd t \cdot (x-x').
	\end{align*} 
	Inserting these two identities into inequality \eqref{eq:tilde-monotonicity} gives
	\begin{align*}
		\nabla_x \widetilde{c}(x, y) \cdot (x-x') 
		&\leq \int_0^1 |\nabla_x \widetilde{c}(x_t, y) - \nabla_x \widetilde{c}(x, y)| \,\dd t \, |x-x'|  + |\nabla_x \widetilde{c}(0, y') - \nabla_x \widetilde{c}(0,0)| \, |x-x'| \\
		&\quad +\int_0^1 |\nabla_x \widetilde{c}(x_t, y') - \nabla_x \widetilde{c}(0, y')| \,\dd t \, |x-x'|.
	\end{align*}
	Using the boundedness of $\|c\|_{\mathcal{C}^2(X\times Y)}$ we  estimate this expression further by 
	\begin{align}
		\nabla_x \widetilde{c}(x, y) \cdot (x-x') 
		&\leq \|\nabla_{xx} \widetilde{c} \|_{\mathcal{C}^0(X\times Y)} \left( \int_0^1 |x_t - x|\,\dd t + \int_0^1 |x_t|\,\dd t \right) |x-x'| \nonumber\\
		&\quad + \|\nabla_{xy} \widetilde{c} \|_{\mathcal{C}^0(X\times Y)} |y'| \, |x-x'| \nonumber\\
		&\lesssim \|c\|_{\mathcal{C}^2(X\times Y)} \left( |x'| + |x| + |y'| \right) |x-x'|, \label{eq:nablax-estimate}
	\end{align}
	where in the last step we estimated the integrals and used that
	\begin{align*}
		\|\nabla_{xx} \widetilde{c} \|_{\mathcal{C}^0(X\times Y)} 
		\leq 2 \| \nabla_{xx} c\|_{\mathcal{C}^0(X\times Y)}, 
		\quad
		\|\nabla_{xy} \widetilde{c} \|_{\mathcal{C}^0(X\times Y)} 
		= \|\nabla_{xy} c\|_{\mathcal{C}^0(X\times Y)} .
	\end{align*}
	Now by Lemma \ref{lem:cone}\ref{item:cone-3}, given $x \in B_{5R}$, we have $(S_{R}(x,e) \times B_{7R}) \cap \supp\pi \neq \emptyset$ for any direction $e\in S^{d-1}$. Hence, letting $e = \frac{\nabla_x \widetilde{c}(x,y)}{|\nabla_x \widetilde{c}(x,y)|}$, we can find a point $(x',y') \in (S_{R}(x,e) \times B_{7R})\cap \supp\pi$. Since the opening angle of $S_R(x,e)$ is $\frac{\pi}{2}$, we have 
	\begin{align*}
		\nabla_x \widetilde{c}(x,y) \cdot (x-x') 
		= |\nabla_{x}\widetilde{c}(x,y)| |x-x'| e \cdot \frac{x-x'}{|x-x'|}
		\gtrsim |\nabla_{x}\widetilde{c}(x,y)| |x-x'|.
	\end{align*}
	It follows with \eqref{eq:nablax-estimate} that there exists $\lambda<\infty$ such that 
	\begin{align*}
		|\nabla_x \widetilde{c}(x,y)| \lesssim \|c\|_{\mathcal{C}^2(X\times Y)} \left(|x'| + |x| + |y'| \right) \leq \lambda R.
	\end{align*}
		
	\medskip
	\item \label{step:implicitfunction} (Use of global information on $c$). 
	We claim that there exist $R_0>0$ and $\Lambda_0<\infty$ such that for all $R\leq R_0$ 
	and $x\in X$, we have that 
	\begin{align*}
		B_{\lambda R} \subseteq -\nabla_x \widetilde{c}(x, B_{\Lambda_0 R}).
	\end{align*}
	
	Indeed, by assumption \ref{item:cost-inj-y}, for any $x\in X$, the map $-\nabla_x \widetilde{c}(x,\cdot) = -\nabla_x c(x,\cdot) + \nabla_x c(x, 0)$ is one-to-one on $Y$.
	Since $\nabla_{xy}\widetilde{c}(x,y) = \nabla_{xy}c(x,y)$, it follows by \ref{item:cost-non-deg} that $\det\nabla_{xy}\widetilde{c}(x,y) \neq 0$ for all $(x,y) \in X\times Y$. Hence, the map 
	\begin{align*}
		F_x : -\nabla_x \widetilde{c}(x,Y) \to Y, \quad v \mapsto \left[ - \nabla_x \widetilde{c}(x,\cdot) \right]^{-1}(v)
	\end{align*}
	is well-defined and a $\mathcal{C}^1$-diffeomorphism, so that in particular
	\begin{align*}
		F_x(v) = F_x(0) + DF_x(0)v + o_x(|v|).
	\end{align*}
	Using that $-\nabla_x \widetilde{c}(x,0) = 0$, which translates into $F_x(0) = 0$, this takes the form
	\begin{align*}
		F_x(v) = DF_x(0)v + o_x(|v|).
	\end{align*}
	By compactness of $X$, there thus exist a radius $R_0>0$ and a constant $\Lambda_0<\infty$ such that
	\begin{align*}
		\lambda |F_x(v)| \leq \Lambda_0 |v| 
		\quad \text{for all} \quad 
		x\in X \quad \text{and} \quad 
		|v| \leq \lambda R_0,
	\end{align*}
	which we may reformulate as
	\begin{align*}
		F_x(B_{\lambda R}) \subseteq B_{\Lambda_0 R}, \quad \text{i.e.} \quad B_{\lambda R} \subseteq F_x^{-1}(B_{\Lambda_0 R}) = -\nabla_x \widetilde{c}(x,B_{\Lambda_0 R})
	\end{align*}
	for all $R\leq R_0$ and $x\in X$.
	
	\medskip
	\item \label{step:qualitative-bound} (Conclusion). 
	If $(x,y) \in (B_{5R} \times \RR^d) \cap \supp\pi$, then we claim that $|y| \leq \Lambda_0 R$ for $R\leq R_0$.
	
	Indeed, by \ref{step:grad-boundedness} we have $\nabla_x\widetilde{c}(x,y) \in B_{\lambda R}$. 
	Since $B_{\lambda R} \subseteq \nabla_x \widetilde{c}(x,B_{\Lambda_0 R})$ by \ref{step:implicitfunction},
	injectivity of $y \mapsto \nabla_x\widetilde{c}(x,y)$ implies that we must have $y \in B_{\Lambda_0 R}$. \qedhere
\end{enumerate}
\end{proof}

\begin{remark}\label{rem:one-sided-qualitative} 
    Note that if \eqref{eq:displacement-qualitative-assumption} is replaced by smallness of the one-sided energy, i.e.,
    \begin{align*}
        \frac{1}{R^{d+2}} \int_{B_{6R} \times \RR^d} |y-x|^2 \, \dd\pi + \DD_{6R} \ll 1, 
    \end{align*}
    then Lemma \ref{lem:cone} still applies and we obtain the one-sided qualitative bound 
    \begin{align*}
        (B_{5R}\times \RR^d) \cap \supp\pi \subseteq \BB_{5R, \Lambda_0 R}. 
    \end{align*}
\end{remark}
\section{Almost-minimality with respect to Euclidean cost}\label{sec:quasiminimality} 

In this section we show that a minimizer of the optimal transport problem with cost function $c$ is an approximate minimizer for the problem with Euclidean cost function. However, in order to make full use of the Euclidean harmonic approximation result from \cite[Proposition 1.6]{GHO19} on the Eulerian side, we have to be careful in relating Lagrangian and Eulerian energies. This is where the concept of almost-minimality shows its strength, since it provides us with the missing bound of Lagrangian energy in terms of its Eulerian counterpart.

\begin{proof}[Proof of Proposition \ref{prop:quasi-minimality-lagrangian-main}] First, let us observe that we may assume in the following that 
\begin{align}\label{eq:quasimin-wlog}
	\frac{1}{2} \int |x-y|^2 \ \dd(\pi-\widetilde{\pi}) \geq 0,
\end{align}
since otherwise there is nothing to show. 
By the support assumption \eqref{eq:supp-mu-nu-main} on $\mu$ and $\nu$, the couplings $\pi$ and $\widetilde{\pi}$ satisfy
\begin{align}\label{eq:supp-pi-pitilde}
	\supp \pi \subseteq B_R \times B_R, \quad \supp\widetilde{\pi} \subseteq B_R \times B_R.
\end{align}
Together with \eqref{eq:quasimin-wlog} this implies that 
\begin{equation}\label{eq:comparison-E-Etilde}
    \EE_R(\widetilde{\pi}) \leq \EE_R(\pi).
\end{equation}

By the admissibility of $\widetilde{\pi}$, i.e.\ that $\pi$ and $\widetilde{\pi}$ have the same marginals, we may write 
    \begin{align}\label{eq:lag-quasimin-opt}
        \frac{1}{2} \int |x-y|^2 \, \dd(\pi-\widetilde{\pi})
        &= - \int ( c(x,y)+x\cdot y ) \, \dd(\pi-\widetilde{\pi}) + \int c(x,y) \, \dd(\pi-\widetilde{\pi}) \nonumber \\
       	&= - \int (\widetilde{c}(x,y)+x\cdot y ) \, \dd(\pi-\widetilde{\pi}) + \int c(x,y) \, \dd(\pi-\widetilde{\pi}),
    \end{align}
where $\widetilde{c}$ is defined as in \eqref{eq:ctilde-def}. Abbreviating  
\begin{equation*}
    \zeta(x,y) := -(\widetilde{c}(x,y) + x\cdot y),
\end{equation*}
optimality of $\pi$ with respect to the cost function $c$ implies that 
\begin{align}\label{eq:lag-quasimin-split} 
    \frac{1}{2} \int |x-y|^2 \, \dd(\pi-\widetilde{\pi}) \leq \int \zeta(x,y) \, \dd(\pi-\widetilde{\pi}). 
\end{align}
Using again the admissibility of $\widetilde{\pi}$, we may write 
\begin{align*}
    \int \zeta(x,y) \, \dd(\pi-\widetilde{\pi}) = \int (\zeta(x,y)-\zeta(x,x)) \, \dd(\pi-\widetilde{\pi}).
\end{align*}
Note that by the definition \eqref{eq:ctilde-def} of $\widetilde{c}$, the function $\zeta$ satisfies
\begin{align}
    \nabla_x\zeta(x,0) &= 0 \quad \text{for all} \, x \quad \textrm{and} \quad \nabla_y\zeta(0,y) = 0 \quad \text{for all} \, y, \label{eq:grad-y-zeta} \\
    \nabla_{xy}\zeta(x,y) &= -(\nabla_{xy}\widetilde{c}(x,y) + \II) = -(\nabla_{xy}c(x,y) + \II). \label{eq:mixeg-grad-zeta}
\end{align}
Now, by \eqref{eq:grad-y-zeta},
\begin{align*}
    \zeta(x,y) - \zeta(x,x) &= \int_0^1  \left( \nabla_y \zeta(x, sy+(1-s)x) - \nabla_y \zeta(0, sy+(1-s)x) \right) \cdot (y-x) \, \dd s \\
    &= \int_0^1 \int_0^1 x \cdot \nabla_{xy} \zeta(tx, sy+(1-s)x) (y-x) \, \dd t \dd s, 
\end{align*}
so that, using \eqref{eq:mixeg-grad-zeta} and \eqref{eq:supp-pi-pitilde}, it follows that 
\begin{align}\label{eq:lag-quasimin-first-term}
    \left| \int \zeta(x,y) \, \dd(\pi-\widetilde{\pi}) \right| 
    &\lesssim \|\nabla_{xy} c + \II\|_{L^{\infty}(B_R\times B_R)} \int_{B_R \times B_R} |x| |y-x| \, \dd(\pi+\widetilde{\pi}) \nonumber \\
    &\lesssim R \|\nabla_{xy} c + \II\|_{L^{\infty}(B_R\times B_R)} \int |y-x| \, \dd(\pi+\widetilde{\pi}). 
\end{align}
By the estimate
\begin{align*}
    (\pi+\widetilde{\pi})(\RR^d \times \RR^d) 
    = 2 \mu(\RR^d) 
    = 2 \mu(B_R) 
    \stackrel{\eqref{eq:mu-nu-close-main}}{\lesssim} R^d,
\end{align*}
and Hölder's inequality, we get 
\begin{align*}
    \int |x-y| \, \dd(\pi+\widetilde{\pi}) 
    \lesssim R^{\frac{d}{2}} \left( \int |x-y|^2 \, \dd(\pi+\widetilde{\pi})\right)^{\frac{1}{2}} 
    \overset{\eqref{eq:comparison-E-Etilde}}{\lesssim} R^{d+1} \EE(\pi)^{\frac{1}{2}}. 
\end{align*} 
\end{proof}
\hyphenation{Lip-schitz}
\section{Eulerian point of view} 
The purpose of this section is to translate almost-minimality from the Lagrangian setting, as encoded by Proposition \ref{prop:quasi-minimality-lagrangian-main}, to the Eulerian setting so that it may be plugged into the proof of the harmonic approximation result \cite[Proposition 1.7]{GHO19}. This is the purpose of Lemma \ref{prop:quasi-minimality-eulerian-main} below, which relates a (Lagrangian) coupling $\pi$, which we think of being an almost-minimizer, to its Eulerian description $(\rho,j)$ (introduced in \eqref{eq:density-flux-associated}). 

\medskip

The proof of Lemma \ref{prop:quasi-minimality-eulerian-main} relies on the fact that the Eulerian cost is always dominated by the Lagrangian one, 
	while the other inequality in general only holds for minimizers of the Euclidean transport cost. However, in the proof of Lemma~\ref{prop:quasi-minimality-eulerian-main} we can use almost-minimality of $\pi$, together with the equality of Eulerian and Lagrangian energy for minimizers of the Euclidean transport cost (Remark~\ref{rem:BB}) to overcome this nuisance. 
\begin{proof}[Proof of Lemma \ref{prop:quasi-minimality-eulerian-main}]
	Let $\pi^* \in \mathrm{argmin} \, W^2(\mu, \nu)$ be a minimizer of the Euclidean transport cost and let $(\rho^*, j^*)$ be its Eulerian description. Then by \eqref{eq:Lagrangian-dominates-Eulerian} and \eqref{eq:lagrangian-almost-min-assumption} it follows that 
	\begin{align*}
		\int \frac{1}{\rho} |j|^2 
		\leq \int |x-y|^2\,\dd \pi 
		\leq \int |x-y|^2\,\dd \pi^* + \Delta.
	\end{align*}
	Since $\pi^* \in \mathrm{argmin} \, W^2(\mu, \nu)$, we have that 
	\begin{align*}
		 \int |x-y|^2\,\dd \pi^* = \int \frac{1}{\rho^*} |j^*|^2,
	\end{align*}
	see Remark \ref{rem:BB}, so that by minimality of $\pi^*$, which implies minimality of $(\rho^*,j^*)$ in the Benamou--Brenier formulation \eqref{eq:BB} of $W^2(\mu, \nu)$, we obtain
	\begin{align*}
		\int \frac{1}{\rho} |j|^2 
		\leq \int |x-y|^2\,\dd \pi
		\leq \int \frac{1}{\rho^*} |j^*|^2 + \Delta
		\leq \int \frac{1}{\widetilde{\rho}} |\widetilde{j}|^2 + \Delta
	\end{align*}
	for any $(\widetilde\rho,\widetilde j)$ satisfying \eqref{eq:CE-tilde-main}, in particular also for $(\widetilde\rho,\widetilde j)=(\rho,j)$.
\end{proof}

The Eulerian version of almost-minimality also implies the following localized version, which will be needed for the harmonic approximation: 
\begin{corollary}\label{cor:eulerian-quasimin-2}
	Let $\pi \in \Pi(\mu, \nu)$ be a coupling between the measures $\mu$ and $\nu$ with the property that there exists a constant $\Delta <\infty$ such that 
\begin{align}
	 \int \frac{1}{2} |x-y|^2\,\dd \pi 
    \leq \int \frac{1}{2} |x-y|^2\,\dd \widetilde{\pi} + \Delta
\end{align}
for any $\widetilde{\pi}\in\Pi(\mu, \nu)$,
and let $(\rho,j)$ be its Eulerian description defined in \eqref{eq:density-flux-associated}. 
 For any $r > 0$ small enough \footnote{$r$ must be small enough so that $B_r \subset \supp \mu \cap \supp \nu$.}, let $f_r$ be the inner trace of $j$ on $\partial B_r \times (0,1)$ in the sense of \eqref{eq:CE-R}, i.e.\
	\begin{align}\label{eq:CE-r}
		\int_{B_r \times [0,1]} \partial_t \zeta \,\dd \rho + \nabla\zeta\cdot \dd j = \int_{B_r} \zeta_1 \,\dd\nu - \int_{B_r} \zeta_0 \,\dd \mu + \int_{\partial B_r \times [0,1]} \zeta\,\dd f_r.
	\end{align}
	Then for any density-flux pair $(\widetilde{\rho}, \widetilde{j})$ satisfying 
	\begin{align}\label{eq:CE-R-competitor}
		\int \partial_t \zeta \,\dd \widetilde{\rho} + \nabla\zeta\cdot \dd \widetilde{j} = \int_{B_r} \zeta_1 \,\dd\nu - \int_{B_r} \zeta_0 \,\dd \mu + \int_{\partial B_r \times [0,1]} \zeta\,\dd f_r
	\end{align}
	there holds
	\begin{align}\label{eq:Eulerian-competitor}
		\frac{1}{2}\int_{B_r\times[0,1]} \frac{1}{\rho} |j|^2 \leq \frac{1}{2}\int \frac{1}{\widetilde{\rho}} |\widetilde{j}|^2 + \Delta.	
	\end{align}
\end{corollary}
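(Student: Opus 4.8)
The plan is to reduce this localized statement to its global counterpart, Lemma~\ref{prop:quasi-minimality-eulerian-main}, by a cut-and-paste (gluing) construction performed at the given radius $r$. Since the hypothesis on $\pi$ is exactly the Lagrangian almost-minimality \eqref{eq:lagrangian-almost-min-assumption}, Lemma~\ref{prop:quasi-minimality-eulerian-main} applies and yields, for \emph{every} pair of measures $(\widehat\rho,\widehat j)$ on $\RR^d\times[0,1]$ satisfying the global continuity equation \eqref{eq:CE-tilde-main}, the bound $\tfrac12\int\frac{1}{\rho}|j|^2\le\tfrac12\int\frac{1}{\widehat\rho}|\widehat j|^2+\Delta$. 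It thus suffices to turn a given local competitor $(\widetilde\rho,\widetilde j)$ for \eqref{eq:CE-R-competitor} into such a global competitor whose Benamou--Brenier energy exceeds $\int\frac{1}{\widetilde\rho}|\widetilde j|^2$ only by the fixed amount $\int_{(\RR^d\setminus B_r)\times[0,1]}\frac{1}{\rho}|j|^2$.

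Concretely, given $(\widetilde\rho,\widetilde j)$ satisfying \eqref{eq:CE-R-competitor}, I would set
\[
    (\widehat\rho,\widehat j):=
    \begin{cases}
        (\widetilde\rho,\widetilde j) & \text{on } B_r\times[0,1],\\
        (\rho,j) & \text{on } (\RR^d\setminus \overline{B_r})\times[0,1].
    \end{cases}
\]
To see that $(\widehat\rho,\widehat j)$ solves \eqref{eq:CE-tilde-main}, I would start from the global continuity equation for $(\rho,j)$ (valid because $(\rho,j)$ is the Eulerian description of a coupling with marginals $\mu,\nu$), subtract from it the local identity \eqref{eq:CE-r} for $(\rho,j)$ itself, and thereby obtain that $(\rho,j)$ restricted to $(\RR^d\setminus B_r)\times[0,1]$ satisfies the continuity equation with data $\int_{\RR^d\setminus B_r}\zeta_1\,\dd\nu-\int_{\RR^d\setminus B_r}\zeta_0\,\dd\mu$ and boundary flux $-f_r$ on $\partial B_r\times(0,1)$. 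Adding this to \eqref{eq:CE-R-competitor}, the two boundary contributions $\pm\int_{\partial B_r\times[0,1]}\zeta\,\dd f_r$ cancel and one recovers precisely \eqref{eq:CE-tilde-main} for $(\widehat\rho,\widehat j)$.

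It then remains to unwind the energies. Since the two pieces of $(\widehat\rho,\widehat j)$ have (essentially) disjoint spatial supports, the Benamou--Brenier functional is additive over the decomposition $\RR^d=B_r\cup(\RR^d\setminus B_r)$, so
\begin{align*}
    \int\tfrac{1}{\widehat\rho}|\widehat j|^2 &=\int_{B_r\times[0,1]}\tfrac{1}{\widetilde\rho}|\widetilde j|^2+\int_{(\RR^d\setminus B_r)\times[0,1]}\tfrac{1}{\rho}|j|^2, \\
    \int\tfrac{1}{\rho}|j|^2 &=\int_{B_r\times[0,1]}\tfrac{1}{\rho}|j|^2+\int_{(\RR^d\setminus B_r)\times[0,1]}\tfrac{1}{\rho}|j|^2.
\end{align*}
Plugging these into the inequality from Lemma~\ref{prop:quasi-minimality-eulerian-main} and cancelling the common term $\int_{(\RR^d\setminus B_r)\times[0,1]}\frac{1}{\rho}|j|^2$ — which is finite because it is bounded by $\int|x-y|^2\,\dd\pi<\infty$ via \eqref{eq:Lagrangian-dominates-Eulerian} — leaves $\tfrac12\int_{B_r\times[0,1]}\frac{1}{\rho}|j|^2\le\tfrac12\int_{B_r\times[0,1]}\frac{1}{\widetilde\rho}|\widetilde j|^2+\Delta$, and estimating $\int_{B_r\times[0,1]}\frac{1}{\widetilde\rho}|\widetilde j|^2\le\int\frac{1}{\widetilde\rho}|\widetilde j|^2$ gives \eqref{eq:Eulerian-competitor}.

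The step I expect to be the main obstacle is the trace bookkeeping on $\partial B_r\times(0,1)$: splitting the continuity equation and the energy across $\partial B_r$ as above is legitimate only when the internal and external traces of $(\rho,j)$ on $\partial B_r$ agree and no mass of $\rho$ or $j$ concentrates on $\partial B_r\times[0,1]$. Because $(\rho,j)$ is divergence-free in $(t,x)$, both hold for all but countably many $r$; hence the qualifier ``for $r>0$ small enough'' in the statement should be read as: for such $r$ below the threshold where $B_r\subset\supp\mu\cap\supp\nu$. This is harmless in practice, since the corollary is applied only at the single carefully chosen radius $R_*$ occurring in the harmonic approximation (cf.\ Theorem~\ref{thm:harmonic}).
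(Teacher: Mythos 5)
Your proposal is correct and is essentially the paper's proof: one glues the local competitor to $(\rho,j)\lfloor_{B_r^c\times[0,1]}$, checks via the trace identity \eqref{eq:CE-r} that the result satisfies the global continuity equation \eqref{eq:CE-tilde-main}, and then applies Lemma~\ref{prop:quasi-minimality-eulerian-main}. The only cosmetic difference is that the paper defines the glued pair as the \emph{sum} of measures $(\widetilde\rho,\widetilde j)+(\rho,j)\lfloor_{B_r^c\times[0,1]}$ and invokes subadditivity of the Benamou--Brenier functional (from the duality \eqref{eq:BB-duality}), which sidesteps your disjoint-support/exact-additivity bookkeeping on $\partial B_r$.
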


\begin{proof}
	Let $(\widetilde{\rho}, \widetilde{j})$ satisfy \eqref{eq:CE-R-competitor}. Then the density-flux pair $(\widehat{\rho}, \widehat{j}) := (\widetilde{\rho}, \widetilde{j}) + (\rho, j)\lfloor_{B_r^c\times[0,1]}$ obeys \eqref{eq:CE-tilde-main}. Indeed, 
	\begin{align*}
		\int \partial_t \zeta \,\dd\widehat{\rho} + \nabla \zeta\cdot\dd\widehat{j} 
		&= \int \partial_t \zeta \,\dd \widetilde{\rho} + \nabla\zeta\cdot \dd \widetilde{j} + \int_{B_r^c \times [0,1]} \partial_t \zeta \,\dd \rho + \nabla\zeta\cdot \dd j \\
		&= \int \partial_t \zeta \,\dd \widetilde{\rho} + \nabla\zeta\cdot \dd \widetilde{j} + \int \partial_t \zeta \,\dd \rho + \nabla\zeta\cdot \dd j - \int_{B_r \times [0,1]} \partial_t \zeta \,\dd \rho + \nabla\zeta\cdot \dd j  \\
		&\stackrel{\eqref{eq:CE-R-competitor}\&\eqref{eq:CE-weak-intro}\&\eqref{eq:CE-r}}{=} \int \zeta_1\,\dd\nu - \int \zeta_0\,\dd \mu.
	\end{align*}
	Hence, by Lemma~\ref{prop:quasi-minimality-eulerian-main} and subadditivity of the Eulerian cost it follows that 
\begin{align*}
	\frac{1}{2}\int \frac{1}{\rho} |j|^2 
	\leq \frac{1}{2}\int \frac{1}{\widehat{\rho}} |\widehat{j}|^2 + \Delta 
	\leq \frac{1}{2}\int \frac{1}{\widetilde{\rho}} |\widetilde{j}|^2 + \frac{1}{2}\int_{B_r^c\times[0,1]} \frac{1}{\rho} |j|^2 + \Delta,
\end{align*}
which implies \eqref{eq:Eulerian-competitor}.
\end{proof}

Lemma~\ref{prop:quasi-minimality-eulerian-main}, in the form of the bound \eqref{eq:almost-min-harmonic-lagrange}, allows us to relate Eulerian and Lagrangian side of the harmonic approximation result, which will be central in the application to the one-step-improvement Proposition~\ref{prop:one-step-main}. The proof of this Lagrangian version is very similar to \cite[Proof of Theorem 1.5]{GHO19}, however, we stress again that since we are not dealing with minimizers of the Euclidean transport cost, one has to be careful when passing from Eulerian to Lagrangian energies. 

\begin{proof}[Proof of Lemma~\ref{lem:harmonic-lagrange}]
	By scaling, it is enough to show that 
	\begin{align}\label{eq:harmonic-lagrangian-1-bound}
		\int_{\cross{1}} \int_0^1 |x-y + \nabla\Phi(tx+(1-t)y)|^2\,\dd t\,\dd \pi \leq \int_{B_2 \times [0,1]} \frac{1}{\rho}|j-\rho\nabla\Phi|^2 + \Delta_1.
	\end{align}
	By the $L^{\infty}$-bound, we know that if $(x,y) \in \cross{1} \cap \supp\pi$, then for any $t\in [0,1]$ there holds $t x + (1-t)y \in B_{2}$. Hence, we may estimate
	\begin{align}
		&\int_{\cross{1}} \int_0^1 |x-y + \nabla\Phi(tx+(1-t)y)|^2\,\dd t\,\dd \pi \nonumber \\
		&\quad\leq \iint_0^1 |x-y + \nabla\Phi(t x + (1-t)y)|^2 \, 1_{\{t x + (1-t)y \in B_{2}\}} \,\dd t \dd \pi \label{eq:harmonic-lagrangian-1}.
	\end{align}
	Multiplying out the square and using the  definition of $(\rho,j)$ from \eqref{eq:density-flux-associated}, we may write\footnote{Note that by an approximation argument, we may use $\zeta := 1_{B_2} |\nabla \Phi|^2 $ and $\xi := 1_{B_2} \nabla \Phi$ as test functions in \eqref{eq:density-flux-associated}.} 
\begin{align*}
	&\iint_0^1 |x-y + \nabla\Phi(t x + (1-t)y)|^2 \, 1_{\{t x + (1-t)y \in B_{2}\}} \,\dd t \dd\pi\\
	&= \iint_0^1 |x-y|^2 \, 1_{\{t x + (1-t)y \in B_{2}\}} \,\dd t \dd\pi
	+ \iint_0^1 |\nabla\Phi(t x + (1-t)y)|^2 \, 1_{\{t x + (1-t)y \in B_{2}\}}\,\dd t \dd\pi\\
	&\quad+ 2 \iint_0^1 (x-y)\cdot \nabla\Phi(t x + (1-t)y) \, 1_{\{t x + (1-t)y \in B_{2}\}}\,\dd t \dd\pi \\
	&=\int |x-y|^2 \,\dd \pi - \iint_0^1 |x-y|^2 \, 1_{\{t x + (1-t)y \in B_{2}^c\}} \,\dd t \dd\pi 
	+ \int_{B_{2}\times[0,1]} |\nabla\Phi|^2\,\dd\rho 
	- 2 \int_{B_{2}\times[0,1]} \nabla\Phi\cdot \dd j.
\end{align*}
Now note that \eqref{eq:BB-duality-local} implies a local counterpart of \eqref{eq:Lagrangian-dominates-Eulerian}: for any open set $B \subseteq \RR^d$, 
\begin{equation}\label{eq:Lagrangian-dominates-Eulerian-local}
    \int_{B \times [0,1]} \frac{1}{\rho} |j|^2 \leq  \iint_0^1 |x-y|^2 1_{\{tx + (1-t)y \in B\}} \,\dd t \dd \pi.
\end{equation} Arguing with an open $\epsilon$-neighborhood of $B$ and continuity of the right-hand side with respect to $B$, one may show that \eqref{eq:Lagrangian-dominates-Eulerian-local} holds for any closed set $B$, so that in particular 
\begin{align*}
	\int_{B_{2}^c \times [0,1]} \frac{1}{\rho}|j|^2 \leq \iint_0^1 |x-y|^2 1_{\{tx + (1-t)y \in B_{2}^c\}} \,\dd t \dd \pi, 
\end{align*} which, together with \eqref{eq:almost-min-harmonic-lagrange}, gives
\begin{align*}
	\int |x-y|^2 \,\dd \pi - \iint_0^1 |x-y|^2 \, 1_{\{t x + (1-t)y \in B_{2}^c\}} \,\dd t \dd\pi 
	&\leq \int \frac{1}{\rho} |j|^2 + \Delta_1
	- \int_{B_{2}^c\times[0,1]} \frac{1}{\rho} |j|^2  \\
	&= \int_{B_{2}\times[0,1]} \frac{1}{\rho} |j|^2 
	 + \Delta_1,
\end{align*}
so that \eqref{eq:harmonic-lagrangian-1-bound} follows from the identity
\begin{align}
	&\int_{B_{2}\times[0,1]} \frac{1}{\rho} |j|^2 
	+ \int_{B_{2}\times[0,1]} |\nabla\Phi|^2\,\dd\rho 
	- 2 \int_{B_{2}\times[0,1]} \nabla\Phi\cdot \dd j\nonumber\\
	&= \int_{B_{2}\times[0,1]} \left(\left| \frac{\dd j}{\dd\rho}\right|^2 + |\nabla\Phi|^2 - 2 \nabla\Phi\cdot \frac{\dd j}{\dd\rho} \right) \,\dd\rho  
	= \int_{B_{2}\times[0,1]} \left| \frac{\dd j}{\dd\rho} - \nabla\Phi\right|^2 \,\dd\rho \nonumber\\
	&= \int_{B_{2}\times[0,1]} \frac{1}{\rho} |j-\rho\nabla\Phi|^2. \label{eq:def-j-rho-phi}
\end{align}
\end{proof}
\section{Harmonic Approximation}\label{sec:harmonic}

In this section we sketch the proof of the (Eulerian) harmonic approximation Theorem~\ref{thm:harmonic}. 
As already noted in the introduction, the proof of Theorem~\ref{thm:harmonic} is done at the Eulerian level (as in \cite{GO17,GHO19}) by constructing a suitable competitor. 

\begin{proof}[Proof of Theorem~\ref{thm:harmonic}]
By scaling we may without loss of generality assume that $R=1$. 	
Let $(\rho,j)$ be the Eulerian description of the coupling $\pi\in \Pi(\mu, \nu)$. 
The proof of the Eulerian version of the harmonic approximation consists of the following four steps, at the heart of which is the construction of a suitable competitor (\ref{step:harmonic-competitor}). Note that since we want to make the dependence on the parameter $M$ in the $L^{\infty}$-bound \eqref{eq:Linfty-global} precise, one actually has to look a bit closer into the proofs of the corresponding statements in \cite{GHO19}, since in their presentation of the results the estimate $M\lesssim (\EE_6 + \DD_6)^{\frac{1}{d+2}}$ is used.\footnote{We do not assume that this bound holds in assumption \eqref{eq:Linfty-global}. However, in the one-step improvement Proposition~\ref{prop:one-step-main} and the consecutive iteration to obtain the $\epsilon$-regularity Theorem~\ref{thm:main}, this is of course an important ingredient, which holds in view of Proposition~\ref{prop:Linfty-main}.} 

\begin{enumerate}[label=\textsc{Step \arabic*},leftmargin=0pt,labelsep=*,itemindent=*]
	\item (\label{step:harmonic-regularised} Passage to a regularized problem). Choose a good radius $R_*\in(3,4)$ for which the flux $\overline{f}$ is well-behaved on $\partial B_{R_*}$. Actually, since we are working with $L^2$-based quantities, to be able to get $L^2$ bounds on $\nabla\Phi$, we would have to be able to estimate $\overline{f}$ in $L^2$ or at least in the Sobolev trace space $L^{\frac{2(d-1)}{d}}$. However, since the boundary fluxes $j$ are just measures and since for the approximate orthogonality (see \ref{step:harmonic-orthogonality}) a regularity theory is required up to the boundary, one first has to go to a solution $\varphi$ (with $\int_{B_{R_*}} \varphi\,\dd x = 0$) of a regularized problem 
	\begin{align}\label{eq:harmonic-appro-regular}
		\Delta \varphi = \mathrm{const} \quad \text{in } B_{R_*} 
		\quad \text{and} \quad 
		\nu_{R_*} \cdot \nabla\varphi = \widehat{g} \quad \text{on } \partial B_{R_*},
	\end{align}
	where $\mathrm{const}$ is the generic constant for which the equation has a solution, and $\widehat{g}$ is a regularization through rearrangement of $\overline{f}$ with good $L^2$ bounds (see \cite[Section 3.1.2]{GHO19} for details). 
	
	Using properties of the regularized flux $\widehat{g}$ and elliptic regularity, the error made by this approximation can be quantified as 
	\begin{align}\label{eq:harmonic-phi-diff}
		\sup_{B_2} |\nabla(\Phi-\varphi)|^2 + \sup_{B_2} |\nabla^2(\Phi-\varphi)|^2 + \sup_{B_2} |\nabla^3(\Phi-\varphi)|^2 \lesssim M^{\alpha} (\EE_6 + \DD_6)^{1+ \frac{\alpha}{2}},
	\end{align}
	for any $\alpha\in(0,1)$, see \cite[Proof of Proposition 1.7]{GHO19}.
 	Note that by the definition of $\rho$ in \eqref{eq:density-flux-associated} and assumption \eqref{eq:harmonic-assumption-mu-nu} we have that
 	\begin{align*}
 		\int_{B_2\times[0,1]} \dd{\rho} 
 		\leq \int \dd\rho 
 		= \int \dd\pi 
 		= \mu(\RR^d) 
 		= \mu(B_7)
 		\lesssim 1,
 	\end{align*}
 	so that together with \eqref{eq:harmonic-phi-diff} we may estimate, using $M\leq 1$, 
 	\begin{align*}
 		\int_{B_{2}\times[0,1]} \frac{1}{\rho} |j-\rho \nabla\Phi|^2 
 		&\lesssim \int_{B_{2}\times[0,1]} \frac{1}{\rho} |j-\rho \nabla\varphi|^2 + (\EE_6 + \DD_6)^{1+\frac{\alpha}{2}}
 	\end{align*}
 	for any $\alpha\in(0,1)$.  
 	Note that the error term is superlinear in $\EE_6 + \DD_6$. 
	
    \item\label{step:harmonic-orthogonality} (Approximate orthogonality \cite[Proof of Lemma 1.8]{GHO19}). For every $0<\tau \ll 1$ there exist $\epsilon_{\tau} > 0$, $C_{\tau} < \infty$ such that if $\EE_6 + \DD_6 \leq \epsilon_{\tau}$, then
	\begin{align}\label{eq:orthogonality}
		\int_{B_{2}\times[0,1]} \frac{1}{\rho} |j-\rho \nabla\varphi|^2 - \left( \int_{B_{R_*}\times[0,1]} \frac{1}{\rho} |j|^2 - \int_{B_{R_*}} |\nabla\varphi|^2 \right) 
		\leq \tau \EE_6 + C_{\tau} \DD_6.
	\end{align}
	The proof of \eqref{eq:orthogonality} essentially relies on the representation formula 
	\begin{align*}
	    \int_{B_{R_*}\times[0,1]}  \frac{1}{\rho} |j-\rho \nabla\varphi|^2 
	    &= \int_{B_{R_*}\times[0,1]} \frac{1}{\rho} |j|^2 
	    - \int_{B_{R_*}} |\nabla\varphi|^2 \\
	    &\quad+ 2 \int_{B_{R_*}} \varphi \,\dd(\mu-\nu) 
	    + 2 \int_{\partial B_{R_*}} \varphi \,\dd(\widehat{g}-\overline{f}) 
	    + \int_{B_{R_*}\times[0,1]} |\nabla\varphi|^2 \,(\dd \rho - \dd x).
	\end{align*}
	The three error terms in the second line of this equality are then bounded as follows. The first term uses that $\mu$ and $\nu$ are close in Wasserstein distance. An estimate on the second term relies on the fact that $\widehat{g}$ and $\overline{f}$ are close\footnote{Closeness here means closeness of $\widehat{g}_{\pm}$ and $\overline{f}_{\pm}$ (the positive and negative parts of the measures) with respect to the geodesic Wasserstein distance on $\partial B_{R_*}$.}. This bound relies on the choice of a good radius in \ref{step:harmonic-regularised} and $L^2$ estimates up to the boundary on $\nabla\varphi$. The bound on the third error term uses elliptic regularity theory and a restriction result for the Wasserstein distance, which implies that $W^2_{B_{R_*}}(\int_0^1\rho,\kappa) \lesssim \EE_6 + \DD_6$.\footnote{We note that for the case of quadratic cost and Hölder continuous densities treated in \cite{GO17} a bound on this term is easy due to McCann's displacement convexity, which implies that $\rho\leq 1$ up to a small error.} This estimate actually requires a further regularization of $\widehat{g}$, and by relying on interior regularity estimates explains why one has to go from $B_{R_*}$ to the slightly smaller ball $B_{2}$ in the estimate \eqref{eq:orthogonality}.
A close inspection of \cite[Proof of Lemma 1.8]{GHO19} shows that the term involving $M$ in these error estimates comes in product with a superlinear power of $\EE_6 + \DD_6$ as in \ref{step:harmonic-regularised}, so that we may bound $M\leq 1$ and still be able to obtain an arbitrarily small prefactor in \eqref{eq:orthogonality} by choosing $\EE_6 + \DD_6$ small enough.
	
	\item\label{step:harmonic-competitor} (Construction of a competitor \cite[Proof of Lemma 1.9]{GHO19}). For every $0<\tau\ll 1$ there exist $\epsilon_{\tau}>0$ and $C,C_{\tau}<\infty$ such that if $\EE_6+\DD_6\leq \epsilon_{\tau}$, then there exists a density-flux pair $(\widetilde{\rho}, \widetilde{j})$ satisfying \eqref{eq:CE-R-competitor} for $r=R_*$, and such that
	\begin{align}\label{eq:competitor}
		\int \frac{1}{\widetilde{\rho}} |\widetilde{j}|^2 - \int_{B_{R_*}} |\nabla\varphi|^2 
		&\leq \left(\tau+ \frac{CM}{\tau}\right) \EE_6 + C_{\tau} \DD_6.
	\end{align}
	
    \item\label{step:harmonic-almostmin} (Almost-minimality on the Eulerian level). Since the density-flux pair $(\widetilde{\rho}, \widetilde{j})$ satisfies \eqref{eq:CE-R-competitor} for $r=R_*$, Corollary~\ref{cor:eulerian-quasimin-2} implies that 
	\begin{align}
		\int_{B_{R_*}\times[0,1]} \frac{1}{\rho} |j|^2 - \int_{B_{R_*}} |\nabla\varphi|^2 
		&\leq \int \frac{1}{\widetilde{\rho}} |\widetilde{j}|^2  - \int_{B_{R_*}} |\nabla\varphi|^2 + \Delta.
	\end{align}

\end{enumerate}
Combining the above steps, we have proved that for any $0<\tau\ll 1$ there exist $\epsilon_{\tau}>0$ and $C_{\tau}<\infty$ such that if $\Phi$ is the solution of \eqref{eq:harmonic-approx}, then 
\begin{align}\label{eq:harmonic-eulerian}
	\int_{B_{2}\times[0,1]} \frac{1}{\rho} |j-\rho\nabla\Phi|^2 \leq \left(\tau+ \frac{CM}{\tau}\right) \EE_6 + C_{\tau} \DD_6 + \Delta.
\end{align}
\end{proof}
\section{One-step improvement}

The following proposition is a one-step improvement result, which will be the basis of a Campanato iteration in Theorem \ref{thm:main}. Note that the iteration is more complicated than in \cite{GO17}, because at each step we have to restrict the $c$-optimal coupling to the smaller cross where the $L^{\infty}$-bound holds to be able to apply the harmonic approximation result. As a consequence, we have to make sure that the qualitative bound \eqref{eq:displacement-qualitative-assumption} on the displacement (which is an important ingredient in obtaining quantitative version of the $L^{\infty}$-bound \eqref{eq:Linfty-main}) is propagated in each step of the iteration.\footnote{Alternatively, one could devise an argument based on the fact that the qualitative $L^{\infty}$ bound only depends on the cost through its global properties \ref{item:cost-cont}--\ref{item:cost-non-deg} and that the set of cost functions considered in the iteration is relatively compact.}

We start with the short proof of Lemma~\ref{lem:optimal-restriction}, which is the starting point of each iteration step.

\begin{proof}[Proof of Lemma~\ref{lem:optimal-restriction}]
	Let $\pi\in\Pi(\mu, \nu)$ be a $c$-optimal coupling, and let $\pi_{\Omega}:=\pi\lfloor_{\Omega}$ be its restriction to a subset $\Omega\subseteq \RR^d \times\RR^d$. Then $\pi_{\Omega} \in \Pi(\mu_{\Omega}, \nu_{\Omega})$ where the marginal measures $\mu_{\Omega}$, $\nu_{\Omega}$ are defined in \eqref{eq:mu-nu-Omega}. 
	
	Given any coupling $\widetilde{\pi} \in \Pi(\mu_{\Omega}, \nu_{\Omega})$, we can define $\widehat{\pi}:= \pi - \pi_{\Omega} + \widetilde{\pi}$. It is easy to see that $\widehat{\pi}$ is an admissible coupling between the measures $\mu$ and $\nu$, hence by $c$-optimality of $\pi$, we obtain by the additivity of the cost functional with respect to the transference plan
	\begin{align*}
		\int c(x,y) \,\dd\pi 
		\leq \int c(x,y) \,\dd\widehat{\pi}
		= \int c(x,y) \,\dd\pi - \int c(x,y) \,\dd\pi_{\Omega} + \int c(x,y) \,\dd\widetilde{\pi},
	\end{align*}
	hence
	\begin{align*}
		\int c(x,y) \,\dd\pi_{\Omega} \leq \int c(x,y) \,\dd\widetilde{\pi} \quad \text{for any} \quad \widetilde{\pi}\in\Pi(\mu_{\Omega}, \nu_{\Omega}),
	\end{align*}
	that is, $\pi_{\Omega}$ is a $c$-optimal coupling between $\mu_{\Omega}$ and $\nu_{\Omega}$.
\end{proof}

As a direct application and as a further preparation for the one-step-improvement we present the short proof of Corollary~\ref{cor:restriction-properties}. 
\begin{proof}[Proof of Corollary~\ref{cor:restriction-properties}]
	Let $\pi\in\Pi(\mu,\nu)$ be $c$-optimal. Then by Lemma~\ref{lem:optimal-restriction} the coupling $\pi_R := \pi\lfloor_{\cross{R}}$ is a $c$-optimal coupling between the measures $\mu_R$ and $\nu_R$ defined via 
	\begin{align*}
		\mu_R(A) = \pi((A\times\RR^d)\cap\cross{R}), \quad \nu_R(A) = \pi((\RR^d\times A)\cap\cross{R}),
	\end{align*}
	for any $A\subseteq\RR^d$ Borel. 
	In particular, we have that 
	\begin{align*}
		\mu_R(A) \leq \pi(A\times \RR^d) = \mu(A).
	\end{align*}
	If $A\subseteq B_{2R}^c$, then 
	\begin{align*}
		\mu_R(A) = \pi(((A\cap B_R)\times \RR^d) \cup (A\times B_R)) = \pi(A\times B_R) = 0,
	\end{align*}
	since for any $(x,y)\in\supp\pi_R$, by assumption we have that $|x-y| \leq MR \leq R$, so $A\times B_R \cap \supp\pi = \emptyset$. Hence, $\supp\mu_R \subseteq B_{2R}$. 
	Similarly, if $A\subseteq B_R$, then
	\begin{align*}
		\mu_R(A) = \pi((A \times \RR^d) \cup (A\times B_R)) = \pi(A\times \RR^d) = \mu(A),
	\end{align*}
	which implies that $\mu_R = \mu$ on $B_R$. 
	
	By symmetry, the same properties hold for $\nu_R$.
\end{proof}

We now give the proof of the one-step improvement Proposition \ref{prop:one-step-main}, which is the working horse of the Campanato iteration.
\begin{proof}[Proof of Proposition \ref{prop:one-step-main}]
\begin{enumerate}[label=\textsc{Step \arabic*},leftmargin=0pt,labelsep=*,itemindent=*]
	\item (Rescaling). Without loss of generality we can assume that $R=1$. Hence, to simplify notation we will also use the shorthand 
	\begin{align*}
	    \EE := \EE_9(\pi), \quad &\DD := \DD_9(\rho_0, \rho_1), \\
	    \text{and} \quad [\rho_0]_{\alpha} := [\rho_0]_{\alpha, 9}, \quad [\rho_1]_{\alpha} &:= [\rho_1]_{\alpha, 9}, \quad \left[ \nabla_{xy}c \right]_{\alpha} := \left[ \nabla_{xy}c \right]_{\alpha, 9}.
	\end{align*}
	Note that by the normalization $\rho_0(0)=\rho_1(0)=1$ we have that $\frac{1}{2}\leq \rho_j \leq 2$ for $j=1,2$, provided that $[\rho_0]_{\alpha} + [\rho_1]_{\alpha}$ is small enough. In particular, Lemma~\ref{lem:D-Holder_rho} then implies that 
	\begin{align*}
		\DD \lesssim [\rho_0]_{\alpha}^2 + [\rho_1]_{\alpha}^2.
	\end{align*}

	Indeed, let 
	$(\widetilde{x},\widetilde{y}) = S_R(x,y) := (R^{-1}x,R^{-1}y)$ and set 
	\begin{align*}
		\widetilde{\rho}_0(\widetilde{x}) = \rho_0(R\widetilde{x}), \quad 
		\widetilde{\rho}_1(\widetilde{y}) = \rho_1(R\widetilde{y}), \quad 
		\widetilde{c}(\widetilde{x},\widetilde{y}) = R^{-2} c(R\widetilde{x},R\widetilde{y}),
	\end{align*}
	and $\widetilde{b} = R^{-1} b$. We still have $\widetilde{\rho}_0(0) = \widetilde{\rho}_1(0) = 1$ and $\nabla_{\widetilde{x}\widetilde{y}}\widetilde{c}(0,0) = -\II$, and $\widetilde{\pi} := R^{-d}(S_R)_{\#}\pi$ is the $\widetilde{c}$-optimal coupling between $\widetilde{\rho}_0$ and $\widetilde{\rho}_1$. 
	
	\medskip
	\item\label{item:onestep-Linfty} ($L^{\infty}$-bound for $\pi$).
	We claim that the coupling $\pi$ satisfies the $L^{\infty}$-bound 
	\begin{align}\label{eq:Linfty-onestep}
		|x-y| \leq M \quad \text{for all } (x,y) \in \supp\pi\lfloor_{\cross{6}}
	\end{align}
	with $M = C_{\infty} (\EE+\DD)^{\frac{1}{d+2}}$ for some constant $C_{\infty}<\infty$ depending only on $d$. 
	
	In view of the harmonic approximation Theorem~\ref{thm:harmonic} it is therefore natural to consider the coupling $\pi\lfloor_{\cross{6}}$, which by Lemma~\ref{lem:optimal-restriction} and Corollary~\ref{cor:restriction-properties} is a $c$-optimal coupling between its own marginals, denoted by $\mu$ and $\nu$. 

	Indeed, since $\pi$ is $c$-optimal, its support is $c$-monotone. By assumptions \eqref{eq:qualitative-Linfty-onestep}, \eqref{eq:smallness-onestep-main}, and Remark~\ref{rem:qualitative-quantitative} we may therefore
	estimate 
	\begin{align}\label{eq:onestep-assumption-Linfty}
			\|\nabla_{xy}c+\II\|_{\mathcal{C}^0(\BB_{8,\Lambda})} \lesssim_{\Lambda} [\nabla_{xy}c]_{\alpha}\ll_{\Lambda} 1.
	\end{align}
	 Hence, appealing to Proposition~\ref{prop:Linfty-main} (suitably rescaled), we obtain the claimed $L^{\infty}$-bound \eqref{eq:Linfty-onestep}.
	Notice that the dependence of the smallness assumption \eqref{eq:smallness-onestep-main} on $\Lambda$ only enters through the $\Lambda$-dependence of \eqref{eq:onestep-assumption-Linfty}.

    \medskip
    \item \label{step:onestep-marginals} (Properties of the marginals $\mu$ and $\nu$ of $\pi\lfloor_{\cross{6}}$). 
    We claim that the marginals $\mu$ and $\nu$ of $\pi\lfloor_{\cross{6}}$ satisfy 
    \begin{align*}
        &\supp\mu, \supp\nu \subseteq B_7, \quad \text{and} \quad 
        \mu(B_7) \leq 2 |B_7|.
    \end{align*}
    
    Indeed, due to the $L^{\infty}$-bound \eqref{eq:Linfty-onestep} the marginal measures are supported on $B_{7}$ if $\EE+\DD\ll 1$ (such that $M\leq 1$). Furthermore, from Corollary \ref{cor:restriction-properties} we have that $\mu = \rho_0 \,\dd x$ and $\nu = \rho_1 \,\dd y$ on $B_6$, as well as $\mu\leq \rho_0\,\dd x$, which implies that 
	\begin{align*}
		\mu(B_7) \leq \int_{B_7} \rho_0 \,\dd x \leq (1+[\rho_0]_{\alpha}) |B_7| \leq 2 |B_7| 
	\end{align*}
	since by assumption \eqref{eq:smallness-onestep-main} we may assume that $[\rho_0]_{\alpha}\leq 1$. 
    
	\medskip
	\item \label{step:onestep-harmonic-applicable} (Almost-minimality of $\pi\lfloor_{\cross{6}}$ and applicability of the harmonic approximation Theorem~\ref{thm:harmonic}).
	We show next that the coupling $\pi\lfloor_{\cross{6}}$ is an almost-minimizer of the Euclidean transport problem in the sense that for any $\widetilde{\pi}\in\Pi(\mu,\nu)$ there holds
	\begin{align}\label{eq:almost-minimality-lagrangian-onestep}
		\frac{1}{2} \int |x-y|^2 \,\dd \pi\lfloor_{\cross{6}} \leq \frac{1}{2} \int |x-y|^2\,\dd \widetilde{\pi} + \Delta ,
	\end{align}
	with $\Delta = C [\nabla_{xy}c]_{\alpha} \EE^{\frac{1}{2}}$, and that $\pi\lfloor_{\cross{6}}$ satisfies the assumptions of Theorem~\ref{thm:harmonic}.
	
	Indeed, by \ref{step:onestep-marginals} and the $c$-optimality of $\pi\lfloor_{\cross{6}}$, we may  apply Proposition~\ref{prop:quasi-minimality-lagrangian-main} to obtain inequality \eqref{eq:almost-minimality-lagrangian-onestep} with 
	\begin{align}\label{eq:Delta-bound}
		\Delta = C \|\nabla_{xy} c + \II\|_{\mathcal{C}^0(B_7\times B_7)} \EE_7^{\frac{1}{2}} \lesssim [\nabla_{xy}c]_{\alpha} \EE^{\frac{1}{2}},
	\end{align}
	where we also used that $\EE_7 \lesssim \EE$. 
	
	Note that in view of Lemma~\ref{prop:quasi-minimality-eulerian-main} the Eulerian description of $\pi\lfloor_{\cross{6}}$ satisfies \eqref{eq:almostminimal-harmonic}. Together with \ref{item:onestep-Linfty} and \ref{step:onestep-marginals} this implies that the assumptions of Theorem~\ref{thm:harmonic} are fulfilled.

	\medskip
    \item (Definition of $b$ and $B$ and proof of estimate \eqref{eq:B-b-est-main}).
    For given $\tau > 0$, to be chosen later, let $\epsilon_{\tau}$ and $C_{\tau}$ be as in the harmonic approximation result (Theorem \ref{thm:harmonic}). 
    We claim that, with the harmonic gradient field $\nabla\Phi$ defined in \eqref{eq:harmonic-approx},
    \begin{align}\label{def:b-B}
        b:=\nabla\Phi(0), \quad A:=\nabla^2\Phi(0), \quad \text{and} \quad B:= \ee^{\frac{A}{2}},
    \end{align}
	satisfy the bounds \eqref{eq:B-b-est-main}.
    
    Since $\rho_0(0) = \rho_1(0) = 1$,  we have $\frac{1}{2} \leq \rho_j \leq 2$, $j=0,1$, provided $[\rho_0]_{\alpha}^2 + [\rho_1]_{\alpha}^2$ is small enough, so that \eqref{eq:D-bound-rho} holds. In particular, since $\mu$ agrees with $\rho_0 \,\dd x$ on $B_6$ and $\nu$ agrees with $\rho_1 \,\dd y$ on $B_6$, 
    we may estimate
    \begin{align*}
    	\DD_6(\mu, \nu) = \DD_6(\rho_0, \rho_1) 
    	\stackrel{\eqref{eq:D-bound-rho}}{\lesssim} [\rho_0]_{\alpha,6}^2 + [\rho_1]_{\alpha,6}^2 
    	\lesssim [\rho_0]_{\alpha}^2 + [\rho_1]_{\alpha}^2.
    \end{align*} 
    Assume now that 
    \begin{align}\label{eq:epsilon-tau-1}
	    \mathcal{E}_6(\pi) + \mathcal{D}_6(\mu,\nu) \lesssim \EE + [\rho_0]_{\alpha}^2 + [\rho_1]_{\alpha}^2 \leq \epsilon_{\tau},
    \end{align} 
    so that Theorem~\ref{thm:harmonic} allows us to define the vector $b$ and the matrices $A,B$ as in \eqref{def:b-B}. Note that since $A$ is symmetric, so is the matrix $B$.
    By \eqref{eq:harmonic-estimate} and \eqref{eq:D-bound-rho}, we then have
    \begin{align*}
        |b|^2 
        \lesssim \mathcal{E}_6(\pi) +\mathcal{D}_6(\mu, \nu) 
        \lesssim \mathcal{E}+[\rho_0]_{\alpha}^2 + [\rho_1]_{\alpha}^2,
    \end{align*}
    and the same estimate holds for $|A|^2$, so that 
    \begin{align*}
        |B-\II|^2 \lesssim \mathcal{E}+[\rho_0]_{\alpha}^2 + [\rho_1]_{\alpha}^2.
    \end{align*} 
    This proves the estimate \eqref{eq:B-b-est-main}. Furthermore, recalling the definition of $\Phi$ in \eqref{eq:harmonic-approx}, we bound
    \begin{align}\label{eq:detB}
    	|1-\det B^2|^2
    	&= \left|1- \ee^{\mathrm{tr}A}\right|^2
    	= \left|1- \ee^{\Delta\Phi(0)}\right|^2
    	= \left|1-\ee^{\kappa_{\mu}-\kappa_{\nu}}\right|^2 \nonumber\\
    	&\lesssim \left| \kappa_{\nu} - \kappa_{\mu} \right|^2
    	\overset{\eqref{eq:D-def}}{\lesssim} \mathcal{D}_{R_*}(\mu, \nu)
    	\overset{\eqref{eq:D-bound-rho}}{\lesssim} R_*^{2\alpha} \left( [\rho_0]_{\alpha, R_*}^2 + [\rho_1]_{\alpha, R_*}^2 \right) 
    	\lesssim [\rho_0]_{\alpha}^2 + [\rho_1]_{\alpha}^2.
    \end{align}
    In view of \eqref{eq:B-b-est-main} we may assume that $b \in B_9$, so that%
    \footnote{The first inequality follows from $\rho_1\geq \frac{1}{2}$ and the Lipschitz continuity of the function $t\mapsto t^{\frac{1}{d}}$ away from zero.}
	\begin{align}\label{eq:delta-lambda-est}
	    |\rho_1(b)^{\frac{1}{d}} - \rho_1(0)^{\frac{1}{d}}|^2 \lesssim |\rho_1(b) - \rho_1(0)|^2 \leq |b|^{2\alpha} [\rho_1]_{\alpha}^2 \lesssim [\rho_1]_{\alpha}^2, 
	\end{align}
	and similarly for $D = -\nabla_{xy}c(0,b)$,
	\begin{align}\label{eq:M-est-2}
	    |D-\II|^2 &\lesssim \left[\nabla_{xy}c\right]_{\alpha}^2,
	\end{align} 
	which implies that
	\begin{align}\label{eq:detM}
		|1-\det D|^2 
		\lesssim |D-\II|^2 
		\lesssim [\nabla_{xy}c]_{\alpha}^2.
	\end{align}
	Now, noticing that $\gamma = \rho_1(b)^{\frac{1}{d}} \left(\frac{|\det B|^2}{|\det D|}\right)^{\frac{1}{d}}$ because $\rho_0(0) = 1$, \eqref{eq:detB}, \eqref{eq:delta-lambda-est} together with $\rho_1(0)=1$, and \eqref{eq:detM} imply
	\begin{align}\label{eq:gamma-est}
		|1-\gamma|^2 
		\lesssim [\rho_0]_{\alpha}^2 + [\rho_1]_{\alpha}^2 + [\nabla_{xy}c]_{\alpha}^2.
	\end{align}

	\medskip
	\item \label{step:mapping-properties} (Mapping properties of $Q$).
	We show that if $\EE + \DD + [\nabla_{xy}c]_{\alpha}^2 \ll \theta^2$,\footnote{Note that we have not yet fixed $\theta$.} then 
	\begin{align}\label{eq:cross-inclusion}
	    Q^{-1}(\cross{\theta}) \subseteq \cross{2\theta}.
	\end{align} 
	
	Indeed, we have
	\begin{align*}
		Q^{-1}(\cross{\theta})
		&= Q^{-1}(B_{\theta} \times \RR^d) \cup Q^{-1}(\RR^d \times B_{\theta}) \\
		&= \left(B^{-1}B_{\theta} \times \RR^d\right) \cup \left(\RR^d \times ( \gamma^{-1} D^{-*}BB_{\theta} + b)\right),
	\end{align*}
	and from \eqref{eq:B-b-est-main}, \eqref{eq:M-est-2}, and \eqref{eq:gamma-est} it follows that $B^{-1}B_{\theta} \subseteq B_{2\theta}$ and $\gamma^{-1} D^{-*}BB_{\theta} + b \subseteq B_{2\theta}$ if $\EE + \DD + [\nabla_{xy}c]_{\alpha}^2 \ll \theta^2$, which we assume to be true from now on. 
	
	\medskip
	\item (Transformation of the displacement)
	We next show that for all $(\widehat{x},\widehat{y}) = Q(x,y) \in\RR^d \times \RR^d$ and $t\in[0,1]$ there holds
	\begin{align}\label{eq:transformation-displacement}
	    B(\widehat{y}-\widehat{x}) = y-x - \nabla\Phi(t x + (1-t)y) + \delta,
	\end{align}
	where the error $\delta$ is controlled by 
	\begin{align}
	    |\delta| 
	    &\lesssim \left( |\gamma-1| + |D-\II| \right) (|y| + |b|)  + |\nabla^2\Phi(0)|^2 |x|  + |\nabla^2\Phi(0)| |x-y| \nonumber\\
	    &\quad + \sup_{s\in[0,1]} |\nabla^3\Phi(s(tx + (1-t)y))| \,  |tx + (1-t)y|^2. \label{eq:onestep-delta}
	\end{align}
	
	Indeed, by \eqref{eq:Q-transformation} we have
	\begin{align*}
	    B(\widehat{y}-\widehat{x}) 
	    &= \gamma D^* (y-b) - B^2 x 
	    = y-b-B^2 x + (\gamma D^* - \II)(y-b),
	\end{align*}
	and the second term, which will turn out to be an error term, can be bounded by 
	\begin{align}\label{eq:onestep-error-1}
	    |(\gamma D^* - \II)(y-b)|
	    \leq \left(|\gamma-1||D^*| + |D^*-\II|\right) |y-b| 
	    \stackrel{\eqref{eq:M-est-2}}{\lesssim} \left( |\gamma-1| + |D-\II| \right) (|y| + |b|).
	\end{align}
	We show next that $B^2 x + b \approx x + \nabla\Phi(tx + (1-t)y)$, with an error that can be controlled. This relies on the fact that,
	\begin{align}\label{eq:B2-expansion}
	    B^2 = \ee^{A} = \II + A + (\ee^{A} - \II-A),
	\end{align}
	where
	\begin{align}\label{eq:onestep-error-2}
	    |\ee^{A} - \II-A| \lesssim |A|^2,
	\end{align}
	and Taylor approximation. Indeed,
	\begin{align}
	    &|\nabla\Phi(tx + (1-t)y) - (\nabla\Phi(0) + \nabla^2 \Phi(0) (tx + (1-t)y))| \nonumber\\
	    &\lesssim \sup_{s\in[0,1]} |\nabla^3\Phi(s(tx + (1-t)y))| \,  |tx + (1-t)y|^2 \label{eq:onestep-taylor}
	\end{align}
	so that with \eqref{eq:B2-expansion}, \eqref{eq:onestep-error-2}, and \eqref{def:b-B},
	\begin{align}
	    &|B^2 x + b - (x + \nabla\Phi(tx + (1-t)y)) | 
        \lesssim |\nabla^2\Phi(0) x + \nabla\Phi(0) - \nabla\Phi(tx + (1-t)y))| + |\nabla^2\Phi(0)|^2 |x| \nonumber\\
        &\leq |\nabla^2\Phi(0) (tx + (1-t)y) + \nabla\Phi(0) - \nabla\Phi(tx + (1-t)y))| + |1-t| |\nabla^2\Phi(0)| |x-y| + |\nabla^2\Phi(0)|^2 |x| \nonumber\\
        &\stackrel{\eqref{eq:onestep-taylor}}{\lesssim} 
        \sup_{s\in[0,1]} |\nabla^3\Phi(s(tx + (1-t)y))| \,  |tx + (1-t)y|^2 + |\nabla^2\Phi(0)| |x-y| + |\nabla^2\Phi(0)|^2 |x|. \label{eq:onestep-error-3}
	\end{align}
Collecting all the error terms from \eqref{eq:onestep-error-1} and \eqref{eq:onestep-error-3} then yields \eqref{eq:transformation-displacement}.

	\medskip
	\item (Proof of the energy estimate \eqref{eq:one-step-conclusion-main}).
	In this final step we prove the energy estimate \eqref{eq:one-step-conclusion-main}. 
	
	To this end, let us compute
	\begin{align}
		\theta^{d+2} \mathcal{E}_{\theta}(\widehat{\pi})
		&= \int_{\cross{\theta}} |\widehat{x}-\widehat{y}|^2 \, \dd\widehat{\pi} 
		\leq |B^{-1}|^2 \int_{\cross{\theta}} |B(\widehat{x}-\widehat{y})|^2 \, \dd\widehat{\pi} \nonumber 
		= |B^{-1}|^2 \int_0^1\int_{\cross{\theta}} |B(\widehat{x}-\widehat{y})|^2 \, \dd\widehat{\pi} \,\dd t \\
		&\stackrel{\eqref{eq:pihat-def}\&\eqref{eq:transformation-displacement}}{\lesssim} |\det B| |B^{-1}|^2  \int_{Q^{-1}(\cross{\theta})}\int_0^1 \left(|y-x - \nabla\Phi(t x + (1-t)y)|^2 + |\delta|^2 \right)\,\dd t \, \dd \pi \nonumber\\
		&\overset{\eqref{eq:cross-inclusion}\&\eqref{eq:B-b-est-main}}{\lesssim} \int_{\cross{2\theta}}\int_0^1 |y-x - \nabla\Phi(t x + (1-t)y)|^2 \,\dd t\,\dd\pi  + \int_{\cross{2\theta}}\int_0^1 |\delta|^2\,\dd t \,  \dd \pi. \label{eq:one-step-1}
	\end{align}
	For the first term on the right-hand side of \eqref{eq:one-step-1}, we use the harmonic approximation theorem in its Lagrangian version (Lemma~\ref{lem:harmonic-lagrange}), which is applicable by \ref{step:onestep-harmonic-applicable}. 
    In particular, we may estimate, assuming that $\theta \leq \frac{1}{2}$, 
	\begin{align*}
        \int_{\cross{2\theta}}\int_0^1 |y-x - \nabla\Phi(t x + (1-t)y)|^2 \,\dd t\,\dd\pi
	    &\leq
	    \int_{\cross{1}}\int_0^1 |y-x - \nabla\Phi(t x + (1-t)y)|^2 \,\dd t\,\dd\pi \nonumber\\
	    &\leq \int_{B_2 \times[0,1]} \frac{1}{\rho}|j-\rho\nabla\Phi|^2 + \Delta \nonumber\\
	    &\leq \left(\tau + \frac{C M}{\tau}\right) \EE + C_{\tau} \DD + \Delta,
	\end{align*}
	so that by \eqref{eq:Delta-bound}, \eqref{eq:D-bound-rho}, and Young's inequality,
	\begin{align} \label{eq:harmonic-final-1}
        &\int_{\cross{2\theta}}\int_0^1 |y-x - \nabla\Phi(t x + (1-t)y)|^2 \,\dd t\,\dd\pi\nonumber\\
        &\quad\lesssim \left(\tau + \frac{M}{\tau}\right) \EE + C_{\tau} ([\rho_0]_{\alpha}^2 + [\rho_1]_{\alpha}^2 + [\nabla_{xy}c]_{\alpha}^2).
    \end{align}
	
	To estimate the error term $\int_{\cross{2\theta}}\int_0^1 |\delta|^2\,\dd t\,\dd\pi$ we use the following consequence of the $L^{\infty}$-bound \eqref{eq:Linfty-onestep}: if $\EE + [\rho_0]_{\alpha}^2 + [\rho_1]_{\alpha}^2 \ll \theta^{d+2}$, which holds in view of \eqref{eq:smallness-onestep-main}, then
	\begin{align}\label{eq:cross-ball-inclusion}
		\cross{2\theta} \cap \supp\pi \subseteq	B_{3\theta} \times B_{3\theta}.
	\end{align}
	With this in mind, we will bound each term in \eqref{eq:onestep-delta} separately. 
    Note that 
    \begin{align*}
        \int_{\cross{2\theta}} \left(|x|^2 + |y|^2\right) \,\dd \pi 
        &\stackrel{\eqref{eq:cross-ball-inclusion}}{\leq} \int_{ B_{3\theta}\times \RR^d} |x|^2\,\dd\pi  + \int_{\RR^d \times B_{3\theta}} |y|^2\,\dd\pi \\
        &= \int_{B_{3\theta}} |x|^2 \, \rho_0(x)\,\dd x  + \int_{B_{3\theta}} |y|^2 \, \rho_1(y)\,\dd y 
        \lesssim \theta^{d+2},
    \end{align*}
    and similarly $\int_{\cross{2\theta}} \,\dd\pi \lesssim \theta^d$.
    Moreover, by \eqref{eq:cross-ball-inclusion}, we may estimate for any $(x,y) \in \cross{2\theta} \cap \supp \pi$
    \begin{align*}
        \sup_{s\in[0,1]} |\nabla^3\Phi(s(tx + (1-t)y))| \,  |tx + (1-t)y|^2 \lesssim \theta^2 \sup_{B_{3\theta}} |\nabla^3\Phi| \, 
        \lesssim \theta^2 \sup_{B_2} |\nabla^3\Phi|  \, .
    \end{align*}
    Together with the bound \eqref{eq:B-b-est-main} and since we assumed that $\EE + [\rho_0]_{\alpha}^2 + [\rho_1]_{\alpha}^2 \ll \theta^2$, which implies that $|b|^2\lesssim \theta^2$, it follows that 
    \begin{align*}
        \int_{\cross{2\theta}}\int_0^1 |\delta|^2\,\dd t \,  \dd \pi
        &\lesssim \theta^{d+2} \left(|1-\gamma|^2 + |D-\II|^2 \right) \,  
        + \theta^{d+2} |\nabla^2\Phi(0)|^4  \\
        &\quad+ |\nabla^2\Phi(0)|^2 \int_{\cross{2\theta}} |x-y|^2\,\dd\pi 
        + \theta^{d+4} \sup_{B_2}|\nabla^3\Phi|^2 .
    \end{align*}
Hence with the bounds \eqref{eq:gamma-est}, \eqref{eq:M-est-2}, and  \eqref{eq:harmonic-estimate}, together with $\EE + [\rho_0]_{\alpha}^2 + [\rho_1]_{\alpha}^2 \ll \theta^2$ and $\theta \leq 1$, we obtain 
\begin{align}
     \int_{\cross{2\theta}}\int_0^1 |\delta|^2\,\dd t \,  \dd \pi 
      &\lesssim \theta^{d+2}  \left( [\rho_0]_{\alpha}^2 + [\rho_1]_{\alpha}^2 + [\nabla_{xy}c]_{\alpha}^2 \right) 
      + \theta^{d+2} (\EE +  [\rho_0]_{\alpha}^2 + [\rho_1]_{\alpha}^2)^2  \nonumber\\
      &\quad +   (\EE +  [\rho_0]_{\alpha}^2 + [\rho_1]_{\alpha}^2) \theta^{d+2} \EE\, 
      + \theta^{d+4} (\EE +  [\rho_0]_{\alpha}^2 + [\rho_1]_{\alpha}^2)  \nonumber\\
      &\lesssim \theta^{d+2} \left( [\rho_0]_{\alpha}^2 + [\rho_1]_{\alpha}^2 + [\nabla_{xy}c]_{\alpha}^2 \right)  
      + \theta^{d+4} \EE  . \label{eq:harmonic-final-2}
\end{align}

    Inserting the estimates \eqref{eq:harmonic-final-1} and \eqref{eq:harmonic-final-2} into \eqref{eq:one-step-1} yields the existence of a constant $C=C(d,\alpha)$ such that 
	\begin{align*}
		\mathcal{E}_{\theta}(\widehat{\pi})
		&\leq C\left( \tau \theta^{-(d+2)} + \frac{M}{\tau \theta^{d+2}} + \theta^2 \right) \EE + C_{\tau,\theta} \left( [\rho_0]_{\alpha}^2 + [\rho_1]_{\alpha}^2 + [\nabla_{xy}c]_{\alpha}^2 \right).
	\end{align*}    
    We now fix $\theta=\theta(d,\alpha,\beta)$ such that $C \theta^2 \leq \frac{1}{3} \theta^{2\beta}$, which is possible since $\beta<1$, and then $\tau$ small enough so that $C \tau \theta^{-(d+2)} \leq \frac{1}{3} \theta^{2\beta}$. This fixes $\epsilon_{\tau}$ given by Theorem \ref{thm:harmonic}. Finally, we may choose $\EE + [\rho_0]_{\alpha}^2 + [\rho_1]_{\alpha}^2$ small enough so that 
    \begin{align*}
    	C\frac{M}{\tau \theta^{d+2}} 
    	\stackrel{\eqref{eq:Linfty-onestep}}{=} C \frac{C_{\infty}(\EE + \DD)^{\frac{1}{d+2}}}{\tau \theta^{d+2}}
    	\leq \frac{1}{3}\theta^{2\beta},
    \end{align*}
   	yielding \eqref{eq:one-step-conclusion-main}.
	
	\medskip
	\item (Proof of \eqref{eq:qualitative-theta-inclusion-main}).
	Let $(\widehat{x}, \widehat{y}) \in \cross{\frac{8}{9}\theta} \cap \supp \widehat{\pi}$ and set $(x,y) := Q^{-1}(\widehat{x}, \widehat{y})$, see Lemma \ref{lem:coordinate-change} for the definition of $Q$. Then by  \ref{lem:equivalence-support}, there holds  $(x,y) \in \supp\pi$.
	Similarly to \ref{step:mapping-properties}, we have that $Q^{-1}(\cross{\frac{8}{9}\theta}) \subseteq \cross{\theta}$, provided that $\EE + \DD + [\nabla_{xy}c]_{\alpha}^2 \ll \theta^2$, so that
    \begin{align*}
        (x,y) \in \cross{\theta} \cap \supp\pi.
    \end{align*} Using the $L^{\infty}$ bound \eqref{eq:Linfty-main} (notice that $\theta \leq \frac{2}{3}$) and the fact that we assumed $\EE + \DD \ll \theta^{d+2}$, we obtain 
    \begin{align*}
        |x-y| \leq M = C_{\infty} (\EE + \DD)^{\frac{1}{d+2}} \leq \theta, 
    \end{align*} so that $(x,y) \in \BB_{\theta, 2\theta}$. It thus follows that $(\widehat{x}, \widehat{y}) \in Q(\BB_{\theta, 2\theta})$. Following the proof of \ref{step:mapping-properties}, we have 
    \begin{align*}
        Q(\BB_{\theta, 2\theta}) \subseteq \BB_{2\theta, 3\theta},
    \end{align*} and conclude that
        $(\widehat{x}, \widehat{y}) \in \BB_{2\theta, 3\theta} \cap \cross{\frac{8}{9}\theta} = \BB_{\frac{8}{9}\theta, 3\theta}$. \qedhere
\end{enumerate}
\end{proof}
\section{Proof of \texorpdfstring{$\epsilon$}{epsilon}--regularity}\label{sec:proof-main}

To lighten the notation in this subsection, let us set 
\begin{align*}
    \HH_R(\rho_0, \rho_1, c) &:= R^{2\alpha} \left( [\rho_0]_{\alpha, R}^2 + [\rho_1]_{\alpha, R}^2 + \left[\nabla_{xy}c\right]_{\alpha, R}^2\right).
\end{align*}
We are now in the position to give the proof of our main $\epsilon$-regularity theorem, which we restate for the reader's convenience:
\newtheorem*{epsilon-regularity}{Theorem \ref{thm:main}}
\begin{epsilon-regularity}
Assume that \ref{item:cost-cont}--\ref{item:cost-non-deg} hold and that $\rho_0(0) = \rho_1(0) =1$, as well as $\nabla_{xy}c(0,0)= -\II$. Assume further that $0$ is in the interior of $\, X \times Y$.

Let $\pi$ be a $c$-optimal coupling from $\rho_0$ to $\rho_1$. 
There exists $R_0= R_0(c)>0$ such that for all $R\leq R_0$ with 
    \begin{align}\label{eq:one-sided-assumption-main}
        \frac{1}{R^{d+2}} \int_{B_{4R} \times \RR^d} |y-x|^2 \, \dd\pi
        + \HH_{4R}(\rho_0, \rho_1, c) \ll_c 1,
    \end{align}
    there exists a function $T\in \mathcal{C}^{1,\alpha}(B_R)$ such that $\supp \pi \cap (B_R \times \RR^d) \subseteq \mathrm{graph}\,T$, 
    and the estimate 
    \begin{align}\label{eq:holder-gradient}
        [\nabla T]_{\alpha, R}^2 \lesssim \frac{1}{R^{2\alpha}} \left( \frac{1}{R^{d+2}} \int_{B_{4R} \times \RR^d} |y-x|^2 \, \dd\pi + \HH_{4R}(\rho_0, \rho_1, c)\right)
    \end{align}
    holds.
\end{epsilon-regularity}

\begin{proof}[Proof of Theorem~\ref{thm:main}] To simplify notation, we write $\EE_R$ for $\EE_R(\pi)$ and $\HH_R$ for $\HH_{R}(\rho_0, \rho_1, c)$. Note that since $\rho_0(0)=\rho_1(0)=1$ and $R^{\alpha}\left([\rho_0]_{\alpha, 4R}+[\rho_1]_{\alpha, 4R}\right)$ is small by \eqref{eq:one-sided-assumption-main}, we may assume throughout the proof that $\frac{1}{2}\leq \rho_0,\rho_1 \leq 2$ on $B_{4R}$.
\begin{enumerate}[label=\textsc{Step \arabic*},leftmargin=0pt,labelsep=*,itemindent=*]
\item \label{step:one-sided-to-two-sided} (Control of the full energy at scale $2R$). We show that under assumption \eqref{eq:one-sided-assumption-main} we can bound
\begin{align}\label{eq:two-sided-energy-smallness}
    \EE_{2R} \lesssim \frac{1}{R^{d+2}} \int_{B_{4R} \times \RR^d} |y-x|^2 \, \dd\pi.
\end{align}

Indeed, from Remarks \ref{rem:one-sided-energy} and \ref{rem:one-sided-qualitative}, we know that \eqref{eq:one-sided-assumption-main} implies the $L^{\infty}$ bound 
\begin{align}\label{eq:one-sided-Linfty}
\begin{split}
    (x,y) \in \, &(B_{3R}\times \RR^d) \cap \supp\pi \\
    &\Rightarrow \quad |x-y| \lesssim R \left( \frac{1}{R^{d+2}} \int_{B_{4R} \times \RR^d} |y-x|^2 \, \dd\pi + \DD_{4R}(\rho_0,\rho_1) \right)^{\frac{1}{d+2}}.
\end{split}
\end{align} Let us now prove that
\begin{align}\label{eq:one-sided-to-two-sided}
    (\RR^d \times B_{2R}) \cap \supp\pi \subseteq B_{4R} \times B_{2R}, 
\end{align}
from which we get 
\begin{align*}
    \int_{\RR^d \times B_{2R}} |y-x|^2 \, \dd\pi \leq \int_{B_{4R}\times \RR^d} |y-x|^2 \, \dd\pi,
\end{align*} thus yielding \eqref{eq:two-sided-energy-smallness}. 
To this end, assume there exists $(x,y) \in (B_{4R}^c \times B_{2R}) \cap \supp\pi$. Let also $x' \in [x,y] \cap \partial B_{\frac{5}{2}R}$ and $y' \in \RR^d$ such that $(x',y') \in \supp\pi$, see Figure \ref{fig:Linfty}. Then by \eqref{eq:one-sided-Linfty}, \eqref{eq:one-sided-assumption-main} and \eqref{eq:D-bound-rho}, we have 
\begin{align}\label{eq:main-proof-one-sided-step}
    |x'-y'| \ll R. 
\end{align}
From \eqref{eq:Linfty-step1-cmon-2}, recalling the definition \eqref{eq:delta-def} of $\delta$ and the fact that $\delta \ll 1$, we get, upon writing $y-y'=y-x'+x'-y'$, 
\begin{align*}
    0 &\leq (x-x') \cdot (y-x') + (x-x')\cdot (x'-y') + \delta |x-x'||y-x'| + \delta |x-x'||x'-y'| \\
    &\leq (\delta-1)|x-x'||y-x'| + (1+\delta)|x-x'||x'-y'| \\
    &\leq |x-x'| \left(-\frac{1}{2}\frac{R}{2} + (1+\delta) |x'-y'|\right), 
\end{align*} which together with \eqref{eq:main-proof-one-sided-step} yields a contradiction, proving \eqref{eq:one-sided-to-two-sided}. 
\begin{figure}[ht]
\begin{tikzpicture}[line cap=round,line join=round,>=triangle 45,x=1.0cm,y=1.0cm]
\clip(-1.86634036085822,0.4799433083793329) rectangle (4.637400464773399,4.2);
\draw [line width=1.pt] (0.,0.) circle (2.cm);
\draw [line width=1.pt] (0.,0.) circle (4.cm);
\draw [line width=1.pt] (0.20292602528856288,1.1155646844514124)-- (3.7999795164275345,2.6373950076255994);
\draw [line width=1.pt] (0.,0.) circle (2.5cm);
\begin{scriptsize}
\draw[color=black] (-0.821365761844323,1.5) node {\small $B_{2R}$};
\draw[color=black] (-1.0,3.5) node {\small $B_{4R}$};
\draw [fill=black] (0.20292602528856288,1.1155646844514124) circle (1.5pt);
\draw[color=black] (0.2160638942164051,1.35) node {$y$};
\draw [fill=black] (3.7999795164275345,2.6373950076255994) circle (1.5pt);
\draw[color=black] (3.8074567399248167,2.85) node {$x$};
\draw[color=black] (-0.9420848490950258,2.6) node {\small $B_{\frac{5}{2}R}$};
\draw [fill=black] (1.7608292129297964,1.774677571530399) circle (1.5pt);
\draw[color=black] (1.883496286866739,2.0) node {$x'$};
\draw [fill=black] (0.7447898524793695,2.9717365180199287) circle (1.5pt);
\draw[color=black] (0.8649289881889333,3.25) node {$y'$};
\end{scriptsize}
\end{tikzpicture}
\caption{\label{fig:Linfty}The definition of $x'$ and $y'$ in the proof of \eqref{eq:one-sided-to-two-sided}.}
\end{figure}
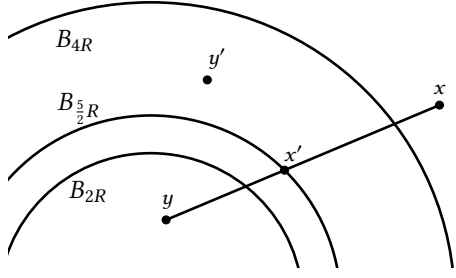

In the following, \ref{step:normalized-setting} -- \ref{step:campanato} are devoted to prove that under the assumption 
\begin{align}\label{eq:smallness-assumption-proof-main}
    \EE_{2R} + \HH_{2R} \ll 1,
\end{align} the following Campanato estimate holds: 
\begin{align}\label{eq:campanato-pi-full}
    \underset{0<r<\frac{R}{2}}{\sup} \ \underset{x_0 \in B_{R}}{\sup} \ \frac{1}{r^{d+2+2\alpha}} \ \underset{A,b}{\inf} \int_{(B_r(x_0)\cap B_R) \times \RR^d} |y-(Ax+b)|^2 \, \dd\pi \lesssim \frac{1}{R^{2\alpha}} \left( \EE_{2R}+\HH_{2R} \right). 
\end{align} 

\medskip
\item \label{step:normalized-setting} (Getting to a normalized setting). We claim that it is enough to prove that if 
\begin{align}\label{eq:local-smallness}
    \EE_R + \HH_R \ll 1
\end{align}
then for all $r \leq \frac{R}{2}$, 
\begin{align}\label{eq:campanato}
    \frac{1}{r^{d+2}} \ \underset{A,b}{\inf} \int_{B_{r}\times \RR^d} |y-(Ax+b)|^2 \, \dd \pi \lesssim \frac{r^{2\alpha}}{R^{2\alpha}} \left( \EE_{R} + \HH_{R} \right).
\end{align}

Let us first notice that for any $x_0 \in B_{R}$, $(B_r(x_0) \cap B_R) \times \RR^d \subseteq \cross{2R}$ so that 
\begin{align*}
    \EE_{x_0, R} := \frac{1}{R^{d+2}} \int_{(B_r(x_0) \cap B_R) \times \RR^d} |y-x|^2 \, \dd \pi  &\lesssim \EE_{2R}, \\ 
    \text{and} \quad \HH_{x_0,R} := R^{2\alpha} \left( [\rho_0]_{\alpha, B_R(x_0)}^2 + [\rho_1]_{\alpha,R}^2 + \left[\nabla_{xy}c\right]_{\alpha, (B_r(x_0) \cap B_R) \times \RR^d}^2 \right) &\lesssim \HH_{2R}.
\end{align*}
Therefore, in view of \eqref{eq:smallness-assumption-proof-main}, it is sufficient to show for all $x_0 \in B_{R}$ that, if 
\begin{align}\label{eq:x0-smallness}
    \EE_{x_0,R} + \HH_{x_0,R} \ll 1,
\end{align}
then for all $r \leq \frac{R}{2}$,
\begin{align}\label{eq:campanato-x0}
    \frac{1}{r^{d+2}} \ \underset{A,b}{\inf} \int_{(B_r(x_0) \cap B_R) \times \RR^d} |y-(Ax+b)|^2 \, \dd \pi \lesssim \frac{r^{2\alpha}}{R^{2\alpha}} \left( \EE_{x_0,R} + \HH_{x_0,R} \right).
\end{align}

Performing a similar change of coordinates as Lemma \ref{lem:coordinate-change}, namely letting $M := -\nabla_{xy}c(x_0,0)$, $\gamma :=  (\rho_0(x_0) \det M)^{\frac{1}{d}}$ and $S(y) := \gamma M^{-1} y$ and defining
\begin{align*}
    \widetilde{\pi} &:= \frac{1}{\rho_0(x_0)} (\id \times \,S^{-1}) _{\#} \pi, \quad \widetilde{\rho}_0 := \frac{\rho_0}{\rho_0(x_0)}, \quad
    \widetilde{\rho}_1 := \rho_1 \circ S, \quad
    \widetilde{c}(x,y) := \, \gamma^{-1} c(x, S(y)),
\end{align*} 
we have that $\widetilde{\rho}_0(x_0) = \widetilde{\rho}_1(0) = 1$ and $\nabla_{xy}\widetilde{c}(x_0,0) = -\II$. Furthermore, $\widetilde{\pi}$ is a  $\widetilde{c}$-optimal coupling between $\widetilde{\rho}_0$ and $\widetilde{\rho}_1$ and $\widetilde{\pi}$, $\widetilde{\rho}_0$,  $\widetilde{\rho}_1$ and $\widetilde{c}$ still satisfy \eqref{eq:x0-smallness}. Without loss of generality, we may thus assume $x_0=0$ and then \eqref{eq:x0-smallness} turns into \eqref{eq:local-smallness} and \eqref{eq:campanato-x0} turns into \eqref{eq:campanato}.

\medskip
\item (First step of the iteration). Recall that \eqref{eq:D-bound-rho} holds, i.e., $\DD_{R} \lesssim R^{2\alpha}\left( [\rho_0]_{\alpha,R}^2 + [\rho_1]_{\alpha,R}^2 \right)$. By (a rescaling of) Lemma \ref{lem:displacement-qualitative}, there exist $\Lambda_0 < \infty$ and $R_0>0$ such that for all $R\leq R_0$ for which \eqref{eq:local-smallness} holds, we have 
\begin{align}\label{eq:iteration-first-inclusion}
    \cross{\frac{8}{9}R} \cap \supp\pi \subseteq \BB_{\frac{8}{9}R, \frac{\Lambda_0}{9} R}.
\end{align}
Let $\beta>0$. In view of \eqref{eq:local-smallness} and \eqref{eq:iteration-first-inclusion}, we may apply Proposition \ref{prop:one-step-main} to get the existence of a symmetric matrix $B_1$ and a vector $b_1$ such that, defining $D_1 := -\nabla_{xy}c_1(0, b_1)$, $\gamma_1 := \left(\rho_1(b_1) \det B_1^2D_1^{-1}\right)^{\frac{1}{d}}$ and $\rho_{0,1}$, $\rho_{1,1}$, $c_1$ and $\pi_1$ as $\widehat{\rho}_0$, $\widehat{\rho}_1$, $\widehat{c}$ and $\widehat{\pi}$ in Lemma \ref{lem:coordinate-change}, we get 
\begin{align}\label{eq:E1-est}
    \mathcal{E}^1 := \EE_{\theta R}(\pi_1) \leq \theta^{2\beta} \mathcal{E}_R + C_{\beta} \HH_R, 
\end{align}
and that $\pi_1$ is a $c_1$-optimal coupling between $\rho_{0,1}$ and $\rho_{1,1}$. From \eqref{eq:qualitative-theta-inclusion-main} we also have the inclusion
\begin{align}\label{eq:iteration-second-inclusion}
    \cross{\frac{8}{9} \theta R} \cap \supp\pi_1 \subseteq \BB_{\frac{8}{9} \theta R,3\theta R},
\end{align}
so that we may fix $\Lambda=27$ from now on (assuming w.l.o.g. that $\Lambda_0\geq 27$).

\medskip
\item (Iterating Proposition \ref{prop:one-step-main}). We now show that we can iterate Proposition \ref{prop:one-step-main} a finite number of times. 

Notice that from the estimates \eqref{eq:B-b-est-main}, \eqref{eq:M-est-2} and \eqref{eq:gamma-est} we have the inclusion 
\begin{align}\label{eq:Q-inclusion}
\gamma_1^{-1} D_1^{-*} B_1 B_{\theta R} + b_1 \subseteq B_R.
\end{align} 
Hence, we can bound
\begin{align*}
    [\rho_{1,1}]_{\alpha, \theta R} &= \rho_1(b_1)^{-1} \underset{y \neq y' \in B_{\theta R}}{\sup} \ \frac{|\rho_1( \gamma_1^{-1} D_1^{-*} B_1 y + b_1) - \rho_1(\gamma_1^{-1} D_1^{-*} B_1 y' + b_1)|}{|y-y'|^{\alpha}} \\
    &\overset{\eqref{eq:delta-lambda-est}}{\lesssim} |\gamma_1^{-1} D_1^{-*} B_1|^{\alpha} \underset{y \neq y' \in B_{\theta R}}{\sup} \ \frac{|\rho_1( \gamma_1^{-1} D_1^{-*} B_1 y + b_1) - \rho_1(\gamma_1^{-1} D_1^{-*} B_1 y' + b_1)|}{|(\gamma_1^{-1} D_1^{-*} B_1 y + b_1) - (\gamma_1^{-1} D_1^{-*} B_1 y' + b_1)|^{\alpha}} \\
    &\stackrel{\eqref{eq:Q-inclusion}}{\leq} \gamma_1^{-\alpha} |D_1^{-1}|^{\alpha} |B_1|^{\alpha} \underset{\widetilde{y} \neq \widetilde{y}' \in B_{R}}{\sup} \ \frac{|\rho_1(\widetilde{y}) - \rho_1(\widetilde{y}')|}{|\widetilde{y} - \widetilde{y}'|^{\alpha}}. 
\end{align*}
Estimates \eqref{eq:B-b-est-main}, \eqref{eq:delta-lambda-est}, \eqref{eq:M-est-2}, and \eqref{eq:gamma-est} thus yield for $i \in \{0,1\}$ ($\rho_{0,1}$ is treated similarly) 
\begin{align*}
    [\rho_{i,1}]_{\alpha, \theta R}^2 \leq \left(1+C(\EE_R^{\frac{1}{2}} + \HH_R^{\frac{1}{2}})\right) [\rho_i]_{\alpha, R}^2, 
\end{align*}
where $C$ is a constant depending only on $d$ and $\alpha$. Using \eqref{eq:mixed-derivative}, the same kind of computation gives 
\begin{align}\label{eq:estimate-seminorm-c}
    \left[\nabla_{xy}c_1\right]_{\alpha, \theta R}^{2} \leq \left(1+C(\EE_R^{\frac{1}{2}} + \HH_R^{\frac{1}{2}})\right) \left[\nabla_{xy}c\right]_{\alpha, R}^{2}, 
\end{align}
so that 
\begin{align}\label{eq:H1-est}
    \HH^1 := \HH_{\theta R}(\rho_{0,1}, \rho_{1,1}, c_1) \leq \theta^{2\alpha} \left(1+C(\EE_R^{\frac{1}{2}} + \HH_R^{\frac{1}{2}})\right) \HH_R.
\end{align}
Therefore, \eqref{eq:local-smallness}, \eqref{eq:E1-est}, \eqref{eq:iteration-second-inclusion}, and \eqref{eq:H1-est} imply that we may iterate Proposition \ref{prop:one-step-main} $K$ times, $K\geq 2$, to find sequences of matrices $B_k$ and $D_k$, a sequence of vectors $b_k$, and a sequence of real numbers $\gamma_k$ such that, setting for $1 \leq k \leq K$  
\begin{align}
    \rho_{0,k}(x) &:= \rho_{0,k-1}(B_k^{-1} x), \quad  \rho_{1,k}(y) := \frac{\rho_{1,k-1}(\gamma_k^{-1} D_k^{-*} B_k y + b_k)}{\rho_{1,k-1}(b_k)}, \nonumber\\ 
    Q_k(x,y) &:= (B_k x, \gamma_k B_k^{-1} D_k^* (y-b_k)), \quad \pi_k := \det B_k \, {Q_k}_{\#} \pi_{k-1},  \label{eq:Tk-def} \\
    \text{and} \quad c_k(x, y) &:= \gamma_k c_{k-1}(B_k^{-1} x,\gamma_k^{-1} D_k^{-*} B_k y + b_k), 
\end{align}
we recover $\rho_{0,k}(0) = \rho_{1,k}(0) = 1$ and $\nabla_{xy}c_k(0,0) = -\II$, $\pi_k$ is a $c_k$-optimal coupling between $\rho_{0,k}$ and $\rho_{1,k}$, and from \eqref{eq:qualitative-theta-inclusion-main} we have 
\begin{align}\label{eq:iteration-k-inclusion}
    \cross{\frac{8}{9}\theta^kR} \cap \supp\pi_k \subseteq \BB_{\frac{8}{9}\theta^kR,3\theta^kR}. 
\end{align} Moreover, defining 
\begin{align}\label{eq:Hk-def}
     \EE^k := \EE_{\theta^k R}(\pi_k) \quad \text{and} \quad \HH^{k} := \HH_{\theta^{k} R}(\rho_{0,k}, \rho_{1,k}, c_{k}), 
\end{align} we have 
\begin{align}
    \mathcal{E}^k &\overset{\eqref{eq:one-step-conclusion-main}}{\leq} \theta^{2\beta} \EE^{k-1} + C_{\beta} \HH^{k-1}, \label{eq:Ek} \\
    |B_k-\II|^2 + \frac{1}{(\theta^{k-1}R)^2} |b_k|^2 &\overset{\eqref{eq:B-b-est-main}}{\lesssim} \mathcal{E}^{k-1} + \theta^{2(k-1)\alpha} R^{2\alpha} \left( [\rho_{0,k-1}]_{\alpha, \theta^{k-1}R}^2 + [\rho_{1,k-1}]_{\alpha, \theta^{k-1}R}^2 \right), \label{eq:Bk-est} \\
    |D_k-\II|^2 &\overset{\eqref{eq:M-est-2}}{\lesssim} \theta^{2(k-1)\alpha} R^{2\alpha} [\nabla_{xy}c_{k-1}]_{\alpha, \theta^{k-1}R}^2,  \label{eq:Mk}\\
    |\gamma_k-1|^2 &\overset{\eqref{eq:gamma-est}}{\lesssim} \HH^{k-1}. \label{eq:gammak}
\end{align} Arguing as for \eqref{eq:H1-est}, there exists a constant $C_1 = C_1(d, \alpha) < \infty$ such that 
\begin{align}\label{eq:rhok}
    \HH^k \leq \theta^{2\alpha} \left( 1+C_1\left((\EE^{k-1})^{\frac{1}{2}} + (\HH^{k-1})^{\frac{1}{2}}\right)\right) \HH^{k-1}. 
\end{align}

\medskip
\item (Smallness at any step of the iteration). 
From now on, we fix $\beta > \alpha$. Let us show by induction that there exists a constant $C_2 = C_2(d, \alpha, \beta)<\infty$ such that for all $K \in \mathbb{N}$  
\begin{align}
    \EE^K &\leq C_2 \theta^{2K\alpha} \left( \EE_R + \HH_R \right), \label{eq:indhypE} \\
    \HH^K &\leq \theta^{2\alpha}(1+\theta^{K\alpha}) \HH^{K-1}. \label{eq:indhypH}
\end{align}
This will show, together with \eqref{eq:iteration-k-inclusion}, that we can keep on iterating Proposition \ref{prop:one-step-main}. As an outcome of this induction, the estimate 
\begin{align}\label{eq:H-estimate}
    \HH^K \lesssim C_3 \theta^{2K\alpha} \HH_R
\end{align} 
holds for all $K\in\NN$ with $C_3 := \prod_{k=1}^{+\infty} (1+\theta^{k\alpha}) < \infty$. 

We set 
\begin{align}\label{eq:C2-def}
    C_2 := \max \left\{\theta^{2(\beta-\alpha)}, \frac{C_3C_{\beta}\theta^{-2\alpha}}{1-\theta^{2(\beta-\alpha)}}\right\}.
\end{align} 
By \eqref{eq:E1-est} and \eqref{eq:H1-est}, the estimates \eqref{eq:indhypE} and \eqref{eq:indhypH} hold for $K=1$ provided $\EE_R +\HH_R$ is small enough, since $C_2 \geq \max \{\theta^{2(\beta-\alpha)}, C_{\beta}\theta^{-2\alpha}\}$. Assume now that \eqref{eq:indhypE} and \eqref{eq:indhypH} hold for all $1 \leq k \leq K-1$. By induction hypothesis, we have 
\begin{align*}
    \EE^{K-1} \leq C_2 \theta^{2(K-1)\alpha} \left( \EE_R + \HH_R \right) \quad \text{and} \quad \HH^{K-1} \leq C_3 \theta^{2(K-1)\alpha} \HH_R.
\end{align*}
Combining these two estimates with \eqref{eq:rhok} for $k=K$ and assuming that 
\begin{align*}
    C_1 \left( C_2^{\frac{1}{2}} (\EE_R^{\frac{1}{2}}+\HH_R^{\frac{1}{2}}) + C_3^{\frac{1}{2}} \HH_R^{\frac{1}{2}} \right) \leq \theta^{\alpha},
\end{align*} which is possible provided $\EE_R+\HH_R$ is small enough, we get \eqref{eq:indhypH}. 
Similarly, by \eqref{eq:Ek} with $k=K$ we obtain, using that $C_2 \geq \frac{C_3C_{\beta}\theta^{-2\alpha}}{1-\theta^{2(\beta-\alpha)}}$ by \eqref{eq:C2-def}, the bound
\begin{align*}
    \theta^{-2K\alpha} \EE^K &\leq \left( \theta^{2(\beta-\alpha)} C_2 + C_{\beta}C_3\theta^{-2\alpha} \right) ( \EE_R + \HH_R ) \leq C_2 ( \EE_R + \HH_R ).
\end{align*}
This concludes the induction. 

\medskip
\item \label{step:campanato} (Campanato estimate). We can now prove the main estimate, that is, assuming \eqref{eq:local-smallness}, we show that \eqref{eq:campanato} holds. 

Let 
\begin{align}
    \overline{\gamma}_k := \gamma_k \hdots \gamma_1, \quad \overline{B}_k &:= B_k \hdots B_1, \quad \overline{D}_k := B_k^{-1} D_k^* \hdots B_1^{-1} D_1^*, \quad
    \overline{b}_k := \sum_{i=1}^k (\gamma_k B_k^{-1} D_k^*) \hdots (\gamma_i B_i^{-1} D_i^*)b_i \nonumber
\end{align}
and  
\begin{align}
    \overline{Q}_k(x,y) &:= (\overline{B}_k x, \overline{\gamma}_k \overline{D}_k y - \overline{b}_k). \label{eq:Qbar-def}
\end{align} We see that, recalling \eqref{eq:Tk-def} and noticing that $\overline{Q}_k = Q_k \circ \cdots \circ Q_1$,
\begin{align}\label{eq:Tk-wrt-Ak}
    \pi_k = \det \overline{B}_k {\overline{Q}_k}_{\#} \pi \quad \text{and} \quad \rho_{0,k}(x) = \rho_0(\overline{B}_k^{\kern.2ex-1}x).
\end{align} 
Moreover, from \eqref{eq:Bk-est}, \eqref{eq:indhypE}, and \eqref{eq:indhypH}, we have the estimate 
\begin{align}\label{eq:Ak-Dk}
    |\overline{B}_k-\II|^2 \lesssim \EE_R + \HH_R \ll 1,
\end{align}
so that 
\begin{align}\label{eq:inclusion}
    B_{\frac{1}{2}\theta^kR} \times \RR^d \subseteq \overline{B}_k^{\kern.2ex-1}(B_{\theta^kR}) \times \RR^d = \overline{Q}_k^{\kern.2ex-1}(B_{\theta^kR} \times \RR^d).
\end{align} 
Similarly, from \eqref{eq:Bk-est}, \eqref{eq:Mk} and \eqref{eq:gammak}, 
\begin{align}\label{eq:Lambdak}
    |\overline{\gamma}_k-1|^2 + |\overline{D}_k-\II|^2 \lesssim \EE_R + \HH_R \ll 1.
\end{align}
Let us now compute 
\begin{align*}
    \frac{1}{\left(\frac{1}{2} \theta^kR\right)^{d+2}} \, &\inf_{A,b} \int_{B_{\frac{1}{2} \theta^kR}\times \RR^d} |y-(Ax+b)|^2 \, \dd \pi \\
    &\overset{\eqref{eq:inclusion}}{\lesssim} \frac{1}{(\theta^kR)^{d+2}} \int_{\overline{Q}_k^{\kern.2ex-1}(B_{\theta^kR}\times \RR^d)} |y-\overline{\gamma}_k^{\kern.2ex-1}\overline{D}_k^{\kern.2ex-1}\overline{B}_k x - \overline{\gamma}_k^{\kern.2ex-1}\overline{D}_k^{\kern.2ex-1}\overline{b}_k|^2 \, \dd \pi\\
    &\overset{\eqref{eq:Tk-wrt-Ak}}{=} \frac{(\det \overline{B}_k)^{-1}}{(\theta^kR)^{d+2}} \int_{B_{\theta^kR}\times \RR^d} |\overline{\gamma}_k^{\kern.2ex-1}\overline{D}_k^{\kern.2ex-1} (y-x)|^2 \, \dd \pi_k \\
    &\overset{\eqref{eq:Ak-Dk} \& \eqref{eq:Lambdak}}{\lesssim} \frac{1}{(\theta^kR)^{d+2}} \int_{B_{\theta^kR}\times \RR^d} |y-x|^2 \, \dd \pi_k \overset{\eqref{eq:Hk-def}}{=} \mathcal{E}^k.
\end{align*}
By \eqref{eq:indhypE}, we obtain
\begin{align*}
    \frac{1}{\left(\frac{1}{2} \theta^kR\right)^{d+2}} \, \underset{A,b}{\inf} \int_{B_{\frac{1}{2} \theta^kR}\times \RR^d} |y-(Ax+b)|^2 \, \dd \pi \lesssim \theta^{2k\alpha} \left( \EE_R + \HH_R \right),
\end{align*}
from which \eqref{eq:campanato} follows, concluding the proof of \eqref{eq:campanato-pi-full}

\medskip
\item\label{item:supp-T} ($\supp\pi$ is contained in the graph of a function $T$ within $B_R \times \RR^d$).
We claim that \eqref{eq:campanato-pi-full} implies the existence of a function $T:B_R \to Y$ such that 
\begin{align}\label{eq:support-pi-T}
     (B_R \times \RR^d) \cap \supp \pi \subseteq \mathrm{graph\, T}.
\end{align}

In the following, we abbreviate 
\begin{align*}
    \normpi_{\alpha}^2 &:= \sup_{0<r<\frac{R}{2}} \sup_{x_0 \in B_{R}} \frac{1}{r^{d+2+2\alpha}} \inf_{A,b} \int_{(B_r(x_0)\cap B_R)\times \RR^d} |y-(Ax+b)|^2 \,\dd\pi.
\end{align*} To prove the claim, fix $x_0 \in B_R$ and notice that \eqref{eq:campanato-pi-full} implies that for any $r>0$ small enough, there holds 
\begin{align}\label{eq:campanato-pi-2}
    \frac{1}{r^{d+2}} \inf_{A,b} \int_{B_r(x_0) \times \RR^d} |y-(Ax+b)|^2 \,\dd\pi \lesssim \frac{r^{2\alpha}}{R^{2\alpha}} \left(  \frac{1}{R^{d+2}} \int_{B_{4R}\times \RR^d} |x-y|^2\,\dd \pi +\HH_{4R} \right).
\end{align}
\begin{enumerate}[label=\textsc{Step 7.\Alph*.},labelsep=*,itemindent=*]
\item It is easy to see that the infimum in \eqref{eq:campanato-pi-2} is attained at some $A_r = A_r(x_0)$ and $b_r = b_r(x_0)$. Analogous to \cite[Lemma 3.IV]{Cam64} one can show that there exist a matrix $A_0 = A_0(x_0)$ and a vector $b_0 = b_0(x_0)$ such that $A_r \to A_0$ and $b_r \to b_0$ as $r\to 0$ (uniformly in $x_0$) with rates
\begin{align}\label{eq:A-b-conv-rates}
    |A_r - A_0| \lesssim \normpi_{\alpha} r^{\alpha} \quad \text{and} \quad 
    |b_r - b_0| \lesssim \normpi_{\alpha} r^{\alpha+1}.
\end{align}
We refer the reader to Appendix \ref{app:campanato} for a proof of the convergences and \eqref{eq:A-b-conv-rates}.

\item We claim that 
\begin{align}\label{eq:T-derivation-limit}
    \frac{1}{r^d} \int_{B_r(x_0)\times \RR^d} |y- (A_0 x_0 + b_0)|^2\,\dd\pi \to 0 \quad \text{as } r \searrow 0.
\end{align}
Indeed, we can split
\begin{align}
    \frac{1}{r^d} \int_{B_r(x_0)\times \RR^d} |y- (A_0 x_0 + b_0)|^2\,\dd\pi 
    &\lesssim \frac{1}{r^d} \int_{B_r(x_0)\times \RR^d} |y- (A_r x + b_r)|^2\,\dd\pi \nonumber \\
    &\quad+ \frac{1}{r^d} \int_{B_r(x_0)\times \RR^d} |(A_r-A_0) x + (b_r- b_0)|^2\,\dd\pi \label{eq:T-derivation-bound} \\
    &\quad+ \frac{1}{r^d} \int_{B_r(x_0)\times \RR^d} |A_0 (x-x_0)|^2\,\dd\pi \nonumber.
\end{align}
By definition of $A_r, b_r$, we have
\begin{align*}
    \frac{1}{r^d} \int_{B_r(x_0)\times \RR^d} |y- (A_r x + b_r)|^2\,\dd\pi 
    = \frac{1}{r^d} \inf_{A,b} \int_{B_r(x_0)\times \RR^d} |y- (A x + b)|^2\,\dd\pi 
    \lesssim \normpi_{\alpha}^2 r^{2\alpha+2}.
\end{align*}
Using \eqref{eq:A-b-conv-rates}, $\rho_0\leq 2$, and $x_0\in B_R$, it follows that 
\begin{align*}
    \frac{1}{r^d} \int_{B_r(x_0)\times \RR^d} |(A_r-A_0) x + (b_r- b_0)|^2\,\dd\pi
    &= \frac{1}{r^d} \int_{B_r(x_0)} |(A_r-A_0) x + (b_r- b_0)|^2\,\rho_0(x)\dd x\\
    &\lesssim |A_r-A_0|^2 R^2 + |b_r - b_0|^2 \\
    &\lesssim \normpi_{\alpha}^2 r^{2\alpha} (R^2 + r^2).
\end{align*}
Finally, the last term in \eqref{eq:T-derivation-bound} is estimated by
\begin{align*}
    \frac{1}{r^d} \int_{B_r(x_0)\times \RR^d} |A_0 (x-x_0)|^2\,\dd\pi 
    &= \frac{1}{r^d} \int_{B_r(x_0)} |A_0 (x-x_0)|^2\,\rho_0(x)\dd x \lesssim r^2.
\end{align*}
Letting $r\to 0$ in the above estimates proves the claim \eqref{eq:T-derivation-limit}.

\item By disintegration, there exists a family of measures $\{\pi_x\}_{x\in X}$ on $Y$ such that 
\begin{align}\label{eq:disintegration}
    \frac{1}{r^d} \int_{B_r(x_0)\times \RR^d} |y- (A_0 x_0 + b_0)|^2\,\dd\pi 
    = \frac{1}{r^d} \int_{B_r(x_0)} \int |y- (A_0 x_0 + b_0)|^2\,\pi_x(\dd y) \, \rho_0(x)\dd x.
\end{align}
Since the left-hand side of \eqref{eq:disintegration} tends to zero as $r\to 0$ by Step 7.B, it follows that if $x_0$ is a Lebesgue point, we must have 
\begin{align*}
    \int |y- (A_0 x_0 + b_0)|^2\,\pi_{x_0}(\dd y) = 0,
\end{align*}
hence $\pi_{x_0} = \delta_{A_0 x_0 + b_0}$. 

\item For any Lebesgue point $x_0 \in B_R$, define $T(x_0):= A_0(x_0) x_0 + b_0(x_0)$. Then the previous Step 7.C shows that
\begin{align*}
    \pi\big\lfloor_{B_R \times \RR^d} = (\mathrm{Id} \times T)_{\#}\rho_0,
\end{align*}
that is \eqref{eq:support-pi-T}.
\end{enumerate}

\medskip
\item \label{step:hoelder-regularity} By boundedness of $\rho_0$, \eqref{eq:campanato-pi-full} implies the bound
\begin{align*}
        \sup_{0<r<\frac{R}{2}} \sup_{x_0 \in B_{R}}  \frac{1}{r^{d+2+2\alpha}} \inf_{A,b} &\int_{B_r(x_0)\cap B_R} |T(x)-(Ax+b)|^2 \,\dd x \\
        &\lesssim \frac{1}{R^{2\alpha}} \left(  \frac{1}{R^{d+2}} \int_{B_{4R}\times \RR^d} |x-y|^2\,\dd \pi +\HH_{4R} \right),
\end{align*}
which by means of Campanato's theory \cite{Cam64} proves that $T\in \mathcal{C}^{1,\alpha}(B_R)$ and that the Hölder seminorm of $\nabla T$ satisfies \eqref{eq:holder-gradient}.
\hfill \qedhere
\end{enumerate}
\end{proof}

\begin{remark}
    The deterministic structure of the $c$-optimal coupling, that is, the existence of $T$ such that $\pi = (\mathrm{Id}\times T)_{\#}\rho_0$, is a classical result in optimal transportation. If we had used this result, the proof would have become shorter, as \ref{item:supp-T} would not have been needed. 
\end{remark}

Before we give the proof of Corollary \ref{cor:DF}, let us remark that one can show the following variant of our qualitative $L^{\infty}$ bound on the displacement (Lemma \ref{lem:displacement-qualitative}): 
\begin{lemma}\label{lem:qualitative-2}
    Assume that the cost function $c$ satisfies \ref{item:cost-cont}--\ref{item:cost-non-deg} and that $\nabla_x c(0,0)=0$. Let $u$ be a $c$-convex function. There exist $\Lambda_0 < \infty$ and $R_0'>0$ such that for all $R\leq R_0'$ for which $\frac{1}{R^2} \left\| u - \tfrac{1}{2}|\cdot|^2\right\|_{\mathcal{C}^0(B_{8R})} \leq 1$ we have 
    \begin{align}\label{eq:qualitative-u}
        \esssup_{x\in B_{4R}} \left|\cexp_x(\nabla u(x)) \right| \leq \Lambda_0 R.
    \end{align}
\end{lemma}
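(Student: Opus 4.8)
The plan is to follow the proof of Lemma~\ref{lem:displacement-qualitative} almost verbatim, the only substantial change being in its first step: rather than invoking $c$-monotonicity of $\supp\pi$ together with Lemma~\ref{lem:cone} to produce competitor points near the diagonal, I would exploit that $c$-convexity of $u$ already provides a pointwise inequality against \emph{every} competitor point. Concretely, fix a point $x\in B_{4R}$ at which $u$ is differentiable (a.e.\ $x$ qualifies, since $u$ is semi-convex), put $y:=\cexp_x(\nabla u(x))$, so that $\nabla u(x)=-\nabla_x c(x,y)$, and use that $y$ attains the supremum in $u(x)=\sup_{z\in Y}(\lambda(z)-c(x,z))$; this yields
\[
    u(x')-u(x)\ \geq\ c(x,y)-c(x',y)\qquad\text{for all }x'.
\]

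The heart of the argument (the analogue of \ref{step:grad-boundedness}) is to deduce from this inequality, together with the hypothesis $\|u-\tfrac12|\cdot|^2\|_{\mathcal C^0(B_{8R})}\leq R^2$ and the normalization $\nabla_x c(0,0)=0$, that $\nabla_x\widetilde c(x,y)\in B_{\lambda R}$ for a constant $\lambda$ depending only on $\|c\|_{\mathcal C^2(X\times Y)}$, where $\widetilde c$ is the modified cost of \eqref{eq:ctilde-def}. I would test the displayed inequality at $x':=x-Re$, with $e:=\nabla_x\widetilde c(x,y)/|\nabla_x\widetilde c(x,y)|$ (the case $\nabla_x\widetilde c(x,y)=0$ being trivial); then $x,x'\in B_{8R}$, and using $c(x,y)-c(x',y)=(\widetilde c(x,y)-\widetilde c(x',y))+(c(x,0)-c(x',0))$ one rewrites it as $\widetilde c(x,y)-\widetilde c(x',y)\leq (u(x')-u(x))+|c(x,0)-c(x',0)|$. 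The closeness of $u$ to $\tfrac12|\cdot|^2$ bounds $u(x')-u(x)$ by $\tfrac12|x'|^2-\tfrac12|x|^2+2R^2\lesssim R^2$, while $\nabla_x c(0,0)=0$ and $\|c\|_{\mathcal C^2(X\times Y)}<\infty$ bound $|c(x,0)-c(x',0)|\lesssim R\,|x-x'|=R^2$, so the right-hand side is $O(R^2)$. On the left, a first-order Taylor expansion of $\widetilde c(\cdot,y)$ along the segment $[x',x]$ gives $\widetilde c(x,y)-\widetilde c(x',y)\geq R\,|\nabla_x\widetilde c(x,y)|-\tfrac12\|\nabla_{xx}\widetilde c\|_{\mathcal C^0(X\times Y)}R^2$. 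Comparing the two estimates and dividing by $R$ yields $|\nabla_x\widetilde c(x,y)|\leq\lambda R$. From this gradient bound the argument repeats \ref{step:implicitfunction} and \ref{step:qualitative-bound} of the proof of Lemma~\ref{lem:displacement-qualitative} word for word: since $\nabla_{xy}\widetilde c=\nabla_{xy}c$, assumptions \ref{item:cost-inj-y}--\ref{item:cost-non-deg} make $-\nabla_x\widetilde c(x,\cdot)$ an injective $\mathcal C^1$-map on $Y$ vanishing at $0$; a compactness argument over $x\in X$ furnishes $R_0'>0$ and $\Lambda_0<\infty$ with $B_{\lambda R}\subseteq -\nabla_x\widetilde c(x,B_{\Lambda_0 R})$ for all $R\leq R_0'$, and injectivity of $y\mapsto\nabla_x\widetilde c(x,y)$ then forces $y\in B_{\Lambda_0 R}$. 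Taking the essential supremum over $x\in B_{4R}$ gives \eqref{eq:qualitative-u} (shrinking $R_0'$ if necessary so that $B_{8R_0'}\subseteq X$, which is automatic in the setting of Corollary~\ref{cor:DF}, where the lemma is applied).

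The one genuinely delicate point is the justification of the displayed $c$-convexity inequality, i.e.\ that at a differentiability point of $u$ the $c$-superdifferential reduces to the single point $\cexp_x(\nabla u(x))$, so that this $y$ indeed realizes the supremum defining $u(x)$; this is classical under \ref{item:cost-cont}--\ref{item:cost-inj-y} and compactness of $Y$, see \cite[Chapter 5]{Vil09} and \cite[Section 5.3]{Fig17}. Everything else is a routine reprise of Lemma~\ref{lem:displacement-qualitative}, the structural simplification being that the $c$-convexity inequality, valid for \emph{all} $x'$, lets us choose the competitor $x'$ in the optimal direction by hand and so dispenses with Lemma~\ref{lem:cone} entirely.
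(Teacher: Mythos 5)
Your proof is correct, and it takes a genuinely (if mildly) different route at the one substantive step. The paper obtains the gradient bound $|\nabla_x\widetilde c(x,y)|\leq\lambda R$ directly from the first-order identity $\nabla u(x)=-\nabla_x c(x,y)$: it writes $-\nabla_x\widetilde c(x,y)=(\nabla u(x)-x)+x+\int_0^1\nabla_{xx}c(tx,0)\,\dd t\,x$ and controls $|\nabla u(x)-x|\lesssim\frac1R\|u-\tfrac12|\cdot|^2\|_{\mathcal C^0(B_{8R})}$ by invoking the auxiliary semi-convexity Lemma~\ref{lem:semi-convex} for $u-\tfrac12|\cdot|^2$. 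You instead test the $c$-convexity support inequality $u(x')-u(x)\geq c(x,y)-c(x',y)$ at the hand-picked competitor $x'=x-Re$ with $e$ aligned with $\nabla_x\widetilde c(x,y)$; this is in effect an inlined, one-directional version of the same cone-testing mechanism that proves Lemma~\ref{lem:semi-convex}, carried out at the level of the cost rather than after subtracting the quadratic. Both routes produce the same intermediate bound with $\lambda$ depending only on $\|c\|_{\mathcal C^2(X\times Y)}$, and both then conclude verbatim via Steps 2 and 3 of the proof of Lemma~\ref{lem:displacement-qualitative}, which indeed use only \ref{item:cost-inj-y}--\ref{item:cost-non-deg} and compactness, not monotonicity. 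What your version buys: the competitor stays in $B_{5R}\subset B_{8R}$, and no separate lemma is needed. What it costs: you must know that at a differentiability point the supremum defining $u(x)$ is attained at $y=\cexp_x(\nabla u(x))$ (i.e.\ that $y$ lies in the $c$-superdifferential), a standard fact under \ref{item:cost-cont}--\ref{item:cost-inj-y} and compactness of $Y$ which you correctly flag and reference; the paper's route sidesteps this entirely because it never uses the support inequality, only the defining relation $\nabla u(x)+\nabla_x c(x,y)=0$.
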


\begin{proof}
    Since $u$ is $c$-convex, it is differentiable a.e. For any $x \in B_{4R}$ such that $\nabla u(x)$ exists, let $y = \cexp_x(\nabla u(x))$, that is,
    \begin{align}\label{eq:y-c-exp-def}
         \nabla u(x) + \nabla_x c(x,y) = 0.
    \end{align}
    Let $\widetilde{c}$ be defined as in \eqref{eq:ctilde-def}. Then, using $\nabla_x c(0,0) = 0$, we have 
    \begin{align*}
        -\nabla_x\widetilde{c}(x,y) 
        &= -\nabla_x c(x,y) + \nabla_x c(x,0) 
        \stackrel{\eqref{eq:y-c-exp-def}}{=} \nabla u(x) +  \nabla_x c(x,0) - \nabla_x c(0,0) \\
        &= \nabla u(x) -x + x +  \int_{0}^1 \nabla_{xx} c(t x,0) \,\dd t x, 
    \end{align*}
    so that 
    \begin{align*}
        |\nabla_x\widetilde{c}(x,y)| \leq |\nabla u(x) -x| + |x| +  \|\nabla_{xx} c\|_{\mathcal{C}^0(X\times Y)} |x|.
    \end{align*}
    Being $c$-convex, the function $u$, and therefore also the function $x\mapsto u(x) - \frac{1}{2}|x|^2$, is semi-convex, which implies that 
    \begin{align}
        \esssup_{x\in B_{4R}} |\nabla u(x) - x| \lesssim \frac{1}{R} \sup_{x\in B_{8R}} |u-\tfrac{1}{2}|x|^2|,
    \end{align}
    see Lemma \ref{lem:semi-convex} in the appendix. 
    By the closeness assumption on $u$ and \ref{item:cost-cont} we may therefore bound 
    \begin{align*}
        |\nabla_x\widetilde{c}(x,y)| \leq \lambda R.
    \end{align*}
    Steps 2 and 3 of the proof of Lemma \ref{lem:displacement-qualitative} then imply that there exist $\Lambda_0<\infty$ and $R_0'>0$ (depending on $c$ only through assumptions \ref{item:cost-cont}--\ref{item:cost-non-deg}) such that for all $R \leq R_0'$ we have
    \begin{align*}
        |\nabla_x\widetilde{c}(x,y)| \leq \lambda R \quad \Rightarrow \quad |y| \leq \Lambda_0 R,
    \end{align*}
    that is, \eqref{eq:qualitative-u} holds.
\end{proof}

\begin{proof}[Proof of Corollary \ref{cor:DF}]
    By Lemma \ref{lem:qualitative-2} there exist $\Lambda_0<\infty$ and $R_0'>0$, depending only on the qualitative assumptions \ref{item:cost-cont}--\ref{item:cost-non-deg} on $c$ such that for all $R\leq R_0'$ for which \eqref{eq:smallness-main-corollary} holds, we have 
    \begin{align}\label{eq:Linfty-Tu}
        \|T_u\|_{L^{\infty}(B_{4R})} \leq \Lambda_0 R. 
    \end{align}
    We claim that 
    \begin{align}\label{eq:cor-df-Linfty-bound}
        \frac{1}{R} \left\| x - T_u \right\|_{L^{\infty}(B_{4R})} 
        \lesssim \frac{1}{R^2} \left\| u - \tfrac{1}{2}|\cdot|^2\right\|_{\mathcal{C}^0(B_{8R})}+ R^{\alpha} \left([\nabla_{xy}c]_{\alpha,4R} + [\nabla_{xx}c]_{\alpha,4R}\right),
    \end{align}
    which immediately implies that 
    \begin{align}\label{eq:cor-df-bound}
    \begin{split}
        \frac{1}{R^{d+2}} \int_{B_{4R}\times \RR^d} |x-y|^2\,\dd \pi 
        &=  \frac{1}{R^{d+2}} \int_{B_{4R}} |x-T_u(x)|^2\,\rho_0(x)\dd x \\
        &\lesssim \frac{1}{R^4}\left\| u - \tfrac{1}{2}|\cdot|^2\right\|_{\mathcal{C}^0(B_{8R})}^2 + R^{2\alpha} \left([\nabla_{xy}c]_{\alpha,4R}^2 + [\nabla_{xx}c]_{\alpha,4R}^2\right) \ll 1.
    \end{split}
    \end{align}
    In particular, it follows by Theorem \ref{thm:main}, that there exists a potentially smaller scale $R_0\leq R_0'$ such that for all $R\leq R_0$ for which \eqref{eq:smallness-main-corollary} holds, we have that $T_u \in \mathcal{C}^{1,\alpha}(B_R)$ and $\nabla T_u$ satisfies the bound \eqref{eq:holder-gradient-main}. Applying \eqref{eq:cor-df-bound} once more, we see that \eqref{eq:holder-gradient-main-corollary} holds. 
    
    To prove the claim \eqref{eq:cor-df-Linfty-bound}, we appeal to semi-convexity of the $c$-convex function $u$ (which implies semi-convexity of the function $x\mapsto u(x) - \frac{1}{2}|x|^2$), in particular Lemma \ref{lem:semi-convex}, to bound 
    \begin{align*}
        \left\| x - T_u \right\|_{L^{\infty}(B_{4R})} 
        &\leq \|x-\nabla u\|_{L^{\infty}(B_{4R})} + \|\nabla u - T_u\|_{L^{\infty}(B_{4R})} \\
        &\lesssim \frac{1}{R} \| u-\tfrac{1}{2}|\cdot|^2\|_{\mathcal{C}^0(B_{8R})} + \|\nabla u - T_u\|_{L^{\infty}(B_{4R})}. 
    \end{align*}
    It remains to estimate the latter term. To this end, notice that for a.e. $x\in B_{4R}$ we have $\nabla u(x) = -\nabla_x c(x, T_u(x))$, so that with the normalization assumption $\nabla_x c(0,0) = 0$ we may bound
    \begin{align*}
        |\nabla u(x) - T_u(x)| 
        &= |\nabla_x c(x, T_u(x)) + T_u(x)| \\
        &\leq |\nabla_x c(x, T_u(x)) - \nabla_x c(x,0) + T_u(x)| + |\nabla_x c(x,0) - \nabla_x c(0,0)| \\
        &\leq \int_0^1 |(\nabla_{xy}c (x,s T_u(x)) + \II) T_u(x)|\,\dd s + \int_0^1 |\nabla_{xx}c(t x, 0) x|\,\dd t.
    \end{align*}
    It now follows with \eqref{eq:Linfty-Tu}, definition \eqref{eq:c-hoelder-def}, and $\nabla_{xx}c(0,0) = 0$, $\nabla_{xy} c(0,0) = -\II$, that 
    \begin{align}
       |\nabla u(x) - T_u(x)|
        &\leq [\nabla_{xy}c]_{\alpha,4R} (|x|^{\alpha} + |T_u(x)|^{\alpha}) |T_u(x)| + [\nabla_{xx}c]_{\alpha,4R} |x|^{\alpha+1} \label{eq:first-gradu-Tu} \\
        &\stackrel{\eqref{eq:Linfty-Tu}}{\leq} C_{\Lambda_0} R^{\alpha}\left( [\nabla_{xy}c]_{\alpha,4R} + [\nabla_{xx}c]_{\alpha,4R}\right) R.\nonumber
    \end{align}
    In view of \eqref{eq:smallness-main-corollary}, we may assume 
    \begin{align*}
        C_{\Lambda_0} R^{\alpha}\left( [\nabla_{xy}c]_{\alpha,4R} + [\nabla_{xx}c]_{\alpha,4R}\right) \leq 1,
    \end{align*}
    so that
    \begin{align}\label{eq:c-lambda0}
        |\nabla u(x) - T_u(x)| \leq R.
    \end{align}
    Using that by Lemma \ref{lem:semi-convex} and the smallness assumption \eqref{eq:smallness-main-corollary}, we have
    \begin{align*}
        |\nabla u(x) - x| \lesssim \frac{1}{R} \|u-\tfrac{1}{2}|\cdot|^2\|_{\mathcal{C}^0(B_{8R})} \lesssim R,
    \end{align*} 
    and writing $T_u(x) = (T_u(x)-\nabla u(x)) + (\nabla u(x) -x) +x$, the estimate \eqref{eq:first-gradu-Tu} turns into
    \begin{align*}
        |\nabla u(x) - T_u(x)| &\lesssim [\nabla_{xy}c]_{\alpha,4R}(R^{\alpha} + |\nabla u(x)-T_u(x)|^{\alpha})(|\nabla u(x)-T_u(x)|+R) + R^{1+\alpha}[\nabla_{xx}c]_{\alpha,4R} \\
        &\overset{\eqref{eq:c-lambda0}}{\lesssim} R^{1+\alpha}\left([\nabla_{xy}c]_{\alpha,4R} + [\nabla_{xx}c]_{\alpha,4R}\right).
    \end{align*}
    This proves the claimed inequality \eqref{eq:cor-df-Linfty-bound}.
\end{proof}

\section{\texorpdfstring{$\epsilon$}{epsilon}-regularity for almost-minimizers}\label{sec:almost-min-general}
In this section we give a sketch of the proof of Theorem~\ref{thm:almost-min}. 
One of the main differences compared to the situation of Theorem~\ref{thm:main} is that our assumptions do not allow us to prove an $L^{\infty}$ bound on the displacement (which followed from $(c-)$monotonicity of $\supp\pi$). However, almost-minimality (on all scales) allows us to obtain an $L^p$ bound for arbitrarily large $p<\infty$.

\begin{proposition} \label{prop:Lp-bound-almost-min-main} 
    Assume that $\rho_0, \rho_1 \in C^{0,\alpha}$ with $\rho_0(0) = \rho_1(0) = 1$. Let $T$ be an almost-minimizing transport map from $\mu=\rho_0\,\mathrm{d}x$ to $\nu=\rho_1\,\mathrm{d}y$ with $\Delta_r \leq 1$. 
    Assume further that $T$ is invertible. Then there exists a radius $R_1 = R_1(\rho_0, \rho_1) >0$ such that for any $6R\leq R_1$,
    \begin{equation}\label{eq:Lp-bound-small-assumption-main}
        \EE_{6R}(\pi_T) + R^{\alpha} \left( [\rho_0]_{\alpha, 6R} + [\rho_1]_{\alpha, 6R} \right) \ll 1
    \end{equation} 
    implies that for any $p < \infty$, 
    \begin{equation}\label{eq:Lp-bound-main}
        \frac{1}{R} \left( \frac{1}{R^d} \int_{B_{2R}} |T(x)-x|^p \, \mu(\mathrm{d}x) + \frac{1}{R^d} \int_{B_{2R}} |T^{-1}(y)-y|^p \, \nu(\mathrm{d}y) \right)^{\frac{1}{p}} \lesssim_p  \EE_{6R}(\pi_T)^{\frac{1}{d+2}}. 
    \end{equation}
\end{proposition}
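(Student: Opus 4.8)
The plan is to mirror the proof of the $L^{\infty}$ bound of Proposition~\ref{prop:Linfty-main}, with the $c$-monotonicity of $\supp\pi_{T}$ — which is unavailable for almost-minimizers — replaced throughout by the almost-minimality \eqref{eq:almost-min-monge}, and with the failure of the minimality defect $r^{d+2}\Delta_{r}$ to shrink with the energy compensated by a self-improvement (bootstrap) of the integrability exponent.

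First I would reduce to $R=1$ by the usual rescaling $x\mapsto x/R$, and to the forward displacement only. Since the class of admissible perturbations in \eqref{eq:almost-min-monge} and the rate $\Delta_{r}$ are symmetric under $x\leftrightarrow y$ and $T$ is invertible, $T^{-1}$ is again an almost-minimizing map from $\nu$ to $\mu$ with the same rate; moreover $\EE_{6}(\pi_{T})=\EE_{6}(\pi_{T^{-1}})$ because the cross $\cross{6}$ is symmetric. Hence the bound for $\int_{B_{2}}|T^{-1}(y)-y|^{p}\,\nu(\dd y)$ follows by applying the forward bound to $T^{-1}$, and it suffices to treat $\int_{B_{2}}|T(x)-x|^{p}\,\mu(\dd x)$. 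From $\rho_{0}(0)=\rho_{1}(0)=1$, Hölder continuity and \eqref{eq:Lp-bound-small-assumption-main} we may assume $\tfrac12\le\rho_{0},\rho_{1}\le 2$ on $B_{6}$, so $\mu,\nu$ are comparable to Lebesgue measure there; writing $h:=|T(\cdot)-\cdot|$ one has $\int_{B_{6}}h^{2}\,\mu(\dd x)\lesssim\EE_{6}(\pi_{T})=:E$.

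The core is a competitor-based self-improving estimate for the energy density of $h$. For a ball $B_{r}(\bar x)\subseteq B_{5}$ with $r\le R_{1}$, reroute the mass that $T$ moves ``across'' $B_{r}(\bar x)$: the cone Lemma~\ref{lem:cone} (which uses no monotonicity) supplies enough $\supp\pi_{T}$-mass near the diagonal, both inside $B_{r}(\bar x)$ and inside the corresponding target ball, onto which this mass can be redistributed, and produces a genuinely admissible map $\widetilde T$ with $\widetilde T_{\#}\mu=\nu$ that differs from $T$ only over a cross of radius $\sim r$. Since the perturbed mass is $\sim r^{d}$, comparing costs via \eqref{eq:almost-min-monge} and using $\Delta_{r}\le1$ and $\rho_{j}\sim1$ yields a bound of the schematic form
\begin{equation*}
    \frac{1}{r^{d}}\int_{B_{r}(\bar x)}h^{2}\,\mu(\dd x)\;\lesssim\; r^{2}\bigl([\rho_{0}]_{\alpha}^{2}+[\rho_{1}]_{\alpha}^{2}\bigr)+r^{2}+\Bigl(\frac{1}{r^{d}}\int_{B_{Cr}(\bar x)}h^{2}\,\mu(\dd x)\Bigr)^{1+\sigma}
\end{equation*}
for some $\sigma=\sigma(d)>0$ — a reverse-Hölder/Caccioppoli-type inequality for the energy of $h$, whose superlinear exponent $1+\sigma$ encodes the quadratic structure of the energy (its integrand is the square of the displacement we are bootstrapping). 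A Gehring/Campanato-type iteration across the dyadic scales between $r$ and $R_{1}$ upgrades the a priori $L^{2}$ control of $h$ on $B_{2}$ to $L^{q}$ for some $q>2$; and because the gain is quadratic, re-running the competitor estimate with $h^{2}$ replaced by $h^{q}$ in the cost comparison improves $q$ to a strictly larger exponent. Iterating this outer bootstrap reaches every finite $p$, with constants deteriorating as $p\to\infty$ — which is precisely why $L^{\infty}$ (equivalently, the pointwise bound of Proposition~\ref{prop:Linfty-main}) is out of reach here, the defect $\Delta_{r}$ being only bounded rather than infinitesimal. Undoing the rescaling gives \eqref{eq:Lp-bound-main}.

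I expect the main obstacle to be the competitor construction: one must exhibit, for a.e.\ center $\bar x$ and every scale $r$, a genuinely measure-preserving $\widetilde T$ (differing from $T$ only over the prescribed cross) whose Euclidean cost can be compared \emph{sharply} — up to $O(r^{d+2}\Delta_{r})$ and the explicit Hölder errors — to that of $T$, which is where the cone lemma and a careful bookkeeping of masses near the diagonal enter; a secondary difficulty is checking that the constants generated by the outer bootstrap stay finite at each step, so that indeed every $p<\infty$ (and no more) is attained.
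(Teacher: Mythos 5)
Your reduction (rescale to $R=1$; treat only the forward displacement, since $T^{-1}$ is again almost-minimizing with the same rate and the energy on the cross is symmetric) matches the paper. The core of the argument, however, has three genuine gaps. First, the competitor is never actually constructed: Lemma~\ref{lem:cone} only guarantees that $\supp\pi_T$ meets certain product sets --- it is the device used in Proposition~\ref{prop:Linfty-main} to produce a witness point $(x',y')$ for monotonicity, not a measure-preserving rearrangement. The paper builds the competitor quite differently: it first flattens $\mu$ locally to a multiple of the Lebesgue measure by a Dacorogna--Moser map $S_{x_0}$ normalized at $x_0$, then uses the explicit involution $\phi$ of \eqref{eq:Lp-bound-def-phi} that swaps $S(A_\pm)$ with their translates by $\pm 2Re$, and averages over the direction $e$; the map $\widetilde T=T\circ S^{-1}\circ\phi\circ S$ is then admissible and differs from $T$ only on a ball of radius $\sim R$. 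Second, the inequality you propose is not a reverse-H\"older inequality in the Gehring sense: it bounds the normalized local energy at scale $r$ by a \emph{superlinear power of the same quantity at a larger scale}, which (even if it were established) yields decay of the energy density, not higher integrability of $h$. What the swapping competitor actually delivers is the linear, first-order estimate \eqref{eq:Lp-bound-step1-result}, namely $\int_A|u|\lesssim R^{d+1}+\frac{|A|}{|B|}\int_B|u|$, whose additive error $R^{d+1}$ is exactly $r^{d+2}\Delta_r/R$ with $\Delta_r\le1$.

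Third, and most seriously, the ``outer bootstrap'' in which you re-run the cost comparison with $h^2$ replaced by $h^q$ has no foundation: almost-minimality \eqref{eq:almost-min-monge} is a variational inequality for the \emph{quadratic} cost only, so there is nothing to compare $\int h^q$ against. The paper reaches arbitrary $p<\infty$ by a different route: iterating the linear estimate over dyadic scales gives, for every $\beta>0$, the decay estimate \eqref{eq:Lp-bound-step2-result}; applying it to the superlevel sets $A=\{|u|>M\}\cap B$ and covering $B_{R_1/4}$ yields a weak-$L^p$ bound with $p=1+1/\beta$, and interpolation with the a priori $L^2$ bound concludes. The constants degenerate as $\beta\to0$, i.e.\ $p\to\infty$, which is the correct reason $L^\infty$ is out of reach --- but this comes from the scale iteration, not from a bootstrapped exponent in the cost functional.
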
 

The scale $R_1$ below which the result holds depends on the global H\"older semi-norms $[\rho_0]_{\alpha}$ and $[\rho_1]_{\alpha}$ of the densities and the condition $B_{R_1} \subset \supp \rho_0 \cap \supp \rho_1$. 

The proof of Proposition~\ref{prop:Lp-bound-almost-min-main} is given in Appendix~\ref{app:Lp}. Note that since $T^{-1}$ is also almost-minimizing, the $L^p$ bound for $T^{-1}$ follows from applying Proposition~\ref{prop:Lp-bound-almost-min} to $T^{-1}$. The $L^p$ estimate (for arbitrarily large $p<\infty$) allows us to split the particle trajectories into two groups: 
\begin{itemize}
    \item \emph{good trajectories} that satisfy an $L^{\infty}$ bound on the displacement, corresponding to starting points in the set
    \begin{align}\label{eq:good-trajectories}
        \mathcal{G} = \{x\in B_{2R}\cup T^{-1}(B_{2R}): |T(x) - x| \leq M R \},
    \end{align}
    where $M := (\EE_{6R}(\pi_T) + \DD_{6R}(\mu, \nu))^{\alpha}$ for some $\alpha\in (0, \frac{1}{d+2})$, that we fix in what follows, and
    \item \emph{bad trajectories} that are too long, corresponding to 
    \begin{align}\label{eq:bad-trajectories}
        \mathcal{B} = \{x\in B_{2R}\cup T^{-1}(B_{2R}): |T(x) - x| > M R \}.
    \end{align}
\end{itemize}
Due to the $L^p$ bound, the energy carried by bad trajectories is superlinearly small:
By definition of $\EE_{6R}(\pi_T)$ and $M$, 
\begin{align}\label{eq:measure-bad-trajectories}
    \frac{1}{R^d} \mu(\mathcal{B}) 
    \lesssim \frac{1}{M^2} \frac{1}{R^{d+2}} \int_{B_{2R}\cup T^{-1}(B_{2R})} |T(x) - x|^2\,\mu(\mathrm{d}x) 
    \lesssim \EE_{6R}(\pi_T)^{1-2\alpha},
\end{align}
hence
\begin{align*}
    \frac{1}{R^{d+2}} \int_{\mathcal{B}} |T(x) - x|^2\,\mu(\mathrm{d}x) 
    &\leq \left( \frac{1}{R^d} \mu(\mathcal{B}) \right)^{1-\frac{2}{p}} \left(\frac{1}{R^{d+p}} \int_{B_{2R}\cup T^{-1}(B_{2R})} |T(x) - x|^{p}\, \mu(\mathrm{d}x) \right)^{\frac{2}{p}} \\
    &\lesssim \EE_{6R}(\pi_T)^{\frac{2}{d+2} + (1-2\alpha)\left(1- \frac{2}{p} \right)}, 
\end{align*}
from which we see that, given $\alpha\in(0,\frac{1}{d+2})$, we may choose $p$ large enough so that the exponent is larger than $1$. In particular, for any $\tau>0$ we may bound
\begin{equation}\label{eq:energy-bad-trajectories}
    \frac{1}{R^{d+2}}\int_{\mathcal{B}} |T(x) - x|^2\,\mu(\mathrm{d}x) \leq \tau \EE_{6R}(\pi_T),
\end{equation} provided $\EE_{6R}(\pi_T)\ll 1$.

Once the bad trajectories have been removed, the good trajectories can be treated as before. More precisely, if we restrict the coupling $\pi_T$ to the set $\mathcal{G}\times T(\mathcal{G})$, then the resulting coupling is still deterministic and almost-minimizing with respect to quadratic cost (given its own boundary conditions). In particular, since $\mathcal{G}\times T(\mathcal{G}) \subset \cross{2R}$ the \emph{global} estimate
\begin{align*}
    \int_{\mathcal{G}} |T(x) - x|^2\,\mu(\mathrm{d}x) \leq \int_{\mathcal{G}} |\widetilde{T}(x) - x|^2\,\mu(\mathrm{d}x) + \Delta_{2R}
\end{align*}
for all $\widetilde{T}_{\#}\mu\lfloor_{\mathcal{G}} = T_{\#}\mu\lfloor_{\mathcal{G}} = \nu\lfloor_{T(\mathcal{G})}$ holds. This allows for a passage from the Lagrangian to the Eulerian point of view like in Lemma~\ref{prop:quasi-minimality-eulerian-main}.\footnote{Using that the optimal coupling between $\mu\lfloor_{\mathcal{G}}$ and $\nu\lfloor_{T(\mathcal{G})}$ is deterministic, so that one can appeal to almost-minimality within the class of deterministic couplings.} Moreover, the measures $\mu\lfloor_{\mathcal{G}}$ and $\nu\lfloor_{T(\mathcal{G})}$, as well as the coupling $\pi_{T}\lfloor_{\mathcal{G} \times T(\mathcal{G})}$, satisfy the assumptions of the harmonic approximation Theorem~\ref{thm:harmonic}.\footnote{Note that there is a slight mismatch in the power $\alpha$ in the $L^{\infty}$ bound between the definition of good trajectories and the setting of \cite{GHO19}. However, one can convince oneself easily that the results of \cite{GHO19} still apply.} Hence, given $0<\tau\ll 1$, there exists a threshold $\epsilon_{\tau}>0$, constants $C,C_{\tau}<\infty$, and a harmonic gradient field $\nabla\Phi$ (defined through \eqref{eq:harmonic-approx}) satisfying \eqref{eq:harmonic-estimate}, such that \eqref{eq:harmonic-energy} holds for the Eulerian description $(\rho,j)$ of $\pi_{T}\lfloor_{\mathcal{G} \times T(\mathcal{G})}$, provided $\EE_{6R}(\pi_T) + R^{2\alpha} \left([\rho_0]_{\alpha, 6R}^2 + [\rho_1]_{\alpha, 6R}^2\right) \leq \epsilon_{\tau}$. The harmonic gradient field allows us to define the affine change of coordinates from Lemma~\ref{lem:coordinate-change} with $B=\mathrm{e}^{\frac{A}{2}}$ with $A=\nabla^2\Phi(0)$ and $b = \nabla\Phi(0)$ satisfying \eqref{eq:B-b-est-main} (and $D = \mathbb{I}$) to obtain a new coupling $\widehat{\pi_T}= \pi_{\widehat{T}}$ between the measures $\widehat{\mu}$ and $\widehat{\nu}$ from the (full) coupling $\pi_{T}$. 

We can now use the harmonic approximation result Theorem~\ref{thm:harmonic} together with the harmonic estimates \eqref{eq:harmonic-estimate} to bound
\begin{align*}
    \frac{1}{R^{d+2}} \int_{\cross{R}\cap (\mathcal{G} \times T(\mathcal{G}))} |y-x - (Ax+b)|^2\,\mathrm{d}\pi_T  
    \leq \tau \EE_{6R}(\pi_T) + C_{\tau} R^{2\alpha} ([\rho_0]_{\alpha, 6R}^2 +[\rho_1]_{\alpha, 2R}^2) + \Delta_{2R}.
\end{align*}
For the bad trajectories we use the estimate \eqref{eq:energy-bad-trajectories} together with the bound 
\begin{align*}
    \int_{\cross{R}\cap (\mathcal{B} \times T(\mathcal{B}))} |Ax+b|^2\,\mathrm{d}\pi_T &\lesssim \int_{(B_R\cap \mathcal{B}) \times \RR^d} |Ax+b|^2\,\mathrm{d}\pi_T 
    +|A|^2 \int_{\mathcal{B}\times T(\mathcal{B})} |y-x|^2\,\mathrm{d}\pi_T \\
    &\qquad + |A|^2 \int_{\mathcal{B} \times (B_R \cap T(\mathcal{B}))} |y|^2\,\mathrm{d}\pi_T \\
    &\stackrel{\eqref{eq:energy-bad-trajectories}}{\lesssim} (|A|^2 R^2+|b|^2) \mu(\mathcal{B}) + \tau |A|^2 R^{d+2} \EE_{6R}(\pi_T) \\
    &\stackrel{\eqref{eq:measure-bad-trajectories}\&\eqref{eq:B-b-est-main}}{\lesssim} R^{d+2} \left(\EE_{6R}(\pi_T)^{2-2\alpha} + R^{2\alpha} ([\rho_{0}]_{\alpha, 6R}^2 + [\rho_1]_{\alpha, 6R}^2) + \tau \EE_{6R}(\pi_T)\right)
\end{align*}
to obtain, recalling that $\alpha < \frac{1}{d+2} < \frac{1}{2}$, 
\begin{align}\label{eq:harmonic-all-trajectories}
     \frac{1}{R^{d+2}} \int_{\cross{R}\cap (\mathcal{B} \times T(\mathcal{B}))} |y-x-(Ax+b)|^2\,\mathrm{d}\pi_T
     \leq \tau \EE_{6R}(\pi_T) + C_{\tau} R^{2\alpha} ([\rho_0]_{\alpha, 6R}^2 + [\rho_1]_{\alpha, 6R}^2).
\end{align}

In particular, for $\theta\in(0,1)$ we can write 
\begin{align*}
    (\theta R)^{d+2} \EE_{\theta R}(\pi_{\widehat{T}}) 
    &= \int_{\cross{\theta R}} |\widehat{y}-\widehat{x}|^2\,\pi_{\widehat{T}}(\mathrm{d}\widehat{x}\mathrm{d}\widehat{y}) 
    = |\det B| \int_{Q^{-1}(\cross{\theta R})} |\gamma B^{-*}(y-b) - Bx|^2 \,\pi_T(\mathrm{d}x\mathrm{d}y), 
\end{align*}
so that using the identity $\gamma B^{-*}(y-b) - Bx = \gamma B^{-*} (y-x-(Ax+b)) - \gamma B^{-*} (B^*B-\mathbb{I}-A) x + (\gamma-1) Bx$, the estimate \eqref{eq:harmonic-all-trajectories} together with \eqref{eq:harmonic-estimate} and \eqref{eq:B-b-est-main} imply that for any $\beta\in(0,1)$ there exist $0<\theta\ll 1$ and $C_{\beta}<\infty$ such that 
\begin{align*}
    \EE_{\theta R}(\pi_{\widehat{T}}) \leq \theta^{2\beta} \EE_{6R}(\pi_T) + C_{\beta} \left( R^{2\alpha} [\rho_0]_{\alpha, 6R}^2 + R^{2\alpha} [\rho_1]_{\alpha, 6R}^2 + \Delta_{2R} \right),
\end{align*}
which implies a one-step-improvement result for the case of general almost-minimizers. 

It remains to show that the transformed coupling $\pi_{\widehat{T}}$ is still almost-minimal on all small scales in the sense of Definition~\ref{def:almost-min-general}. To this end, let $r\leq R_1$, $(\widehat{x_0}, \widehat{y_0}) \in \supp\pi_{\widehat{T}}$, and $\pi_{\widehat{T'}} \in \Pi(\widehat{\mu},\widehat{\nu})$ with $\supp(\pi_{\widehat{T}}-\pi_{\widehat{T'}}) \subset (B_r(\widehat{x_0})\times \RR^d) \cup (\RR^d \times B_r(\widehat{y_0}))$. Then, writing $\widehat{T'}(\widehat{x}) = \gamma B^{-*} (T'(B^{-1}\widehat{x}) - b)$, where $T'_{\#}\mu = \nu$, one sees that 
\begin{align*}
    \int |\widehat{y} - \widehat{x}|^2\,\mathrm{d}(\pi_{\widehat{T}} - \pi_{\widehat{T'}}) 
    &= -2 \int (\widehat{T}(\widehat{x}) - \widehat{T'}(\widehat{x}))\cdot \widehat{x}\,\widehat{\mu}(\mathrm{d}\widehat{x}) \\
    &= -2 |\det B| \int \gamma B^{-*}(T(x) - T'(x))\cdot Bx \,\mu(\mathrm{d}x) \\
    &= - 2 \gamma |\det B| \int (T(x) - T'(x))\cdot x \,\mu(\mathrm{d}x) \\
    &= \gamma |\det B| \int |y-x|^2\,\mathrm{d}(\pi_{T} - \pi_{T'}).
\end{align*}
Note that
\begin{align*}
    \supp(\pi_T - \pi_{T'}) 
    = Q^{-1} \supp(\pi_{\widehat{T}} - \pi_{\widehat{T'}}) 
    &\subset (B_{|B|r}(x_0) \times \RR^d) \cup (\RR^d \times B_{\frac{|B|}{\gamma}r}(y_0)),
\end{align*}
where $x_0 = B\widehat{x_0}$ and $y_0 = \gamma^{-1}B \widehat{y_0} + b$. Since $\pi_T$ is almost-minimizing, it follows that 
\begin{align*}
    \int |\widehat{y} - \widehat{x}|^2\,\mathrm{d}(\pi_{\widehat{T}} - \pi_{\widehat{T'}}) 
    &\leq \gamma |\det B| (\max\{1, \gamma^{-1}\} |B| r)^{d+2} \Delta_{\max\{1, \gamma^{-1}\} |B| r},
\end{align*}
hence $\pi_{\widehat{T}}$ is almost-minimizing among deterministic couplings with rate 
\begin{align*}
    \widehat{\Delta}_r = \gamma |\det B| (\max\{1, \gamma^{-1}\} |B|)^{d+2} \Delta_{\max\{1, \gamma^{-1}\} |B| r}.
\end{align*}
Assuming that $\Delta_r = C r^{2\alpha}$, together with the bounds on $\gamma$ and $B$ from \eqref{eq:B-b-est-main} this gives
\begin{align*}
    \widehat{\Delta}_{r} \leq \left(1+ C (\EE_{R_1}^{\frac{1}{2}} + R_1^{\alpha}[\rho_0]_{\alpha,R_1} + R_1^{\alpha}[\rho_1]_{\alpha, R_1})\right) \Delta_r,
\end{align*}
in particular the rate $\Delta_r$ exhibits the same behaviour as the Hölder seminorm of $\nabla_{xy}c$ in \eqref{eq:estimate-seminorm-c}
and shows that the one-step-improvement can be iterated down to arbitrarily small scales, yielding the $C^{1,\alpha}$-regularity of $T$ in a ball with radius given by a fraction of $R$. \qed
\section{Partial regularity: Proof of Corollary \ref{cor:partial}} \label{sec:partial}
As a corollary of Theorem \ref{thm:main}, we obtain a variational proof of partial regularity for optimal transport maps proved in \cite{DF14}. The changes of variables used to arrive to a normalized situation are exactly the same as in \cite{DF14} and the argument to derive partial regularity from $\epsilon$-regularity follows \cite{GO17}.

\begin{proof}[Proof of Corollary \ref{cor:partial}]
A classical result in optimal transport states that the optimal map $T$ from $\rho_0$ to $\rho_1$ for the cost $c$ and the optimal map $T^*$ from $\rho_1$ to $\rho_0$ for the cost $c^*(y,x) := c(x,y)$ are almost everywhere inverse to each other, and are of the form 
\begin{align*}
    T(x) = \cexp_x(\nabla u(x)) 
    \quad \text{and} \quad 
    T^{-1}(y) := T^*(y) = \csexp_y(\nabla u^c(y)),
\end{align*}
where $u$ is a $c$-convex function and $u^c$ is the $c$-conjugate of $u$. $u$ and $u^c$ are semi-convex so that by Alexandrov's Theorem, they are twice differentiable almost everywhere. Therefore, we can find two sets of full measure $X_1 \subseteq X$ and $Y_1 \subseteq Y$ such that for all $(x_0,y_0) \in X_1 \times Y_1$, $u$ is twice differentiable at $x_0$, $u^c$ is twice differentiable at $y_0$ and 
\begin{align}\label{eq:inverse}
    T^{-1}(T(x_0)) = x_0 
    \quad \text{and} \quad 
    T(T^{-1}(y_0)) = y_0.
\end{align}
Now let 
\begin{align}\label{eq:Eprime-def}
    X' := X_1 \cap T^{-1}(Y_1) \quad \text{and} \quad Y' := Y_1 \cap T(X_1). 
\end{align} Because $\rho_0$ and $\rho_1$ are bounded and bounded away from zero, $T$ sends sets of measure $0$ to sets of measure $0$ so that $|X \setminus X'| = |Y \setminus Y'| = 0$. The goal is now to prove that $X'$ and $Y'$ are open sets and that $T$ is a $\mathcal{C}^{1, \alpha}$-diffeomorphism between $X'$ and $Y'$. 

Fix $x_0 \in X'$, then by \eqref{eq:Eprime-def}, $y_0 := T(x_0) \in Y'$. Up to translation, we may assume that $x_0 = y_0 = 0$. Define 
\begin{align}\label{eq:cbar-def}
    \overline{u}(x) &:= u(x)-u(0)+c(x,0)-c(0,0), \nonumber\\
    \overline{c}(x,y) &:= c(x,y)-c(x,0)-c(0,y)+c(0,0).
\end{align}
Then $\overline{u}$ is a $\overline{c}$-convex function and we have 
\begin{align}\label{eq:cbexp-equal-cexp}
    \cbexp_x(\nabla \overline{u}(x)) = \cexp_x(\nabla u(x)),
\end{align} so that $T(x) = \cbexp_x(\nabla \overline{u}(x))$, from which we know that $T$ is the $\overline{c}$-optimal transport map from $\rho_0$ to $\rho_1$. 

By Alexandrov's Theorem, there exist a symmetric matrix $A$ such that 
\begin{align*}
    \nabla \overline{u}(x) = \nabla \overline{u}(0) + Ax + o(|x|),
\end{align*}
so that, using that $(p,x) \mapsto \cexp_x(p)$ is $\mathcal{C}^1$ and setting $M := -\nabla_{xy}\overline{c}(0,0) = -\nabla_{xy}c(0,0)$, noticing that by Assumption \ref{item:cost-non-deg} $M$ is nondegenerate, a simple computation yields 
\begin{align*}
    T(x) = M^{-1}Ax + o(|x|).
\end{align*}
Therefore, we have 
\begin{align}\label{eq:smallness-partial}
    \frac{1}{R^{d+2}} \int_{B_R} |T(x)-M^{-1}Ax|^2 \rho_0(x) \, \dd x \underset{R \to 0}{\longrightarrow} 0.
\end{align}
The $\overline{c}$-convexity of $\overline{u}$ and the fact that $\cbexp_x(\nabla \overline{u}(x)) \in \partial_{\overline{c}} \overline{u}(x)$ imply that, see for instance \cite[Section 5.3]{Fig17},  \begin{align*}
    \nabla^2 \overline{u}(x) + \nabla_{xx} \overline{c}(x, \cbexp_x(\nabla \overline{u}(x))) \geq 0, 
\end{align*} so that, together with \eqref{eq:cbar-def}, \eqref{eq:cbexp-equal-cexp} and the property $T(0)=0$, the matrix $A = \nabla^2\overline{u}(0)$ is positive definite. We now make the change of variables $\widetilde{x} := A^{\frac{1}{2}}x$ and $\widetilde{y} := A^{-\frac{1}{2}}My$ so that 
\begin{align*}
    \widetilde{T}(\widetilde{x}) &:= A^{-\frac{1}{2}}MT(A^{-\frac{1}{2}}\widetilde{x}), \\
    \widetilde{c}(\widetilde{x}, \widetilde{y}) &:= \overline{c}(A^{-\frac{1}{2}}\widetilde{x}, M^{-1}A^{\frac{1}{2}}\widetilde{y}).
\end{align*}
Defining 
\begin{align*}
    \widetilde{\rho}_0(\widetilde{x}) := \det(A^{-\frac{1}{2}}) \rho_0(A^{-\frac{1}{2}}\widetilde{x}) \quad \text{and} \quad \widetilde{\rho}_1(\widetilde{y}) := |\det(M^{-1}A^{\frac{1}{2}})| \rho_1(M^{-1}A^{\frac{1}{2}}\widetilde{y}),
\end{align*} we get that $\widetilde{T}_{\#} \widetilde{\rho}_0 = \widetilde{\rho}_1$ $\widetilde{c}$-optimally. This may be seen noticing that 
\begin{align*}
    \widetilde{T}(\widetilde{x}) = \ctexp_{\widetilde{x}}(\nabla \widetilde{u}(\widetilde{x})),
\end{align*} 
where $\widetilde{u}(\widetilde{x}) := \overline{u}(A^{-1/2}x)$ is a $\widetilde{c}$-convex function. 
The cost $\widetilde{c}$ satisfies $\nabla_{\widetilde{x} \widetilde{y}}\widetilde{c}(0,0) = -\II$ and by the Monge--Ampère equation 
\begin{align*}
    \big|\det \nabla \widetilde{T}(x)\big| = \frac{\widetilde{\rho}_0(x)}{\widetilde{\rho}_1(\widetilde{T}(x))},
\end{align*} 
we obtain $\widetilde{\rho}_0(0) = \widetilde{\rho}_1(0)$. Up to dividing $\widetilde{\rho}_0$ and $\widetilde{\rho}_1$ by an equal constant, we may assume that $\widetilde{\rho}_0(0) = \widetilde{\rho}_1(0) = 1$.
Moreover, with this change of variables, \eqref{eq:smallness-partial} turns into
\begin{align}\label{eq:partial-smallness}
    \frac{1}{R^{d+2}} \int_{B_R} |\widetilde{T}(\widetilde{x})-\widetilde{x}|^2 \widetilde{\rho}_0(\widetilde{x}) \, \dd\widetilde{x} \underset{R \to 0}{\longrightarrow} 0.
\end{align}
Finally, $\widetilde{c}$ is still $\mathcal{C}^{2,\alpha}$ and satisfies Assumptions \ref{item:cost-inj-y}--\ref{item:cost-non-deg} and since $\rho_0$ and $\rho_1$ are bounded and bounded away from zero, $\widetilde{\rho}_0$ and $\widetilde{\rho}_1$ are $\mathcal{C}^{0, \alpha}$, and we have 
\begin{align}\label{eq:partial-H-smallness}
    \HH_R(\widetilde{\rho}_0, \widetilde{\rho}_1, \widetilde{c}) = R^{2\alpha} \left( [\widetilde{\rho}_0]_{\alpha, R}^2 + [\widetilde{\rho}_1]_{\alpha, R}^2 + \left[\nabla_{xy} \widetilde{c}\, \right]_{\alpha, R}^2 \right) \underset{R \to 0}{\longrightarrow} 0.
\end{align}
Hence by \eqref{eq:partial-smallness} and \eqref{eq:partial-H-smallness}, we may apply Theorem \ref{thm:main} to obtain that $\widetilde{T}$ is $\mathcal{C}^{1,\alpha}$ in a neighborhood of zero. 
By Remark \ref{rem:T-inverse}, we also obtain that $\widetilde{T}^{-1}$ is $\mathcal{C}^{1,\alpha}$ in a neighborhood of zero.

Going back to the original map, this means that $T$ is a $\mathcal{C}^{1,\alpha}$ diffeomorphism between a neighborhood $U$ of $x_0$ and the neighborhood $T(U)$ of $T(x_0)$. In particular, $U \times T(U) \subseteq X' \times Y'$ so that $X'$ and $Y'$ are both open and by \eqref{eq:inverse}, $T$ is a global $\mathcal{C}^{1,\alpha}$ diffeomorphism between $X'$ and $Y'$. 
\end{proof}
\appendix
\section{Some technical lemmata}\label{app:technical}

\subsection{Properties of the support of couplings}
The following lemma is an important ingredient in the proofs of our $L^{\infty}$ bounds on the displacement of couplings with $c$-monotone support, Proposition \ref{prop:Linfty-main} and Lemma \ref{lem:displacement-qualitative}:
\begin{lemma}\label{lem:cone}
	Let $\pi \in \Pi(\mu, \nu)$ and assume that there exists $R>0$ such that 
	\begin{align}\label{eq:cone-smallness}
		\frac{1}{R^{d+2}} \int_{B_{6R} \times \RR^d} |y-x|^2 \, \dd\pi + \DD_{6R} \ll 1.
	\end{align}
	Then
	\begin{enumerate}[label=(\roman*)]
		\item \label{item:cone-1} $(B_{\lambda R} \times B_{2\lambda R}) \cap \supp\pi \neq \emptyset$ provided $\lambda \gg \left(\frac{1}{R^{d+2}} \int_{B_{6R} \times \RR^d} |y-x|^2 \, \dd\pi + \DD_{6R}\right)^{\frac{1}{d+2}}$.
		\item \label{item:cone-3} For any $x \in B_{5R}$ and $e \in S^{d-1}$ we have that $(S_{R}(x,e) \times B_{7R}) \cap \supp\pi\neq \emptyset$, where \[S_{R}(x,e) := C(x,e) \cap (B_{R}(x)\setminus B_{\frac{R}{2}}(x))\] is the intersection of the annulus $B_{R}(x)\setminus B_{\frac{R}{2}}(x)$ with the spherical cone $C(x,e)$ of opening angle $\frac{\pi}{2}$ with apex at $x$ and axis along $e$.
	\end{enumerate}
\end{lemma}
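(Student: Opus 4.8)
The plan is to prove both statements by a variational/volume argument that exploits the smallness of the transport energy and the closeness of $\mu,\nu$ to the Lebesgue measure encoded in $\DD_{6R}$. By scaling, I would first reduce to $R=1$, as is done throughout the paper. The common mechanism is the following: if $\supp\pi$ failed to intersect the prescribed region, then all the mass of $\mu$ lying in some ball $B_\ell$ would have to be transported to points $y$ at distance $\gtrsim \ell$ from their source $x$, forcing a lower bound on $\int_{B_{6}\times\RR^d}|x-y|^2\,\dd\pi$ of order $\mu(B_\ell)\,\ell^2$; but $\mu(B_\ell)\gtrsim \ell^d$ as soon as $\DD_{6}\ll 1$ (since then $\kappa_\mu\sim 1$ and the Wasserstein term in \eqref{eq:D-def} is small, so $\mu$ cannot be much smaller than Lebesgue on $B_\ell$). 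This would give $\EE_{6}+\DD_6\gtrsim \ell^{d+2}$, contradicting \eqref{eq:cone-smallness} once $\ell$ is a large enough multiple of $(\EE_6+\DD_6)^{1/(d+2)}$.

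For \ref{item:cone-1}: suppose $(B_\lambda\times B_{2\lambda})\cap\supp\pi=\emptyset$. Then for every $(x,y)\in\supp\pi$ with $x\in B_\lambda$ we have $y\notin B_{2\lambda}$, hence $|x-y|\ge|y|-|x|\ge 2\lambda-\lambda=\lambda$. Consequently
\begin{align*}
	\int_{B_6\times\RR^d}|x-y|^2\,\dd\pi \ge \int_{B_\lambda\times\RR^d}|x-y|^2\,\dd\pi \ge \lambda^2\,\mu(B_\lambda).
\end{align*}
Using $\DD_6\ll 1$ one gets $\mu(B_\lambda)\gtrsim \lambda^d$ (a transparent way is: $\kappa_\mu=\mu(B_6)/|B_6|\sim 1$, and by the triangle inequality in Wasserstein distance together with the smallness of $W^2_{B_6}(\mu,\kappa_\mu)$, the mass of $\mu$ on $B_\lambda$ cannot drop below a fixed fraction of $\kappa_\mu|B_\lambda|$ — this needs $\lambda$ bounded below by a multiple of $(\EE_6+\DD_6)^{1/(d+2)}$, which is exactly the regime we are in). Therefore $\EE_6+\DD_6 \gtrsim \lambda^{d+2}$, i.e. $\lambda\lesssim(\EE_6+\DD_6)^{1/(d+2)}$, contradicting $\lambda\gg(\EE_6+\DD_6)^{1/(d+2)}$.

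For \ref{item:cone-3}: fix $x\in B_5$ and $e\in S^{d-1}$, and suppose $(S_1(x,e)\times B_7)\cap\supp\pi=\emptyset$. The key geometric point is that the spherical cone $C(x,e)$ of opening $\pi/2$ still has a fixed fraction of the volume of the annulus, so $|S_1(x,e)|\sim 1$, and hence, by the same Wasserstein-closeness argument as above, $\mu(S_1(x,e))\gtrsim 1$ — here one uses that $x\in B_5$ ensures $S_1(x,e)\subseteq B_6$, and that $\DD_6\ll 1$ controls how far $\mu$ can deviate from Lebesgue on a set of unit measure. Now every $(x',y')\in\supp\pi$ with $x'\in S_1(x,e)\subseteq B_6$ must have $y'\notin B_7$; since $x'\in B_6$, this forces $|x'-y'|\ge|y'|-|x'|\ge 7-6=1$. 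Integrating, $\int_{B_6\times\RR^d}|x'-y'|^2\,\dd\pi\ge \mu(S_1(x,e))\gtrsim 1$, contradicting \eqref{eq:cone-smallness}. After undoing the rescaling, this yields the two claims with the stated constants.

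The main obstacle, and the only genuinely non-routine point, is the lower bound $\mu(\text{set})\gtrsim |\text{set}|$ for the relevant subsets of $B_{6R}$: one has to turn the control of $W^2_{B_{6R}}(\mu,\kappa_\mu)$ and of $(\kappa_\mu-1)^2/\kappa_\mu$ in $\DD_{6R}$ into a lower bound on $\mu$ of a fixed ball (resp. cone-annulus) of the right scale. The cleanest route is probably to argue by contradiction at the level of the optimal coupling realizing $W_{B_{6R}}(\mu,\kappa_\mu)$: if $\mu$ were too small on such a set, transporting $\kappa_\mu\,\dd x$ to $\mu$ within $B_{6R}$ would already cost more than the assumed bound on $W^2_{B_{6R}}$, since a definite amount of Lebesgue mass would have to travel a definite distance. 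This is where the scale threshold "$\lambda\gg(\EE_{6R}+\DD_{6R})^{1/(d+2)}$" (and implicitly the smallness "$\ll 1$" in \eqref{eq:cone-smallness}, guaranteeing $\kappa_\mu\sim 1$ and that the relevant radii are $\le 6R$) is used in an essential way.
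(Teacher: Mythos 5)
Your proposal is correct and follows essentially the same mechanism as the paper's proof: the $\mu$-mass of the source set ($B_{\lambda R}$, resp.\ $S_R(x,e)$) is bounded below by a fixed fraction of its Lebesgue measure using the smallness of $\DD_{6R}$, while the energy controls the $\pi$-mass that escapes far away; the paper phrases this as a direct positivity estimate for $\pi(\text{source}\times\text{target})$ rather than a contradiction, which is only presentational. The one point where you genuinely deviate is the lower bound $\mu(A)\gtrsim|A|$: the paper gets it from the test-function estimate \eqref{eq:GHO19-cutoff-bound} applied to a smooth cutoff supported in $A$, whereas you propose a primal argument on the coupling realizing $W_{B_{6R}}(\mu,\kappa_\mu)$. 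That route works, but as sketched it is slightly off: if $\mu(B_\lambda)$ is much smaller than $\kappa_\mu|B_\lambda|$, the excess Lebesgue mass need only cross $\partial B_\lambda$, which costs nothing; you must instead compare $\mu(B_\lambda)$ with $\kappa_\mu|B_{\lambda/2}|$ (resp.\ with the measure of a copy of $S_R(x,e)$ shrunk by a definite distance), so that the displaced mass travels a distance $\gtrsim\lambda$ (resp.\ $\gtrsim R$) and the cost is $\gtrsim\lambda^{d+2}$ (resp.\ $\gtrsim R^{d+2}$), which then contradicts $W^2_{B_{6R}}(\mu,\kappa_\mu)\lesssim R^{d+2}\DD_{6R}$ in exactly the regime $\lambda\gg(\EE+\DD)^{1/(d+2)}$ you identify. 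With that standard adjustment, which you essentially acknowledge, the argument is complete.
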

\begin{figure}[!ht]
\begin{tikzpicture}[line cap=round,line join=round,>=triangle 45,x=1.0cm,y=1.0cm]
\clip(-2.8,-1) rectangle (5.654665125482122,4.8);
\draw [line width=1.pt] (0.,0.) circle (3.cm);
\draw [line width=1.pt,domain=-5.266953937349604:1.22] plot(\x,{(-8.7852--1.92*\x)/-3.54});
\draw [line width=1.pt,domain=1.22:5.654665125482122] plot(\x,{(-0.6038011460131179--2.592741456679323*\x)/1.4062326544701411});
\draw [->,line width=1.pt] (1.22,1.82) -- (0.4427647044109043,4.4395708110595455);
\draw [line width=1.pt] (0.,0.) circle (4.cm);
\draw (0.9841577351743219,4.700298113503307) node[anchor=north west] {$C(x,e)$};
\draw (0.4850195407902822,4.664645385333019) node[anchor=north west] {{$e$}};
\draw (3.586806891605386,2.2283756270299806) node[anchor=north west] {$B_{6R}$};
\draw (2.945057784540192,0.8141507429418756) node[anchor=north west] {$B_{5R}$};
\draw (1.2218425896429124,1.9669222871145327) node[anchor=north west] {{$x$}};
\draw [line width=1.pt] (1.22,1.82) circle (1.5cm);
\draw [line width=1.pt] (1.22,1.82) circle (0.75cm);
\draw (0.5682092398542888,1.15) node[anchor=north west] {$B_{\frac{R}{2}}(x)$};
\draw (-1.35,1.8599641026036675) node[anchor=north west] {$B_R(x)$};
\draw (-0.8,3.7) node[anchor=north west] {$\bm{S_R(x,e)}$};
\draw [line width=1.pt] (0.04313301465180075,2.750045213308171)-- (0.04451652942338402,2.4575503569229102);
\draw [line width=1.pt] (0.16258251566546922,2.8838929757375187)-- (0.16484101882796254,2.3922896169068677);
\draw [line width=1.pt] (0.2866719329815402,2.328384650594847)-- (0.28538485080534537,2.993241033588518);
\draw [line width=1.pt] (0.4015315730778378,2.2639150790086306)-- (0.40179614618818943,3.07560026226913);
\draw [line width=1.pt] (0.5175065672184895,3.145331270625895)-- (0.5179615565265798,2.200766613409313);
\draw [line width=1.pt] (0.6412517389945414,3.2038534786541373)-- (0.6445561644459589,2.301003526102308);
\draw [line width=1.pt] (0.8866017253819901,3.2824792615561202)-- (0.8870493173249011,2.4920445244968414);
\draw [line width=1.pt] (1.0009428624687338,3.303918451430741)-- (1.0011225543892308,2.5576980574288886);
\draw [line width=1.pt] (1.1140112003937337,3.3162507725505184)-- (1.1146281676330594,2.562560958402496);
\draw [line width=1.pt] (1.2357206092671997,3.319917618552522)-- (1.235651154841407,2.5698366764516996);
\draw [line width=1.pt] (1.3553130543488467,3.3138843252818426)-- (1.3521897171553574,2.5582586800562432);
\draw [line width=1.pt] (1.4835107704075157,3.2966726359891814)-- (1.475830662760142,2.5250182068512177);
\draw [line width=1.pt] (1.603898043194798,3.2700421691906776)-- (1.6069020708752728,2.533350693176284);
\draw [line width=1.pt] (1.716325339417105,3.2355073851635305)-- (1.7251178746687366,2.751311081420483);
\draw [line width=1.pt] (1.840175307258799,3.1857900967082955)-- (1.8445800508204075,2.9715694687001264);
\draw [line width=1.pt] (0.7576401339044782,3.246962982779908)-- (0.7605048901979343,2.412759853623698);
\draw [line width=1.pt] (-0.03837413108626102,2.5025080033010227)-- (-0.03921467759201147,2.635094102381343);
\begin{scriptsize}
\draw [fill=black] (0.,0.) circle (2.0pt);
\draw[color=black] (0.18,0.21) node {$O$};
\draw [fill=black] (1.22,1.82) circle (2.0pt);
\end{scriptsize}
\end{tikzpicture}
\end{figure}

\begin{remark}
    If \eqref{eq:cone-smallness} is replaced by $\frac{1}{R^{d+2}} \int_{\RR^d \times B_{6R}} |y-x|^2 \, \dd\pi + \DD_{6R} \ll 1$, then the symmetric results hold, namely $(B_{2\lambda R} \times B_{\lambda R}) \cap \supp\pi \neq \emptyset$ and $(B_{7R} \times S_R(y,e)) \cap \supp\pi \neq \emptyset$ for all $y \in B_{5R}$ and $e \in S^{d-1}$.     
\end{remark}

\begin{proof}
    To lighten the notation in the proof, let us set 
    \begin{align}\label{eq:Eplus-def}
        \EE_{6R}^+ := \frac{1}{R^{d+2}} \int_{B_{6R} \times \RR^d} |y-x|^2 \, \dd\pi.
    \end{align}
    
	We start with the more delicate statement \ref{item:cone-3}. Let $x \in B_{5R}$ and $e \in S^{d-1}$, note that
	\begin{align*}
		\pi(S_{R}(x,e) \times B_{7R}) 
		&= \pi(S_{R}(x,e) \times \RR^d) - \pi(S_{R}(x,e) \times B_{7R}^c) \\
		&= \mu(S_{R}(x,e)) - \pi(S_{R}(x,e) \times B_{7R}^c).
	\end{align*}
	Since $S_{R}(x,e) \subseteq B_{6R}$, we have
	\begin{align*}
		\pi(S_{R}(x,e) \times B_{7R}^c)
		\lesssim \frac{1}{R^2} \int_{B_{6R} \times B_{7R}^c} \frac{1}{2} |x'-y'|^2\,\pi(\dd x'\dd y') 
		\overset{\eqref{eq:Eplus-def}}{\lesssim} R^d \EE_{6R}^+.
	\end{align*}
	To estimate $\mu(S_{R}(x,e))$ from below, let $\eta$ be a smooth cut-off function equal to one on a ball of radius $\frac{r}{2}$ and zero outside a concentric ball of radius $r$ satisfying 
	\begin{align}\label{eq:cutoff-assumptions}
		\sup |\eta| + r \sup |\nabla\eta| + r^2 \sup |\nabla^2 \eta| \lesssim 1,
	\end{align}
	and such that $\supp\eta \subseteq S_{R}(x,e) \subseteq B_{6R}$, which is possible provided $r \leq \frac{R}{4}$. Then by \eqref{eq:cutoff-assumptions} 
	\begin{align*}
		\mu(S_{R}(x,e)) 
		&\gtrsim \int_{B_{6R}} \eta \,\dd\mu 
		= \int_{B_{6R}} \eta\, \kappa_{\mu} \,\dd x + \int_{B_{6R}} \eta \,(\dd\mu -\kappa_{\mu}\,\dd x) \\
		&\gtrsim \kappa_{\mu} \left(\frac{r}{2}\right)^d - \left| \int_{B_{6R}} \eta \,(\dd\mu -\kappa_{\mu}\,\dd x) \right|.
	\end{align*}
	We now use \eqref{eq:GHO19-cutoff-bound} with $\zeta=\eta$ to get, by the definition \eqref{eq:D-def} of $\DD_{6R}$, and since $\kappa_{\mu} \sim 1$ by \eqref{eq:cone-smallness}, that
	\begin{align*}
		\left| \int_{B_{6R}} \eta \,(\dd\mu -\kappa_{\mu}\,\dd x) \right| 
		&\lesssim r^{\frac{d-2}{2}} R^{\frac{d+2}{2}} \DD_{6R}^{\frac{1}{2}} + r^{-2} R^{d+2} \DD_{6R}.
	\end{align*}
	Hence, 
	\begin{align*}
		\frac{\mu(S_{R}(x,e))}{R^d} 
		&\gtrsim \left(\frac{r}{R}\right)^d \left( 1 - \left( \left(\frac{r}{R}\right)^{-(d+2)}\DD_{6R}\right)^{\frac{1}{2}} - \left(\frac{r}{R}\right)^{-(d+2)} \DD_{6R}\right).
	\end{align*}
	We may now choose $r = \frac{R}{4}$ so that 
	\begin{align*}
	    \frac{\mu(S_{R}(x,e))}{R^d} 
		&\gtrsim 1-\left(4^{d+2}\DD_{6R}\right)^{\frac{1}{2}}-4^{d+2}\DD_{6R}, 
	\end{align*}
	from which we conclude that $\pi(S_{R}(x,e) \times B_{7R})$ is strictly positive if $\DD_{6R}$ and $\EE_{6R}^+$ are small enough.
	
	In order to prove \ref{item:cone-1} we run a similar argument to obtain 
	\begin{align*}
		\frac{\pi(B_{\lambda R} \times B_{2\lambda R})}{R^d}
		\gtrsim \lambda^d \left( 1 - \left(\lambda^{-(d+2)}(\EE_{6R}^+ + \DD_{6R})\right)^{\frac{1}{2}} - \lambda^{-(d+2)} (\EE_{6R}^+ + \DD_{6R})\right).
	\end{align*}
	Hence $(B_{\lambda R} \times B_{2\lambda R}) \cap \supp\pi \neq \emptyset$ provided that $\lambda \gg (\EE_{6R}^+ +\DD_{6R})^{\frac{1}{d+2}}$.
\end{proof}

The next lemma, which is quite elementary, relates the support of a measure and the support of its push forward under an affine transformation:
\begin{lemma}\label{lem:equivalence-support}
    Let $\gamma$ be a measure on $\RR^n$ and set
    $\widetilde{\gamma} := F_{\#} \gamma$, where $F(x) := Ax+b$ with $A \in \RR^{n\times n}$ invertible and $b\in \RR^n$. Then 
    \begin{align*}
        \widetilde{x} \in \supp\widetilde{\gamma} 
        \quad \Leftrightarrow \quad 
        F^{-1}(\widetilde{x}) \in \supp\gamma.
    \end{align*}
\end{lemma}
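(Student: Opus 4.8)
The plan is to prove the elementary equivalence between the support of a measure $\gamma$ on $\RR^n$ and the support of its pushforward $\widetilde{\gamma} = F_{\#}\gamma$ under the invertible affine map $F(x) = Ax + b$. The key observation is that $F$ is a homeomorphism of $\RR^n$ (being affine with invertible linear part), so that $F$ maps open sets to open sets and $F^{-1}$ is continuous; this is what makes the statement work, and it is the only structural fact we need.

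First I would recall the definition: for a measure $\gamma$, a point $x$ belongs to $\supp\gamma$ if and only if every open neighborhood $U$ of $x$ has $\gamma(U) > 0$; equivalently, $x \notin \supp\gamma$ iff there is an open set $U \ni x$ with $\gamma(U) = 0$. Then I would unwind the pushforward: by definition, for any Borel set $B$, $\widetilde{\gamma}(B) = \gamma(F^{-1}(B))$.

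The main step is then a direct chain of equivalences. Suppose $\widetilde{x} \notin \supp\widetilde{\gamma}$. Then there is an open $U \ni \widetilde{x}$ with $\widetilde{\gamma}(U) = 0$, i.e.\ $\gamma(F^{-1}(U)) = 0$. Since $F$ is continuous, $F^{-1}(U)$ is open, and it contains $F^{-1}(\widetilde{x})$; hence $F^{-1}(\widetilde{x}) \notin \supp\gamma$. Conversely, suppose $F^{-1}(\widetilde{x}) \notin \supp\gamma$, so there is an open $V \ni F^{-1}(\widetilde{x})$ with $\gamma(V) = 0$. Since $F$ is an open map (equivalently, $F^{-1}$ is continuous because $A$ is invertible), $F(V)$ is open, contains $\widetilde{x}$, and satisfies $\widetilde{\gamma}(F(V)) = \gamma(F^{-1}(F(V))) = \gamma(V) = 0$ (using injectivity of $F$ so that $F^{-1}(F(V)) = V$); hence $\widetilde{x} \notin \supp\widetilde{\gamma}$. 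Taking contrapositives of both implications gives the claimed equivalence.

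There is essentially no obstacle here; the only point requiring a word of care is invoking both directions of the homeomorphism property — continuity of $F$ to pull back open sets and continuity of $F^{-1}$ (equivalently, openness of $F$) together with injectivity of $F$ to push forward open sets while preserving the preimage identity. I would write the proof in two short paragraphs, one for each inclusion, after the one-line recollection of the definition of support and of pushforward, and note in passing that the same argument applies verbatim to the product map $Q$ from Lemma~\ref{lem:coordinate-change}, which is the use made of this lemma in the proof of Proposition~\ref{prop:one-step-main}.
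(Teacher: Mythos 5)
Your proof is correct and is essentially the same argument as the paper's: both directions rest on $F$ being a homeomorphism, the paper merely instantiating this concretely by showing $F^{-1}(B_{\epsilon}(\widetilde{x})) \subseteq B_{|A^{-1}|\epsilon}(F^{-1}(\widetilde{x}))$ where you invoke openness of $F^{-1}(U)$ and $F(V)$ in the abstract. No gaps.
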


\begin{proof}
    Let $\widetilde{x} \in \supp\widetilde{\gamma}$. Then for all $\epsilon > 0$ 
    \begin{align*}
        \widetilde{\gamma}(B_{\epsilon}(\widetilde{x})) = \gamma(F^{-1}(B_{\epsilon}(\widetilde{x}))) > 0.
    \end{align*}
    Now 
    \begin{align*}
        F^{-1}(B_{\epsilon}(\widetilde{x})) &= \{ x: |\widetilde{x} - F(x)| < \epsilon\} = \{x: |(\widetilde{x}-b) - Ax| < \epsilon \} \\
        &\subseteq \{x: |A^{-1}(\widetilde{x}-b) -x| < |A^{-1}|\epsilon \} = B_{|A^{-1}|\epsilon}( F^{-1}(\widetilde{x})),
    \end{align*}
    so that for all $\epsilon' >0$ we have 
    $\gamma(B_{\epsilon'}(F^{-1}(\widetilde{x}))) > 0$. The other implication follows analogously. 
\end{proof}

\subsection{Bound on \texorpdfstring{$\DD_R$}{D R}}
In this subsection we show how the quantity $\DD_R(\rho_0, \rho_1)$ can be bounded in terms of the Hölder semi-norms of the densities $\rho_0, \rho_1$:
\begin{lemma}\label{lem:D-Holder_rho}
	Let $\rho_0,\rho_1\in \mathcal{C}^{0,\alpha}$, $\alpha\in(0,1)$, be two probability densities with bounded support, and such that $\frac{1}{2} \leq \rho_j \leq 2$, $j=0, 1$, on their support. If $\rho_0(0) = \rho_1(0) = 1$, then for all $R>0$ such that $B_R \subseteq \supp \rho_j$, $j=0,1$, we have 
	\begin{align}\label{eq:D-bound-rhoA}
		\DD_R \lesssim R^{2\alpha} \left( [\rho_0]_{\alpha, R}^2 + [\rho_1]_{\alpha,R}^2 \right).
	\end{align}
\end{lemma}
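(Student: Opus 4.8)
The plan is to prove \eqref{eq:D-bound-rhoA} by estimating separately the four contributions to $\DD_R$, cf.\ \eqref{eq:D-def}: the two Wasserstein terms $\frac{1}{R^{d+2}}W^2_{B_R}(\mu,\kappa_\mu)$, $\frac{1}{R^{d+2}}W^2_{B_R}(\nu,\kappa_\nu)$ and the two ``mass defect'' terms $\frac{(\kappa_\mu-1)^2}{\kappa_\mu}$, $\frac{(\kappa_\nu-1)^2}{\kappa_\nu}$. By symmetry it suffices to treat the contributions coming from $\mu=\rho_0\,\dd x$; the bounds for $\nu$ are identical with $\rho_1$ in place of $\rho_0$. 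Throughout we use $\rho_0(0)=1$, the Hölder bound $|\rho_0(x)-1|=|\rho_0(x)-\rho_0(0)|\leq [\rho_0]_{\alpha,R}|x|^\alpha\leq [\rho_0]_{\alpha,R}R^\alpha$ for $x\in B_R$, and $\tfrac12\leq\rho_0\leq2$.

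First I would handle the mass defect term. Since $\kappa_\mu=\frac{1}{|B_R|}\int_{B_R}\rho_0\,\dd x$, we get $|\kappa_\mu-1|=\frac{1}{|B_R|}\big|\int_{B_R}(\rho_0-1)\,\dd x\big|\leq [\rho_0]_{\alpha,R}R^\alpha$, and because $\kappa_\mu\geq\tfrac12$ this gives $\frac{(\kappa_\mu-1)^2}{\kappa_\mu}\leq 2[\rho_0]_{\alpha,R}^2R^{2\alpha}$, which is of the claimed form.

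Next, the Wasserstein term. The measure $\mu\lfloor_{B_R}=\rho_0\,\dd x\lfloor_{B_R}$ has total mass $\kappa_\mu|B_R|$, the same as $\kappa_\mu\,\dd x\lfloor_{B_R}$, so $W_{B_R}^2(\mu,\kappa_\mu)$ is well defined. The natural strategy is to bound the (quadratic) Wasserstein distance by a weighted $\dot H^{-1}$-type / negative Sobolev norm of the difference of densities $\rho_0-\kappa_\mu$ on $B_R$, using the standard fact (Benamou--Brenier / Loeper-type estimate, valid since both densities are bounded above and below by constants of order $1$) that for densities $f,g$ with $\tfrac12\lesssim f,g\lesssim 2$ on $B_R$ one has $W_{B_R}^2(f\,\dd x,g\,\dd x)\lesssim \|f-g\|_{H^{-1}(B_R)}^2$. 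One then estimates $\|\rho_0-\kappa_\mu\|_{H^{-1}(B_R)}$: testing against $\zeta\in H^1_0(B_R)$, $\int_{B_R}(\rho_0-\kappa_\mu)\zeta = \int_{B_R}(\rho_0-1)\zeta + (1-\kappa_\mu)\int_{B_R}\zeta$, and both terms are bounded by $[\rho_0]_{\alpha,R}R^\alpha$ times $\|\zeta\|_{L^2(B_R)}\lesssim R\|\nabla\zeta\|_{L^2(B_R)}$ (Poincaré on $B_R$), whence $\|\rho_0-\kappa_\mu\|_{H^{-1}(B_R)}\lesssim [\rho_0]_{\alpha,R}R^{\alpha}\cdot R\cdot |B_R|^{1/2}\lesssim [\rho_0]_{\alpha,R}R^{\alpha+1+d/2}$. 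Squaring and dividing by $R^{d+2}$ yields $\frac{1}{R^{d+2}}W_{B_R}^2(\mu,\kappa_\mu)\lesssim [\rho_0]_{\alpha,R}^2R^{2\alpha}$, as desired. Alternatively, and perhaps cleaner for the write-up, one constructs an explicit transport map: since $W^2$ is a distance one can interpolate $W_{B_R}(\mu,\kappa_\mu\dd x)\leq W_{B_R}(\mu,\dd x)+W_{B_R}(\dd x,\kappa_\mu\dd x)$ after adjusting masses, but handling the mass mismatch is slightly delicate, so I would favour the $H^{-1}$ route. Summing the four estimates gives \eqref{eq:D-bound-rhoA}.

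The main obstacle is the Wasserstein estimate: one must be careful that $\mu\lfloor_{B_R}$ and $\kappa_\mu\dd x\lfloor_{B_R}$ have equal mass (they do, by the definition of $\kappa_\mu$) so that $W_{B_R}$ makes sense, and one must invoke the linearized/$H^{-1}$ comparison of $W^2$ with the correct constants — this requires the two-sided bound $\tfrac12\leq\rho_0\leq2$, which is exactly the hypothesis of the lemma, and a scaling check to see that all powers of $R$ combine to $R^{2\alpha}$. Everything else (the mass defect bound, the Poincaré inequality, the Hölder bound on $\rho_0-1$) is routine.
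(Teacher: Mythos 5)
Your proof is correct in substance and follows the same route as the paper: the mass-defect terms are handled exactly as in the paper's proof (via $|\kappa_\mu-1|\leq R^{\alpha}[\rho_0]_{\alpha,R}$ and $\kappa_\mu\gtrsim 1$), and the Wasserstein term is bounded by a negative Sobolev norm of $\rho_0-\kappa_\mu$, which is precisely the content of the paper's Lemma~\ref{lem:W2-H-1} (there phrased via the Neumann problem $-\Delta\Psi=\rho-\kappa$, $\nabla\Psi\cdot\nu=0$, and a Schauder estimate; your duality-plus-Poincar\'e argument is the corresponding energy estimate and is, if anything, more elementary). One point needs correcting, though: the negative norm that controls $W^2_{B_R}$ is the one dual to the \emph{full} gradient seminorm on $H^1(B_R)$ --- equivalently $\|\nabla\Psi\|_{L^2(B_R)}$ for the Neumann problem --- and not the $(H^1_0(B_R))^*$ norm obtained by testing only against $\zeta\in H^1_0(B_R)$. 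The latter is strictly weaker, and the comparison $W^2_{B_R}(f,g)\lesssim \|f-g\|^2_{(H^1_0)^*}$ fails even for densities pinched between $\tfrac12$ and $2$: a density excess near one part of $\partial B_R$ compensated near another must be transported a distance of order $R$, yet is nearly invisible to test functions vanishing on $\partial B_R$. The repair is immediate in your setting: since $\int_{B_R}(\rho_0-\kappa_\mu)=0$, you may test against arbitrary $\zeta\in H^1(B_R)$, subtract its mean, and run the identical estimate with the Poincar\'e--Wirtinger inequality $\bigl\|\zeta-\tfrac{1}{|B_R|}\int_{B_R}\zeta\bigr\|_{L^2(B_R)}\lesssim R\|\nabla\zeta\|_{L^2(B_R)}$ in place of the Poincar\'e inequality on $H^1_0$; all powers of $R$ then combine to $R^{2\alpha}$ exactly as you computed.
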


\begin{proof}
	By the definition \eqref{eq:kappa-def} of $\kappa_j := \kappa_{\rho_j}$ and using $\rho_j(0) = 1$, Jensen's inequality implies 
	\begin{align*}
		|\kappa_j - 1|^2 
		&= |\kappa_j - \rho_j(0)|^2 
		\leq \frac{1}{|B_R|} \int_{B_R} |\rho_j(x) - \rho_j(0)|^2\,\dd x
		\leq [\rho_j]_{\alpha, R}^2 R^{2\alpha}.
	\end{align*}
	If $R>0$ is such that $B_R \subseteq \supp \rho_j$, the assumption that $\rho_j$ is bounded away from zero on its support implies that $\kappa_j \gtrsim 1$.
	The claimed inequality \eqref{eq:D-bound-rhoA} then follows with Lemma \ref{lem:W2-H-1}.
\end{proof}

\begin{lemma}\label{lem:W2-H-1}
	Let $\rho\in \mathcal{C}^{0,\alpha}$, $\alpha\in(0,1)$, be a density with bounded support, and such that $\frac{1}{2} \leq \rho \leq 2$ on its support. 
	Then
	\begin{align*}
		\frac{1}{R^{d+2}} W_{B_R}^2(\rho,\kappa) \lesssim R^{2\alpha} [\rho]_{\alpha,R}^2.
	\end{align*}
\end{lemma}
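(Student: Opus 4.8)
The plan is to bound the localized quadratic Wasserstein distance $W_{B_R}^2(\rho,\kappa)$ between $\rho\lfloor_{B_R}$ and the constant density $\kappa=\kappa_\rho=\frac{1}{|B_R|}\int_{B_R}\rho$ by the standard Benamou--Brenier (or rather $\dot H^{-1}$-type) argument. First I would reduce to scale $R=1$ by the natural scaling: setting $\rho^R(x):=\rho(Rx)$, one checks $W_{B_1}^2(\rho^R,\kappa)=R^{-d-2}W_{B_R}^2(\rho,\kappa)$ while $[\rho^R]_{\alpha,1}=R^\alpha[\rho]_{\alpha,R}$, so \eqref{eq:D-bound-rhoA} for general $R$ is equivalent to the case $R=1$, i.e.\ $W_{B_1}^2(\rho,\kappa)\lesssim [\rho]_{\alpha,1}^2$.

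Next I would construct an explicit competitor flux for the Benamou--Brenier formula \eqref{eq:BB} on $B_1$. Since $\rho$ and $\kappa$ have the same mass on $B_1$ by definition of $\kappa$, the equation $\nabla\cdot q=\kappa-\rho$ in $B_1$ with $q\cdot\nu=0$ on $\partial B_1$ is solvable; take $q=\nabla\psi$ where $-\Delta\psi=\rho-\kappa$ in $B_1$, $\partial_\nu\psi=0$ on $\partial B_1$. Then the constant-in-time pair $\rho_t=(1-t)\rho+t\kappa$, $j_t=q$ satisfies the continuity equation with the right boundary data, and since $\rho_t\geq\frac12$ on its support (using $\frac12\leq\rho\leq2$ and $\kappa\sim1$), the duality/subadditivity bound gives
\begin{align*}
    W_{B_1}^2(\rho,\kappa)\leq \frac12\int_0^1\!\!\int_{B_1}\frac{|q|^2}{\rho_t}\,\dd x\,\dd t\lesssim \int_{B_1}|\nabla\psi|^2\,\dd x = \int_{B_1}\psi\,(\rho-\kappa)\,\dd x.
\end{align*}
By elliptic regularity (Poincaré and the Neumann estimate $\|\psi\|_{H^1(B_1)}\lesssim\|\rho-\kappa\|_{(H^1)^*}\lesssim\|\rho-\kappa\|_{L^2(B_1)}$, using $\int_{B_1}\psi=0$), the right-hand side is $\lesssim\|\rho-\kappa\|_{L^2(B_1)}^2$.

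Finally I would estimate $\|\rho-\kappa\|_{L^2(B_1)}^2$ by the Hölder seminorm: since $\kappa$ is the average of $\rho$ over $B_1$, Jensen gives $|\rho(x)-\kappa|^2\leq\frac{1}{|B_1|}\int_{B_1}|\rho(x)-\rho(x')|^2\,\dd x'\leq [\rho]_{\alpha,1}^2\cdot(\operatorname{diam}B_1)^{2\alpha}\lesssim[\rho]_{\alpha,1}^2$, hence $\|\rho-\kappa\|_{L^2(B_1)}^2\lesssim[\rho]_{\alpha,1}^2$. Chaining the inequalities yields $W_{B_1}^2(\rho,\kappa)\lesssim[\rho]_{\alpha,1}^2$ and undoing the rescaling gives the claim. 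The only mildly delicate point is justifying the chosen competitor in the (localized) Benamou--Brenier formula when $\rho$ may vanish at $\partial B_1$; this is handled by noting $\rho_t\geq\frac12$ on the support of $\rho\lfloor_{B_1}+\kappa\,\dd x\lfloor_{B_1}$ away from the boundary and, if needed, a routine approximation (slightly shrinking the ball or adding $\epsilon$ to the density and passing to the limit), so I expect this to be the main — but entirely standard — obstacle.
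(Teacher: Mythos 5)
Your proof is correct, and its backbone is the same as the paper's: both reduce the estimate to the mean-zero solution $\Psi$ of the Neumann problem $-\Delta\Psi=\rho-\kappa$ in $B_R$, $\nabla\Psi\cdot\nu=0$ on $\partial B_R$, and bound $W^2_{B_R}(\rho,\kappa)\lesssim\|\nabla\Psi\|_{L^2(B_R)}^2$ (the paper cites \cite[Theorem 5.34]{San15} for this, while you re-derive it via the Benamou--Brenier formula with the linear interpolation $\rho_t=(1-t)\rho+t\kappa$ and constant flux $\nabla\Psi$, using $\rho_t\geq\frac12$; both are fine). Where you diverge is in estimating the Dirichlet energy: the paper invokes global Schauder theory to get the pointwise bound $\sup_{B_R}|\nabla\Psi|^2\lesssim R^2\,R^{2\alpha}[\rho]_{\alpha,R}^2$ and then integrates, whereas you use only the variational energy identity $\int|\nabla\Psi|^2=\int\Psi(\rho-\kappa)$ together with Poincar\'e to get $\|\nabla\Psi\|_{L^2}\lesssim\|\rho-\kappa\|_{L^2}$, and then control $\|\rho-\kappa\|_{L^2(B_R)}$ by Jensen and the H\"older seminorm. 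Your route is more elementary (it needs no boundary regularity theory and would work for weaker control of $\rho-\kappa$, e.g.\ merely an $L^2$ or $L^\infty$ bound on the oscillation), at the price of losing the pointwise gradient information that Schauder provides -- information the paper does not actually use here, so nothing is lost for this lemma. One small caveat you correctly flag yourself: the statement only assumes $\frac12\leq\rho\leq2$ on $\supp\rho$, so the lower bound $\rho_t\geq\frac12$ on all of $B_R$ (needed both for your competitor and for the cited $\dot H^{-1}$ comparison) implicitly requires $B_R\subseteq\supp\rho$, which is exactly the hypothesis under which the lemma is invoked in Lemma \ref{lem:D-Holder_rho}; your proposed approximation handles the general case.
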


\begin{proof}
	By the assumptions on $\rho$, the 2-Wasserstein distance between $\rho \,\dd x $ and $\kappa \, \dd x$ restricted to $B_R$ can be bounded by 
	\begin{align*}
		W^{2}_{B_R}(\rho, \kappa) \lesssim \|\nabla \Psi \|_{L^2(B_R)}^2,
	\end{align*}
	where $\Psi$ is the mean-zero solution of the Neumann problem
	\begin{align}\label{eq:W2-Neumann}
	\left\{
	\begin{array}{rcll}
		-\Delta \Psi &=& \rho - \kappa &\text{in } B_R, \\
		\nabla \Psi \cdot \nu &=& 0 &\text{on } \partial B_R,
	\end{array}
	\right.
	\end{align}
	see \cite[Theorem 5.34]{San15}.
	Global Schauder theory\footnote{See for instance \cite[Theorem 3.16]{Tro87} for details.} for \eqref{eq:W2-Neumann} implies that  
	\begin{align*}
		\frac{1}{R^2} \sup_{B_R} |\nabla \Psi|^2 
		\lesssim \|\rho-\kappa\|_{\mathcal{C}^{0}(B_R)}^2 + R^{2\alpha} [\rho-\kappa]_{\alpha,R}^2
		\lesssim R^{2\alpha} [\rho]_{\alpha,R}^2,
	\end{align*}
	so that 
	\begin{align*}
		W^{2}_{B_R}(\rho, \kappa) 
		\lesssim \|\nabla \Psi \|_{L^2(B_R)}^2
		\lesssim R^{d} \sup_{B_R} |\nabla \Psi|^2
		\lesssim R^{d+2} R^{2\alpha} [\rho]_{\alpha,R}^2.
	\end{align*}
\end{proof}

\subsection{A property of semi-convex functions}
\begin{lemma}\label{lem:semi-convex}
    Let $C\in\RR$ and assume that $v: \RR^d \to \RR$ is $C$-semi-convex, that is, $x\mapsto v(x) + \frac{C}{2} |x|^2$ is convex. Then 
    \begin{align}\label{eq:bound-semiconvexity}
    \|\nabla v\|_{L^{\infty}(B_1)} \lesssim \|v\|_{\mathcal{C}^0(B_3)}^{\frac{1}{2}} \max\{ \|v\|_{\mathcal{C}^0(B_3)}^{\frac{1}{2}}, C^{\frac{1}{2}}\}.
\end{align}
\end{lemma}

\begin{proof}
    Notice first that by semi-convexity, the gradient of $v$ exists a.e. Convexity of $x\mapsto v(x) + \frac{C}{2} |x|^2$ implies that for a.e. $y$ and all $x$,
\begin{align*}
    \nabla v(y) \cdot (x-y) \leq v(x) - v(y) + \frac{C}{2} |x-y|^2.
\end{align*}
In particular, for a.e. $y \in B_1$ and every $x\in B_3$ lying in a cone of opening angle $\frac{2\pi}{3}$ with apex at $y$ and axis along $\nabla v(y)$, so that $\nabla v(y) \cdot (x-y) \geq \frac{1}{2} |\nabla v(y)| |x-y|$, we have 
\begin{align*}
    |\nabla v(y)| \lesssim \frac{\|v\|_{\mathcal{C}^0(B_3)}}{|x-y|} + C |x-y|.
\end{align*}
Optimizing in $|x-y|$ then gives \eqref{eq:bound-semiconvexity}.
\end{proof}

\section{The change of coordinates Lemma \ref{lem:coordinate-change}}\label{app:proofcoord}

\begin{proof}[Proof of Lemma \ref{lem:coordinate-change}]
First, we clearly have $\widehat{\rho}_0(0) = \widehat{\rho}_1(0) = 1$ and $\nabla_{\widehat{x} \widehat{y}}\widehat{c}(0,0) = -\II$. It is also easy to check that $\widehat{\pi} \in \Pi(\widehat{\rho}_0, \widehat{\rho}_1)$.
Let us now compute
\begin{align*}
    \int \widehat{c}(\widehat{x}, \widehat{y}) \, \widehat{\pi}(\dd \widehat{x} \dd \widehat{y}) &= \frac{\gamma |\det B|}{\rho_0(0)} \int  c(Q^{-1}(\widehat{x}, \widehat{y})) \, (Q_{\#} \pi)(\dd\widehat{x} \dd\widehat{y}) \\
    &= \frac{\gamma |\det B|}{\rho_0(0)} \int c(x,y) \, \pi(\dd x\dd y).
\end{align*}
Thus, $\widehat{c}$-optimality of $\widehat{\pi}$ is equivalent to $c$-optimality of $\pi$. Indeed, if $\widehat{\pi}$ is not optimal for the cost $\widehat{c}$ then one can find a coupling $\widehat{\sigma} \in \Pi(\widehat{\rho}_0, \widehat{\rho}_1)$ such that $\int \widehat{c}(\widehat{x}, \widehat{y}) \, \widehat{\sigma}(\dd\widehat{x} \dd\widehat{y}) < \int \widehat{c}(\widehat{x}, \widehat{y}) \, \widehat{\pi}(\dd\widehat{x} \dd\widehat{y})$. Defining now $\sigma := \rho_0(0) |\det B|^{-1} \, (Q^{-1})_{\#} \widehat{\sigma} \in \Pi(\rho_0, \rho_1)$, the previous computation would yield
\begin{align*}
	\int c(x,y) \, \sigma(\dd x\dd y) < \int c(x,y) \, \pi(\dd x\dd y),		
\end{align*} a contradiction.  
\end{proof}

\begin{remark}
    It is also possible to prove the $\widehat{c}$-optimality of $\widehat{\pi}$ by showing that $\supp\widehat{\pi}$ is $\widehat{c}$-cyclically monotone, which characterizes optimality (see for instance \cite[Theorem 1.49]{San15}). This property readily follows  from Lemma \ref{lem:equivalence-support} and the $c$-cyclical monotonicity of $\supp\pi$. 
\end{remark}

\section{Some aspects of Campanato's theory}\label{app:campanato}

\begin{lemma}\label{lem:campanato}
    Let $R>0$, $\frac{1}{2}\leq \rho_0\leq 2$ on $B_R$, and assume that the coupling $\pi\in\Pi(\rho_0, \rho_1)$ satisfies \eqref{eq:campanato-pi-full} for $\alpha\in(0,1)$. For $r<\frac{R}{2}$ let $A_r = A_r(x_0, \pi)\in\RR^{d\times d}$ and $b_r = b_r(x_0, \pi)\in\RR^d$ be the (unique) minimizers of 
    \begin{align*}
        \inf_{A,b} \int_{(B_r(x_0)\cap B_R) \times \RR^d} |y- (Ax+b)|^2\,\dd \pi.
    \end{align*}
    Then there exist $A_0\in\RR^{d\times d}$ and $b_0\in\RR^d$ such that $A_r \to A_0$ and $b_r \to b_0$ uniformly in $x_0$ and the estimates \eqref{eq:A-b-conv-rates} hold.
\end{lemma}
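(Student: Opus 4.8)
The argument is the classical Campanato comparison of affine approximants at nested scales; the only extra care is that $B_r(x_0)$ may poke out of $B_R$, so one works throughout with the ``lens'' $B_r(x_0)\cap B_R$. The plan has three steps plus a geometric input.

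\emph{Step 1 (fatness of the lens, uniqueness, coercivity).} First I would record that for $x_0\in B_R$ and $r<\tfrac R2$ the lens $B_r(x_0)\cap B_R$ contains a ball $B_{r/4}(z_r)$ with $|z_r-x_0|\le\tfrac r2$: take $z_r:=x_0-\tfrac r2\tfrac{x_0}{|x_0|}$ (any unit vector if $x_0=0$) and check $B_{r/4}(z_r)\subseteq B_{3r/4}(x_0)\subseteq B_r(x_0)$ and $|z_r|+\tfrac r4\le R-\tfrac r4<R$ using $r<R$. Since $\rho_0\ge\tfrac12$ on $B_R$, this gives $\mu\lfloor_{B_r(x_0)\cap B_R}\ge\tfrac12\,\dd x\lfloor_{B_{r/4}(z_r)}$. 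Consequently the quadratic functional $(A,b)\mapsto\int_{(B_r(x_0)\cap B_R)\times\RR^d}|y-(Ax+b)|^2\,\dd\pi$ has Hessian $2\int(x,1)\otimes(x,1)\,\dd\mu$, which is strictly positive definite (no affine function vanishes on $B_{r/4}(z_r)$), so its minimizer $(A_r,b_r)$ exists and is unique. The same inclusion yields, by expanding around $z_r$ and using the parity of the ball (the cross term drops), the lower bound for every affine $\ell(x)=Mx+v$,
\begin{align}\label{eq:Campanato-lens-lb}
    \int_{(B_r(x_0)\cap B_R)\times\RR^d}|\ell(x)|^2\,\dd\pi\;\gtrsim\;r^{d+2}\,|M|^2+r^{d}\,|\ell(z_r)|^2 .
\end{align}

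\emph{Step 2 (one-step comparison).} Fix $x_0\in B_R$, write $\ell_r(x):=A_rx+b_r$, and let $r\le\tfrac R2$. By \eqref{eq:campanato-pi-full}, $\int_{(B_{r/2}(x_0)\cap B_R)\times\RR^d}|y-\ell_{r/2}|^2\,\dd\pi\le\normpi_\alpha^2(r/2)^{d+2+2\alpha}$, while admissibility of $\ell_r$ on the smaller lens gives $\int_{(B_{r/2}(x_0)\cap B_R)\times\RR^d}|y-\ell_r|^2\,\dd\pi\le\int_{(B_r(x_0)\cap B_R)\times\RR^d}|y-\ell_r|^2\,\dd\pi\le\normpi_\alpha^2 r^{d+2+2\alpha}$. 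By the triangle inequality in $L^2(\pi)$ and then \eqref{eq:Campanato-lens-lb} applied to $\ell_r-\ell_{r/2}$ at scale $r/2$,
\begin{align*}
    r^{d+2}\,|A_r-A_{r/2}|^2+r^{d}\,\bigl|(\ell_r-\ell_{r/2})(z_{r/2})\bigr|^2\;\lesssim\;\normpi_\alpha^2\,r^{d+2+2\alpha},
\end{align*}
and since $|z_{r/2}-x_0|\lesssim r$ this gives $|A_r-A_{r/2}|\lesssim\normpi_\alpha r^{\alpha}$ and $|(\ell_r-\ell_{r/2})(x_0)|\lesssim\normpi_\alpha r^{1+\alpha}$; replacing $r/2$ by any $\theta r$, $\theta\in[\tfrac12,1]$, yields the same bounds, which is what lets one pass from dyadic radii to arbitrary ones.

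\emph{Step 3 (summation and conclusion).} Telescoping over dyadic scales, $|A_r-A_{r'}|\lesssim\normpi_\alpha\sum_{j\ge0}(2^{-j}r)^\alpha\lesssim\normpi_\alpha r^\alpha$ and likewise $|\ell_r(x_0)-\ell_{r'}(x_0)|\lesssim\normpi_\alpha r^{1+\alpha}$ for all $r'\le r\le\tfrac R2$; all constants depend only on $d,\alpha$ and $\normpi_\alpha$ is independent of $x_0$, so these bounds are uniform in $x_0$. Hence $(A_r)_r$ and $(\ell_r(x_0))_r$ are Cauchy as $r\searrow0$; call the limits $A_0=A_0(x_0)$ and $c_0=c_0(x_0)$, and set $b_0:=c_0-A_0x_0$. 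Letting $r'\to0$ gives $|A_r-A_0|\lesssim\normpi_\alpha r^\alpha$ and $|\ell_r(x_0)-c_0|\lesssim\normpi_\alpha r^{1+\alpha}$; since $b_r=\ell_r(x_0)-A_rx_0$ we get $b_r-b_0=(\ell_r(x_0)-c_0)-(A_r-A_0)x_0$, and, absorbing $|x_0|\le R$ into the implicit constant, the estimates \eqref{eq:A-b-conv-rates} follow. The main obstacle is Step 1's uniformity in $x_0$: one must know the lens $B_r(x_0)\cap B_R$ is fat for \emph{every} $x_0\in B_R$, including $x_0$ on or near $\partial B_R$; this is exactly where $r<\tfrac R2$ is used (at scale $r$ the sphere $\partial B_R$ is nearly flat, so the inward-shifted ball $B_{r/4}(z_r)$ fits), and it is the point that makes the Hessian non-degeneracy and \eqref{eq:Campanato-lens-lb} hold with purely dimensional constants. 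Everything else is the standard Campanato iteration of \cite{Cam64}.
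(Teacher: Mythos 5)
Your proof is correct and follows essentially the same route as the paper: compare the minimizing affine functions at nested dyadic scales via the triangle inequality and minimality, convert the resulting $L^2$ smallness into coefficient smallness by equivalence of norms on affine functions, and telescope to get a Cauchy sequence with the rates \eqref{eq:A-b-conv-rates}. Two of your refinements are actually improvements on the paper's write-up: you verify that the lens $B_r(x_0)\cap B_R$ contains a ball of radius $\sim r$ uniformly in $x_0\in B_R$ (the paper dismisses this with a ``WLOG $B_r(x_0)\subset B_R$'' that is not available near $\partial B_R$), and you track the value $\ell_r(x_0)$ rather than the coefficient $b_r$, thereby avoiding the imprecision in the paper's second estimate of \eqref{eq:equival-norms-poly}, which as stated fails when $|x_0|\gg r$. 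The only caveat is at the very end: converting back to $b_r$ via $b_r-b_0=(\ell_r(x_0)-c_0)-(A_r-A_0)x_0$ produces a term of size $\normpi_{\alpha}r^{\alpha}|x_0|$, so the literal rate $|b_r-b_0|\lesssim\normpi_{\alpha}r^{1+\alpha}$ only holds with a constant depending on $R$ (or for $x_0=0$); this is the same harmless imprecision present in the paper, and what is genuinely used downstream is precisely your estimate on $\ell_r(x_0)$.
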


\begin{proof}[Proof of Lemma \ref{lem:campanato}] We only give the proof for $A_r$, as the one for $b_r$ is analogous. Without loss of generality we may assume that $r$ is small enough such that $B_r(x_0)\subset B_R$. For further reference in the proof, let us mention here the following estimates obtained by equivalence of the $L^{\infty}$ and $L^2$ norms in the set of polynomials of degree one: Let $P(x) = Ax + b$ for some $A\in \RR^{d\times d}$ and $b\in \RR^d$, then for any $x_0\in\RR^d$ and $r>0$ there holds
\begin{align}\label{eq:equival-norms-poly}
    |A|^2 &\lesssim \frac{1}{r^{d+2}} \int_{B_r(x_0)} |P(x)|^2\,\dd x, &\text{and}&&
    |b|^2 &\lesssim \frac{1}{r^{d}} \int_{B_r(x_0)} |P(x)|^2\,\dd x.
\end{align}
    \begin{enumerate}[label=\textsc{Step \arabic*},leftmargin=0pt,labelsep=*,itemindent=*]
        \item \label{item:campanato-step1} Define
        \begin{align}\label{eq:def-Pr}
            P_r(x) := A_r x + b_r.
        \end{align}
        We claim that for any $k\in\NN_0$ there holds
        \begin{align}\label{eq:Pr-dyadic}
            \int_{B_{r 2^{-k-1}}(x_0)} |P_{r 2^{-k}}(x) - P_{r 2^{-k-1}}(x)|^2\rho_0(x)\,\dd x \lesssim \normpi_{\alpha}^2 \, 2^{-k(d+2+2\alpha)} \, r^{d+2+2\alpha}.
        \end{align}
        
        Indeed, since $\pi\in\Pi(\rho_0, \rho_1)$ and $B_{r 2^{-k-1}}(x_0) \subset B_{r 2^{-k}}(x_0)$, we may estimate
        \begin{align*}
             &\int_{B_{r 2^{-k-1}}(x_0)} |P_{r 2^{-k}}(x) - P_{r 2^{-k-1}}(x)|^2\rho_0(x)\,\dd x 
             = \int_{B_{r 2^{-k-1}}(x_0) \times \RR^d} |P_{r 2^{-k}}(x) - P_{r 2^{-k-1}}(x)|^2 \,\dd \pi\\
             &\quad\lesssim \int_{B_{r 2^{-k}}(x_0)\times \RR^d} |y-P_{r 2^{-k}}(x)|^2\,\dd \pi + \int_{B_{r 2^{-k-1}}(x_0)\times \RR^d} |y-P_{r 2^{-k-1}}(x)|^2\,\dd \pi ,
        \end{align*}
        so that by the definition of $A_r$ and $b_r$ as minimizers and the definition \eqref{eq:campanato-pi-full} of $\normpi_{\alpha}$, the bound \eqref{eq:Pr-dyadic} easily follows.
        
        \medskip
        \item We claim that for any $i\in\NN$,
        \begin{align}\label{eq:Ar-dyadic}
            |A_r - A_{r 2^{-i}}| \lesssim \normpi_{\alpha} \sum_{k=0}^{i-1} 2^{-k\alpha} \, r^{\alpha}.
        \end{align}
        
        Indeed, writing the difference as a telescopic sum, we may apply \eqref{eq:equival-norms-poly} to the polynomial $P_{r 2^{-k}} - P_{r 2^{-k-1}}$ to obtain
        \begin{align*}
             |A_r - A_{r 2^{-i}}| 
             &\leq \sum_{k=0}^{i-1}  |A_{r 2^{-k}} - A_{r 2^{-k-1}}| 
             \lesssim \sum_{k=0}^{i-1} \left(2^{(k+1)(d+2)} \frac{1}{r^{d+2}} \int_{B_{r 2^{-k-1}}(x_0)} |P_{r 2^{-k}} - P_{r 2^{-k-1}}|^2\,\dd x\right)^{\frac{1}{2}}, 
        \end{align*}
        so using that $\frac{1}{2}\leq \rho_0 \leq 2$ on $B_{r 2^{-k-1}}(x_0)$, the claim follows with \eqref{eq:Pr-dyadic}.
        
        \medskip 
        \item We show next that the sequence $\{A_{r2^{-i}}\}_{i\in\NN}$ converges as $i\to\infty$ to a limit $A_0$ independent of $r$.
        
        Indeed, for any $i>j$ we may use \eqref{eq:Ar-dyadic} to estimate
        \begin{align*}
            |A_{r 2^{-j}} - A_{r 2^{-i}}| \lesssim \normpi_{\alpha} \sum_{k=j}^{i-1} 2^{-k \alpha}  \, r^{\alpha} \stackrel{i,j \to \infty}{\longrightarrow} 0,
        \end{align*}
        since the series $\sum_{k=0}^{\infty} 2^{-k\alpha}$ converges. Hence, the sequence $\{A_{r2^{-i}}\}_{i\in\NN}$ is Cauchy and there exists $A_0\in\RR^{d\times d}$ such that $A_{r2^{-i}} \to A_0$ as $i\to \infty$. 
        
        To see the independence of the limit of $r$, let $0<r<\rho<\frac{R}{2}$ be small enough. Then applying \eqref{eq:equival-norms-poly} to the function $P_{r 2^{-j}} - P_{\rho 2^{-j}}$ for $j\in\NN$ gives
        \begin{align*}
            |A_{r 2^{-j}} - A_{\rho 2^{-j}}|^2 
            \lesssim 2^{j(d+2)} r^{-d-2} \int_{B_{r 2^{-j}}(x_0)} |P_{r 2^{-j}}(x) - P_{\rho 2^{-j}}(x)|^2\,\dd x,
        \end{align*}
        which can be bounded similarly to \ref{item:campanato-step1} to yield
        \begin{align*}
             |A_{r 2^{-j}} - A_{\rho 2^{-j}}|^2 \lesssim 2^{-2j\alpha} r^{2\alpha} \normpi_{\alpha}^2 \left( 1+ \left(\frac{\rho}{r}\right)^{d+2+2\alpha}\right) \stackrel{j\to\infty}{\longrightarrow} 0.
        \end{align*}
        The claimed inequality \eqref{eq:A-b-conv-rates} now follows easily by letting $i\to \infty$ in \eqref{eq:Ar-dyadic}.\hfill \qedhere
    \end{enumerate}
\end{proof}

\section{An \texorpdfstring{$L^p$}{Lp} bound on the displacement for almost-minimizing transport maps} \label{app:Lp}

In this section we give a proof of the interior $L^p$ estimate for arbitrary $p<\infty$ on the displacement of almost-minimizing transport maps: 

\begin{proposition} \label{prop:Lp-bound-almost-min} 
    Assume $\mu$ has a $C^{0,\alpha}$ density $f$ satisfying $f(0) = 1$. Let $T$ be an almost-minimizing transport map from $\mu$ to $\nu$ in the sense of \eqref{eq:almost-min-monge} with a rate function $\Delta_r \leq 1$. Then there exists $R_1 > 0$ such that, if
    \begin{equation}\label{eq:Lp-bound-small-assumption} 
        \frac{1}{R_1^{d+2}} \int_{B_{R_1}} |T(x)-x|^2 \, \mu(\mathrm{d}x) + R_1^{\alpha} [f]_{\alpha, R_1} \ll 1,
    \end{equation} then for any $p < \infty$, 
    \begin{equation}\label{eq:Lp-bound}
        \left( \int_{B_{\frac{R_1}{4}}} |T(x)-x|^p \, \mu(\mathrm{d}x) \right)^{\frac{1}{p}} \lesssim_p R_1^{\frac{d}{p}}\left( \int_{B_{R_1}} |T(x)-x|^2 \, \mu(\mathrm{d}x) \right)^{\frac{1}{d+2}}. 
    \end{equation}
\end{proposition}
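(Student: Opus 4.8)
The plan is to follow the De~Giorgi scheme for local boundedness, but---since we only dispose of almost-minimality with a rate $\Delta_r\le1$ that does not tend to zero as $r\to0$---to run the iteration only down to the natural length scale $\ell_*:=E^{1/(d+2)}$, where $E:=\int_{B_{R_1}}|T(x)-x|^2\,\mu(\dd x)$; this is exactly what degrades the conclusion from $L^\infty$ to $L^p$ for every finite $p$. By the scale invariance of \eqref{eq:Lp-bound} we may assume $R_1=1$, and by the density hypothesis together with \eqref{eq:Lp-bound-small-assumption} we may assume $\tfrac12\le f\le2$ on $B_1$, so that $\mu$ is comparable to Lebesgue measure there and Lemma~\ref{lem:cone} is available on balls contained in $B_1$. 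Setting $m(\ell):=\mu(\{x\in B_{1/4}:|T(x)-x|>\ell\})$ and using the layer-cake identity
\begin{align*}
  \int_{B_{1/4}}|T(x)-x|^p\,\mu(\dd x)=p\int_0^\infty\ell^{p-1}m(\ell)\,\dd\ell,
\end{align*}
together with the trivial bounds $m(\ell)\lesssim1$ for $\ell\le\ell_*$ and $m(\ell)=0$ for $\ell$ larger than a fixed multiple of the diameter, the proposition is reduced to the super-polynomial decay estimate: for every $N<\infty$,
\begin{align}\label{eq:Lp-plan-decay}
  m(\ell)\ \lesssim_N\ \Big(\tfrac{\ell_*}{\ell}\Big)^{N}\qquad\text{for }\ \ell_*\le\ell\le c ,
\end{align}
from which \eqref{eq:Lp-bound} follows upon choosing $N=N(p)$ large enough.

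The engine behind \eqref{eq:Lp-plan-decay} is a one-step improvement for the super-level sets obtained through a \emph{truncation competitor}. Fix a dyadic level $\ell$ with $\ell_*\le\ell\le c$, and cover the bad starting points in $B_{1/4}$ by balls of radius $\rho\sim\ell_*$, chosen small enough that $\rho^{d+2}\Delta_\rho\lesssim\rho^{d+2}\lesssim E$. On a representative ball $B_\rho(x_0)$ one selects a good radius $s\in(\rho,2\rho)$ for which the mass flux of $T$ across $\partial B_s(x_0)$ is controlled by the local energy (a coarea/pigeonhole bound), picks a near-diagonal trajectory furnished by Lemma~\ref{lem:cone} to serve as ``lubricant'', and defines $\widetilde T$ by rerouting the portion of $B_s(x_0)$ whose displacement exceeds $\ell$ to nearby targets---swapping it against the mass that $T$ brings into $B_s(x_0)$ from outside and against the lubricant mass---while leaving every other trajectory untouched, so that $\mathrm{graph}\,\widetilde T$ agrees with $\mathrm{graph}\,T$ outside a single pair of balls of radius $\lesssim\rho$ and $\widetilde T_\#\mu=\nu$. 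Inserting $\widetilde T$ into \eqref{eq:almost-min-monge} and estimating the rerouting cost by a measure-theoretic isoperimetric inequality on $B_s(x_0)$ (the analogue of \cite[Lemma~2.3]{GO17}, which is what produces the superlinear exponent) yields, roughly,
\begin{align*}
  \ell^{2}\,\mu\big(B_{s}(x_0)\cap\{|T-\cdot|>\ell\}\big)\ \lesssim\ \mu\big(B_{2s}(x_0)\cap\{|T-\cdot|>\tfrac{\ell}{2}\}\big)^{1+\frac1d}+\rho^{d+2}.
\end{align*}
Summing over the covering balls produces a recursion of De~Giorgi type, $y_{k+1}\lesssim b^{k}\,y_k^{1+1/d}+(\text{fixed error})$ for $y_k=m(2^{k}\ell_*)$.

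Feeding this recursion into De~Giorgi's iteration lemma, and using the initial smallness \eqref{eq:Lp-bound-small-assumption} to start it, shows that $y_k$ decays geometrically (in fact faster) until the accumulated additive error---of size a fixed power of $\rho^{d+2}\sim E$---takes over; this gives \eqref{eq:Lp-plan-decay} for $\ell$ up to a fixed constant, hence \eqref{eq:Lp-bound}. For the two-sided statement in Proposition~\ref{prop:Lp-bound-almost-min-main} one then invokes that the optimal coupling between the relevant restrictions of $\mu$ and $\nu$ is deterministic, so that $T^{-1}$ is again an almost-minimizing transport map (with the same rate, after possibly shrinking $R_1$), and applies Proposition~\ref{prop:Lp-bound-almost-min} to $T^{-1}$.

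The main difficulty is the competitor construction of the second step. In contrast with the $c$-monotone setting of Proposition~\ref{prop:Linfty-main}, where it suffices to test a pointwise inequality against a cutoff, here we must genuinely rearrange mass across a sphere while respecting the prescribed target $\nu$, which carries no regularity whatsoever, and while keeping the modification confined to a single pair of balls as demanded by Definition~\ref{def:almost-min-general}. Extracting from this rearrangement the \emph{superlinear} (isoperimetric) gain, rather than the mere Chebyshev bound that a one-for-one swap of trajectories would give, is the crux; it is also the structural reason why the iteration must stop at a positive scale and therefore yields only $L^p$ for each finite $p$ with a $p$-dependent constant, and not an $L^\infty$ bound.
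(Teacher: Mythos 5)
Your reduction of \eqref{eq:Lp-bound} to the super-polynomial decay $m(\ell)\lesssim_N(\ell_*/\ell)^N$ of the distribution function down to the scale $\ell_*=E^{1/(d+2)}$ is the right target, and is in substance what the paper establishes (its final step proves the equivalent weak-$L^p$ bound $M\,|\{|u|\ge M\}\cap B_{R_1/4}|^{1/p}\lesssim R_1^{d/p}E^{1/(d+2)}$ for every $M$ and every finite $p$, then interpolates with $L^2$). The gap is in the engine you propose to drive it. Everything hinges on a truncation/rerouting competitor producing a \emph{superlinear} gain of isoperimetric type, $\ell^{2}\mu(B_s\cap\{|u|>\ell\})\lesssim\mu(B_{2s}\cap\{|u|>\ell/2\})^{1+1/d}+\rho^{d+2}$, and this competitor is never constructed: you yourself defer ``extracting the superlinear gain'' as ``the crux''. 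As written the inequality is not even dimensionally consistent (the left-hand side scales like $\mathrm{length}^{d+2}$, the first right-hand term like $\mathrm{length}^{d+1}$), and there is no reason to expect such a gain in this context: in \cite{GO17} the superlinear error term appears in the harmonic approximation (Lemma 2.3 there), not in the $L^\infty$ bound, which instead rests on monotonicity --- precisely the structure that is unavailable for almost-minimizers. A naive one-for-one swap of long trajectories against incoming mass only yields a Chebyshev-type (linear) estimate, so without the unproved one-step inequality the De Giorgi recursion, and hence the whole argument, does not get off the ground.

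The paper's proof shows that no superlinear gain is needed. It conjugates by a Dacorogna--Moser map $S_{x_0}$ (reducing $\mu$ locally to a constant density) and takes as competitor $\widetilde T=T\circ\Phi$ with $\Phi=S_{x_0}^{-1}\circ\phi\circ S_{x_0}$, where $\phi$ translates the sets $S_{x_0}(A_\pm)$ by $\pm2Re$ and back; this is a $\mu$-preserving involution supported in a single ball, so \eqref{eq:almost-min-monge} applies directly. Integrating the polarization identity against $\mu$ and averaging over the direction $e$ gives the purely \emph{linear} estimate $\int_A|u|\,\dd\mu\lesssim R^{d+1}+\frac{|A|}{|B|}\int_B|u|\,\dd\mu$ for $A\subset B_R(x_0)$ and $B=B_{12R}(x_0)$. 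A Campanato-type iteration over dyadic scales then yields $\fint_{B_r(x_0)}|u|\lesssim_\beta (R/r)^{\beta}\bigl(R+\fint_{B_R(x_0)}|u|\bigr)$ for every $\beta>0$, and optimizing in $R$ at $R=E^{1/(d+2)}$ produces exactly the decay \eqref{eq:Lp-plan-decay} with $N=1+1/\beta$. To repair your argument you would either have to actually construct the rerouting competitor and prove the isoperimetric gain (a substantial task with no template in the cited literature), or replace that step by the direction-averaged swapping estimate above.
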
 

\begin{proof}
\begin{enumerate}[label=\textsc{Step \arabic*},leftmargin=0pt,labelsep=*,itemindent=*]
\item (A smooth transport map from $\mu$ to the Lebesgue measure). We fix an open ball $B_0$ centered at $0$ such that $B_0 \subset \supp f$. Let $S$ be the Dacorogna-Moser transport map from $\mu \lfloor_{B_0}$ to $\frac{\mu(B_0)}{|B_0|} \dd x \lfloor_{B_0}$, which is $\mathcal{C}^{1, \alpha}$ on $B_0$, see for example \cite[Chapter 1]{Vil09}. By an affine change of variables in the target space, we may define for any $x_0 \in X$ a map $S_{x_0}$ with the same regularity as $S$ that pushes forward $\mu \lfloor_{B_0}$ to $f(x_0) \, \dd x \lfloor_{\widetilde{B}_0}$, where $\widetilde{B}_0$ is the modified target space under this affine transformation, such that 
\begin{equation*}
    S_{x_0} (x_0) = x_0 \quad \textrm{and} \quad D S_{x_0} (x_0) = \II. 
\end{equation*} In particular, for all $x \in B_0$, 
\begin{equation*}
    |S_{x_0}(x) - x| \leq \left[ DS_{x_0} \right]_{\alpha} |x-x_0|^{1+\alpha} \leq C |x-x_0|^{1+\alpha},  
\end{equation*} where $C$ depends on $S$ and therefore on $f$ through the global H\"older semi-norm $[f]_{\alpha}$. Moreover, since $S^{-1}$ has the same regularity as $S$, $S_{x_0}^{-1}$ is $\mathcal{C}^{1,\alpha}$ and for all $y \in B_R(x_0)$, 
\begin{equation}\label{eq:Lp-alpha-Sinverse}
    |S_{x_0}^{-1} (y) - y| \leq C |y-x_0|^{1+\alpha}. 
\end{equation} 
Letting $R_1 > 0$ small enough so that first, $B_{R_1} \subset B_0 \subset \supp f$, and second, $C R_1^{\alpha} \leq 1$, we may therefore assume, writing $S_{x_0}(x) - x_0 = S_{x_0}(x) - x + x - x_0$ and $S_{x_0}^{-1}(y) - x_0 = S_{x_0}^{-1}(y) - y + y - x_0$, that for all $x_0 \in B_{R_1}$,  
\begin{equation}\label{eq:Lp-bound-S-seminorm}
    |S_{x_0}(x) - x_0| \leq 2|x-x_0| \quad \textrm{and} \quad |S_{x_0}^{-1}(y) - x_0| \leq 2|y-x_0|, \quad \textrm{for all } x, \, y \in B_{R_1}.
\end{equation}
We note that $R_1$ depends on $f$ only through $[f]_{\alpha}$. In view of \eqref{eq:Lp-bound-small-assumption} and the condition $f(0) = 1$, we may assume that 
\begin{equation}\label{eq:Lp-bound-f-bounded}
    \frac{1}{2} \leq f \leq 2 \quad \textrm{on} \quad B_{R_1}.
\end{equation}

\item \label{step:Lp-bound-step1} (Use of almost-minimality at scale $R$). Let us denote the displacement by $u:=T-\id$. We claim that if $R_1$ is small enough, for any ball $B := B_{12R}(x_0) \subset B_{R_1}$ and any set $A \subset B_R(x_0)$, we have\footnote{We use the notation $\fint$ for the average of a function on a set, with respect to the Lebesgue measure.}

\begin{equation}\label{eq:Lp-bound-step1-result} 
    \int_A |u| \lesssim R^{d+1} + |A| \fint_B |u|. 
\end{equation} 

To see this, we start from the following pointwise identity, that holds for any map $\Phi$, 
\begin{align}\label{eq:Lp-bound-identity}
    &(T(x)-x) \cdot (\Phi(x)-x) + (T(\Phi(x)) - \Phi(x)) \cdot (x-\Phi(x)) \\
    &\quad = \frac{1}{2} \left( |T(x)-x|^2 + |T(\Phi(x)) - \Phi(x)|^2 \right) - \frac{1}{2} \left( |T(\Phi(x))-x|^2 + |T(x) - \Phi(x)|^2 \right) + |\Phi(x)-x|^2, \nonumber
\end{align} 
which is (a deformation of) a standard identity used to derive $(c-)$monotonicity of an optimal transport map. As usual in optimal transportation, such a monotonicity property follows from considering a competitor that swaps points in the target space. However, because of our assumption of almost-minimality, we cannot really work with a pointwise argument but have to consider small sets $A$ of positive measure. We will do this by applying \eqref{eq:Lp-bound-identity} with a map $\Phi$ that swaps some parts of the graph of $T$. Indeed, assuming $\Phi$ is a $\mu$-preserving involution such that $\Phi = \id$ outside of a set of diameter of order $R$, we may integrate \eqref{eq:Lp-bound-identity} with respect to $\mu$, so that the identities 
\begin{align*}
    \int |T(\Phi(x)) - \Phi(x)|^2 \, \mu(\mathrm{d}x) &= \int |T(x)-x|^2 \, \mu(\mathrm{d}x), \\
    \int |T(x) - \Phi(x)|^2 \, \mu(\mathrm{d}x) &= \int |\widetilde{T}(x)-x|^2 \, \mu(\mathrm{d}x) \\
    \textrm{and} \quad \int (T(\Phi(x)) - \Phi(x)) \cdot (x-\Phi(x)) \, \mu(\mathrm{d}x) &= \int (T(x)-x) \cdot (\Phi(x)-x) \, \mu(\mathrm{d}x), 
\end{align*} 
combined with almost-minimality of $T$ at a scale of order $R$, yield 
\begin{align}\label{eq:Lp-bound-integrated-identity}
    \int (T(x)-x) \cdot (\Phi(x)-x) \, \mu(\mathrm{d}x) &\lesssim R^{d+2} + \int |\Phi(x)-x|^2 \, \mu(\mathrm{d}x) \nonumber\\
    &\lesssim R^{d+2} + R^2 \mu(B) \overset{\eqref{eq:Lp-bound-f-bounded}}{\lesssim} R^{d+2}. 
\end{align} 
To implement this, we give ourselves a direction $e \in B_2 \setminus B_1$, swap the (images through $S$ of the)\footnote{In the rest of \ref{step:Lp-bound-step1}, we drop the index $x_0$ for the map $S_{x_0}$. Now, $S$ is the map such that $S(x_0) = x_0$ and $DS(x_0) = \II$.} sets 
\begin{equation*}
    A_+ := \left\{ x\in A \mid u(x) \cdot e \geq 0 \right\} \quad \textrm{and} \quad A_- := A \setminus A_+, 
\end{equation*} 
with their translates along $e$ or $-e$, and average over all directions. 
For this we first define a map $\phi$ via 
\begin{align}\label{eq:Lp-bound-def-phi}
	\phi(x) := \left\{
	\begin{array}{ll}
		x+ 2Re & \textrm{ for } x \in S(A_+), \\
		x-2Re & \textrm{ for } x \in S(A_-), \\
		x-2Re & \textrm{ for } x \in S(A_+) + 2Re, \\ 
		x+2Re & \textrm{ for } x \in S(A_-) - 2Re, \\
		x & \textrm{ otherwise}. 
	\end{array}
	\right.
\end{align}
Notice that $\phi$ is well-defined because, in view of \eqref{eq:Lp-bound-S-seminorm}, we have $S(A) \subset B_{2R}(x_0)$. Furthermore, we see that $\phi = \id$ outside of $B_{6R}(x_0)$, and that $\phi$ swaps $S(A_+)$ and its translate $S(A_+) + 2Re$ and swaps $S(A_-)$ and its translate $S(A_-) - 2Re$, so that $\phi$ is an involution; in particular, $\phi$ preserves the Lebesgue measure. Combining the second point with the inclusion $B_{6R}(x_0) \subset S_{x_0}(B)$ from \eqref{eq:Lp-bound-S-seminorm}, we get that $\Phi := S^{-1} \circ \phi \circ S$ is identity outside of $B$. From the third point, we see that $\Phi$ is also an involution and $\Phi_{\#} \mu = \mu$, and it follows that the map $\widetilde{T} := T \circ \Phi$ satisfies $\widetilde{T}_{\#} \mu = \nu$ and $\widetilde{T} = T$ outside of $B$. Thus, \eqref{eq:Lp-bound-integrated-identity} holds. 

Now, recalling that $u = T-\id$, 
\begin{equation*}
    \int (T(x)-x) \cdot (\Phi(x)-x) \, \mu(\mathrm{d}x) = \int u(S^{-1}(y)) \cdot (S^{-1}(\phi(y))-S^{-1}(y)) f(x_0) \, \dd y, 
\end{equation*} 
so using the fact that $\phi = \id$ everywhere except on $S(A)$ and $(S(A_+) + 2Re) \cup (S(A_-) - 2Re)$, we have two terms to estimate. From the identity 
\begin{equation*}
    \int_{S(A)} u(S^{-1}(y)) \cdot (\phi(y)-y) f(x_0) \, \dd y = 2R \int_A |u \cdot e| \, \dd \mu, 
\end{equation*} and \eqref{eq:Lp-alpha-Sinverse}, we first have 
\begin{align}\label{eq:Lp-bound-left1}
    \left| \int_{S(A)} \right. & \left. u(S^{-1}(y)) \cdot (S^{-1}(\phi(y))-S^{-1}(y)) f(x_0) \, \dd y - 2R \int_A |u \cdot e| \, \dd \mu \right| \nonumber\\
    &\leq C \int_{S(A)} |u(S^{-1}(y)| \left( |\phi(y)-x_0|^{1+\alpha} + |y-x_0|^{1+\alpha} \right) \, f(x_0) \, \dd y \nonumber\\
    &\leq C R^{1+\alpha} \int_A |u| \, \dd \mu. 
\end{align}
We then estimate, using \eqref{eq:Lp-alpha-Sinverse} and \eqref{eq:Lp-bound-f-bounded},
\begin{align}\label{eq:Lp-bound-left2}
    &\left|\int_{(S(A_+) + 2Re) \cup (S(A_-) - 2Re)} u(S^{-1}(y)) \cdot (S^{-1}(\phi(y))-S^{-1}(y)) f(x_0) \, \dd y \right| \nonumber\\
    &\qquad \qquad \qquad \lesssim R \int_{(S(A_+) + 2Re) \cup (S(A_-) - 2Re)} |u(S^{-1}(y))| \, \dd y, 
\end{align} so that combining \eqref{eq:Lp-bound-integrated-identity}, \eqref{eq:Lp-bound-left1} and \eqref{eq:Lp-bound-left2} gives 
\begin{align}\label{eq:Lp-bound-e-identity}
    \int_A |u \cdot e| \, \dd \mu - C R^{\alpha} \int_A |u| \, \dd \mu \lesssim R^{d+1} + \int_{(S(A_+) + 2Re) \cup (S(A_-) - 2Re)} |u(S^{-1}(y))| \, \dd y. 
\end{align}

It remains to integrate \eqref{eq:Lp-bound-e-identity} with respect to $e \in B_2 \setminus B_1$, using that 
\begin{align}\label{eq:Lp-bound-int-e}
    \int_{B_2 \setminus B_1} |u(x) \cdot e| \, \dd e \gtrsim |u(x)| 
\end{align} and the equivalence 
\begin{equation*}
    x \in (S(A_+) + 2Re) \cup (S(A_-) - 2Re) \quad  \Longleftrightarrow \quad e \in \frac{1}{2R}(x-S(A_+)) \cup \frac{1}{2R}(x+S(A_-)), 
\end{equation*} which gives 
\begin{align}\label{eq:Lp-bound-int-e2}
    \int_{B_2 \setminus B_1} 1_{(S(A_+) + 2Re) \cup (S(A_-) - 2Re)}(x) \, \dd e \leq \left|\frac{1}{2R}(x-S(A_+)) \cup \frac{1}{2R}(x+S(A_-)) \right| \lesssim \frac{|S(A)|}{|B|}. 
\end{align} From \eqref{eq:Lp-bound-int-e}, \eqref{eq:Lp-bound-int-e2} and the inclusion
\begin{equation*}
    (S(A_+) + 2Re) \cup (S(A_-) - 2Re) \subset B_{6R}(x_0), 
\end{equation*} we obtain 
\begin{align*}
    \int_{B_2 \setminus B_1} \int_{(S(A_+) + 2Re) \cup (S(A_-) - 2Re)} |u(S^{-1}(y))| \, \dd y \, \dd e &\overset{\eqref{eq:Lp-bound-f-bounded}}{\lesssim} \frac{|S(A)|}{|B|} \int_{B_{6R}(x_0)} |u(S^{-1}(y))| \, f(x_0) \, \dd y \\
    &= \frac{|S(A)|}{|B|} \int_{S^{-1}(B_{6R}(x_0))} |u| \, \dd \mu \\
    &\overset{\eqref{eq:Lp-bound-S-seminorm}}{\leq} \frac{|S(A)|}{|B|} \int_{B} |u| \, \dd \mu. 
\end{align*} Finally, the transport condition $|\det DS| = \frac{f}{f(x_0)}$ ensures that we have $|S(A)| \lesssim |A|$, so that \eqref{eq:Lp-bound-e-identity} yields
\begin{equation*}
    (1-C R^{\alpha}) \int_A |u| \, \dd \mu \lesssim R^{d+1} + \frac{|A|}{|B|} \int_B |u| \, \dd \mu. 
\end{equation*} Choosing $R_1$ so that $C R_1^{\alpha} \leq \frac{1}{2}$ and using \eqref{eq:Lp-bound-f-bounded}, we obtain \eqref{eq:Lp-bound-step1-result}. We note again that the choice of $R_1$ depends on $f$ only through $[f]_{\alpha}$.

\medskip

\item \label{step:Lp-bound-step2} We claim that for any set $A \subset B_{\frac{R_1}{2}}$ with $\textrm{diam} \, A \leq R \ll R_1$, for any $\beta > 0$, we have 
\begin{equation}\label{eq:Lp-bound-step2-result}
    \int_A |u| \lesssim_{\beta} R^{d+1} + \frac{|A|}{R^{\beta}} \left( \int_{B_{R_1}} |u|^2 \right)^{\frac{1+\beta}{d+2}}. 
\end{equation}

\begin{enumerate}[label=\textsc{Step 3.\Alph*.},leftmargin=0pt, labelsep=*,itemindent=*]

\item Let us show that for all $\beta > 0$ and for any two balls $B_r(x_0) \subset B_R(x_0) \subset B_{R_1}$, 
\begin{equation}\label{eq:Lp-substep1}
    \fint_{B_r(x_0)} |u| \lesssim_{\beta} \left( \frac{R}{r} \right)^{\beta} \left( R + \fint_{B_R(x_0)} |u| \right). 
\end{equation}

To this end, we momentarily introduce 
\begin{equation*}
    \Lambda := \sup_{r \leq \rho \leq R} \rho^{\beta} \fint_{B_{\rho}(x_0)} |u|. 
\end{equation*}
Applying \ref{step:Lp-bound-step1} to the sets $A := B_{\rho}(x_0)$ and $B := B_{M \rho}(x_0)$, we obtain, provided $12\rho \leq M\rho \leq R$, 
\begin{equation*}
    \fint_{B_{\rho}(x_0)} |u| \lesssim M^{d+1} \rho + \fint_{B_{M\rho}} |u|, 
\end{equation*} 
so that, provided $r\leq \rho \leq \frac{R}{M}$, 
\begin{equation*}
    \rho^{\beta} \fint_{B_{\rho}(x_0)} |u| \lesssim M^{d-\beta} R^{1+\beta} + \frac{\Lambda}{M^{\beta}}. 
\end{equation*}
Fixing a large enough $M \sim 1$, this yields in the range $r\leq \rho \leq \frac{R}{M}$, 
\begin{equation}\label{eq:Lp-ineq-Lambda}
    \rho^{\beta} \fint_{B_{\rho}(x_0)} |u| - \frac{\Lambda}{2}  \lesssim R^{1+\beta}. 
\end{equation} 
In the remaining range $\frac{R}{M} \leq \rho \leq R$, we have 
\begin{equation*}
    \rho^{\beta} \fint_{B_{\rho}(x_0)} |u| \lesssim_{\beta} R^{\beta} \fint_{B_{R}(x_0)} |u|, 
\end{equation*} 
so that \eqref{eq:Lp-ineq-Lambda} turns into 
\begin{equation*}
    \frac{\Lambda}{2}  \lesssim R^{1+\beta} + R^{\beta} \fint_{B_{R}(x_0)} |u|, 
\end{equation*} 
which is \eqref{eq:Lp-substep1}. 

\medskip
\item We prove \eqref{eq:Lp-bound-step2-result}. 

By applying Cauchy-Schwarz to \eqref{eq:Lp-substep1}, we obtain 
\begin{equation*}
    \fint_{B_r(x_0)} |u| \lesssim \left( \frac{R}{r} \right)^{\beta} \left( R + \frac{1}{R^{\frac{d}{2}}} \int_{B_{R_1}} |u|^2 \right), 
\end{equation*} 
so that, optimizing in $R$ by choosing $R = \left( \int_{B_{R_1}} |u|^2 \right)^{\frac{1}{d+2}} \ll R_1$ by \eqref{eq:Lp-bound-small-assumption}, we get, provided $B_r(x_0) \subset B_{R_1}$,  
\begin{equation*}
    r^{\beta} \fint_{B_r(x_0)} |u| \lesssim \left( \int_{B_{R_1}} |u|^2 \right)^{\frac{1+\beta}{d+2}}. 
\end{equation*} 
In combination with \ref{step:Lp-bound-step1}, this yields \eqref{eq:Lp-bound-step2-result}.

\end{enumerate}

\item (Conclusion). We now have all the ingredients to prove the estimate \eqref{eq:Lp-bound}. 

Given a threshold $M < \infty$ and a ball $B \subset B_{\frac{R_1}{2}}$ of radius $R \ll R_1$, we apply \ref{step:Lp-bound-step2} to $A := \{|u|>M\} \cap B$, to the effect that 
\begin{equation}\label{eq:Lp-step4-M}
    M R^{\beta} \gg \left( \int_{B_{R_1}} |u|^2 \right)^{\frac{1+\beta}{d+2}} \quad \Longrightarrow \quad M|A| \lesssim R^{d+1}. 
\end{equation}
Provided $M \geq \left( \int_{B_{R_1}} |u|^2 \right)^{\frac{1}{d+2}}$, thanks to \eqref{eq:Lp-bound-small-assumption}, there are radii $R \ll R_1$ for which the statement on the left-hand side of \eqref{eq:Lp-step4-M} holds. Hence, by covering $B_{\frac{R_1}{4}}$ by these balls, we obtain 
\begin{equation}\label{eq:Lp-weak-Lp}
    M \left| \{|u| \geq M\}\cap B_{\frac{R_1}{4}} \right| \lesssim R_1^d \left( \frac{1}{M} \left( \int_{B_{R_1}} |u|^2 \right)^{\frac{1+\beta}{d+2}} \right)^{\frac{1}{\beta}}. 
\end{equation} 
Note that \eqref{eq:Lp-weak-Lp} is trivially satisfied for $M \leq \left( \int_{B_{R_1}} |u|^2 \right)^{\frac{1}{d+2}}$, so that with $p := 1+\frac{1}{\beta}$, 
\begin{equation*}
    M \left| \{|u| \geq M\}\cap B_{\frac{R_1}{4}} \right|^{\frac{1}{p}} \lesssim R_1^{\frac{d}{p}} \left( \int_{B_{R_1}} |u|^2 \right)^{\frac{1}{d+2}}
\end{equation*} 
holds for all $M$. This amounts to an estimate in the weak $L^p$ norm of $u$ on $B_{\frac{R_1}{4}}$. Because of \eqref{eq:Lp-bound-small-assumption}, we trivially have 
\begin{equation*}
    \left( \int_{B_{\frac{R_1}{4}}} |u|^2 \right)^{\frac{1}{2}} \lesssim R_1^{\frac{d}{2}} \left( \int_{B_{R_1}} |u|^2 \right)^{\frac{1}{d+2}}, 
\end{equation*} 
so by interpolation, we obtain \eqref{eq:Lp-bound}.  \qedhere
\end{enumerate}
\end{proof}

\bigskip

\vfill

\begin{thebibliography}{MTW05}

\bibitem[Alm76]{Alm76}
    \textsc{F.J. Almgren Jr.},
    \newblock  \doi{10.1090/memo/0165}{\emph{Existence and regularity almost everywhere of solutions to elliptic variational problems with constraints}}.
    \newblock Memoirs of the American Mathematical Society \textbf{4} (1976), no. 165.
    \newblock American Mathematical Society, Providence, RI, 1976.
    \newblock ISBN: 978-0-8218-1865-7.
    \newblock \mr{0420406}.
    \newblock \zbl{0327.49043}.
    \hfill

\bibitem[BB99]{BenamouBrenier}
	\textsc{J.-D. Benamou}, and \textsc{Y. Brenier},
	\newblock A numerical method for the optimal time-continuous mass transport problem and related problems.
	\newblock In \emph{Monge Ampére equation: applications to geometry and optimization (Deerfield Beach, FL, 1997)}, 1--11, 
	\newblock \doi{10.1090/conm/226/03232}{\emph{Contemporary Mathematics}} \textbf{226}, 
	\newblock American Mathematical Society, Providence, RI, 1999.
	\newblock \mr{1660739}.
	\newblock \zbl{0916.65068}.
	\hfill

\bibitem[Bom82]{Bom82}
    \textsc{E. Bombieri},
    \newblock Regularity theory for almost minimal currents.
    \newblock \doi{10.1007/BF00250836}{\emph{Archive for Rational Mechanics and Analysis}} \textbf{78} (1982), 99--130.
    \newblock \mr{0648941}.
    \newblock \zbl{0485.49024}.
    \hfill

\bibitem[Caf92]{Caffarelli92}
	\textsc{L.A. Caffarelli},
	\newblock The regularity of mappings with a convex potential.
	\newblock \doi{10.2307/2152752}{\emph{Journal of the American Mathematical Society}} \textbf{5} (1992), no. 1, 99--104.
	\newblock \mr{1124980}.
	\newblock \zbl{0753.35031}.
	\hfill

\bibitem[Cam64]{Cam64}
	\textsc{S. Campanato},
	\newblock Proprietà di una famiglia di spazi funzionali.
	\newblock \href{http://www.numdam.org/item/?id=ASNSP_1964_3_18_1_137_0}{\emph{Annali della Scuola Normale Superiore di Pisa - Classe di Scienze (Serie 3)}} \textbf{18} (1964), no. 1, 137--160.
	\newblock \mr{167862}.
	\newblock \zbl{0133.06801}.
	\hfill

\bibitem[CF03]{CF03}
	\textsc{G.-Q. Chen}, and \textsc{H. Frid},
	\newblock Extended Divergence-Measure Fields and the Euler Equations for Gas Dynamics.
	\newblock \doi{10.1007/s00220-003-0823-7}{\emph{Communications in Mathematical Physics}} \textbf{236} (2003), 251--280.
	\newblock \mr{1981992}.
	\newblock \zbl{1036.35125}.
	\hfill
	
\bibitem[DF14]{DF14}
	\textsc{G. De Philippis}, and \textsc{A. Figalli},
	\newblock Partial regularity for optimal transport maps.
    \newblock \doi{10.1007/s10240-014-0064-7}{\emph{Publications Mathématiques. Institut de Hautes Études Scientifiques}} \textbf{121} (2015), 81--112.
	\newblock \mr{3349831}.
	\newblock \zbl{1325.49051}.
	\hfill

\bibitem[EG15]{EG15}
	\textsc{L.C. Evans}, and \textsc{R.F. Gariepy},
	\newblock \emph{Measure theory and fine properties of functions}. Revised edition.
	\newblock Textbooks in Mathematics. 
	\newblock CRC Press, Boca Raton, FL, 2015. 
	\newblock ISBN: 978-1-4822-4238-6.
	\newblock \mr{3409135}.
	\newblock \zbl{1310.28001}.
	\hfill

\bibitem[Fig17]{Fig17}
	\textsc{A. Figalli},
	\newblock \doi{10.4171/170}{\emph{The Monge-Ampère Equation and Its Applications}}.
	\newblock Zurich Lectures in Advanced Mathematics.
	\newblock European Mathematical Society (EMS), Zürich, 2017. 
	\newblock ISBN 978-3-03719-170-5.
	\newblock \mr{3617963}.
	\newblock \zbl{06669881}.
	\hfill

\bibitem[FK10]{FK10}
	\textsc{A. Figalli}, and \textsc{Y.-H. Kim},
	\newblock Partial regularity of Brenier solutions of the Monge-Ampère equation.
	\newblock \doi{10.3934/dcds.2010.28.559}{\emph{Discrete and Continuous Dynamical Systems (Series A)}} \textbf{28} (2010), 559--565.
	\newblock \mr{2644756}.
	\newblock \zbl{1193.35087}.
	\hfill

\bibitem[Giu03]{Giu03}
    \textsc{E. Giusti},
    \newblock \doi{10.1142/9789812795557}{\emph{Direct methods in the calculus of variations}}.
    \newblock World Scientific Publishing Co., Inc., River Edge, NJ, 2003.
    \newblock ISBN: 981-238-043-4.
    \newblock \mr{1962933}.
    \newblock \zbl{1028.49001}.
    \hfill

\bibitem[Gol20]{Gol20}
    \textsc{M. Goldman},
    \newblock An $\varepsilon$--regularity result for optimal transport maps between continuous densities.
    \newblock \doi{10.4171/RLM/922}{\emph{Atti della Accademia Nazionale dei Lincei. Rendiconti Lincei. Matematica e Applicazioni}} \textbf{31} (2020), no. 4, 971--979.
    \newblock \mr{4215687}.
    \hfill

\bibitem[GO20]{GO17}
    \textsc{M. Goldman}, and \textsc{F. Otto}, 
    \newblock A variational proof of partial regularity for optimal transportation maps.
    \newblock \doi{10.24033/asens.2444}{\emph{Annales Scientifiques de l'École Normale Supérieure. Quatrième Série}} \textbf{53} (2020), no. 5, 1209--1233.
    \newblock \mr{4174852}.
    \newblock \zbl{07346521}.
    \hfill

\bibitem[GHO21]{GHO19}
    \textsc{M. Goldman, M. Huesmann}, and \textsc{F. Otto}, 
    \newblock Quantitative linearization results for the Monge-Ampère equation.
    \newblock \doi{10.1002/cpa.21994}{\emph{Communications on Pure and Applied Mathematics}} (2021), \doi{10.1002/cpa.21994}{https://doi.org/10.1002/cpa.21994}.
    \hfill

\bibitem[KM10]{KM10}
    \textsc{Y.-H. Kim}, and \textsc{R.J. McCann},
    \newblock Continuity, curvature, and the general covariance of optimal transportation.
    \newblock \doi{10.4171/JEMS/221}{\emph{Journal of the European Mathematical Society}} \textbf{12} (2010), no. 4, 1009--1040.
    \newblock \mr{2654086}.
    \newblock \zbl{1191.49046}.
    \hfill

\bibitem[Loe09]{Loe09}
	\textsc{G. Loeper},
	\newblock On the regularity of solutions of optimal transportation problems.
	\newblock \doi{10.1007/s11511-009-0037-8}{\emph{Acta Mathematica}} \textbf{202} (2009), no. 2, 241--283.
	\newblock \mr{2506751}.
	\newblock \zbl{1219.49038}.
	\hfill

\bibitem[MTW05]{MTW05}
	\textsc{X.-N. Ma, N.S. Trudinger}, and \textsc{X.-J. Wang},
	\newblock Regularity of potential functions of the optimal transportation problem.
	\newblock \doi{10.1007/s00205-005-0362-9}{\emph{Archive for Rational Mechanics and Analysis}} \textbf{177} (2005), no. 2, 151--183.
	\newblock \mr{2188047}.
	\newblock \zbl{1072.49035}.
	\hfill

\bibitem[MO21]{MiuraOtto}
	\textsc{T. Miura}, and \textsc{F. Otto},
	\newblock Sharp boundary $\epsilon$-regularity of optimal transport maps.
	\newblock \doi{10.1016/j.aim.2021.107603}{\emph{Advances in Mathematics}} \textbf{381} (2021), Article ID 107603, 65 pp.
    \newblock \mr{4205711}.
    \newblock \zbl{07319239}.
    \hfill


\bibitem[PR20]{PR20}
    \textsc{M. Prod'homme}, and \textsc{T. Ried},
    \newblock Variational approach to partial regularity of optimal transport maps: general cost functions and continuous densities.
    \newblock \emph{In Preparation}.
    \hfill

\bibitem[San15]{San15}
	\textsc{F. Santambrogio}, 
	\newblock \doi{10.1007/978-3-319-20828-2}{\emph{Optimal transport for applied mathematicians}}.
	\newblock Calculus of Variations, PDEs, and Modeling. 
	\newblock Progress in Nonlinear Differential Equations and their Applications \textbf{87}, 
	\newblock Birkhäuser, Cham, 2015.
	\newblock ISBN: 978-3-319-20827-5.
	\newblock \mr{3409718}.
	\newblock \zbl{1401.49002}.
	\hfill

\bibitem[SS82]{SchoenSimon}
	\textsc{R. Schoen}, and \textsc{L. Simon},
	\newblock A New Proof of the Regularity Theorem for Rectifiable Currents which Minimize Parametric Elliptic Functionals.
	\newblock \doi{10.1512/iumj.1982.31.31035}{\emph{Indiana University Mathematics Journal}} \textbf{31} (1982), no. 3, 415--434.
	\newblock \mr{0652826}.
	\newblock \zbl{0516.49026}.
	\hfill
	
\bibitem[Tro87]{Tro87}
    \textsc{G.M. Troianiello}, 
    \newblock \doi{10.1007/978-1-4899-3614-1}{\emph{Elliptic differential equations and obstacle problems}}.
    \newblock The University Series in Mathematics, Plenum Press, New York, 1987.
    \newblock ISBN 0-306-42448-7.
    \newblock \mr{1094820}.
    \newblock \zbl{0655.35002}.
    \hfill

\bibitem[Vil03]{Vil03}
	\textsc{C. Villani},
	\newblock \doi{10.1090/gsm/058}{\emph{Topics in optimal transportation}}. 
	\newblock Graduate Studies in Mathematics \textbf{58},
	\newblock American Mathematical Society, Providence, RI, 2003.
	\newblock ISBN 978-0-8218-3312-4.
	\newblock \mr{1964483}.
	\newblock \zbl{1106.90001}.
	\hfill
	
\bibitem[Vil09]{Vil09}
	\textsc{C. Villani},
	\newblock \doi{10.1007/978-3-540-71050-9}{\emph{Optimal transport: old and new}}. 
	\newblock Grundlehren der Mathematischen Wissenschaften \textbf{338}, 
	\newblock Springer-Verlag, Berlin, 2009.
	\newblock ISBN 978-3-540-71049-3.
	\newblock \mr{2459454}.
	\newblock \zbl{1156.53003}.
	\hfill
	

\end{thebibliography}
\end{document}